\definecolor{crimson}{rgb}{0.86, 0.08, 0.24}
\definecolor{bleudefrance}{rgb}{0.19, 0.5, 0.91}
\newtheorem*{rep@theorem}{\rep@title}
\newcommand{\newreptheorem}[2]{%
\newenvironment{rep#1}[1]{%
 \def\rep@title{#2 \ref{##1}}%
 \begin{rep@theorem}}%
 {\end{rep@theorem}}}
\newtheorem{theorem}{Theorem}
\newtheorem{lemma}[theorem]{Lemma}%[section]
\newtheorem{proposition}[theorem]{Proposition}%[section]
\newtheorem{corollary}[theorem]{Corollary}%[section]
\newtheorem{claim}[theorem]{Claim}
\theoremstyle{definition}
\newtheorem{definition}[theorem]{Definition}
\newtheorem{remark}[theorem]{Remark}
\numberwithin{equation}{section}
\numberwithin{theorem}{section}
\begin{document}
  
\normalsize

\title[Drift for group extensions]{Drift and Matrix coefficients for discrete group extensions of countable Markov shifts}

\author{Rhiannon \textsc{Dougall}}

\address{Department of Mathematical Sciences,
Durham University,
Upper Mountjoy,
Durham DH1 3LE}
\email{rhiannon.dougall@durham.ac.uk}

\begin{abstract} 
There has been much interest in generalizing Kesten's criterion for amenability in terms of a random walk to other contexts, such as determining amenability of a deck covering group by the bottom of the spectrum of the Laplacian or entropy of the geodesic flow. One outcome of this work is to generalise the results to so-called discrete group extensions of countable Markov shifts that satisfy a strong positive recurrence hypothesis. The other outcome is to further develop the language of unitary representation theory in this problem, and to bring some of the machinery developed by Coulon--Dougall--Schapira--Tapie [Twisted Patterson-Sullivan measures and applications to amenability and coverings, arXiv:1809.10881, 2018] to the countable Markov shift setting. In particular we recast the problem of determining a drop in Gurevi\v{c} pressure in terms of eventual almost sure decay for matrix coefficients, and explain that a so-called twisted measure ``finds points with the worst decay." We are also able locate the results of Dougall--Sharp [Anosov flows, growth rates on covers and group extensions of subshifts, {\em Inventiones Mathematicae}, 223, 445--483, 2021] within this framework.
\end{abstract}

\maketitle

%%%%%%%%%%%%%%%%%%%%%INTRO%%%%%%%%%%%%%%%%%%%%%%%%%
\section{Introduction}\label{section:intro}
It has been known for some time that one can connect certain structural properties of a group to properties exhibited by a random walk on the group. (A \emph{path} in a random walk is the sequence $s_1\cdots s_n\in G$, for $n\in \mathbb{N}$, with each $s_i\in G$, $i\in\mathbb{N}$, independently sampled according to a probability $p$ on $G$. Here, we only consider $G$ to be countable and finitely generated.)  \emph{Kesten's criterion for the radius of convergence of a (symmetric) random walk} \cite{Kesten} states that if the probability $p$ is symmetric and has support generating $G$ then the radius of convergence of the (symmetric) random walk is equal to one if and only if $G$ is amenable. Motivated by hyperbolic dynamics (where independence is not given) it is natural to relax the independence of the products $s_1\cdots s_n$ and instead suppose they are given by a $G$-extension (also called a group extension, see subsection \ref{subsection:gextension}) of a strongly positively recurrent countable Markov shift (CMS). (A random walk can be understood as a $G$-extension of a subshift of finite type.) It is also natural to remove the symmetry requirement entirely, as we later explain for the example of Anosov flows.
The radius of convergence of the random walk is naturally realised as the exponential of a Gurevi\v{c} pressure of the $G$-extension. The notion of pressure is important to geometric examples --- it is directly (in cases admitting a symbolic coding) or indirectly (by considering a transliteration in Patterson--Sullivan theory) related to critical exponents of groups acting on sufficiently negatively curved spaces.
There has been much interest in extending the Kesten criterion to dynamical or geometric settings. For example, the early work of Brooks \cite{Brooks} connects amenability of a covering deck group with the bottom of the spectrum of the Laplacian. In more recent times the advances began in earnest with \emph{Stadlbauer's criterion for the Gurevi\v{c} pressure of a (weakly symmetric) group extension}  \cite{Stadlbauer13} which characterises amenability of $G$ in terms of the Gurevi\v{c} pressure of a group extension by $G$ under the following restrictions: the dynamics for the group extension must be transitive, the CMS must satisfy a finiteness condition called ``the big images and preimages property", and the H\"{o}lder potential and group extension must satisfy a weak symmetry hypothesis. 

Amenability of a countable group can be understood in a combinatorial manner from the equivalent notion due to F{\o}lner \cite{folner}. Yet it can be more enlightening to phrase in terms of unitary representations. Indeed, to determine whether there can be a gap in Gurevi\v{c} pressure over a family of extensions, one is led to consider a spectral gap condition for the left regular representation in $\ell^2(G/H)$, for $H\le G$.
These were observations of Dougall \cite{Dougall} and Coulon--Dal'bo--Sambusetti \cite{CoulonDalboSambusetti} who independently gave a gap criterion for extensions of subshifts of finite type with similar ``visibility hypotheses". (The visibility hypothesis replaces transitivity, which cannot be taken to be uniform in such problems.) We again emphasise that the weak symmetry hypothesis is required for one direction of the criterion.

Let us now comment on ``symmetry". A flow is by definition invertible, and in this way there is a mapping of an increasing-time orbit to a decreasing-time orbit. The geodesic flow has extra symmetry given in the existence of a natural smooth conjugacy of the positive time flow (defined by vector field $\mathcal{X}$) with the negative time flow (defined by vector field $-\mathcal{X}$), preserving both an invariant volume and the measure of maximal entropy. (This can be encoded to a weak symmetry hypothesis for a symbolic coding.) Dougall and Sharp \cite{DougallSharp2} study the entropy of an Anosov flow in a covering manifold --- in the context of Anosov flows one can already have a drop in entropy for the homology cover \cite{Sharp93}. A particular case of their technique characterises the radius of convergence of a finitely supported random walk on an amenable group $G$ as being equal to the radius of convergence for the abelianisation of $G$, with the restriction that the semigroup $G^+_p$ generated by the support of the probability $p$ is in fact the whole of $G$. (In particular there is no symmetry requirement.)
This was proved in the framework of transitive group extensions of subshifts of finite type (and in that context $G^+_p=G$ implies transitivity of the group extension). The characterisation of the radius of convergence for an amenable group had not been seen before the work of \cite{DougallSharp2}. We expand on the technique of \cite{DougallSharp2} to give Theorem \ref{theorem:phiastphi}, and compare this with a ratio limit theorem (see subsection \ref{subsection:related}).

Finally, we elaborate on the work of Coulon, Dougall, Schapira and Tapie \cite{CoulonDougallSchapiraTapie}, which sought to characterise critical exponents $h_{\Gamma^\prime}$ for $\Gamma^\prime \le \Gamma$ in terms of the Patterson--Sullivan machinery for the action of $\Gamma$ on a proper Gromov hyperbolic space $X$. They introduced a so-called twisted Patterson--Sullivan measure and characterised equality $h_{\Gamma^\prime}=h_\Gamma$ in terms of the twisted measure coinciding with the usual Patterson--Sullivan measure for $\Gamma$. This geometric setting has local compactness that a CMS does not, and so one of our aims is to extend the result to this setting. 

We reinterpret arguments and results of \cite{CoulonDougallSchapiraTapie} in terms of the existence of a thermodynamic $G$-density $\phi[t]:G\to\mathbb{R}$, and to show that the parameters at which this thermodynamic $G$ density $\phi[t]$ exists controls the eventual decay of $\langle\rho(s_1\cdots s_n)f,v \rangle$ along typical paths $s_1\cdots s_n$ in a group extension $T_{\mathfrak{s}}$ of a CMS. And that diagonal coefficients associated to $\phi[t]$ are naturally related to integrals of functions with respect a ``(twisted) measure" $\nu^{\phi}_c$ on $\Sigma^+$.
We hope that this exposition will increase the accessibility of these techniques and grow interest in this novel relation between hyperbolic dynamics and unitary representations.

\subsection*{Acknowledgements}
This work greatly benefitted from the author's stay in the trimester program {\em Dynamics: Topology and Numbers} at the Hausdorff Research Institute for Mathematics. 
The author is grateful to Yves Benoist for the notion of matrix coefficients, and to Manfred Einsiedler and Tom Ward for the availability of a very helpful book draft on unitary representations. The author thanks Richard Sharp for many comments in this drafting process. Any errors and misattributions remain the fault of the author.
The author also acknowledges the support of the Heilbronn Institute at the University of Bristol.

%%%%%%%%%%%%%%%%%%%%%subsection:cms%%%%%%%%%%%%%%%%%%%%%%
\subsection{Countable Markov shifts and discrete group extensions}\label{subsection:gextension}  More detail for the definitions is given in Section \ref{section:cms}.
Let $\sigma:\Sigma^+\to\Sigma^+$ be a (one-sided) countable Markov shift with alphabet $\mathcal{W}_1$. (In particular $\Sigma^+\subset \mathcal{W}_1^{\mathbb{N}}$ but need not be a full-shift.) As usual, we call a word $w\in(\mathcal{W}_1)^k$ admissible if it appears as a subword of some $x\in\Sigma^+$. The two-sided shift is denoted $\sigma:\Sigma\to\Sigma$. Cylinder sets given by $[w_1\cdots w_k]=\left\{ x=(x_0,x_1,\ldots)\in \Sigma^+ : x_{i-1} = w_{i}, \, i=1,\ldots, k \right\}$ form a basis for the topology on $\Sigma^+$. We always assume that $\sigma$ is mixing. Let $R:\Sigma^+\to \mathbb{R}_+$ with $\log R$ locally H\"{o}lder continuous (we use multiplicative notation, writing $R_n(x) = R(x)\cdots R(\sigma^{n-1}x)$). We will also view $R$ as a function on $\Sigma$.
The Gurevi\v{c} pressure $P(\log R,\sigma)$ is 
\begin{equation}\label{equation:pressure}
P(\log R,\sigma) = \limsup_{n\to\infty}\frac{1}{n} \log \sum_{x\in[B]: \sigma^n x = x}R_n(x).
\end{equation}
The function $\hat{R}=\exp (-P(\log R, \sigma))R$ has $P(\log \hat{R},\sigma)=0$, and $\hat{R}$ satisfies the same recurrence properties as $R$. Therefore we always assume that  $P(\log R,\sigma)=0$.

The countable Markov shift and $R:\Sigma^+\to\mathbb{R}$ is said to be positive recurrent (following Sarig \cite{Sarig}) if there is a constant $M_B$ with
$$
\sum_{x\in\Sigma^+: \sigma^n x = x, x_0=B}R_n(x)\in[M_B^{-1},  M_B]
$$
for all $n\in\mathbb{N}$.
We also ask that $R$ is strongly positively recurrent (see subsection \ref{sprdef} which discusses this condition as it was given by Sarig in \cite{Sarig2001}), which may be equivalently stated as having that for any $B\in\mathcal{W}_1$
$$
\exp{P(\log R,\sigma)}=1>\gamma(\mathrm{SPR}) :=  \limsup_{n\to\infty} \frac{1}{n}\log \sum_{x\in[B]: \sigma^n x = x, \, x_i\ne B\, \mathrm{if}\, i\notin n\mathbb{N} }R_n(x).
$$
(For a subshift of finite type, any $R$ with $\log R$ being H\"{o}lder continuous is strongly positively recurrent --- note that we assumed that $\sigma$ is mixing.)

The shift-invariant probability equilibrium state is denoted $\mu=\mu_R$. (We also identify it as a shift-invariant measure for the two-sided shift space $\Sigma$.)
We should think of the data of $\sigma,\Sigma^+,\mathcal{W}_1,R,\mu$ as being fixed and to then vary the skew products that follow.

Let $G$ a discrete countable group. A map $\mathfrak{s}:\mathcal{W}_1 \to G$ defines a  \emph{$\mathfrak{s}$-skew product} with phase space $\Sigma\times G$ and dynamical system
$$
T_\mathfrak{s}(x,g) = (\sigma x, \mathfrak{s}(x_0)^{-1}g).
$$
The skew product is isomorphic to a countable Markov shift. We also call $T_\mathfrak{s}$ a \emph{discrete group extension}, or $G$-\emph{extension}.
We lift $R$ to $\Sigma^+\times G$ by defining $R(x,g)=R(x)$ (and use the same notation for the function on domain $\Sigma^+$ and on domain $\Sigma^+\times G$). 
For $w\in\mathcal{W}$, $w=w_1\cdots w_n$, define $\underline{\mathfrak{s}}(w)=\mathfrak{s}(w_1)\cdots \mathfrak{s}(w_n)$. Then $$T^n_\mathfrak{s}(x,g) = (\sigma x, \underline{\mathfrak{s}}(x_0\cdots x_{n-1})^{-1}g).$$
The Gurevi\v{c} pressure $P(\log R,T_{\mathfrak{s}})$ is 
$$
\limsup_{n\to\infty}\frac{1}{n}\log \sum_{(x,e)\in[B]\times\left\{e\right\}: T_{\mathfrak{s}}^n (x,e)=(x,e)}R_n(x),
$$
where $e$ is the identity element of $G$. When $T_{\mathfrak{s}}$ is transitive the definition is independent of $B$. We always have $P(\log R,T_{\mathfrak{s}})\le P(\log R,\sigma)=1$. 

The basic example is given by a random walk $p:S\to[0,1]$ on a group $G$ in which we assume $S=S^{-1}$ generates $G$. We let $\Sigma^+ = S^\mathbb{N}$, $R((s_0,s_1,\ldots))=p(s_0)$ and $\mathfrak{s}(w)=w$, once we identify the alphabet with the group elements. Then $T_{\mathfrak{s}}$ describes the $p$-random walk. The logarithm of the Gurevi\v{c} pressure coincides with the radius of convergence of the random walk $\limsup_{n\to\infty} p^{\ast n}(e)^{1/n}$.

A different example to have in mind is $F_{a,b}$, the free group with generators $\left\{a,a^{-1},b,b^{-1}\right\}$, and
$\Sigma^+ = \left\{ x\in\left\{a,a^{-1},b,b^{-1}\right\}^{\mathbb{N}}: x_{i+1}\ne x_i^{-1}\, \forall i\in\mathbb{N} \right\}$, identifying the alphabet with the group elements. The skew product $\Sigma\times F_{a,b}$ with $\mathfrak{s}(x_0)=x_0$ walks along geodesics in $F_{a,b}$. In this case $T_{\mathfrak{s}}$ is not transitive, but it is natural to study the quotients.

For $\mathfrak{s}$ and $G$ fixed we define, for any $H\unlhd G$, $\mathfrak{s}_H(x)=\mathfrak{s}(x)H$ and corresponding skew product $T_{\mathfrak{s}_H}:\Sigma\times G/H \to \Sigma\times G/H $.
Following \cite{Dougall}, \cite{CoulonDalboSambusetti} we say that $\mathfrak{s}$ satisfies the \emph{visibility hypothesis} if there is a finite set $K$ of $G$ so that every $g\in G$ may be expressed $g=k_1 \mathfrak{s}(w_1)\cdots \mathfrak{s}(w_n)k_2$ for $k_1,k_2\in K$ and some admissible word $w_1\cdots w_n$. We expect the visibility hypothesis to be satisfied in settings where there is a coding of a geometric problem; the visibility hypothesis is used in Coulon--Dal'bo--Sambusetti \cite{CoulonDalboSambusetti} to characterize a uniform gap in critical exponents of certain isometric actions of word hyperbolic groups, and a more restrictive version is used in Dougall \cite{Dougall} to characterize a uniform gap in Gurevi\v{c} entropy for certain geodesic flows.
One of the insights of \cite{CoulonDalboSambusetti} and \cite{Dougall} is that one should understand a family of skew products in $G/H$ in terms of unitary representation of $G$.

%%%%%%%%%%%%%%%%%%%%%subsection:unitary%%%%%%%%%%%%%%%%%%%%%%
\subsection{Unitary representations and matrix coefficients}\label{subsection:unitary}
A comprehensive text on unitary representations is \cite{bekka}. Let $G$ be a discrete group. For a (real) Hilbert space $\mathcal{H}$ with inner product $\langle \cdot, \cdot \rangle_{\mathcal{H}}$, we denote by $\mathcal{U}(\mathcal{H})$ the unitary operators from $\mathcal{H}$ to itself. (Recall that an operator $U$ is unitary if $\langle Uv, Uw\rangle = \langle v,w\rangle_{\mathcal{H}}$ for all $v,w\in\mathcal{H}$.) A \emph{unitary representation} of $G$ (\emph{in } $\mathcal{H}$) is a homomorphism $\rho : G \to \mathcal{U}(\mathcal{H})$. The prototypical example is given by the real Hilbert space $\ell^2(G)=\ell^2(G,\mathbb{R})=\left\{ f:G\to\mathbb{R} : \sum_{g\in G} f(g)^2<\infty \right\}$ and \emph{left regular representation} $\lambda$ defined by $[\lambda(g)f](x)=f(g^{-1}x)$. (One could consider the complex Hilbert space $\ell^2(G,\mathbb{C})$, but all the representations we consider preserve the real cone and so in this case there is nothing of interest added by complexifying.) For a subgroup $H\le G$ we write $G/H$ for the cosets of $H$ in $G$. 
We refer to to the unitary representation $\lambda_H$ of $G$ in $\ell^2(G/H)$ defined by $[\lambda_H(g)]f(xH)=f(g^{-1}xH)$ as \emph{the quotient representation}. (We are equally tempted to call $\lambda_H$ a \emph{permutation representation} since one of the key features is that $\lambda_H$ permutes an orthonormal basis. See section \ref{section:ell2}.) The scope of this paper is restricted to unitary representations $\rho$ in $\mathcal{H}$ that are either a quotient representation or a countable direct sum of quotient representations. It should be noted that in this paper we rely heavily on the $G$-invariant cone of non-negative functions $\ell^2_+(G/H)$ in $\ell^2(G/H)$.

When $G$ is infinite, there does not exist a vector $v\in\ell^2(G)$ invariant by all of $G$. A group is said to be \emph{amenable} if the left regular representation $\lambda$ has almost invariant vectors: meaning that for every $\epsilon>0$ and every finite set $S$ there is a unit vector $v\in\ell^2(G)$ with $\|\rho(s)v-v\|_{\ell^2(G)}\le \epsilon$ for all $s\in S$. One also says that a unitary representation $\rho$ in $\mathcal{H}$ weakly contains the trivial representation, written $\mathds{1} \preceq \rho$, if for every $\epsilon>0$ and every finite set $S$ there is a unit vector $v\in\mathcal{H}$ with $\|\rho(s)v-v\|_{\mathcal{H}}\le \epsilon$ for all $s\in S$. If $G/H_n$, $n=1,2, \ldots $, is a sequence of non-amenable quotients (and so $\mathds{1} \nprec \lambda_{H_n}$) one may ask whether $\mathds{1} \preceq \oplus_{n=1}^\infty \lambda_{H_n}$. 

We use the notation of convolution throughout the paper: for two functions $f,\phi:G\to \mathbb{R}$ we write $\phi\ast f$ for the function $\phi\ast f(g)=\int \phi(h)f(h^{-1} g)dm$, where $m$ is the counting measure in $G$ (or Haar measure, recalling that $G$ is a countable discrete group). For a representation $\pi$ of $G$ write $\phi\ast_\pi f$ for the vector $\int \phi(h)\pi(h) fdm$. Frequently one sees $\pi(\phi)f$ or $\pi_*(\phi)f$; we prefer $\phi\ast_\pi f$ in this setting because the unitary representations are all discrete function spaces $\ell^2(G/H)$. We also use the notation $\phi^*$ for the function $\phi^*(g) = \phi(g^{-1})$; the operator $\pi(\phi)=\sum_{h\in G} \phi(h)\pi(h)$ has adjoint $\pi(\phi^*)$ (recall that the Hilbert spaces are real).

Fixing the representation $\rho$ of $G$ in $\mathcal{H}$, a vector $f\in \mathcal{H}$ gives rise to a \emph{(diagonal) matrix coefficient} $\psi_f : G\to \mathbb{R}$ defined by $\psi_f(g) =  \langle \rho(g) f,f\rangle_{\mathcal{H}}$. Matrix coefficients play an important role in the unitary representation theory of groups (where the theory is at all tractable) such as for compact groups, Abelian groups and semi-simple Lie groups. Here our groups are always countable and equipped with the discrete topology. When $G$ contains an infinite cyclic subgroup one can find $f\in\ell^2(G)$ so that $\psi_f$ is not integrable ($\psi_f\notin \ell^1(G)$) or square integrable ($\psi_f\notin\ell^{2}(G)$). In a different direction, if $\phi\in \ell^1(G)$ then $\int \psi_f(g)\phi(g)dm(g) = \langle \phi\ast_\rho f,f\rangle_{\ell^2(G)}$ is always well-defined and finite, i.e. $\psi_f$ is integrable with respect to the density $\phi m$.

%%%%%%%%%%%%%%%%%%%%%%%%%%%%%%%%%%%%%%%%%%%%%
%%%%%%%%%%%%%%%%%%%%%PHI%%%%%%%%%%%%%%%%%%%%%%
%%%%%%%%%%%%%%%%%%%%%%%%%%%%%%%%%%%%%%%%%%%%%

\section{Thermodynamic $G$-densities}\label{section:phi}
Throughout the data of the countable Markov shift and log-H\"{o}lder function $R$ are fixed, as is $\mathfrak{s}:\mathcal{W}_1\to G$. Recall that we assume $P(\log R,\sigma)=0$. The ideology is that properties of a unitary representation of $G$ relate to statistics of group extensions, and vice-versa. To this end, we need a mathematical object by which to reveal this relationship. 
We introduction a ``thermodynamic $G$-density", a family of non-negative functions $\phi[t]:G\to G$ associated to a convergence parameter $t\in\mathbb{R}$, and show a relationship to certain statistics of a unitary representation. 
\begin{remark}
The emphasis is to begin with the function $\phi[t]$ and then for each unitary representation $\rho$ in $\mathcal{H}$ and $f\in\mathcal{H}$ consider the vectors $\phi[t]\ast_\rho f$, rather than go directly to the operator $\rho_*(\phi[t])$. The operator $\rho_*(\phi[t])$ appears (in a different guise) in two other contexts. In the work of \cite{CoulonDougallSchapiraTapie} the \emph{twisted Poincar\'{e}} series could be transliterated to $\rho_*(\phi[t])$. When the underlying skew product describes a random walk we have that $\sum_{a,b\in\mathcal{W}_1}\rho_*(\phi^{a,b}[t])$ coincides with the Neumann series of the random walk operator (for $t$ large enough). One might consider the Neumann series for the transfer operator for $T_{\mathfrak{s}}$ but this has disadvantage that one has to worry about the whole space $\Sigma^+\times G$ --- this is the topic of future work.
\end{remark}

Recall that we write $\mathcal{W}^{A,a}_n$ for admissible words $w=w_1\cdots w_n$ of length $n$ with first letter $w_1=A$ and last letter $w_n=a$. 
\begin{definition}[Thermodynamic $G$-density]\label{def:phi}
Let $A,a\in\mathcal{W}_1$ and $x\in \sigma [a]$. Let $c:\mathbb{N}\to [0,\infty)$ with subexponential growth. Define the partial sums $\phi^{A,a}_{c;\le N}[t]: G\to G$ by
$$
\phi^{A,a}_{c;\le N}[t](g) = \sum_{n=1}^N t^{-n}c_n \sum_{w\in\mathcal{W}_n^{A,a}: \underline{\mathfrak{s}}(w)=g}R_n(wx).
$$

For any $t>0$ such that $\sup_{N} \phi^{A,a}_{c;\le N}[t](g)<\infty$ for each $g\in G$, we call $\phi_c^{A,a}[t]:G\to G$
$$
\phi^{A,a}_c[t](g) = \sum_{n=1}^\infty t^{-n} \sum_{w\in\mathcal{W}_n^{A,a}: \underline{\mathfrak{s}}(w)=g}R_n(wx).
$$
a \emph{thermodynamic $G$-density}.

In the case that $c$ is identically $1$ we abbreviate to $\phi^{A,a}_{\le N}[t](g)$ and $\phi^{A,a}[t]$ respectively.
If $\Sigma^+$ is not compact we insist that $aA$ is admissible, and $x\in [A]$.
\end{definition}

We continue the exposition for a fixed $A,a,x$, and use the shorter notation $\phi[t]=\phi^{A,a}[t]$ (and $\phi_c[t]=\phi_c^{A,a}[t]$). When $T_{\mathfrak{s}}$ is transitive (and $\log R$ is H\"{o}lder) the convergence parameters that follow are independent of these choices. (Or if $\rho$ corresponds to the quotient representation $\ell^2(G/H)$ then we ask for $T_{\mathfrak{s}_H}$ to be transitive.) In the case that $\mathcal{H}=\ell^2(G/H)$ we write $\delta_h$ to denote the indicator function at a coset $h\in G/H$, and $\delta_e$ denotes the indicator function at the coset corresponding to $H$.

Let $\rho$ and $\mathcal{H}$ be a countable sum of quotient representations of $G$. (We encompass the group extension $\mathfrak{s}_H:\mathcal{W}_1\to G/H$ using the quotient representation for $H\unlhd G$). We make use of the cone of non-negative functions $\ell^2_+(G/H)=\left\{ f\in\ell^2(G/H): \forall g\in G\, f(g)\ge 0\right\}$; and denote $\mathcal{H}_+$ the corresponding cone in $\mathcal{H}$. Recall that a vector $f\in\mathcal{H}$ gives rise to a matrix coefficient $h\mapsto \langle \rho(h)f,f\rangle_{\mathcal{H}}$.
We define convergence parameters according to the integrability of matrix coefficients in the following way.

\begin{definition}[Hierarchy of convergence parameters]\label{def:integrability}
For $f\in\mathcal{H}_+$ denote
$$
\gamma(f) = \inf\left\{t>0 :  \sup_{N\in\mathbb{N}} \int \langle \rho(h)f,f\rangle_{\mathcal{H}} d\phi_{\le N}[t] (h) <\infty \right\},
$$
\begin{align*}
\gamma(\phi^*\ast_\rho f) &= \inf\left\{t>0 :  \sup_{N\in\mathbb{N}} \int \langle \rho(h)f,f\rangle_{\mathcal{H}} d\phi_{\le N}[t]\ast \phi_{\le N}[t] (h) <\infty \right\}
\\
&=\inf\left\{t>0 :  \sup_{N\in\mathbb{N}} \langle \phi_{\le N}[t]\ast_\rho f,\phi^*_{\le N}[t]\ast_\rho f\rangle_{\mathcal{H}}  <\infty \right\},
\end{align*}
\begin{align*}
\gamma(\phi\ast_\rho f) &= \inf\left\{t>0 :  \sup_{N\in\mathbb{N}} \int \langle \rho(h)f,f\rangle_{\mathcal{H}} d\phi^*_{\le N}[t]\ast \phi_{\le N}[t] (h) <\infty \right\}
\\
&=\inf\left\{t>0 :  \sup_{N\in\mathbb{N}} \langle \phi_{\le N}[t]\ast_\rho f,\phi_{\le N}[t]\ast_\rho f\rangle_{\mathcal{H}}  <\infty \right\}.
\end{align*}
We say that the \emph{pressure of $f$} is $-\log \gamma(f)$. We say that $\gamma(\phi\ast_\rho f)$ is the \emph{decay exponent for $f$}.
\end{definition}
We justify the terminology by Proposition \ref{prop:gurevic} and Lemma \ref{lemma:decaystatement}.
Let us comment on the hierarchy of convergence --- the verification of the statements follows later. We simplify here to $\mathcal{H}=\ell^2(G)$, and so $f\in \ell^2_+(G)$.
\begin{enumerate}
\item
The hierarchy is increasing: $\gamma(f) \le \gamma(\phi^*\ast  f) \le \gamma(\phi \ast  f)$.
\item
For $t>\gamma(\mathrm{SPR})$, $\phi_{c;\le N}[t]$ belongs to $\ell^1(G)$. 
\item
For $t>\gamma(\delta_e)$, $\phi_c[t]= \lim_{N\to\infty} \phi_{c;\le N}[t]$ is well-defined as an element of $G^{\mathbb{R}}$.
\item
For $t>\gamma(\phi\ast f)$, $\phi_c[t]\ast f\in \ell^2(G)$. 
\end{enumerate}
Most textbooks featuring convolutions of functions explain that for $f\in \ell^2(G)$, we have that the convolution $v\ast f $ satisfies $v\ast f \in\ell^2(G)$ if $v\in\ell^1(G)$. The condition that $v\in\ell^1(G)$ is not optimal --- later Theorem \ref{theorem:transitive} will tell us that if $G$ is non-amenable there is $t<1$ so that $\phi[t]=\phi[t] \ast \delta_e\in \ell^2(G)$ despite $\phi[t]\notin \ell^1(G)$. We use the positivity of the functions throughout this paper. We check the next lemma in Section \ref{section:ell2}.
\begin{lemma}\label{lemma:strongconvergence}
Let $f\in\mathcal{H}_+$. If $\sup_{N\in\mathbb{N}}\|\phi_{c;\le N}[t]\ast_\rho f\|<\infty$ then $\phi_{c;\le N}[t]\ast_\rho f$ strongly converges to $\phi_c[t]\ast_\rho f$ as $N\to\infty$, and $\phi_c[t]\ast_\rho f\in \mathcal{H}_+$. 
\end{lemma}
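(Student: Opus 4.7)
The proof is essentially a monotone-convergence argument in the non-negative cone $\mathcal{H}_+$, so I would structure it around exploiting positivity at every step.

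My plan is to first observe that the partial sums $\phi_{c;\le N}[t]$ are monotone increasing in $N$ as functions on $G$ (each term $t^{-n}c_n R_n(wx)$ added at step $N+1$ is non-negative), and in fact $\phi_{c;\le N}[t] \in [0,\infty)^G$. Because $\rho$ is a countable direct sum of quotient representations, each $\rho(h)$ permutes the canonical basis of $\mathcal{H}$ and therefore preserves the cone: $\rho(h)\mathcal{H}_+\subseteq \mathcal{H}_+$. Hence every scaled vector $\phi_{c;\le N}[t](h)\,\rho(h)f$ lies in $\mathcal{H}_+$, and the partial-sum vectors $g_N := \phi_{c;\le N}[t]\ast_\rho f$ form a sequence in $\mathcal{H}_+$ that is coordinate-wise monotone increasing (evaluating at any fixed basis coset $xH$, $g_N(xH)=\sum_h \phi_{c;\le N}[t](h)f(h^{-1}xH)$ is a non-decreasing sum of non-negative terms).

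Next, I would identify the limit. For each coordinate $xH$, monotone convergence on the group variable gives
\[
\lim_{N\to\infty} g_N(xH) = \sum_{h\in G} \phi_c[t](h)\,f(h^{-1}xH) = [\phi_c[t]\ast_\rho f](xH) \in [0,\infty].
\]
Call this limit $F(xH)$. The hypothesis $M:=\sup_N \|g_N\|^2 < \infty$ combined with Fatou's lemma (on $\mathcal{H}$ realised as a countable direct sum of $\ell^2$ spaces, with counting measure on the coset indices) yields $\|F\|^2 \le \liminf_N \|g_N\|^2 \le M$, so in particular $F(xH)<\infty$ for every coordinate, $F\in\mathcal{H}_+$, and we may identify $F$ with $\phi_c[t]\ast_\rho f$.

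Strong convergence follows by dominated convergence: the error vector $F-g_N$ is coordinate-wise non-negative and decreases monotonically to $0$, and it is pointwise dominated by $F\in\mathcal{H}_+\subset \mathcal{H}$, so $(F-g_N)^2 \le F^2$ is summable, giving
\[
\|F-g_N\|^2 = \sum_{xH}(F(xH)-g_N(xH))^2 \xrightarrow[N\to\infty]{} 0
\]
by dominated convergence on the coordinate index. This delivers both $\phi_c[t]\ast_\rho f \in \mathcal{H}_+$ and strong convergence $g_N\to \phi_c[t]\ast_\rho f$.

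I do not expect a genuine obstacle here; the only point requiring a sentence of care is that "pointwise" really means coordinate-wise in the countable direct sum, and that the norm on $\mathcal{H}$ is the $\ell^2$ sum of squared coordinates, so that Fatou and dominated convergence apply directly. All three ingredients --- monotonicity of the partial sums, invariance of the cone under $\rho(h)$, and the uniform norm bound --- are precisely what is needed to promote coordinate-wise convergence to strong convergence, and it is the positivity hypothesis $f\in\mathcal{H}_+$ (as highlighted in the remarks following the definition) that makes this argument work without requiring $\phi_c[t]\in\ell^1(G)$.
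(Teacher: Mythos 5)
Your proof is correct and takes essentially the same route as the paper's: both rely on coordinate-wise monotonicity of the non-negative partial sums $\phi_{c;\le N}[t]\ast_\rho f$, the uniform bound on their norms, and the fact that $\rho$ preserves the non-negative cone, in order to promote pointwise convergence to strong convergence. The only cosmetic difference is that the paper first records weak convergence and then finishes with an explicit head/tail projection estimate (bounding the tail by $2\|P_{i>Q}X_\infty\|_{\mathcal{H}}$ via monotonicity), whereas you bypass that intermediate step and invoke Fatou's lemma and dominated convergence on the coordinate index directly --- a cleaner packaging of the identical idea.
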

\begin{remark}
\begin{enumerate}
\item 
The vector norm $\|\phi[t]\|_{\ell^2(G)}$ should not be confused with the operator norm $\|\rho_*(\phi[t])\|_{\mathrm{op}}:=\sup_{f\ne 0}\frac{\|\phi[t]\ast f\|_{\ell^2(G)}}{\|f\|_{\ell^2(G)}}$.
\item
We only make use of the unitarity of the representation $\rho$ for results involving the largest parameter $\gamma(\phi\ast_\rho f)$. A hidden consequence of unitarity is the strong convergence of $\phi_{\le N}[t]$; in particular $\langle \phi_{\le N}[t],\phi_{\le N}[t]\rangle \to \langle \phi[t],\phi[t]\rangle$ as $N\to\infty$. For $t>\gamma(\phi^*\ast_\rho f)$ we know only that $\langle \phi_{\le N}[t],\phi^*_{\le N}[t]\rangle$ has a limit, and our manipulations only utilise the homomorphism property of $\rho$ and that $\rho$ preserves the non-negative cone.
\end{enumerate}
 \end{remark}

Secondly, we explain the effect of the choice of $A,a$. 
For two vectors $v,w\in\ell^2_+(G)$ we say that $v\le w$ if $v(g)\le w(g)$ for every $g\in G$.
\begin{lemma}\label{lemma:changeletter}
For each $B$ there are a constants $\mathrm{Const.}(B)$, $K_B$ and a group elements $h_B$,$g_B$ with
\begin{equation}\label{equation:changeletter}
\mathrm{Const.}(B)^{-1} \delta_{h_B}\ast \phi^{A,a}_{c;\le N-K_B}[t] \le \phi^{B,a}_{c;\le N}[t]\le  \mathrm{Const.}(B) \delta_{g_B} \ast \phi^{A,a}_{c;\le N+K_B}[t]. 
\end{equation}
If $T_{\mathfrak{s}}$ is transitive then we may assume $g_B=e=h_B$.
\end{lemma}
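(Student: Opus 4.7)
The plan is to inject $(B,a)$-admissible words into $(A,a)$-admissible words (and vice versa) by prepending a fixed connecting word, using the mixing of $\sigma$ to guarantee such words exist and the Gibbs-type distortion of $R_n$ (from locally H\"{o}lder $\log R$) to control ratios of Birkhoff products. Concretely, by mixing, choose admissible connecting words $u=u_1\cdots u_k$ with $u_1=A$, $u_k=B$, and $v=v_1\cdots v_{k'}$ with $v_1=B$, $v_{k'}=A$; set $K_B:=\max(k,k')-1$ and define
\[
g_B:=\underline{\mathfrak{s}}(u_1\cdots u_{k-1})^{-1},\qquad h_B:=\underline{\mathfrak{s}}(v_1\cdots v_{k'-1}).
\]
When $T_{\mathfrak{s}}$ is transitive, transitivity lets us additionally require $\underline{\mathfrak{s}}(u_1\cdots u_{k-1})=\underline{\mathfrak{s}}(v_1\cdots v_{k'-1})=e$, so that $g_B=h_B=e$.

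For the right-hand inequality of \eqref{equation:changeletter}, to each $w=w_1\cdots w_n\in\mathcal{W}_n^{B,a}$ with $\underline{\mathfrak{s}}(w)=g$ associate $\tilde{w}:=u_1\cdots u_{k-1}w_1\cdots w_n\in\mathcal{W}_{n+k-1}^{A,a}$; this is admissible because $u_1\cdots u_{k-1}B$ is admissible and $u_k=B=w_1$. One has $\underline{\mathfrak{s}}(\tilde{w})=g_B^{-1}g$, and the factorisation $R_{n+k-1}(\tilde{w}x)=R_{k-1}(\tilde{w}x)\,R_n(wx)$ combined with the distortion bound on the cylinder $[u_1\cdots u_{k-1}]$ gives $R_{k-1}(\tilde{w}x)\ge C(B)>0$ uniformly in $w$. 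Since $w\mapsto\tilde{w}$ is injective, term-by-term comparison of the defining sums yields the right-hand inequality. The left-hand inequality follows symmetrically from the injection $w'\mapsto v_1\cdots v_{k'-1}w'$ of $\mathcal{W}_m^{A,a}$ into $\mathcal{W}_{m+k'-1}^{B,a}$, which shifts the group element from $g'$ to $h_Bg'$ and produces an $R$-factor again bounded below by distortion on the cylinder $[v_1\cdots v_{k'-1}]$.

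The main technical point is the ratio $c_{n+k-1}/c_n$ that appears when carrying the weight $t^{-n}c_n$ across a length shift of $k-1$: the $t^{-(k-1)}$ piece is a fixed multiplicative constant, and for the subexponentially growing $c$'s of interest one has uniform bounds $c_{n+j}/c_n\in[C_j^{-1},C_j]$ for each fixed $j$, which are absorbed into $\mathrm{Const.}(B)$. All remaining steps are routine bookkeeping with word lengths and group elements.
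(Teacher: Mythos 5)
Your proof is correct and follows the same strategy as the paper's: prepend a fixed connecting word to inject $(B,a)$-admissible words into $(A,a)$-admissible words (the paper writes the prefix as $Au_B$ with $Au_B B$ admissible, which is your $u_1\cdots u_{k-1}$), use local H\"{o}lder distortion on the first $k-1$ Birkhoff factors to get a uniform multiplicative bound, and carry the length shift $n\mapsto n+k-1$ through the weights $t^{-n}c_n$. Two small remarks worth being explicit about, which are also present implicitly in the paper's own proof: the factor $t^{\pm(k-1)}$ created by the length shift means $\mathrm{Const.}(B)$ should be understood uniform over $t$ in a bounded range (in practice $t$ near the relevant convergence parameter), and the bound $c_{n+j}/c_n\in[C_j^{-1},C_j]$ for fixed $j$ requires $c$ to be slowly increasing (or at least $c_{n}/c_{n-1}$ bounded), not merely subexponential — a general subexponential sequence can have unbounded step ratios.
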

In Section \ref{section:ell2} we also check other basic facts, such as the connectedness of the parameters $t$ for which the defining integrals converge, and that transitivity implies $0<\gamma(f)=\gamma(\phi^*\ast_\rho f)$.

%%%%%%%%%%%%%%%%%%%%%subsection:examples%%%%%%%%%%%%%%%%%%%%
\subsection{Examples}\label{subsection:examples}
For the Abelian group $\mathbb{Z}$ and vector $f=\delta_{0}$ we have
\begin{align*}
\int \langle \rho(k)\delta_{0},\delta_{0}\rangle_{\ell^2(Z)} d\phi_{\le N}[t] (k) = &\int \sum_{n=1}^N t^{-n} \sum_{w\in\mathcal{W}_n^{A,a}}R_n(wx)\exp(2\pi i \underline{\mathfrak{s}}(w)\alpha) d\alpha \\\
&=: \int \zeta_{\le N}(t,\alpha)d\alpha
\end{align*}
The implied series $\zeta(t,\alpha)$ converges for all $t\ge |\exp(P(\log R + 2\pi i \alpha\underline{\mathfrak{s}},\sigma))|$. At $t=\exp(P(\log R,\sigma))=1$ the series $\zeta(1,0)=\infty$, but this does not guarantee that the integral over all $\alpha$ is infinite. For a transitive random walk on $\mathbb{Z}$ that does not have zero mean we have $\gamma(\delta_{0})<1$ --- see \cite{woess}.

Let $S$ be a generating set for a group $G$, and $p:S\to [0,1]$.
Then for the $G$-extension corresponding to the random walk we have
$$
\phi[t](g) = \sum_{n\in\mathbb{N}} t^{-n} p^{\ast n}(g).
$$
In the case that $G=F_{a,b}$, $S=\left\{a,b,a^{-1},b^{-1}\right\}$ and $p=1/4$, one can compute asymptotics for $p^{\ast 2n}(g)$ (see \cite{woess}) and in particular we deduce that $\gamma(\delta_e)=\sqrt{3}/2$. This example is elaborated on in section \ref{section:randomwalk}.

\subsection{Related notions: ratio limit theorems and local limit theorems}\label{subsection:related}
It is worth mentioning related concepts in the context of random walks since we find our constructions to be reminiscent. We quote a \emph{ratio limit theorem} from the book of Woess \cite{woess}.
Set $\gamma = \lim_{n\to\infty}  (p^{\ast n}(e))^{1/n}$ and note that this coincides with our definition of $\gamma(\delta_e)$. 
\begin{proposition}[Ratio Limit Theorem, Theorem 5.6 (and Corollary 5.8) of \cite{woess}]\label{prop:rlt}
Suppose that $G$ is Abelian (or nilpotent) and $p$ is a finitely supported measure whose support generates $G$ as a semigroup. Then
there is a unique $\gamma$-harmonic function $\Psi :G\to \mathbb{R}$ with
$$
\lim_{n\to\infty}\frac{ p^{\ast n}(x)}{ p^{\ast n}(e)} = \frac{1}{\Psi (x)}
$$
for all $x\in G$. 
\end{proposition}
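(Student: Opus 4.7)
The plan is a Doob $h$-transform reduction to a local limit theorem for a centred probability random walk, following the well-known strategy for random walks on finitely generated abelian groups (with the nilpotent case an extra layer of structure). The hypothesis that $\mathrm{supp}(p)$ generates $G$ as a semigroup enters twice: it guarantees existence of a suitable $\gamma$-harmonic character and forces strict convexity of the selection problem.

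For abelian $G$, write $G\cong \mathbb{Z}^d\oplus F$ with $F$ finite and $\pi:G\to\mathbb{R}^d$ the natural projection, and consider the Laplace transform $L(\xi) = \sum_x p(x) e^{\langle \xi, \pi(x)\rangle}$ for $\xi\in\mathbb{R}^d$. Semigroup generation places $0$ in the interior of $\mathrm{conv}(\pi(\mathrm{supp}\, p))$, so $L$ is strictly convex and coercive and attains its infimum at a unique $\xi_*$. Set $\gamma := L(\xi_*)$ and let $\Psi$ be the character $x\mapsto e^{\pm\langle \xi_*, \pi(x)\rangle}$ (sign chosen to match the convolution convention), normalised so that $\Psi(e)=1$; a direct character computation then gives $p\ast \Psi = \gamma\Psi$, and a Legendre-duality / large-deviation argument identifies $\gamma$ with $\lim p^{\ast n}(e)^{1/n}$. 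Doob-tilt to
\[
\tilde p(x) \;:=\; \frac{p(x)\Psi(x)}{\gamma},
\]
so that $\sum \tilde p = 1$ by the critical-point equation, and an induction gives the key identity $\tilde p^{\ast n}(x) = p^{\ast n}(x)\Psi(x)/\gamma^n$. Differentiating $L$ at the minimiser shows the mean of $\tilde p$ vanishes, so $\tilde p$ is a centred, finitely supported probability measure on (a finite extension of) $\mathbb{Z}^d$. The classical local limit theorem gives $\tilde p^{\ast n}(x)\sim C\, n^{-d/2}$ with $C$ independent of $x$ in each period class. Dividing cancels the $n^{-d/2}$ factor:
\[
\frac{p^{\ast n}(x)}{p^{\ast n}(e)} \;=\; \frac{\tilde p^{\ast n}(x)\,\Psi(x)^{-1}}{\tilde p^{\ast n}(e)} \;\longrightarrow\; \frac{1}{\Psi(x)},
\]
and uniqueness of $\Psi$ is immediate from uniqueness of a pointwise limit.

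The principal obstacle is the nilpotent case: characters of a non-abelian nilpotent $G$ factor through the abelianisation, so the construction above does not apply directly. I would pull the candidate $\Psi$ back from $G^{\mathrm{ab}}$ via the natural projection, repeat the tilt, and invoke a local central limit theorem on nilpotent groups (available via polynomial growth and Gaussian-type asymptotics on such groups), arguing that contributions from non-principal representations of $G^{\mathrm{ab}}$ decay faster than $n^{-d/2}$ and drop out of the ratio. Period-class control across $x$ and $e$ is a further delicate point, handled by the usual aperiodicity analysis once the semigroup-generation hypothesis is unpacked.
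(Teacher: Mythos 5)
This proposition is imported verbatim from Woess's book (the paper's attribution is ``Theorem~5.6 (and Corollary~5.8) of \cite{woess}''); the paper states it without proof, so there is no in-paper argument to compare against. I will therefore assess your sketch on its own terms.

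Your abelian argument is the standard Doob $h$-transform reduction and the outline is sound. A few points need tightening. First, the sign bookkeeping you leave implicit actually works out precisely because the notion of $\gamma$-harmonic intended here (and used elsewhere in the paper, e.g.\ in the proof of Lemma~\ref{lemma:spherical}) is with respect to the Markov operator $P\Psi(x)=\sum_z p(z)\Psi(xz)$: with $\Psi(x)=e^{\langle \xi_\ast,\pi(x)\rangle}$ one gets $P\Psi=L(\xi_\ast)\Psi=\gamma\Psi$, the tilt $\tilde p = p\,\Psi/\gamma$ is genuinely a probability measure because $L(\xi_\ast)=\gamma$, and the telescoping $\tilde p^{\ast n}(x)=p^{\ast n}(x)\Psi(x)/\gamma^n$ then produces the factor $1/\Psi(x)$ as claimed. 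Had one instead defined harmonicity via the convolution $p\ast\Psi$ of Section~\ref{subsection:unitary}, the exponent flips and the ratio converges to $\Psi(x)$; you should state the convention rather than writing ``$\pm$''. Second, you set $G\cong\mathbb{Z}^d\oplus F$ and work with $\pi:G\to\mathbb{R}^d$, but the tilted walk lives on all of $G$, not just $\mathbb{Z}^d$; the LLT must be applied on $\mathbb{Z}^d\times F$ (or one must first quotient by the torsion and argue separately on $F$-fibres). Third, the periodicity issue you flag is real and cannot be waved off: without an aperiodicity normalisation the numerator and denominator vanish on complementary residue classes of $n$, and the statement as written is vacuous; Woess handles this by restricting to the support period class, and so must you.

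The genuine gap is the nilpotent case. Pulling $\Psi$ back from $G^{\mathrm{ab}}$ is the right move (this is exactly the remark the paper makes immediately after the proposition), and the tilted walk is then centred in the sense that its abelianised drift vanishes, but the claim that ``the classical local limit theorem'' with error control over ``non-principal representations of $G^{\mathrm{ab}}$'' finishes it is not a proof. A local limit theorem with sharp polynomial rate $n^{-D/2}$ (where $D$ is the homogeneous dimension) for centred, finitely supported, aperiodic probabilities on a general finitely generated nilpotent group is a substantial theorem in its own right, not an off-the-shelf classical fact, and the uniformity in $x$ needed to cancel the $n^{-D/2}$ in the ratio has to be argued. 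If you want a route that avoids the nilpotent LLT, the ratio limit theorem can also be obtained by Martin boundary / minimal $\gamma$-harmonic function methods (which is closer to the spirit in which Woess states it), where semigroup generation is used to show the relevant minimal harmonic function factors through $G^{\mathrm{ab}}$. As written, though, the nilpotent step is an appeal to a hard theorem you have not identified precisely, and that is where the proof is incomplete.
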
 
Let us elaborate on the case where $G$ is nilpotent: the assumption that the support of $p$ generates $G$ as a semigroup forces that the harmonic function $\Psi $ for $G$ is also harmonic for the abelianisation of $G$. Recent work of Benoist \cite{benoist} finds a new harmonic function for the Heisenberg group for a probability whose support does not generate $G$ as a semigroup (and the ratio limit theorem stated in \cite{benoist} is of a different nature than what is stated in Proposition \ref{prop:rlt}). In section \ref{section:randomwalk} we discuss further the setting of a random walk when $p$ is finitely supported, (aperiodic,) and with the assumption that the semigroup generated by the support is the whole of $G$ (this last assumption implies transitivity of the group extension). When $G$ is Abelian we show in Proposition \ref{prop:rlt2} how to directly recover the function $\Psi $ (but not the ratio limit theorem) from Lemma \ref{lemma:phiastphi}. Theorem \ref{theorem:phiastphi} also recovers the function $\Psi $ and, notably, is valid even for amenable groups.

A stronger result to mention is a local limit theorem (to keep this discussion light we do not mention the local limit theorem for Abelian groups).
For the free group $F_r$ on $r$ generators one can find $C\Psi $ with 
$$
\limsup_{n\to\infty}\frac{ p^{\ast n}(x)}{ \gamma^n n^{-3/2}} = C\Psi (x)>0
$$
for all $x\in F_r$ (Theorem 6.8 of \cite{woess} and attributed to Lalley, and also in this isotropic case to Picardello \cite{Pic}). We give a calculation to explain how one recovers $\Psi $ (but not the local limit theorem) in the case of the simple random walk in Section \ref{section:randomwalk}.
%We check by hand for the uniform walk on the free basis that $\Upsilon(g)=\Upsilon(g)$.

%%%%%%%%%%%%%%%%%%%%%%%%%%%%%%%%%%%%%%%%%%%%%%%%%%%
%%%%%%%%%%%%%%%%%%%%%RESULTS%%%%%%%%%%%%%%%%%%%%%%
%%%%%%%%%%%%%%%%%%%%%%%%%%%%%%%%%%%%%%%%%%%%%%%%

\section{Results}\label{section:results}
%%%%%%%%%%%%%%%%%%%%%subsection:why%%%%%%%%%%%%%%%%%%%%%%

\subsection{Preliminaries}\label{subsection:whyprelim}
We motivate the study of the thermodynamic $G$-density by its connection to growth statistics for discrete group extensions.

Let us begin with Gurevi\v{c} pressure and notion of decay of matrix coefficients along $\mu$ typical paths.
\begin{proposition}\label{prop:gurevic}
Assume that $T_{\mathfrak{s}}$ is transitive. If $\exp{P(\log R, T_{\mathfrak{s}})}>\gamma(\mathrm{SPR})$ then
$$
\gamma(\delta_e) = \exp{P(\log R, T_{\mathfrak{s}})},
$$
where $\delta_e$ is the indicator function at the identity for the left regular representation $\lambda$ in $\ell^2(G)$.
\end{proposition}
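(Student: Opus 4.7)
The plan is to reduce $\gamma(\delta_e)$ to a word-counting generating series with trivial cocycle and to identify its exponential growth rate with the pressure via transitivity of $T_{\mathfrak{s}}$ (to change boundary letters and close orbit segments) together with the strong positive recurrence hypothesis (to eliminate the end-letter constraint in the periodic orbit sum).

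For the left regular representation $\lambda$ on $\ell^2(G)$ and $f=\delta_e$, the matrix coefficient is the indicator $\langle\lambda(h)\delta_e,\delta_e\rangle=\mathbf{1}_{\{h=e\}}$, so the integral in Definition~\ref{def:integrability} collapses to the point evaluation
$$
\phi^{A,a}_{\le N}[t](e) \;=\; \sum_{n=1}^{N} t^{-n} Z_n^{A,a},\qquad Z_n^{A,a}\;:=\!\!\!\sum_{\substack{w\in\mathcal{W}_n^{A,a}\\ \underline{\mathfrak{s}}(w)=e}}\!\! R_n(wx),
$$
and hence $\log\gamma(\delta_e)=\limsup_n\tfrac{1}{n}\log Z_n^{A,a}$. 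Under transitivity of $T_{\mathfrak{s}}$ the translations $g_B,h_B$ in Lemma~\ref{lemma:changeletter} vanish, and together with its analogue for the terminal letter (part of the independence statement the paper records just before that lemma), $\log\gamma(\delta_e)$ is independent of $A,a,x$. I therefore take $A=B$ (the base letter of the pressure formula~\eqref{equation:pressure}) and any $a$ with $aB$ admissible.

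Write $\Pi_n^{B,y}$ for the sum over $w\in\mathcal{W}_n^{B,y}$ with $\underline{\mathfrak{s}}(w)=e$ and $yB$ admissible; every such $w$ is the itinerary of a period-$n$ orbit at $B$, and bounded distortion of the H\"older potential $\log R$ absorbs the discrepancy between $R_n(x)$ on a closed orbit and $R_n(wx)$, so $\exp P(\log R,T_{\mathfrak{s}})=\limsup_n (\sum_y\Pi_n^{B,y})^{1/n}$. Two elementary comparisons then tie $Z_n^{B,a}$ to the periodic orbit sum. First, trivially $\Pi_n^{B,a}\le Z_n^{B,a}$, since every word counted by $\Pi_n^{B,a}$ is admissible from $B$ to $a$. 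Second, transitivity of $T_{\mathfrak{s}}$ supplies a fixed admissible word $u$ of bounded length $K$ from $a$ back to $B$ with $\underline{\mathfrak{s}}(u)=e$; the concatenation $w\mapsto wu$ embeds the terms of $Z_n^{B,a}$ into $\sum_y\Pi_{n+K}^{B,y}$ up to multiplicative distortion constants. This second comparison already yields $\gamma(\delta_e)\le\exp P(\log R,T_{\mathfrak{s}})$.

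The reverse inequality $\gamma(\delta_e)\ge\exp P(\log R,T_{\mathfrak{s}})$ is the crux, and amounts to showing that restricting the end letter to the single value $a$ does not reduce the exponential growth rate, i.e.\ $\limsup_n\tfrac{1}{n}\log \Pi_n^{B,a}=P(\log R,T_{\mathfrak{s}})$. This is where the strong positive recurrence hypothesis intervenes: the strict inequality $\exp P(\log R,T_{\mathfrak{s}})>\gamma(\mathrm{SPR})$ places the pressure above the first-return-to-$B$ rate, and the first-return decomposition of Sarig (adapted to skew products by Stadlbauer~\cite{Stadlbauer13}) then forces the generating series $\sum_n t^{-n}\Pi_n^{B,a}$ to have the same abscissa of convergence as $\sum_n t^{-n}\sum_y\Pi_n^{B,y}$. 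The hard part is precisely this step: on a genuinely infinite-alphabet CMS no naive bound of the form $\sum_y\Pi_n^{B,y}\le C\cdot\Pi_n^{B,a}$ is available, and SPR is the only mechanism ruling out the pathology in which the pressure is driven by rare long first-return excursions terminating at letters remote from $a$.
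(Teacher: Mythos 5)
Your reduction of $\gamma(\delta_e)$ to the abscissa of convergence of $\phi[t](e)=\sum_n t^{-n}Z_n^{A,a}$ is correct, and the easy inequality $\gamma(\delta_e)\le\exp P(\log R,T_\mathfrak{s})$ is fine: indeed the direct comparison $Z_n^{B,a}\le\sum_y\Pi_n^{B,y}$ already gives it, without the concatenation-with-$u$ step (which is redundant). You also correctly identify that the role of strong positive recurrence is to control the hard direction.

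The genuine gap is exactly where you declare ``the hard part is precisely this step'' and then cite Sarig/Stadlbauer rather than giving the argument. The paper has the needed tool as Lemma~\ref{lemma:growthgap} (in Section~\ref{section:cms}), and its proof is a short multiplicative estimate on generating series with nonnegative coefficients: concatenating a trajectory counted by the two-letter conditioned return series $\zeta^{A,a}[t]$ with a trajectory counted by the first-return series (a word from $A$ that avoids $A$ until arriving at some $b$ adjacent to $A$) produces a trajectory counted by the single-letter conditioned return series, and bounded distortion of $\log R$ keeps the weights comparable. Formally, $\zeta^{A,a}[t]\cdot\bigl(\text{first-return series}\bigr)[t]\ge \mathrm{const.}\cdot\bigl(\text{single-letter conditioned series}\bigr)[t]$. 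Running this over the skew product $\Sigma^+\times G$ (so the single-letter series has abscissa $\exp P(\log R,T_\mathfrak{s})$ and the two-letter series conditioned on $(A,e)$ and $(a,\cdot)$ is $\phi[t](e)$ up to a constant), one concludes that for $\gamma(\mathrm{SPR})<t<\exp P(\log R,T_\mathfrak{s})$ the first-return series converges while the single-letter series diverges, forcing $\phi[t](e)$ to diverge. This is exactly the lower bound $\gamma(\delta_e)\ge\exp P(\log R,T_\mathfrak{s})$.

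One further point you should state explicitly: the hypothesis $\exp P(\log R,T_\mathfrak{s})>\gamma(\mathrm{SPR})$ refers to the first-return rate of the base shift, but the multiplicative estimate must run over the skew product. Since first-returns to $(B,e)$ in $\Sigma^+\times G$ are a subset of first-returns to $B$ in $\Sigma^+$, the skew product's first-return rate is bounded above by $\gamma(\mathrm{SPR})$, so the hypothesis does furnish a growth gap for the skew product. This bridge is what justifies applying the lemma, and it is the content of the paper's opening sentence of the proof.
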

The proof is found in Section \ref{section:cms}.

A random walk on a group is said to be \emph{transient} (see \cite{woess}) if the probability $p:G\to[0,1]$ has $\sum_{n=1}^\infty p^{\ast n}(e)<\infty$. We reimagine this as a statement about decay of pairs of vectors in $\ell^2(G)$. Ultimately, the idea is that if the left regular representation $\lambda$ in $\ell^2(G)$ does not weakly contain the trivial representation ($G$ is non-amenable) then pairs of vectors are forced to have a so-called statistical-dynamical eventual decay.

\begin{definition}\label{def:decay}
Assume $\Sigma$ is compact. We say that vectors $f,v\in\mathcal{H}_+$ have a \emph{(statistical-dynamical) eventual decay} if there is $\gamma<1$ so that for $\mu$ almost every $x\in\Sigma$ there is $N$ with
$$
n\ge N \implies \; \langle \rho(\underline{\mathfrak{s}}(x_{-n}\cdots x_{-1}))f,v\rangle_{\mathcal{H}} \le \gamma^n.
$$

We say that $\rho, \mathcal{H}$ \emph{has uniform (statistical-dynamical) eventual decay exponent} if we can take $\gamma< \gamma_0$ for some $\gamma_0<1$ independent of $f,v\in\mathcal{H}_+$.
\end{definition}

\begin{remark}
One may also consider decay for the products $\rho(\underline{\mathfrak{s}}(x_{1}\cdots x_{n}))f$ (and indeed we do in Lemma \ref{lemma:sphericaldecay} as the measure there is only defined on the one-sided shift). Lemma \ref{lemma:decaystatement} is also valid for products $\rho(\underline{\mathfrak{s}}(x_{1}\cdots x_{n}))f$.
\end{remark}

\begin{remark}
Note that the Definition \ref{def:decay} is scalar independent. The significance of the decay statement is given by the arbitrariness of $v$. We illustrate this with the example of a symmetric random walk on a group.

Consider $f=\delta_e$ and $v=\delta_e$. The property that $f=\delta_e$ and $v=\delta_e$ have almost sure eventual $\gamma$-decay is equivalent to asking that the $\mu$ measure of points $(x,e)$ that $T_{\mathfrak{s}}$-return infinitely often to $\Sigma^+\times G$ is zero. The (assumed symmetric) random walk on $\mathbb{Z}^d$ is transient for $d>2$ \cite{woess}, and so $f=\delta_0$, $v=\delta_0$ have eventual decay for any $\gamma<1$. 

On the other hand, in $\mathbb{Z}^d$ and $f=\delta_0$ we can find a function $v$ for which $f$,$v$ do not have statistical-dynamical decay (with respect to a symmetric random walk). The function $v_t(k)=|k|^{-t}$ belongs to $\ell^2(\mathbb{Z})$ if $t>\frac{1}{2}$; and similarly $v_t(\vec{k})=\|\vec{k}\|_1^{-t}$ belongs to $\ell^2(\mathbb{Z}^d)$ if $d-2t<2$. Suppose that $\mathfrak{s}$ takes values in the unit $\|\cdot\|_1$-ball. Let $\gamma<1$. Choose $M=M_\gamma$ so that $v_t(\vec{h})>\gamma^{\|\vec{h}\|_1}$ for $\vec{h}$ in the complement to the $\|\cdot\|_1$-ball of radius $M$. Then $\langle \lambda(\underline{\mathfrak{s}}(x_{-n}\cdots x_{-1}))f,v_t\rangle_{\ell^2(Z^d)} > \gamma^{n}$ for any $n> M$.

In an arbitrary amenable group, with some extra work we can show that for every $\gamma$ there are vectors $f=v_\gamma$ and $v=v_\gamma$ which have cannot have decay rate faster than $\gamma$. In this way, for $G$ amenable, $\lambda, \ell^2(G)$ does not have \emph{uniform} (statistical-dynamical) eventual decay. (The argument presented for $\mathbb{Z}^d$ relied on polynomial growth and gave the stronger conclusion of exhibiting vectors not having eventual decay.) See Lemma \ref{lemma:amenablevectordecay}.
\end{remark}

\begin{lemma}\label{lemma:decaystatement}
Assume $\Sigma$ is compact. Let $f\in\mathcal{H}_+$. If $\gamma(\phi\ast_\rho f)<1$ then for every $\gamma\in(\gamma(\phi\ast_\rho f),1)$ and every $v\in\mathcal{H}_+$ we have that for $\mu$ almost every $x\in\Sigma$ there is $N$ with
$$
n\ge N \; \implies \; \langle \rho(\underline{\mathfrak{s}}(x_{-n}\cdots x_{-1}))f,v\rangle_{\mathcal{H}} \le \gamma^n
$$
\end{lemma}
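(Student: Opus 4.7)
The plan is to apply the first Borel--Cantelli lemma to the non-negative random variables
\[ F_n(x) := \langle \rho(\underline{\mathfrak{s}}(x_{-n}\cdots x_{-1})) f, v\rangle_\mathcal{H}, \]
which are non-negative because $f,v\in\mathcal{H}_+$ and each $\rho(g)$ preserves $\mathcal{H}_+$ (as $\rho$ is a sum of quotient representations). It is enough to prove $\sum_{n}\gamma^{-n}\int F_n\, d\mu < \infty$: then Markov's inequality bounds $\mu\{F_n>\gamma^n\}$ by a summable sequence, and Borel--Cantelli yields $F_n(x)\le\gamma^n$ for all sufficiently large $n$ at $\mu$-almost every $x$.

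First I would expand $\int F_n\, d\mu$ by conditioning on the word $w=x_{-n}\cdots x_{-1}$. Shift-invariance of $\mu$ gives $\mu\{x : x_{-n}\cdots x_{-1}=w\}=\mu([w])$, and since compactness of $\Sigma$ forces $\mathcal W_1$ to be finite, the Gibbs property of the equilibrium state supplies a uniform comparison $\mu([w])\asymp R_n(w x_w)$ over admissible words $w$ of length $n$. Summing on $w$,
\[ \int F_n\, d\mu \;\asymp\; \sum_{B,b\in\mathcal W_1}\sum_{w\in\mathcal W_n^{B,b}} R_n(w x_w)\langle\rho(\underline{\mathfrak{s}}(w))f,v\rangle_\mathcal{H}. \]
Multiplying by $\gamma^{-n}$ and summing in $n$, monotone convergence (everything is non-negative) identifies the $(B,b)$-contribution with $\sum_{h\in G}\phi^{B,b}[\gamma](h)\,\langle\rho(h)f,v\rangle_\mathcal{H}$.

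Second, I would reduce every pair $(B,b)$ to the reference pair $(A,a)$. Lemma \ref{lemma:changeletter}, together with its symmetric analogue on the last letter, dominates each $\phi^{B,b}[\gamma]$ pointwise by a constant multiple of a translate $\delta_{g_{B,b}}\ast\phi^{A,a}[\gamma]$. There are only finitely many pairs $(B,b)$; absorbing the translate into $v$ via the unitary operator $\rho(g_{B,b}^{-1})$ and using Cauchy--Schwarz then gives
\[ \sum_{B,b}\sum_h \phi^{B,b}[\gamma](h)\langle\rho(h)f,v\rangle_\mathcal{H} \;\le\; C\,\bigl\lVert\phi^{A,a}[\gamma]\ast_\rho f\bigr\rVert_\mathcal{H}\,\lVert v\rVert_\mathcal{H}. \]
By Lemma \ref{lemma:strongconvergence} and the assumption $\gamma>\gamma(\phi\ast_\rho f)$, the vector $\phi^{A,a}[\gamma]\ast_\rho f$ lies in $\mathcal{H}_+$, so the right-hand side is finite, as required.

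The one step that needs care is the identification of $\sum_n \gamma^{-n}\int F_n\, d\mu$ with an inner product against $\phi^{A,a}[\gamma]\ast_\rho f$ (up to translates and the finite sum over $(B,b)$): the series defining $\phi[\gamma]$ is typically not $\ell^1$, so the interchange of sums cannot be justified by absolute convergence. The restriction to $f,v\in\mathcal{H}_+$ is exactly what makes monotone convergence available. The remaining ingredients --- the Gibbs comparison on the compact CMS, Markov's inequality, and Borel--Cantelli --- are then routine.
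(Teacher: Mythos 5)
Your proposal is correct and follows essentially the same route as the paper's own proof: both use the Gibbs bound on cylinder measures, identify the resulting sum with $\langle\phi^{B,b}[\gamma]\ast_\rho f,v\rangle$, bound this via Cauchy--Schwarz and Lemma \ref{lemma:strongconvergence}, and conclude by Borel--Cantelli (your explicit Markov-inequality step is exactly what the paper does implicitly when it inserts the factor $\gamma^{-n}\langle\rho(\underline{\mathfrak{s}}(w))f,v\rangle \ge 1$ on $E_n$). Your remarks about reducing $(B,b)$ to the reference pair $(A,a)$ via Lemma \ref{lemma:changeletter} and its last-letter analogue, and about monotone convergence justifying the rearrangement, spell out details the paper leaves tacit but do not change the argument.
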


\begin{remark}
In the case that $\Sigma$ is not compact one should expect a restriction on $x$ returning to a small open set.
\end{remark}

\begin{proof}[Proof of Lemma \ref{lemma:decaystatement}]
Let $f,v\in\mathcal{H}_+$ be arbitrary.
We denote the sets 
$$
E_n= \left\{ x\in\Sigma  : \langle \mathfrak{s}(x_{-n})\cdots \mathfrak{s}(x_{-1})f,v\rangle_{\mathcal{H}} \ge \gamma^n \right\},
$$
$$
\limsup E_n =  \left\{ x\in\Sigma  : \langle \mathfrak{s}(x_{-n})\cdots \mathfrak{s}(x_{-1})f,v\rangle_{\mathcal{H}} \ge \gamma^n,\, n \;\mathrm{i.o.} \right\}.
$$
Our goal is to use the Borel-Cantelli Lemma to deduce that $\limsup E_n$ has zero $\mu$ measure. Negative-coordinate cylinder sets are denoted
$$
[u_n\cdots u_{-1}.] = \left\{ x\in\Sigma: x_{-i}=u_{-i} , i=1,\cdots n\right\}.
$$
The Gibbs property \ref{equation:gibbs} states that there is a constant $C>0$ with
$$
\mu([w.])\le CR_n(wy)
$$
for any $w$ of length $n$ and $y\in\sigma^n [w.]$.
 We compute 
\begin{align*}
\mu(E_n) &= \sum_{A,a\in\mathcal{W}_1}\sum_{\substack{w\in\mathcal{W}_n^{A,a}:\\ \langle \mathfrak{s}(w_{1})\cdots \mathfrak{s}(w_n)v,f\rangle_{\mathcal{H}} \ge \gamma^n}} \mu([w_1\cdots w_n.])
\\
&\le  \sum_{A,a\in\mathcal{W}_1}\sum_{\substack{w\in\mathcal{W}_n^{A,a} :\\ \langle \mathfrak{s}(w_1)\cdots \mathfrak{s}(w_n)f,v\rangle_{\mathcal{H}} \ge \gamma^n}} \mu([w_1\cdots w_n.])\gamma^{-n}\langle \rho(\mathfrak{s}(w_1)\cdots \mathfrak{s}(w_n))f,v\rangle_{\mathcal{H}}
\\
&\le C \sum_{A,a\in\mathcal{W}_1}\sum_{w\mathcal{W}_n^{A,a}} R_n(w_1\cdots w_n x)\gamma^{-n}\langle \rho(\mathfrak{s}(w_1)\cdots \mathfrak{s}(w_n))f,v\rangle_{\mathcal{H}}
\\
&\le  C \sum_{A,a\in\mathcal{W}_1} \langle \phi^{A,a}_n[\gamma]\ast_\rho f,v\rangle_{\mathcal{H}},
\end{align*}
with $\phi^{A,a}_n[t](g) := \sum_{w\mathcal{W}_n^{A,a}:\underline{\mathfrak{s}}(w)=g} t^{-n}R_n(w_1\cdots w_n x)$.
Then
$$
\sum_{n=1}^\infty \mu(E_n) \le  \sum_{A,a\in\mathcal{W}_1} \langle \phi^{A,a}[\gamma]\ast_\rho f,v\rangle_{\mathcal{H}} \le \sum_{A,a\in\mathcal{W}_1}\|\phi^{A,a}[\gamma ]\ast_\rho f \|_{\mathcal{H}}\|v\|_{\mathcal{H}}.
$$
By Lemma \ref{lemma:strongconvergence} we know that $\|\phi^{A,a}[\gamma ]\ast_\rho f \|_{\mathcal{H}}$ is finite for $\gamma >\gamma(\phi\ast_\rho f)$, thus giving the convergence case of the Borel-Cantelli Lemma as desired.
\end{proof}

%%%%%%%%%%%%%%%%%%%%%subsection:coefficients%%%%%%%%%%%%%%%%%%%
\subsection{(Limits of) matrix coefficients of the thermodynamic $G$-density and twisted measures}\label{subsection:coefficients}
We refer the reader to Subsection \ref{subsection:cocycle} and Definition \ref{def:twistedbycocycle} for the definition of a measure twisted by a generalised multiplicative cocycle. We are not able to work with $\phi[t]$ but rather must increase it with a slowly increasing function.

\begin{lemma}\label{lemma:phiastastphi}
Let $f\in\mathcal{H}_+$. There is a slowly increasing function $c:\mathbb{N}\to\infty$ and sequence $t_k\to \gamma(\phi\ast_\rho f)$ as $k\to\infty$ with the following. The function $\Upsilon_c:G\to\mathbb{R}$ 
$$
\Upsilon_{c}(g) = \lim_{k\to\infty}\frac{\langle \rho(g)\phi_c[t_k]\ast_\rho f , \phi_c[t_k]\ast_\rho f\rangle_{\mathcal{H}}}{\langle \phi_c[t_k]\ast_\rho f , \phi_c[t_k]\ast_\rho f\rangle_{\mathcal{H}}}.
$$
is well-defined, and there is a twisted measure $\nu^{\phi\ast_\rho f}_{c}$ on $\Sigma^+$ (that is finite on cylinders) with
$$
(\gamma(\phi\ast_\rho f))^{n}\nu^{\phi\ast_\rho f}_{c}(R_n^{-1}\mathds{1}_{[w]}) = \Upsilon_{c}(\underline{\mathfrak{s}}(w))
$$
for all $w\in\mathcal{W}^{a,A}_{n+1}$.
\end{lemma}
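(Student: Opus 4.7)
My plan is to adapt the Patterson--Sullivan construction to this vector-valued setting. If $\|\phi[t]\ast_\rho f\|^2_\mathcal{H}$ already diverges as $t\downarrow\gamma(\phi\ast_\rho f)$, I take $c\equiv 1$; otherwise I would apply the classical slow-growth trick to produce a subexponentially growing $c:\mathbb{N}\to[0,\infty)$ (with $c_{n+m}/(c_n c_m)\to 1$) for which $\phi_c[t]\ast_\rho f$ keeps the convergence parameter $\gamma(\phi\ast_\rho f)$ but has $\|\phi_c[t]\ast_\rho f\|^2_\mathcal{H}\to\infty$ as $t\downarrow\gamma(\phi\ast_\rho f)$. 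I then pick any sequence $t_k\downarrow\gamma(\phi\ast_\rho f)$ and set $F_k:=\phi_c[t_k]\ast_\rho f\in\mathcal{H}_+\setminus\{0\}$. Because $\rho$ is unitary, Cauchy--Schwarz yields
\[
\Bigl|\frac{\langle\rho(g)F_k,F_k\rangle_\mathcal{H}}{\langle F_k,F_k\rangle_\mathcal{H}}\Bigr|\le 1
\]
for every $g\in G$ and every $k$. Since $G$ is countable, a Cantor diagonal extraction produces a subsequence along which these quotients converge pointwise on $G$ to a bounded function $\Upsilon_c$, giving the required limit.

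Next I would construct $\nu^{\phi\ast_\rho f}_c$ via a Kolmogorov-type extension. For $w\in\mathcal{W}^{a,A}_{n+1}$ and a chosen reference point $x_*\in\sigma^n[w]$, set
\[
\nu\bigl([w]\bigr):=\gamma(\phi\ast_\rho f)^{-n}R_n(wx_*)\,\Upsilon_c\bigl(\underline{\mathfrak{s}}(w)\bigr),
\]
finite by boundedness of $\Upsilon_c$. To check consistency under subdivision I would derive, at each pre-limit level $k$, an approximate transfer relation
\[
\Upsilon^{(k)}(g)\;\sim\;t_k^{-1}\sum_{B}R(\cdot)\,\Upsilon^{(k)}\bigl(g\,\mathfrak{s}(B)\bigr),
\]
with defect controlled by $c_{n+1}/c_n\to 1$, obtained from the recursive decomposition of admissible words in $\phi_c[t_k]$ by their last letter; then pass to the limit using pointwise convergence $\Upsilon^{(k)}\to\Upsilon_c$. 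Lemma \ref{lemma:changeletter} reconciles contributions from different initial letters. The bounded-distortion consequence of the local Hölder hypothesis on $\log R$ makes the prescription well-defined up to a controlled error in the choice of $x_*$, so $\nu$ extends to a Borel measure on $\Sigma^+$, finite on cylinders, and with $R_n^{-1}\mathds{1}_{[w]}$ integrable; the identity $\gamma(\phi\ast_\rho f)^{n}\nu(R_n^{-1}\mathds{1}_{[w]})=\Upsilon_c(\underline{\mathfrak{s}}(w))$ then follows, the cocycle structure of Definition \ref{def:twistedbycocycle} being encoded in the transfer identity.

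The main obstacle I anticipate is justifying the slow-growth step in this vector-valued, non-locally-compact setting: one needs $c$ so that the Dirichlet-type series $\sum_n c_n t^{-n}\sum_{w}R_n(wx)\langle\rho(\underline{\mathfrak{s}}(w))f,f\rangle_\mathcal{H}$ converges for all $t>\gamma(\phi\ast_\rho f)$ but diverges at the critical value. Classically (as in \cite{CoulonDougallSchapiraTapie} and Patterson's original article) a dedicated lemma on scalar convergent series delivers this; adapting it here requires monotonicity and quasi-multiplicativity estimates on the coefficients $a_n:=\sum_{w\in\mathcal{W}^{A,a}_n}R_n(wx)\langle\rho(\underline{\mathfrak{s}}(w))f,f\rangle_\mathcal{H}$, which I expect from strong positive recurrence, the Gibbs property, and invariance of the cone $\mathcal{H}_+$ under $\rho$. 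A secondary technical point is interchanging the pointwise limit in $g$ with the sum over $B$ in the transfer relation when $\mathcal{W}_1$ is infinite; I would handle this by truncating to a finite subalphabet and bounding the tail uniformly in $k$ using $\gamma(\mathrm{SPR})<1$.
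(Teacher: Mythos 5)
Your proposal identifies the right high-level strategy (a Patterson-type divergence trick plus a compactness argument on the normalized matrix coefficients) but there are two genuine gaps that your plan does not close, and the paper handles both quite differently.

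First, the slow-growth lemma is applied to the wrong Dirichlet series. You target the coefficients $a_n:=\sum_{w\in\mathcal{W}^{A,a}_n}R_n(wx)\langle\rho(\underline{\mathfrak{s}}(w))f,f\rangle_\mathcal{H}$, but this series (the paper's $\eta[t]$) has abscissa $\gamma(f)$, not $\gamma(\phi\ast_\rho f)$, and making \emph{it} diverge does not force $\|\phi_c[t]\ast_\rho f\|^2_\mathcal{H}\to\infty$. The relevant quantity $\|\phi_c[t]\ast_\rho f\|^2_\mathcal{H}$ is not itself a power series in $t^{-1}$ because of the $c_mc_n$ factors over $m+n=N$. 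In the paper's Lemma~\ref{lemma:cphiastastphi} the genuine abscissa-$\gamma(\phi\ast_\rho f)$ object is the double series $\pi[t]=\sum_N t^{-N}\sum_{m+n=N}\sum_{V,v}R_m(Vx)R_n(vx)\langle\rho(\underline{\mathfrak{s}}(V))^{-1}\rho(\underline{\mathfrak{s}}(v))f,f\rangle$; one applies Proposition~\ref{prop:divseries2} to get a slowly increasing $d$ dominated by $\pi^d$, then sets $c_n=d_{2n}$ so that $c_mc_n\ge d_{m+n}$ and the divergence transfers to $\langle\phi_c[t]\ast_\rho f,\phi_c[t]\ast_\rho f\rangle$. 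Skipping the $c_n=d_{2n}$ step, or applying Patterson's lemma only to your $a_n$, does not yield the required normalizing divergence.

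Second, the Kolmogorov-type extension from cylinders $[w]$ with $w\in\mathcal{W}^{a,A}_{n+1}$ does not by itself produce a Borel measure. Those cylinders (fixed first and last letters) are far from a semiring generating the Borel $\sigma$-algebra, the formula gives you no prescription on general cylinders, and for non-compact $\Sigma^+$ a premeasure on cylinders still needs a tightness argument to extend. The paper avoids all of this: Definition~\ref{def:diracmass} builds $\nu^{Q}_{c;N}[t]$ as explicit weighted sums of Dirac masses at points $vx$, so the approximants are honest Borel measures on $\Sigma^+$ from the start; Theorem~\ref{theorem:tightstatement} and Corollary~\ref{cor:tight} establish tightness (via strong positive recurrence) so weak* accumulation points exist; Lemmas~\ref{lemma:remone}, \ref{lemma:remtwo}, \ref{lemma:tseries} control remainders; and only then does Lemma~\ref{lemma:matrixcoeff} express $\int_{[wA]}\gamma^nR_n^{-1}d\nu^{Q}_c$ as the desired limit of normalized matrix coefficients, with the well-definedness of $\Upsilon_c$ coming out for free. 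Your diagonal extraction to define $\Upsilon_c$ first is harmless, but the ``approximate transfer relation'' you write for $\Upsilon^{(k)}$ is not an identity that can be iterated without uniform control in the group variable $g$, and tying it to an actual measure on $\Sigma^+$ still needs the Dirac-mass/tightness machinery the paper provides.

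In short: the approach is inverted relative to the paper (you want to define $\Upsilon_c$ and then a measure; the paper constructs the measure by weak* limits of Dirac-mass approximants and then reads off $\Upsilon_c$), and as written the key divergence lemma and the measure-existence step both have holes.
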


\begin{remark}
The measure in \ref{lemma:phiastastphi} can be thought of as being analogous to the twisted measure in \cite{CoulonDougallSchapiraTapie}. However, there are some caveats. The setting of \cite{CoulonDougallSchapiraTapie} is geometric SPR, and they define the measure according to a sequence $f_k$ maximising the twisted operator, whereas in this exposition $f_k=f$. In \cite{CoulonDougallSchapiraTapie} the one-sided measure is turned into a group action invariant current by a generalised product with itself. In the absence of symmetry $\nu^{\phi\ast_\rho f}_{c}$ is not easily related to a shift invariant measure. (For an example with symmetry see also equation \ref{eq:rw2}.)
\end{remark}

Recall that $\log R$ is assumed to be (locally) Lipschitz with respect to the $\theta$-metric (see equation \ref{equation:lip}).
\begin{lemma}\label{lemma:phiastphi}
Assume $\Sigma^+$ is compact, and assume $T_{\mathfrak{s}}$ is transitive. Let $f\in\mathcal{H}_+$. 
There is a slowly increasing function $c:\mathbb{N}\to\infty$ and sequence $t_k\to \gamma$ as $k\to\infty$ with the following. The function $\Upsilon_{c,\ast}:G\to\mathbb{R}$ 
$$
\Upsilon_{c,\ast}(g) = \limsup_{k\to\infty}\lim_{N\to\infty}\frac{\langle \rho(g)\phi_{c;\le N}[t_k]\ast_\rho f , \phi^*_{c;\le N}[t_k]\ast_\rho f\rangle_{\mathcal{H}}}{\langle \phi_{c;\le N}[t_k]\ast_\rho f , \phi^*_{c;\le N}[t_k]\ast_\rho f\rangle_{\mathcal{H}}}
$$
is well-defined, and there is a shift invariant finite measure $\mu^{\phi^*\ast_\rho f}_{c}$ on $\Sigma^+$ 
\begin{equation}\label{equation:xi}
C^{-\theta^{w\wedge x}}\Upsilon_{c,\ast}(\underline{\mathfrak{s}}(w)) \le (\gamma(f))^n \mu^{\phi^*\ast_\rho f}_{c}(R_n^{-1}\mathds{1}_{[w]}) \le C^{\theta^{w\wedge x}} \Upsilon_{c,\ast}(\underline{\mathfrak{s}}(w))
\end{equation}
for some constant $C>1$.
\end{lemma}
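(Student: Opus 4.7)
My plan is to mirror the Patterson--Sullivan construction of a conformal measure at a critical exponent, specialised here to the critical exponent $\gamma := \gamma(f)$. Under the standing transitivity assumption one has $\gamma(f) = \gamma(\phi^*\ast_\rho f)$ (the equality checked in Section \ref{section:ell2}), so the partition function
\[
Z_c(t) := \lim_{N \to \infty} \langle \phi_{c;\le N}[t]\ast_\rho f,\, \phi^*_{c;\le N}[t]\ast_\rho f\rangle_{\mathcal{H}}
\]
has exponent of divergence exactly $\gamma$, where the limit in $N$ exists for $t > \gamma$ by Lemma \ref{lemma:strongconvergence}. The first step is to select a slowly increasing $c_n \to \infty$ with $c_{n+1}/c_n \to 1$, in the standard Patterson manner, so that $Z_c(t) \to \infty$ as $t \downarrow \gamma$; if the unweighted series is already divergent at $\gamma$, then $c \equiv 1$ suffices.

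Choose a sequence $t_k \downarrow \gamma$ achieving the $\limsup$ in the statement. Writing $u_k := \phi_c[t_k]\ast_\rho f \in \mathcal{H}_+$ and $u_k^* := \phi^*_c[t_k]\ast_\rho f \in \mathcal{H}_+$, the candidate ratios $\langle \rho(g) u_k, u_k^*\rangle_{\mathcal{H}} / \langle u_k, u_k^*\rangle_{\mathcal{H}}$ are bounded by $1$ on the countable set $G$, by Cauchy--Schwarz and the unitarity of $\rho$. A diagonal extraction along a subsequence yields the pointwise limit defining $\Upsilon_{c,\ast}(g)$. Simultaneously, for each $t > \gamma$ I define a positive Borel measure $\mu_t$ on $\Sigma^+$ by assigning to each admissible cylinder $[w]$ of length $n$ a weight proportional to
\[
c_n\,t^{-n}\,R_n(wx)\,\langle \rho(\underline{\mathfrak{s}}(w))\, u_t,\, u_t^*\rangle_{\mathcal{H}},
\]
normalised by $Z_c(t)$ so that the cylinder sums telescope back into $Z_c(t)$ itself and $\mu_t$ is a subprobability. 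Compactness of $\Sigma^+$ then gives a weak-$\ast$ subsequential limit $\mu := \mu^{\phi^*\ast_\rho f}_c$ along $t_k \downarrow \gamma$, taken on the same subsequence as that producing $\Upsilon_{c,\ast}$.

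Shift invariance of $\mu$ reduces to a transfer-operator identity for $\Upsilon_{c,\ast}$: the weight assigned to $[w]$ should equal the sum of the weights assigned to its one-letter extensions $[bw]$ over admissible $b \in \mathcal{W}_1$. At finite $t_k$ the corresponding identity holds with eigenvalue $t_k$ in place of $\gamma$, up to a single boundary term controlled by $c_{n+1}/c_n$ and by $Z_c(t_k)^{-1}$; slow-growth together with $Z_c(t_k) \to \infty$ force this remainder to $0$ as $k \to \infty$, giving shift invariance with eigenvalue $\gamma$. The Gibbs inequality \eqref{equation:xi} is then obtained by replacing the reference point $x$ in $R_n(wx)$ by an arbitrary $y \in \sigma^n[w]$, applying the standard bounded-distortion estimate for locally Lipschitz $\log R$ under the $\theta$-metric, and reading off the multiplicative error $C^{\pm \theta^{w \wedge x}}$; finiteness of $\mu$ on cylinders is then immediate from the formula, and global finiteness follows from the subprobability property of $\mu_t$ at every approximating level.

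The principal obstacle is the shift-invariance step: justifying the eigenvalue equation at exactly $\gamma$, not at some $\gamma' > \gamma$. This is the decisive role of the Patterson slow-growth condition $c_{n+1}/c_n \to 1$, without which the limit would encode information at a shifted critical exponent and the geometric-analytic meaning of $\gamma$ would be lost. A secondary subtlety is coordinating the subsequences for the measure $\mu_{t_k}$ and for the pointwise limit $\Upsilon_{c,\ast}$, so that the $\limsup$ defining $\Upsilon_{c,\ast}$ is achieved on the same subsequence delivering the weak-$\ast$ limit of $\mu_{t_k}$. Transitivity of $T_{\mathfrak{s}}$ is used both in the equality $\gamma(f) = \gamma(\phi^*\ast_\rho f)$ underlying the Patterson construction and in ensuring $\Upsilon_{c,\ast}$ is positive on the $G$-orbits where the Gibbs inequality is asserted.
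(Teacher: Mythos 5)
Your overall blueprint --- a Patterson-type slow-growth weight $c$, extraction of a subsequence $t_k\downarrow\gamma$, weak-$\ast$ limits, and bounded distortion for the Gibbs inequality --- is a reasonable template, but the central construction in your proposal does not actually produce a measure, and even if repaired it would not produce a \emph{shift-invariant} one without a change of structure. Concretely: you define $\mu_t$ by ``assigning to each admissible cylinder $[w]$ of length $n$ a weight proportional to $c_n t^{-n} R_n(wx)\langle\rho(\underline{\mathfrak{s}}(w))u_t,u_t^*\rangle$.'' This prescription fails the Kolmogorov consistency condition $\mu_t([w])=\sum_{b}\mu_t([wb])$ at finite $t$; the relevant transfer-operator identity holds only approximately, so the cylinder weights you write down do not cohere into a Borel measure. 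Moreover, the formula carries a spurious $c_n$ factor (three slow-growth weights in total once you expand $u_t=\phi_c[t]\ast f$ and $u_t^*=\phi^*_c[t]\ast f$) that survives the limit $t_k\to\gamma$: you would arrive at $\mu([w])\sim c_n\gamma^{-n}R_n(wx)\Upsilon_{c,\ast}(\underline{\mathfrak{s}}(w))$, which contradicts the Gibbs inequality \eqref{equation:xi} which has no $c_n$. The paper avoids both problems by building the approximating object as an explicit finite sum of Dirac masses, and crucially does so on the \emph{two-sided} shift $\Sigma$ with a \emph{double} index sum --- see Section \ref{section:shift} --- which, after rearrangement, yields cylinder weights governed by the convolution $(c\ast c)_N=\sum_k c_kc_{N-k}$ rather than a single $c_n$.

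This two-sided, double-sum structure is not cosmetic: it is precisely what makes the shift-invariance argument (Theorem \ref{theorem:shiftinv}) work. The paper compares $\mathbb{P}(z,c\ast c,N)=\sum_{k=1}^{N-1}c_kc_{N-k}\sigma^k_*D(z)$ with its push-forward $\sigma_*\mathbb{P}(z,c\ast c,N)$, and the error is absorbed by a reindexing trick $c_kc_{N-k}\leftrightarrow c_{k-1}c_{N-k+1}$, which is only available because of the convolution. Your sketch --- ``a single boundary term controlled by $c_{n+1}/c_n$ and $Z_c(t_k)^{-1}$'' --- does not have this reindexing freedom and, as written, does not control the discrepancy between $\mu_t$ and $\sigma_*\mu_t$ (which is not small term-by-term, only after the $c\ast c$ rearrangement and the normalising factor diverging). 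You correctly identify slow-growth as the decisive tool, but the mechanism you describe is not the one that closes the argument. Finally, your bound $\langle\rho(g)u_k,u_k^*\rangle/\langle u_k,u_k^*\rangle\le 1$ ``by Cauchy--Schwarz and unitarity'' is false: $u_k$ and $u_k^*$ are generally different vectors, so Cauchy--Schwarz bounds the numerator by $\|u_k\|\,\|u_k^*\|$, not by the denominator $\langle u_k,u_k^*\rangle$. The paper obtains uniform boundedness of the cylinder masses instead from Lemma \ref{lemma:changeletter} (changing the leading letter at bounded cost), not from Cauchy--Schwarz; this is a local fix, but as stated your step is incorrect and it propagates into the diagonal-extraction argument.
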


\begin{remark}
\begin{enumerate}
\item
The $c$ appearing in Lemmas \ref{lemma:phiastastphi} and \ref{lemma:phiastphi} can be different. (And indeed the $\gamma$ can be different.)
\item
The functions $\Upsilon_{c}, \Upsilon_{c,\ast}$ implicitly depend on the letters $A,a$ for which $\phi=\phi^{A,a}$.
\item
The function $\Upsilon_{c}$ is defined by a limit (along $t_k$) of matrix coefficients. We can check that $\Upsilon_{c}$ is positive definite ($\sum_{i,j\in I} \alpha_i\bar{\alpha_j} \Upsilon_{c}(g_j^{-1}g_i)\ge 0$), from which $\Upsilon_{c}$ is a matrix coefficient for a unitary representation (see \cite{Pic} or \cite{EW}). Section \ref{appendix:freegroup} gives an explicit example.
\end{enumerate}
\end{remark}

We give an interpretation that the $\nu^{\phi\ast_\rho f}_{c}$ measure finds points with the slowest decay exponent --- for a reversed ordering of the products in Lemma \ref{lemma:decaystatement}. We make this more precise in the case of a symmetric random walk on the free group where we are able to express section $\Upsilon_c(g)$ in terms of an exponential --- see Section \ref{section:randomwalk}. In general we can only say that a typical point has decay bounded by $\Upsilon_c(g)$. And that $\Upsilon_c(g)$ is typically decays no faster than $\gamma$. 
\begin{lemma}\label{lemma:sphericaldecay}
Assume $\Sigma^+$ is compact and that $T_{\mathfrak{s}}$ is transitive. Let $f\in\mathcal{H}_+$. If $\gamma(\phi\ast_\rho f)<1$ then for every $\epsilon>0$ and every $v\in\mathcal{H}_+$ we have that for $\nu^{\phi\ast_\rho f}_{c}$ almost every $x\in\Sigma^+$ there is $N$ with
$$
n\ge N \; \implies \; \langle \rho(\underline{\mathfrak{s}}(x_{1}\cdots x_{n}))f,v\rangle_{\mathcal{H}} \le (1-\epsilon)^{-n}\Upsilon_{c}(\mathfrak{s}(x_{1}\cdots x_{n}))
$$
In addition for $\nu^{\phi\ast_\rho f}_{c}$ almost every $x\in\Sigma^+$ there is $N$ with
\begin{equation}\label{equation:sphericaldecay}
n\ge N \; \implies \; \Upsilon_{c}(\underline{\mathfrak{s}}(x_{1}\cdots x_{n})) \ge n^{-2}(\gamma(\phi\ast_\rho f))^n.
\end{equation}
\end{lemma}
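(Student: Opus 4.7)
The plan is to apply the Borel--Cantelli lemma twice, mirroring the strategy of Lemma \ref{lemma:decaystatement} but using the twisted measure $\nu^{\phi\ast_\rho f}_c$ in place of $\mu$, and using the integral identity from Lemma \ref{lemma:phiastastphi} in place of the Gibbs property. Set $\gamma:=\gamma(\phi\ast_\rho f)$.

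The preliminary step is to upgrade the integral identity $\gamma^n\nu^{\phi\ast_\rho f}_c(R_n^{-1}\mathds{1}_{[w]})=\Upsilon_c(\underline{\mathfrak{s}}(w))$ of Lemma \ref{lemma:phiastastphi} to a cylinder estimate. Since $\log R$ is locally Lipschitz, $R_n$ is essentially constant on a cylinder $[w]$ with $w\in\mathcal{W}^{a,A}_{n+1}$, giving the Gibbs-type bound
$$\nu^{\phi\ast_\rho f}_c([w])\;\asymp\;R_n(wx)\,\gamma^{-n}\,\Upsilon_c(\underline{\mathfrak{s}}(w)).$$

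For the first assertion, fix $\epsilon>0$ and $v\in\mathcal{H}_+$ and let
$$E_n=\{x\in\Sigma^+:\langle\rho(\underline{\mathfrak{s}}(x_1\cdots x_n))f,v\rangle_{\mathcal{H}}>(1-\epsilon)^{-n}\Upsilon_c(\underline{\mathfrak{s}}(x_1\cdots x_n))\}.$$
I would partition $E_n$ into the cylinders $[w]$ it meets, apply the cylinder estimate, then replace $\Upsilon_c(\underline{\mathfrak{s}}(w))$ by $(1-\epsilon)^n\langle\rho(\underline{\mathfrak{s}}(w))f,v\rangle_{\mathcal{H}}$ on the bad set to obtain
$$\nu^{\phi\ast_\rho f}_c(E_n)\;\lesssim\;(1-\epsilon)^n\sum_{w\in\mathcal{W}^{a,A}_{n+1}}R_n(wx)\gamma^{-n}\langle\rho(\underline{\mathfrak{s}}(w))f,v\rangle_{\mathcal{H}}=(1-\epsilon)^n\langle\phi_n[\gamma]\ast_\rho f,v\rangle_{\mathcal{H}},$$
where $\phi_n[\gamma]$ is the $n$-th term of $\phi[\gamma]$ (without the $c$-weight). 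Since $c_n\to\infty$, eventually $(1-\epsilon)^n\le c_n$, hence
$$\sum_n\nu^{\phi\ast_\rho f}_c(E_n)\;\lesssim\;\sum_n c_n\langle\phi_n[\gamma]\ast_\rho f,v\rangle_{\mathcal{H}}=\langle\phi_c[\gamma]\ast_\rho f,v\rangle_{\mathcal{H}}\le\|\phi_c[\gamma]\ast_\rho f\|_{\mathcal{H}}\|v\|_{\mathcal{H}}<\infty,$$
by the choice of $c$ in Lemma \ref{lemma:phiastastphi}. Borel--Cantelli gives $\nu^{\phi\ast_\rho f}_c(\limsup E_n)=0$.

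For the second assertion, set $F_n=\{x\in\Sigma^+:\Upsilon_c(\underline{\mathfrak{s}}(x_1\cdots x_n))<n^{-2}\gamma^n\}$. Partitioning again and using the cylinder estimate,
$$\nu^{\phi\ast_\rho f}_c(F_n)\;\lesssim\;\sum_{\substack{w\in\mathcal{W}^{a,A}_{n+1}\\\Upsilon_c(\underline{\mathfrak{s}}(w))<n^{-2}\gamma^n}}R_n(wx)\gamma^{-n}\Upsilon_c(\underline{\mathfrak{s}}(w))\;\le\;n^{-2}\sum_{w\in\mathcal{W}^{a,A}_{n+1}}R_n(wx)\;\le\;Cn^{-2},$$
where the last step uses $P(\log R,\sigma)=0$ together with strong positive recurrence to bound the partition sum uniformly in $n$. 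Since $\sum n^{-2}<\infty$, a second application of Borel--Cantelli completes the proof.

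The main obstacle is establishing the cylinder estimate $\nu^{\phi\ast_\rho f}_c([w])\asymp R_n(wx)\gamma^{-n}\Upsilon_c(\underline{\mathfrak{s}}(w))$ cleanly from the integral identity of Lemma \ref{lemma:phiastastphi}; this is where the local Lipschitz control of $\log R$ enters via the distortion factors $C^{\pm\theta^{w\wedge x}}$, which are uniformly bounded and so absorbed into the implicit constants. A minor bookkeeping issue is reconciling the index shift between length-$(n+1)$ words $w\in\mathcal{W}^{a,A}_{n+1}$ and the $n$-fold product $\underline{\mathfrak{s}}(x_1\cdots x_n)$: these differ by the fixed element $\mathfrak{s}(a)$ on the left, amounting to a right-translation of $\Upsilon_c$ that is absorbed into constants.
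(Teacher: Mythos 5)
Your overall strategy matches the paper's: upgrade the identity from Lemma~\ref{lemma:phiastastphi} to a RHS Gibbs-type cylinder estimate (this is what Lemma~\ref{lemma:coeffgencyl} does), then apply Borel--Cantelli twice. The second part of your argument (the $n^{-2}$ bound) is essentially the paper's proof. However, the first part contains a genuine error at the last step.

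After you reach
\[
\nu^{\phi\ast_\rho f}_c(E_n)\;\lesssim\;(1-\epsilon)^n\sum_{w\in\mathcal{W}^{A,a}_{n}}R_n(wx)\gamma^{-n}\langle\rho(\underline{\mathfrak{s}}(w))f,v\rangle_{\mathcal{H}}=(1-\epsilon)^n\langle\phi_n[\gamma]\ast_\rho f,v\rangle_{\mathcal{H}},
\]
you should sum directly: since $(1-\epsilon)^n\phi_n[\gamma]=\phi_n[\gamma(1-\epsilon)^{-1}]$ and $\gamma(1-\epsilon)^{-1}>\gamma(\phi\ast_\rho f)$, the series $\sum_n\langle\phi_n[\gamma(1-\epsilon)^{-1}]\ast_\rho f,v\rangle_{\mathcal{H}}=\langle\phi[\gamma(1-\epsilon)^{-1}]\ast_\rho f,v\rangle_{\mathcal{H}}$ converges by Lemma~\ref{lemma:strongconvergence} and the definition of $\gamma(\phi\ast_\rho f)$. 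Instead you replace $(1-\epsilon)^n$ by $c_n$ and invoke $\|\phi_c[\gamma]\ast_\rho f\|_{\mathcal{H}}<\infty$, attributing this finiteness to ``the choice of $c$ in Lemma \ref{lemma:phiastastphi}.'' This is backwards: the slowly increasing $c$ is chosen (Lemma~\ref{lemma:cphiastastphi}, Definition~\ref{def:diracmass}) precisely so that the normalising factor $F(\phi\ast f)[t]=\|\phi_c[t]\ast_\rho f\|^2$ diverges as $t\to\gamma(\phi\ast_\rho f)$, so $\|\phi_c[\gamma]\ast_\rho f\|_{\mathcal{H}}=\infty$ and the Cauchy--Schwarz step fails. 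You already had the convergent bound in hand; the substitution of $c_n$ for $(1-\epsilon)^n$ is both unnecessary and sends the sum to something that diverges.

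One smaller point: you assert a two-sided estimate $\nu^{\phi\ast_\rho f}_c([w])\asymp R_n(wx)\gamma^{-n}\Upsilon_c(\underline{\mathfrak{s}}(w))$, but Lemma~\ref{lemma:coeffgencyl} only supplies the upper bound for general cylinders $[wB]$ (the equality of Lemma~\ref{lemma:phiastastphi} is only for $w\in\mathcal{W}^{a,A}_{n+1}$). Fortunately only the upper bound is used, so this does not affect correctness, but it is worth being precise about which direction you actually have.
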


\begin{remark}\label{remark:sphericaldecay}
\begin{enumerate}
\item
We present a calculation for a random walk on the free group (see Section \ref{section:randomwalk}). In this case $\gamma^n = 2^n3^{-n/2}$ whereas it can be shown that $\Upsilon_{c}$ has infinitely many $g$ with $\Upsilon_{c}(g)\ge 3^{-|g|/2}$. In fact a certain linear combination relating to $\Upsilon_{c}(g)$ (taking into account different choices of $A,a$) is equal to $(1+|g|/2)3^{-|g|/2}$. 
Then \ref{equation:sphericaldecay} implies that a $\nu^{\phi}_c$ typical  point $x$ has that $\underline{\mathfrak{s}}(x_{1}\cdots x_{n})$ is not reduced in a quantitative way, which we compare with the known drift for $\mu$ random walk. 
\item
If $\gamma(\phi\ast_\rho f)=\gamma(f)$ then, upon identifying $\mu^{\phi^*\ast_\rho f}_{c}$ with a shift invariant measure on $\Sigma$ we can show that
for every $\gamma(f)<\gamma$ and every $v\in\mathcal{H}_+$ we have that for $\mu^{\phi^*\ast_\rho f}_{c}$ almost every $x\in\Sigma$ there is $N$ with
$$
n\ge N \; \implies \; \langle \rho(\underline{\mathfrak{s}}(x_{-n}\cdots x_{-1}))f,v\rangle_{\mathcal{H}} \le (1-\epsilon)^{-n}\Upsilon_{c,\ast}(\underline{\mathfrak{s}}(x_{-n}\cdots x_{-1})).
$$
and
$$
n\ge N \; \implies \; \langle \rho(\underline{\mathfrak{s}}(x_{-n}\cdots x_{-1}))f,v\rangle_{\mathcal{H}} \le (1-\epsilon)^{-n}\Upsilon_{c,\ast}(\mathfrak{s}(x_{1}\cdots x_{n})).
$$
In addition for $\nu^{\phi\ast_\rho f}_{c}$ almost every $x\in\Sigma^+$ there is $N$ with
\begin{equation}\label{equation:sphericaldecay}
n\ge N \; \implies \; \Upsilon_{c}(\underline{\mathfrak{s}}(x_{-n}\cdots x_{-1})) \ge n^{-2}(\gamma(\phi\ast_\rho f))^n.
\end{equation}

\end{enumerate}
\end{remark}

\begin{proof}[Proof of Lemma \ref{lemma:sphericaldecay}]
Let $f,v\in\mathcal{H}_+$ be arbitrary. For brevity write $\gamma=\gamma(\phi\ast_\rho f)$.
Set 
$$
E_n= \left\{ x\in\Sigma  : \langle \rho(\underline{\mathfrak{s}}(x_{1}\cdots x_{n}))f,v\rangle_{\mathcal{H}} \ge \Upsilon_c(\underline{\mathfrak{s}}(x_{1}\cdots x_{n}))(1-\epsilon)^{-n} \right\},
$$
$$
\limsup E_n =  \left\{ x  : \langle \rho(\underline{\mathfrak{s}}(x_{1}\cdots x_{n}))f,v\rangle_{\mathcal{H}} \ge \Upsilon_c(\underline{\mathfrak{s}}(x_{1}\cdots x_{n}))(1-\epsilon)^{-n},\, n \;\mathrm{i.o.} \right\}.
$$ 
Lemma \ref{lemma:phiastastphi} is stated only for cylinders $wA$, however Lemma \ref{lemma:coeffgencyl} upgrades to
$$
\nu^{\phi\ast_\rho f}_{c}([u])\le CR_n(uy)\Upsilon_{c}(\underline{\mathfrak{s}}(u)) \gamma^{-n}
$$
for any $u$ of length $n$ and $y\in\sigma^n [u]$.
 We compute 
\begin{align*}
\nu^{\phi\ast_\rho f}_{c}(E_n) &= \sum_{A,a\in\mathcal{W}_1}\sum_{\substack{w\in\mathcal{W}_n^{A,a}:\\ \langle \rho(\underline{\mathfrak{s}}(w_{1}\cdots w_n))v,f\rangle_{\mathcal{H}} \ge \Upsilon_{c}(\underline{\mathfrak{s}}(w)) (1-\epsilon)^{-n}}} \nu^{\phi\ast_\rho f}_{c}([w_1\cdots w_n])
\\
&\le  \sum_{A,a\in\mathcal{W}_1}\sum_{\substack{w\in\mathcal{W}_n^{A,a} :\\ \langle \rho(\underline{\mathfrak{s}}(w))f,v\rangle_{\mathcal{H}} \ge \Upsilon_{c}(\underline{\mathfrak{s}}(w))(1-\epsilon)^{-n}}} \frac{\nu^{\phi\ast_\rho f}_{c}([w])}{\Upsilon_c(\underline{\mathfrak{s}}(w))}(1-\epsilon)^{n}\langle \rho(\underline{\mathfrak{s}}(w))f,v\rangle_{\mathcal{H}}
\\
&\le C \sum_{A,a\in\mathcal{W}_1}\sum_{w\mathcal{W}_n^{A,a}} \gamma^{-n}(1-\epsilon)^n R_n(w x)\langle \rho(\underline{\mathfrak{s}}(w))f,v\rangle_{\mathcal{H}}
\\
&\le  C\sum_{A,a\in\mathcal{W}_1} \langle  \phi^{A,a}_n[\gamma(1-\epsilon)^{-1}]f,v\rangle_{\mathcal{H}}.
\end{align*}
Since $\gamma(1-\epsilon)^{-1}> \gamma(\phi\ast_\rho f)$ the convergence case of the Borel-Cantelli Lemma follows.

For the second part
set 
$$
E_n= \left\{ x\in\Sigma^+  : \Upsilon_c(\underline{\mathfrak{s}}(x_1\cdots x_n))\le \gamma^n n^{-2} \right\}.
$$
We have
\begin{align*}
\nu^{\phi\ast_\rho f}_{c}(E_n) &= \sum_{A,a\in\mathcal{W}_1}\sum_{\substack{w\in\mathcal{W}_n^{A,a}:\\ \Upsilon_c(\underline{\mathfrak{s}}(w)) \le n^{-2}\gamma^n}} \nu^{\phi\ast_\rho f}_{c}([w])
\\
&\le  \sum_{A,a\in\mathcal{W}_1}\sum_{\substack{w\in\mathcal{W}_n^{A,a} :\\ \Upsilon_c(\underline{\mathfrak{s}}(w)) \le n^{-2}\gamma^n}} C R_n(wy) \Upsilon_c(\underline{\mathfrak{s}}(w))\gamma^{-n}
\\
&\le C \sum_{A,a\in\mathcal{W}_1}\sum_{w\mathcal{W}_n^{A,a}} R_n(w x)\frac{1}{n^2}.
\end{align*}
The series $\sum_{n=1}^\infty n^{-2}$ is summable and so we conclude the convergence case of the Borel-Cantelli Lemma.
\end{proof}

%%%%%%%%%%%%%%%%%%%%%subsection:theorems%%%%%%%%%%%%%%%%%%%
\subsection{Theorems}
Lemma \ref{lemma:phiastastphi} is the main ingredient to our first two theorems.

\begin{theorem}\label{theorem:transitive}
Assume that $T_{\mathfrak{s}}$ is transitive. We have the following:
\begin{itemize}
\item
If $G$ is non-amenable then $P(\log R,T_{\mathfrak{s}})<0$.
\item
If $\rho$ does not weakly contain the trivial representation then $\rho$, $\mathcal{H}$ has uniform eventual decay exponent $\gamma_0=\sup_{0\ne f\in\mathcal{H}_+}\gamma(\phi\ast_\rho f)<1$.
\end{itemize}
\end{theorem}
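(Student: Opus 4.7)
The plan is to prove the second bullet and derive the first from it. For the reduction, if $G$ is non-amenable then the left regular representation $\lambda$ on $\ell^2(G)$ satisfies $\mathds{1}\nprec \lambda$, so applying the second bullet to $\lambda$ yields $\gamma(\phi\ast_\lambda\delta_e)<1$. The hierarchy of convergence parameters forces $\gamma(\delta_e)\le\gamma(\phi\ast_\lambda\delta_e)<1$. If $\exp P(\log R,T_{\mathfrak{s}})>\gamma(\mathrm{SPR})$, then Proposition \ref{prop:gurevic} gives $\exp P(\log R,T_{\mathfrak{s}})=\gamma(\delta_e)<1$; otherwise $\exp P(\log R,T_{\mathfrak{s}})\le\gamma(\mathrm{SPR})<1$ by the standing SPR assumption. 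Either way $P(\log R,T_{\mathfrak{s}})<0$.

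For the second bullet I would argue contrapositively. Suppose $\gamma_0:=\sup_{0\ne f\in\mathcal{H}_+}\gamma(\phi\ast_\rho f)\ge 1$ and pick $f_n\in\mathcal{H}_+$ with $\gamma(\phi\ast_\rho f_n)\to 1$. For each $n$, Lemma \ref{lemma:phiastastphi} supplies a slowly increasing weight $c^{(n)}$ and parameters $t_j^{(n)}\searrow\gamma(\phi\ast_\rho f_n)$; a diagonal extraction then produces a single family $u_n:=\phi^{A,a}_c[t_n]\ast_\rho f_n\in\mathcal{H}_+$ with $t_n\to 1$ and $\|u_n\|_{\mathcal{H}}\to\infty$ (the latter because $t_n$ sits at the boundary of convergence in Definition \ref{def:integrability}). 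The goal is to show that the normalised vectors $v_n:=u_n/\|u_n\|_{\mathcal{H}}$ satisfy $\|\rho(g)v_n-v_n\|_{\mathcal{H}}\to 0$ for every $g\in G$, contradicting $\mathds{1}\nprec\rho$.

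The crux is an approximate equivariance for $u^{A,a}[t]:=\phi^{A,a}_c[t]\ast_\rho f$. Isolating the first letter of each admissible word $w=Aw_2\cdots w_m$ in the definition of $\phi^{A,a}[t]$, factoring $\underline{\mathfrak{s}}(w)=\mathfrak{s}(A)\underline{\mathfrak{s}}(w_2\cdots w_m)$, and using the local H\"{o}lder property of $\log R$ to approximate $R_n(wx)$ by $R(Ay_B)\,R_{n-1}(w_2\cdots w_m x)$ (with $y_B$ any fixed point having $y_B[0]=B:=w_2$) up to uniformly bounded multiplicative error, one obtains a recursion of the form
\[
t\,\rho(\mathfrak{s}(A))^{-1}u^{A,a}[t]=\sum_{B:AB\text{ admissible}}R(Ay_B)\,u^{B,a}[t]+O_{\mathcal{H}}(\|f\|_{\mathcal{H}}),
\]
the remainder coming from the length-one term. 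The transitive case of Lemma \ref{lemma:changeletter} yields $\|u^{B,a}[t]\|\asymp\|u^{A,a}[t]\|$, so dividing by $\|u^{A,a}[t_n]\|$ and exploiting $t_n\to 1$, $\|u_n\|_{\mathcal{H}}\to\infty$, and the unitarity of $\rho(\mathfrak{s}(A))$ leaves a rigid linear relation among the bounded vectors $v^{B,a}_n$. Combined with the Ruelle eigenequation $\mathcal{L}_Rh=h$ (which asymptotically normalises the weighted sum $\sum_B R(Ay_B)$) and the analogous recursion obtained by grouping admissible words by their last letter, this forces $\|\rho(\mathfrak{s}(A))v_n-v_n\|_{\mathcal{H}}\to 0$ for every admissible letter $A$. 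Transitivity of $T_{\mathfrak{s}}$ writes every $g\in G$ as $\underline{\mathfrak{s}}(w)$ for some admissible $w$, and the homomorphism property of $\rho$ propagates the almost-invariance to all of $G$, contradicting $\mathds{1}\nprec\rho$ and thus yielding $\gamma_0<1$.

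The main obstacle will be the bookkeeping in the previous paragraph: the change-of-letter multiplicative constants, the H\"{o}lder replacement of $R$, and the slowly increasing weight $c$ each introduce error terms that must be shown uniformly negligible as $t_n\to 1$ and $\|u_n\|_{\mathcal{H}}\to\infty$, and uniformity across $A$ is essential. A cleaner alternative packaging, should this direct route prove too delicate, is to invoke the remark after Lemma \ref{lemma:phiastastphi}: the limit $\Upsilon_c$ is positive definite and hence by GNS is a diagonal matrix coefficient of a unitary representation $\pi$ weakly contained in $\rho$, and the recursion transfers to a functional equation for $\Upsilon_c$ which at $\gamma=1$ forces $\Upsilon_c\equiv 1$ on a generating subset of $G$, equivalently $\pi$ fixes its cyclic vector, hence $\mathds{1}\preceq\pi\preceq\rho$.
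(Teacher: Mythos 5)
The reduction of the first bullet to the second is fine, but the main argument for the second bullet has a genuine gap, and the paper takes a quite different route.

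Your "approximate equivariance'' step fails at a basic level. When you isolate the first letter and replace $R(Aw'x)$ by $R(Ay_B)$ using local H\"older continuity, the two points $Aw'x$ and $Ay_B$ agree only in their first \emph{two} coordinates (position $0$: $A$; position $1$: $B$), so the local H\"older estimate gives $|\log R(Aw'x)-\log R(Ay_B)|\le C\theta^{2}$, a fixed constant. This is a bounded multiplicative error that does \emph{not} shrink as $t_n\to 1$ or $\|u_n\|\to\infty$; it is not a bookkeeping error term but an irreducible $O(1)$ factor that multiplies every summand. Consequently the purported rigid linear relation among the $v^{B,a}_n$ is only a relation with variable coefficients confined to a fixed (non-degenerate) interval, and no convergence-to-one argument is available. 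Moreover, your invocation of the Ruelle eigenequation $\mathcal{L}_Rh=h$ to normalise $\sum_B R(Ay_B)$ is misapplied: the transfer operator sums over \emph{preimages}, i.e.\ over the letter prepended in front of a fixed word (here over $A$ with $AB$ admissible, weighted by $R$ and the eigenfunction $h$), not over the second letter $B$ following a fixed first letter $A$. There is no analogue of the eigenequation that normalises the forward sum you need. Even granting an exact recursion you still owe an argument that it forces $\|\rho(\mathfrak{s}(A))v_n-v_n\|\to 0$; the inequality $\|\sum_B p_B u^{B,a}\|\le\sum_B p_B\|u^{B,a}\|$ is not an equality for vectors in the positive cone of $\ell^2(G/H)$, so the norms do not pin the vectors down. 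The "cleaner packaging'' via GNS inherits all these difficulties, since the functional equation for $\Upsilon_c$ is still produced by the same lossy decomposition.

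The paper's proof is substantively different. Rather than trying to manufacture almost-invariant vectors from $\phi_c[t]\ast_\rho f$ directly, it passes through the one-sided twisted measure $\nu_c^{\phi\ast f}$ (Sections~\ref{section:onesidedprelim}--\ref{section:gibbs}), establishes that at $\gamma(\phi\ast_\rho f)=1$ this measure satisfies a right-hand-side local Gibbs bound and is absolutely continuous with respect to $\nu$ (Theorem~\ref{theorem:abscts}), and then exploits ergodicity of the equilibrium state $\mu$ via dominated convergence (Lemma~\ref{DCT}). Finally one builds, using transitivity, a finite set of admissible words whose weighted $\rho$-sum equals a power $M_p^k$ of a symmetric random walk operator; the spectral gap $\|M_p\|<\beta<1$ from $\mathds{1}\nprec\rho$ produces an upper bound $C\beta^k$ on a twisted integral that is bounded below by a Gibbs constant independent of $k$, giving the contradiction. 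The measure-theoretic detour is what lets the paper convert the representation-theoretic hypothesis into a dynamical bound without ever needing the exact first-letter equivariance your argument requires. If you want to pursue your route, you would need a replacement for the H\"older step that produces an error vanishing with $n$ (e.g.\ isolating $k$-blocks with $k\to\infty$ and handling the exponential proliferation of coefficients), and a separate argument that the resulting relation entails almost-invariance; neither piece is present in the proposal.
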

\begin{theorem}\label{theorem:LVR}
Assume that $\mathfrak{s}$ satisfies the visibility hypothesis.
Let $\mathcal{F}$ be a family of normal subgroups of $G$. We have
$$
\mathds{1} \nprec \bigoplus_{H\in\mathcal{F}} \ell^2(G/H) \implies \sup\left\{P(\log R,T_{\mathfrak{s}_H}): H\in\mathcal{F}\right\}<0.
$$
\end{theorem}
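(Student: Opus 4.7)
The plan is to apply the framework of Theorem \ref{theorem:transitive} to the amalgamated representation $\rho=\bigoplus_{H\in\mathcal{F}}\lambda_H$ on $\mathcal{H}=\bigoplus_{H\in\mathcal{F}}\ell^2(G/H)$, with the visibility hypothesis taking the place of transitivity of $T_{\mathfrak{s}}$. For each $H\in\mathcal{F}$, let $f_H=\delta_{eH}$, regarded as an element of $\mathcal{H}_+$. First I would establish the visibility analogue of Proposition \ref{prop:gurevic}, namely that $\gamma(f_H)=\exp P(\log R,T_{\mathfrak{s}_H})$. Indeed, visibility with finite set $K$, combined with Lemma \ref{lemma:changeletter}, shows that the loop sum defining the Gurevi\v{c} pressure of $T_{\mathfrak{s}_H}$ is comparable (up to multiplicative constants depending only on $|K|$ and \emph{not} on $H$) to the matrix coefficients $\langle \phi^{A,a}_{\le N}[t]\ast_\rho f_H,f_H\rangle_{\mathcal{H}}$. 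This reduces the theorem to showing $\sup_{H\in\mathcal{F}}\gamma(f_H)<1$.

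Suppose for contradiction that there exist $H_n\in\mathcal{F}$ with $\gamma(f_{H_n})\to 1$. For $t>1=\exp P(\log R,\sigma)$ one has $\phi[t]\in\ell^1(G)$, so Young's inequality gives $\phi[t]\ast_\rho f_{H_n}\in\mathcal{H}$ and hence $\gamma(\phi\ast_\rho f_{H_n})\le 1$. Combined with the hierarchy in Definition \ref{def:integrability}, this forces $\gamma_n:=\gamma(\phi\ast_\rho f_{H_n})\to 1$. Apply Lemma \ref{lemma:phiastastphi} to each $f_{H_n}$ to obtain a slowly increasing $c_n$, a sequence $t_k^{(n)}\to\gamma_n$, and a positive-definite function
$$
\Upsilon_n(g)=\lim_{k\to\infty}\frac{\langle \rho(g)\phi_{c_n}[t_k^{(n)}]\ast_\rho f_{H_n},\phi_{c_n}[t_k^{(n)}]\ast_\rho f_{H_n}\rangle_{\mathcal{H}}}{\|\phi_{c_n}[t_k^{(n)}]\ast_\rho f_{H_n}\|_{\mathcal{H}}^2}
$$
with $\Upsilon_n(e)=1$, realised as a pointwise limit of diagonal matrix coefficients of unit vectors $v_n^k\in\mathcal{H}$.

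The crucial step is to show that $\Upsilon_n(g)\to 1$ as $n\to\infty$ for each fixed $g\in G$. Fix $g$ and use visibility to write $g=k_1\underline{\mathfrak{s}}(w_g)k_2$ with $k_1,k_2\in K$. Since $\rho(\underline{\mathfrak{s}}(w))(\phi^{A,a}_{c}[t]\ast_\rho f)$ can be identified, up to a letter-change handled by Lemma \ref{lemma:changeletter} (with constants independent of $H$), with another thermodynamic convolution $\phi^{A',a'}_{c}[t]\ast_\rho f$, the ratio
$$
\frac{\|\rho(\underline{\mathfrak{s}}(w))\phi^{A,a}_{c_n}[t_k^{(n)}]\ast_\rho f_{H_n}\|_{\mathcal{H}}}{\|\phi^{A,a}_{c_n}[t_k^{(n)}]\ast_\rho f_{H_n}\|_{\mathcal{H}}}
$$
stays uniformly bounded below. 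Unitarity of $\rho$ then yields $\liminf_n\Upsilon_n(\underline{\mathfrak{s}}(w))\ge 1$ for single-letter words $w=A\in\mathcal{W}_1$, and the standard subadditivity $\|\rho(gh)v-v\|\le \|\rho(g)v-v\|+\|\rho(h)v-v\|$ propagates this to any $g\in K\cdot\underline{\mathfrak{s}}(\mathcal{W}_\ast)\cdot K = G$.

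A diagonal pointwise-convergent subsequence then produces a limit $\Upsilon_\infty\equiv 1$, the matrix coefficient of the trivial representation. By Raikov's criterion (pointwise convergence of positive-definite functions of norm one is equivalent to weak containment), we obtain $\mathds{1}\preceq \rho=\bigoplus_{H\in\mathcal{F}}\lambda_H$, contradicting the hypothesis. I expect the main obstacle to be the third paragraph: converting the purely combinatorial visibility data into the quantitative pointwise convergence $\Upsilon_n(g)\to 1$, uniformly in the family $\mathcal{F}$. The essential point, which makes the argument work at all, is that the visibility set $K$ is a \emph{fixed} finite subset of $G$ independent of $H$, so the comparison constants produced by Lemma \ref{lemma:changeletter} are uniform over $H\in\mathcal{F}$.
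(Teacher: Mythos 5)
Your proposal takes a genuinely different route from the paper's. The paper's proof starts from a maximizing vector (Proposition~\ref{prop:supdelta}), builds the one-sided twisted measure $\nu_c^{\phi\ast f}$, and uses that when $\gamma(\phi\ast f)=1$ this measure is absolutely continuous with respect to the equilibrium measure (Theorem~\ref{theorem:abscts}); the Birkhoff argument of Lemma~\ref{DCT} then gives a strictly positive lower bound for the time average of a cylinder function $H=\sum_{awA\in\mathcal{Q}}\frac{q(w)}{R_{|w|}(wx)}\mathds{1}_{[awA]}$, while the lemma after Lemma~\ref{DCT} bounds the same integral from above by $C\|\sum q(w)\rho(\underline{\mathfrak{s}}(w))\|_{\mathrm{op}}$. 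The role of visibility is to choose $\mathcal{Q},q$ so that this sum is dominated by a finitely conjugated power $\sum_{u_1,u_2\in F}\rho(u_1)M_p^k\rho(u_2)$ of a symmetric random walk operator, whose norm decays as $\beta^k$ precisely because $\mathds{1}\nprec\bigoplus_H\lambda_H$. Your proposal instead tries to construct almost-invariant vectors directly, exhibiting positive-definite functions $\Upsilon_n$ of norm one and invoking Raikov's criterion; this is closer in spirit to the Patterson--Sullivan argument of \cite{CoulonDougallSchapiraTapie}, and it is a legitimate strategy, but it is \emph{not} the one the paper runs.

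The proposal has a genuine gap at the step you yourself flag as the main obstacle, and I do not think the ingredients you name can close it. Let $v_n=\phi_{c_n}[t_k^{(n)}]\ast_\rho f_{H_n}$ (normalized). Since $\rho$ is unitary, the ratio
\[
\frac{\|\rho(\underline{\mathfrak{s}}(w))v_n\|_{\mathcal{H}}}{\|v_n\|_{\mathcal{H}}}
\]
is \emph{identically} $1$, so asserting that it ``stays uniformly bounded below'' carries no information whatsoever; in particular ``unitarity of $\rho$'' cannot be what yields $\liminf_n\Upsilon_n(\underline{\mathfrak{s}}(A))\ge 1$. What is actually needed is $\|\rho(g)v_n-v_n\|/\|v_n\|\to 0$, equivalently $\Upsilon_n(g)\to1$, and this is the theorem rephrased. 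Lemma~\ref{lemma:changeletter} and the letter-change manipulations only produce two-sided \emph{multiplicative} comparisons of the form $\mathrm{Const.}(B)^{-1}\delta_{h_B}\ast\phi^{A,a}_c\le\phi^{B,a}_c\le\mathrm{Const.}(B)\delta_{g_B}\ast\phi^{A,a}_c$ in the non-negative cone; using positivity and Cauchy--Schwarz this gives at best a uniform lower bound $\Upsilon_n(g)\ge c(g)$ with $c(g)\in(0,1)$ \emph{not} tending to $1$. A fixed lower bound below $1$ is perfectly compatible with $\mathds{1}\nprec\rho$, so it cannot supply the contradiction. (For comparison: this is why the paper does not attempt to show the vectors are almost invariant; it instead extracts a quantitative contradiction from the operator-norm decay of $M_p^k$ integrated against the twisted measure.) A secondary issue is that even if $\|\rho(\mathfrak{s}(A))v_n-v_n\|/\|v_n\|\to0$ were established for all generating letters $A\in\mathcal{W}_1$, the subadditivity $\|\rho(gh)v-v\|\le\|\rho(g)v-v\|+\|\rho(h)v-v\|$ only propagates through the \emph{semigroup} generated by $\mathfrak{s}(\mathcal{W}_1)$; the visibility hypothesis writes $g=k_1\underline{\mathfrak{s}}(w)k_2$ with $k_1,k_2$ in a fixed finite set $K$ which need not lie in that semigroup, so some additional argument is required to reach all of $G$.
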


\begin{remark}
\begin{enumerate}
\item
In section \ref{section:randomwalk} we give an example to see that the decay statement of Theorem \ref{theorem:transitive} is non-trivial.
\item
Theorem \ref{theorem:LVR} only uses normality for the definition of $T_{\mathfrak{s}_H}$ as a group extension. Transitivity is not required as the proof relies on the study of $\gamma(\phi\ast f)$ for $f$ the countable sum of $\ell^2(G/H)$.
\end{enumerate}
\end{remark}

Assume now that $\Sigma$ is compact. For amenable groups the work of \cite{DougallSharp2} shows that the Gurevi\v{c} pressure $P(\log R,T_{\mathfrak{s}})$ is equal $P(\log R+ \psi)$ for a unique real one-dimensional representation $\pi:G\to\mathbb{R}$ and $\psi=\pi\circ\mathfrak{s}$. (In other words: let $\bar{\psi}_{\mathrm{ab}}$ be the composition of $\mathfrak{s}$ with $G\to G/[G:G]\cong \mathbb{Z}^d\oplus F \to \mathbb{Z}^d$, with $F$ the finite torsion group. Then there is a unique $\xi\in\mathbb{R}$ that determines $\psi(x)=\langle \xi, \bar{\psi}_{\mathrm{ab}}(x)\rangle_{\mathbb{R}^d}$.) 

\begin{theorem}\label{theorem:phiastphi}
Assume $\Sigma^+$ is compact and assume $T_{\mathfrak{s}}$ is transitive.
If $G$ is amenable then the measure $\mu^{\phi^*\ast\delta_e}_c$ from Lemma \ref{lemma:phiastphi} is the equilibrium state for $R\exp{\psi}$. 
In particular
$$
\Upsilon_{c,\ast}(g) = \exp{\psi}(g).
$$
\end{theorem}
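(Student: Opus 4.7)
The plan is to match $\mu^{\phi^{\ast}\ast\delta_e}_c$ with the unique SPR equilibrium state $\mu_{R\exp\psi}$ by identifying their Gibbs densities, from which $\Upsilon_{c,\ast}=\exp\psi$ falls out. Combining Proposition \ref{prop:gurevic} with the Dougall--Sharp identity \cite{DougallSharp2}, for amenable $G$ with $T_{\mathfrak{s}}$ transitive we have $\gamma:=\gamma(\delta_e)=\exp P(\log R,T_{\mathfrak{s}})=\exp P(\log R+\psi,\sigma)$, and $\psi=\pi\circ\mathfrak{s}$ is the unique real character attaining this pressure. Since $\log R+\psi$ is locally H\"older on an SPR base, there is a unique shift-invariant Gibbs equilibrium state $\mu_{R\exp\psi}$ satisfying
\[
\mu_{R\exp\psi}([w])\asymp R_n(wx)\exp\psi_n(wx)\gamma^{-n}=R_n(wx)\exp\pi(\underline{\mathfrak{s}}(w))\gamma^{-n}.
\]
Lemma \ref{lemma:phiastphi} gives that $\mu^{\phi^{\ast}\ast\delta_e}_c$ is shift-invariant with $\mu^{\phi^{\ast}\ast\delta_e}_c([w])\asymp R_n(wx)\Upsilon_{c,\ast}(\underline{\mathfrak{s}}(w))\gamma^{-n}$, so once $\Upsilon_{c,\ast}(g)=\exp\pi(g)$ is established the Gibbs bounds coincide and Sarig's uniqueness of SPR equilibrium states closes the identification. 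Note that $\Upsilon_{c,\ast}(e)=1=\exp\psi(e)$ automatically fixes the multiplicative constant.

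The computation of $\Upsilon_{c,\ast}$ exploits the specific structure of $\rho=\lambda$, $f=\delta_e$. Since $\phi_{c;\le N}[t]\ast_\lambda\delta_e=\phi_{c;\le N}[t]$ in $\ell^2(G)$,
\[
\big\langle \lambda(g)\phi_{c;\le N}[t],\,\phi^{\ast}_{c;\le N}[t]\big\rangle_{\ell^2(G)}=\big(\phi_{c;\le N}[t]\ast\phi_{c;\le N}[t]\big)(g^{-1}),
\]
so $\Upsilon_{c,\ast}(g)$ is a normalised limit of convolution-square values. For every real $1$-dimensional character $\chi$ of $G$, distributivity of convolution gives the character identity
\[
\sum_{g\in G}\big(\phi_c[t]\ast\phi_c[t]\big)(g^{-1})\chi(g)=\bigg(\sum_{h\in G}\phi_c[t](h)\chi(h)^{-1}\bigg)^{\!\!2}=:W_\chi(t)^2,
\]
and expanding $\phi_c[t]$ displays $W_\chi(t)$ as a $c$-weighted Poincar\'e series with abscissa of convergence $\exp P(\log R-\log\chi\circ\mathfrak{s})$. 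The Dougall--Sharp uniqueness in the amenable case says precisely that the maximum of this abscissa over real $\chi$ equals $\gamma$ and is attained only at the critical character $\chi^{\ast}:=\exp(-\psi)$; hence $W_{\chi^{\ast}}(t)\to\infty$ as $t\to\gamma^{+}$ while $W_\chi(\gamma)<\infty$ for every other real $\chi$.

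Passing to the abelianisation modulo torsion, identified with $\mathbb{Z}^d$, real characters correspond to the $\mathbb{R}^d$ directions and a saddle-point/contour-shift analysis isolates the critical one. The slowly increasing factor $c$ selected in Lemma \ref{lemma:phiastphi} is tailored precisely to regularise the critical singularity of $W_{\chi^{\ast}}(t)$ as $t\to\gamma$, so that the normalised ratio has a limit along the subsequence $t_k$. Shifting the Fourier-inversion contour through the saddle dual to $\chi^{\ast}$ makes the shared divergent prefactor cancel in the ratio, yielding
\[
\lim_{k\to\infty}\,\frac{(\phi_c[t_k]\ast\phi_c[t_k])(g^{-1})}{(\phi_c[t_k]\ast\phi_c[t_k])(e)}\,=\,\chi^{\ast}(g)^{-1}\,=\,\exp\psi(g),
\]
so $\Upsilon_{c,\ast}(g)=\exp\psi(g)$ pointwise. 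Substituting into \eqref{equation:xi} identifies the Gibbs densities of $\mu^{\phi^{\ast}\ast\delta_e}_c$ and $\mu_{R\exp\psi}$, and uniqueness concludes. The main obstacle is this concentration step: unlike the classical Tauberian setting on an abelian group with bounded unitary characters, the critical character $\chi^{\ast}$ here is real and generically unbounded, so concentration must be extracted via an imaginary contour shift along the direction dual to $-\psi$. The slowly varying $c$, together with the Dougall--Sharp uniqueness (which is where amenability is essential), is precisely what prevents any competing character from contributing to the limit.
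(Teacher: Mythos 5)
Your overall architecture runs in the \emph{opposite direction} from the paper's, and the step you yourself flag as ``the main obstacle'' is left unproven and is, in my view, essentially as hard as the theorem itself. The paper first establishes $\mu_c^{\phi^*\ast\delta_e}=\mu_{R\exp\psi}$ directly and then reads off $\Upsilon_{c,\ast}=\exp\psi$ from the Gibbs ratio; you propose to first compute $\Upsilon_{c,\ast}(g)=\exp\psi(g)$ by harmonic analysis on $G$ and then feed that into the Gibbs bounds of Lemma \ref{lemma:phiastphi} to pin down the measure. The paper does this via a \emph{periodic-orbit equidistribution argument}: it introduces a periodic-point variant $\bar\mu^{\phi^*}_{c;N}[t]$ comparable to $\mu^{\phi^*}_c$ (Lemma \ref{periodicmeasure}), proves a large-deviation estimate (Lemma \ref{ld}) showing that periodic points with $\underline{\mathfrak{s}}(w)=e$ whose empirical measure stays away from $\mu_{R\exp\psi}$ have pressure strictly below $P(\log R+\psi)$, and uses divergence of the normalising factor to conclude that the periodic-point measure weak$^*$-converges to $\mu_{R\exp\psi}$; the matrix-coefficient identity then falls out of Corollary \ref{cor:matrixcoeff} together with the Gibbs ratio \eqref{abelianratio}.

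The gap in your route is the assertion
\[
\lim_{k\to\infty}\frac{(\phi_c[t_k]\ast\phi_c[t_k])(g^{-1})}{(\phi_c[t_k]\ast\phi_c[t_k])(e)}=\exp\psi(g),
\]
which you propose to obtain by a saddle-point/contour-shift through the critical real character $\chi^*=\exp(-\psi)$. Two concrete problems. First, this is exactly a ratio-limit-theorem statement, and the paper makes a point of \emph{not} needing one: Proposition \ref{prop:rlt2} recovers the RLT function $\Psi$ \emph{assuming} Woess's RLT (which is only available for abelian/nilpotent groups and random walks, not for $G$-extensions of CMS), and the paper explicitly contrasts this with Theorem \ref{theorem:phiastphi}, which is proved \emph{without} an RLT input. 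You would need to actually carry out the Tauberian/contour argument for a $c$-weighted Poincar\'e series built from the CMS, in the presence of an unbounded real critical character; nothing in Lemma \ref{lemma:cphiastphi} is designed to make that deformation converge, and you offer no estimate controlling the contribution away from the saddle. Second, for non-abelian amenable $G$ the Plancherel/inversion step on the abelianisation $\mathbb{Z}^d$ only recovers \emph{fibre sums} of $\phi_c[t]\ast\phi_c[t]$ over cosets of $[G,G]$, not the pointwise value at $g^{-1}$ appearing in $\Upsilon_{c,\ast}(g)$. That $\Upsilon_{c,\ast}$ factors through the abelianisation is a \emph{conclusion} of the theorem, not something you may assume to make the Fourier inversion well-posed. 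The paper's periodic-orbit argument sidesteps both of these issues by never passing through a local or ratio limit theorem.
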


\begin{remark}
In section \ref{section:randomwalk} we discuss further the setting of a random walk when $p$ is finitely supported, (aperiodic,) and with the assumption that the semigroup generated by the support is the whole of $G$ (this last assumption implies transitivity of the group extension). When $G$ is Abelian we show in Proposition \ref{prop:rlt2} how to recover the function $\Psi$ of the ratio limit theorem as stated in Proposition \ref{prop:rlt} (but not recover the ratio limit theorem) from Lemma \ref{lemma:phiastphi} by
$$
\Upsilon_{c,\ast}(g) = \frac{1}{\Psi (g)}.
$$
Theorem \ref{theorem:phiastphi}, notably, is valid even for amenable groups.
\end{remark}

%%%%%%%%%%%%%%%%%%%%%%%%%%%%%%%%%%%%%%%%%%%%%%%%%%%
%%%%%%%%%%%%%%%%%%%%%RANDOMWALK%%%%%%%%%%%%%%%%%%%%%%
%%%%%%%%%%%%%%%%%%%%%%%%%%%%%%%%%%%%%%%%%%%%%%%%%%%
\section{(Symmetric) random walks}\label{section:randomwalk}
We vary $G$, a (symmetric) finite generating set $S$, and the probability $p:S\to [0,1]$, but fix that the structure of the group extension to describes the random walk. Namely, $\Sigma = S^\mathbb{Z}$ is the full shift on a (symmetric) finite generating set $S$ of $G$, $R=p:S\to [0,1]$, and $\mathfrak{s}$ idenifies the formal letter $a\in\mathcal{W}_1$ with the group element it represents. We assume that the semigroup generated by $S$ is equal to $G$. In this way the group extension is always transitive. 

The structure of a random walk is useful as
\begin{equation}\label{eq:rw}
\sum_{B,b\in\mathcal{W}_1}\langle \phi^{B,b}[t], \delta_e\rangle = \sum_{n=1}^\infty p^{\ast^n}(e) t^{-n}.
\end{equation}
When $S$ is symmetric $\phi$ and $\phi^*$ coincide. Yet $\nu^{\phi}_c$ need not coincide with $\mu^{\phi^*}_{c}$ as 
$$\nu^{\phi}_c([b])=\lim_{q\to\infty}\frac{\langle \phi_c^{b,a}[t_q], \phi_c^{A,a}[t_q]\rangle}{\langle \phi_c^{A,a}[t_q], \phi_c^{A,a}[t_q]\rangle}$$ whereas $$\mu^{\phi}_c([b])=\lim_{q\to\infty}\frac{\langle \phi_c^{b,a}[t_q], \sum_{D\in\mathcal{W}_1} \phi_c^{D,a}[t_q]\rangle}{\langle \phi_c^{A,a}[t_q], \phi_c^{A,a}[t_q]\rangle}.$$ Let $\nu^{A,a;D,a}_c$ be the measure with $$\nu^{A,a;D,a}_c([b])=\lim_{q\to\infty}\frac{\langle \phi_c^{b,a}[t_q], \phi_c^{D,a}[t_q]\rangle}{\langle \phi_c^{A,a}[t_q], \phi_c^{A,a}[t_q]\rangle}.$$ Let $\mu^{d}_c$ be the measure with $$\mu^{d}_c([b])=\lim_{q\to\infty}\frac{\langle \phi_c^{b,d}[t_q], \sum_{B\in\mathcal{W}_1} \phi_c^{B,a}[t_q]\rangle}{\langle \phi_c^{A,a}[t_q], \phi_c^{A,a}[t_q]\rangle}.$$ Then 
\begin{equation}\label{eq:rw2}
\sum_{D\in\mathcal{W}_1}\nu^{a;D,a}_c = \mu^{a}_c,
\end{equation}
with mass
$$
C^A_{a,a} = \sum_{D\in\mathcal{W}_1}\nu^{a;D,a}_c(1) = \lim_{q\to\infty}\frac{\langle \sum_{b\in\mathcal{W}_1} \phi_c^{b,a}[t_q], \sum_{D\in\mathcal{W}_1} \phi_c^{D,a}[t_q]\rangle}{\langle \phi_c^{A,a}[t_q], \phi_c^{A,a}[t_q]\rangle}.
$$
Let us also point out that $\Upsilon^{B,b;D,d}_c(g) = \lim_{q\to\infty}\frac{ \langle \rho(g)\phi^{B,b}_c[t], \phi^{D,d}_c[t]\rangle}{ \langle \phi^{A,a}_c[t], \phi^{A,a}_c[t]\rangle}$ has
\begin{equation}\label{eq:sum}
\Upsilon^{B,b;D,d}_c(\underline{\mathfrak{s}}(w)) = \gamma^{|w|}p^{-|w|}\nu^{b;D,d}_c([wB]).
\end{equation}
(See also equation \ref{eq:sum2}.)

The condition $\mathds{1}\preceq\rho$ can be characterized by operator norms: namely $\mathds{1}\preceq\rho$ implies that for any $F:G\to \mathbb{R}$ we have $\|\rho_*(F)\|_{\mathrm{op}}= \|\mathds{1}_*(F)\|_{\mathrm{op}}=\|F\|_{\ell^1(G)}$. In particular $\|\rho_*(\phi[t])\|_{\mathrm{op}}\to \infty$ as $t\to 1$, but this is weaker than the existence of $f$ with $\gamma(\phi\ast f)=1$.

We first check the non-triviality of the decay statement by giving an amenable example that does not have decay.
\begin{lemma}\label{lemma:amenablevectordecay}
Assume that $S$ is symmetric. Suppose that $G$ is amenable and $\rho=\lambda$ the left regular representation. Then for every $\gamma<1$ there is $v=v_\gamma$ so that $\langle \rho(\underline{\mathfrak{s}}(w))v,v\rangle \ge \gamma^{|w|}$ for $w$ with $|w|$ sufficiently large.
\end{lemma}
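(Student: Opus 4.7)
The plan is to leverage amenability via F{\o}lner sets to build a single non-negative vector $v\in\ell^2_+(G)$ whose autocorrelation $g\mapsto\langle\lambda(g)v,v\rangle$ decays only polynomially in the word length $|g|$ with respect to $S$. Since any exponential $\gamma^{|g|}$ with $\gamma<1$ is eventually dominated by $1/|g|$, and since $|\underline{\mathfrak s}(w)|\le|w|$ whenever $\underline{\mathfrak s}(w)$ is a product of $|w|$ generators, the claimed inequality will follow immediately from such a bound.

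By the F{\o}lner criterion, for each $n\ge 1$ I pick a finite set $F_n\subset G$ with $|sF_n\triangle F_n|/|F_n|\le 1/n^4$ for every $s\in S$. The unit vector $u_n:=\mathbf 1_{F_n}/\sqrt{|F_n|}$ then satisfies $\|\lambda(s)u_n-u_n\|\le 1/n^2$ for $s\in S$, and unitarity of $\lambda$ combined with the triangle inequality along a geodesic word for $g$ upgrades this to
\begin{equation*}
\|\lambda(g)u_n-u_n\|\le |g|/n^2\quad\text{for every }g\in G.
\end{equation*}
Assuming $G$ is infinite (the finite case is trivial since then $|G|^{-1/2}\mathbf 1_G$ is already $\lambda$-invariant), I inductively right-translate: replace $u_n$ by $v_n(x):=u_n(xg_n)$ where $g_n$ is chosen so that the supports $F_n g_n^{-1}$ are pairwise disjoint. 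Right translation commutes with $\lambda$, so each $v_n$ is still a unit vector in $\ell^2_+(G)$ satisfying the same bound.

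Now set $v:=\sum_{n\ge 1}n^{-1}v_n$. Disjointness of supports gives $\|v\|_{\ell^2(G)}^2=\sum_{n\ge 1}n^{-2}<\infty$, so $v\in\ell^2_+(G)$. For any $g\in G$ the vectors $\lambda(g)v_n$ and $v_m$ are all non-negative, so every cross inner product $\langle\lambda(g)v_n,v_m\rangle$ is non-negative and may be dropped, giving
\begin{equation*}
\langle\lambda(g)v,v\rangle\;\ge\;\sum_{n\ge 1}n^{-2}\langle\lambda(g)v_n,v_n\rangle.
\end{equation*}
For $n\ge|g|$ the polarisation identity $\|\lambda(g)v_n-v_n\|^2=2-2\langle\lambda(g)v_n,v_n\rangle$ combined with the bound above gives $\langle\lambda(g)v_n,v_n\rangle\ge 1-|g|^2/(2n^4)\ge 1/2$, and therefore
\begin{equation*}
\langle\lambda(g)v,v\rangle\;\ge\;\tfrac12\sum_{n\ge|g|}n^{-2}\;\ge\;\frac{1}{2|g|}.
\end{equation*}

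For an admissible word $w$ with $|w|=n$ we then obtain $\langle\lambda(\underline{\mathfrak s}(w))v,v\rangle\ge 1/(2n)$, which exceeds $\gamma^n$ once $n$ is large enough (depending on $\gamma$). The only delicate point is controlling the cross terms $\langle\lambda(g)v_n,v_m\rangle$ for $m\ne n$; this is precisely why the construction is kept inside the non-negative cone $\ell^2_+(G)$ using translates of F{\o}lner indicators, rather than using general almost-invariant vectors where cancellations could destroy the diagonal estimate.
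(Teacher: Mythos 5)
Your proof is correct, and it takes a genuinely different route from the paper's. The paper works inside its own thermodynamic framework: it uses Kesten's criterion (and symmetry of $p$) to deduce $\gamma(\phi\ast\delta_e)=1$, identifies the twisted measure $\mu^{\phi}_c$ with the equilibrium state $\mu_p$, extracts from Lemma~\ref{lemma:phiastastphi} that the diagonal coefficients $\langle\rho(\underline{\mathfrak s}(W))\phi_c[t_q],\phi_c[t_q]\rangle/\|\phi_c[t_q]\|^2$ converge to the constant $p^2$, and then builds $v_\eta=\sum_q\eta^q\,\phi_c[t_q]/\|\phi_c[t_q]\|$ (the displayed formula in the paper drops the $\eta^q$ weight --- clearly a typo given the next line). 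You bypass all of that and argue from the F{\o}lner criterion directly: take $F_n$ with defect $\le n^{-4}$ over the finite set $S$, normalize indicators to get $u_n$ with $\|\lambda(s)u_n-u_n\|\le n^{-2}$, upgrade to $\|\lambda(g)u_n-u_n\|\le |g|/n^2$ by the telescoping/unitarity argument, right-translate to disjoint supports (legal since right translations commute with $\lambda$ and $G$ is infinite), and sum with weights $n^{-1}$. The non-negativity of all terms lets you drop the cross inner products, and the polarisation bound $\langle\lambda(g)v_n,v_n\rangle\ge 1-|g|^2/(2n^4)\ge 1/2$ for $n\ge|g|$, together with $\sum_{n\ge k}n^{-2}\ge 1/k$, gives $\langle\lambda(g)v,v\rangle\ge 1/(2|g|)$, which dominates any $\gamma^{|w|}$ for $|w|$ large.

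Two remarks on the comparison. First, your argument is strictly stronger in two respects: you do not need symmetry of $S$ (the paper invokes Kesten's criterion, which requires it, to get $\gamma(\phi\ast\delta_e)=1$), and you produce a single $v$ with a polynomial lower bound valid for all $\gamma<1$ simultaneously, rather than a $v_\gamma$ depending on $\gamma$. Second, the paper's proof has a different purpose in context: by realizing the witness vector as a limit built from the thermodynamic densities $\phi_c[t_q]$, it illustrates that the obstruction to uniform decay is exactly the object $\phi_c[t]$ that the twisted-measure machinery is tracking, which is the thread the rest of the paper develops. Your proof is cleaner as a standalone verification of the lemma but severs that connection. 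Both are valid; which one to prefer depends on whether the goal is brevity or illustrating the framework.
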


\begin{proof}
As $G$ is amenable and $p$ symmetric we deduce from equation \ref{eq:rw} that $\gamma(\phi\ast \delta_e)=1$. Then (using for instance equation \ref{eq:rw2}) $\mu^{\phi}_{c}$ coincides with $\mu=\mu_p$ the equilibrium state for $p$. Using Lemma \ref{lemma:phiastastphi} we have
$$
\mu^{\phi}_{c}([aWA]) = p^{|W|} \lim_{q\to\infty}\frac{\langle \rho(\underline{\mathfrak{s}}(W)\phi_c[t_q], \phi_c[t_q]\rangle}{\langle \phi_c[t_q], \phi_c[t_q]\rangle}
$$
On the other hand, since $\mu([aWA])=p^{|W|+2}$ we must have 
$$
p^{2} = \lim_{q\to\infty}\frac{\langle \rho(\underline{\mathfrak{s}}(W))\phi_c[t_q], \phi_c[t_q]\rangle}{\langle \phi_c[t_q], \phi_c[t_q]\rangle}.
$$

Let $\eta<1$ which will be determined in terms of $\gamma<1$. Define
$$
v_\eta = \sum_{q=1}^\infty \frac{\phi_c[t_q]}{\|\phi_c[t_q]\|^2}
$$
For any $g=\underline{\mathfrak{s}}(W)$ we have
\begin{align*}
\langle \rho(g) v_\eta ,v_\eta\rangle_{\ell^2(G)} &= \sum_{n\in\mathbb{N}}\sum_{m\in\mathbb{N}} \eta^n\eta^m \frac{\langle\rho(g)\phi_c[t_{n}] , \phi_c[t_{m}]\rangle_{\ell^2(G)} }{\|\phi_c[t_{n}]\|_{\ell^2(G)}\|\phi_c[t_{m}]\|_{\ell^2(G)}} 
\\
&\ge \sum_{n\in\mathbb{N}} (\eta^2)^{n} \frac{\langle\rho(g)\phi_c[t_{n}] , \phi_c[t_{n}]\rangle_{\ell^2(G)} }{\|\phi_c[t_{n}]\|_{\ell^2(G)}^2}
\\ 
&\ge \sum_{n\in\mathbb{N}: n\ge |w|} (\eta^2)^{n} p^2
\end{align*}
We assume $\eta$ is sufficiently close to $1$ so that $\sum_{n\in\mathbb{N}: n\ge k} (\eta^2)^{n} p^2> \gamma^k$ for all $k$ larger than some $K$.
\end{proof}

We now explain directly from the definitions how to obtain the harmonic function from the Ratio Limit Theorem.
\begin{proposition}\label{prop:rlt2}
If $G$ is Abelian then
$$
\Upsilon_{c,\ast}(g) = \frac{1}{\Psi (g)}
$$
for $\Psi $ in the Ratio Limit Theorem \ref{prop:rlt}.
\end{proposition}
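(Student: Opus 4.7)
\emph{Plan.} I would compute $\Upsilon_{c,\ast}(g)$ from Lemma \ref{lemma:phiastphi} explicitly in the random-walk setting of Section \ref{section:randomwalk} and then apply the Ratio Limit Theorem (Proposition \ref{prop:rlt}) through a Tauberian argument. Throughout $\rho=\lambda$, $f=\delta_e$, so $\phi_c[t]\ast_\lambda\delta_e=\phi_c[t]$ and $\phi_c[t](h)=\sum_{n\ge 1}c_n t^{-n}p^{\ast n}(h)$.

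First comes the algebraic reduction. The identity $\langle \lambda(g)f,f^*\rangle_{\ell^2(G)}=(f\ast f)(g^{-1})$ is immediate from substituting $y=g^{-1}x$ in the sum defining the inner product. Combining with $p^{\ast n}\ast p^{\ast m}=p^{\ast(n+m)}$ and grouping by $N=n+m$ gives
\[
\langle \lambda(g)\phi_c[t],\phi_c^*[t]\rangle
= \sum_{N\ge 2} A_N\,t^{-N}\,p^{\ast N}(g^{-1}),\qquad A_N:=\sum_{\substack{n,m\ge 1\\ n+m=N}} c_n c_m,
\]
with the denominator $\langle \phi_c[t],\phi_c^*[t]\rangle$ being the same expression at $g=e$. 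Hence Lemma \ref{lemma:phiastphi} gives
\[
\Upsilon_{c,\ast}(g)
= \limsup_{k\to\infty}\frac{\sum_N A_N t_k^{-N}p^{\ast N}(g^{-1})}{\sum_N A_N t_k^{-N}p^{\ast N}(e)}.
\]

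The second step is the Tauberian passage. As $t_k\searrow\gamma=\gamma(\phi\ast\delta_e)$ both series diverge (by the very definition of $\gamma$). Writing $p^{\ast N}(g^{-1})=\bigl(\lim_{M\to\infty} p^{\ast M}(g^{-1})/p^{\ast M}(e)+\varepsilon_N\bigr)p^{\ast N}(e)$ with $\varepsilon_N\to 0$ from Proposition \ref{prop:rlt}, the error $\sum A_N t_k^{-N}\varepsilon_N p^{\ast N}(e)$ is $o(1)$ times the denominator by a standard Karamata/Stolz–Cesàro argument for positive power-series-type sums, using only positivity and termwise convergence. This identifies $\Upsilon_{c,\ast}(g)$ with the termwise limit from Proposition \ref{prop:rlt}. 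The third step uses abelianness to identify this termwise limit with $1/\Psi(g)$: the $\gamma$-harmonicity equation of $1/\Psi$ together with uniqueness in Proposition \ref{prop:rlt} and the normalisation $1/\Psi(e)=1$ forces $\log(1/\Psi)$ to be a homomorphism $G\to\mathbb{R}$, and this character is pinned down by comparison with the character $\exp\psi$ of Theorem \ref{theorem:phiastphi}, whose matching normalised $\gamma$-harmonicity equation forces $\exp\psi=1/\Psi$ on $G$.

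\emph{Main obstacle.} The Tauberian interchange is the delicate point. The weights $c_n$ from Lemma \ref{lemma:phiastphi} are only constrained to be sub-exponentially growing, so $A_N t_k^{-N}p^{\ast N}(e)$ need not concentrate at any specific scale of $N$ as $t_k\searrow\gamma$; one must verify using only positivity and termwise convergence (no sharp asymptotic for $p^{\ast N}(e)$) that finite-$N$ contributions become negligible in the ratio. A secondary nuisance is keeping track of the inversion in $g\mapsto g^{-1}$ through the convolution: for abelian $G$ this becomes a sign choice on the character $\log(1/\Psi)$, to be pinned down by matching the two independent characterisations (direct Tauberian and Theorem \ref{theorem:phiastphi}).
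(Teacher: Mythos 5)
Your approach coincides with the paper's: use abelianness to rewrite $\Upsilon_{c,\ast}(g)$ as a ratio of power series in $t_k^{-1}$ with $N$th coefficient $(c\ast c)_N\,p^{\ast N}(\cdot)$, and then feed in the ratio limit theorem. But let me push back on two points. The step you single out as the delicate ``Tauberian interchange'' is not delicate and needs no concentration at any scale of $N$: the weight $c$ in Lemma \ref{lemma:phiastphi} is constructed (via Lemma \ref{lemma:cphiastphi}) precisely so that the denominator $\sum_N(c\ast c)_N t_k^{-N}p^{\ast N}(e)$ diverges as $t_k\to\gamma$. So for any fixed $M$, the block $N\le M$ is bounded uniformly in $t_k$ and hence a vanishing fraction of the ratio, while on $N>M$ Proposition \ref{prop:rlt} gives $p^{\ast N}(g)=(1+\epsilon_N(g))p^{\ast N}(e)/\Psi(g)$ with $\epsilon'_M(g)=\sup_{N>M}|\epsilon_N(g)|\to 0$, placing the whole ratio within a factor $1+\epsilon'_M(g)$ of $1/\Psi(g)$; letting $M\to\infty$ concludes. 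No Karamata or Stolz--Ces\`{a}ro machinery is required. Your third step is superfluous and a little circular: Proposition \ref{prop:rlt} already \emph{states} that the termwise limit equals $1/\Psi(g)$, so once step two is done you are finished. Appealing to Theorem \ref{theorem:phiastphi} to ``pin down the character'' reuses a result whose conclusion is itself $\Upsilon_{c,\ast}=\exp\psi$, and whose proof (Section \ref{section:amenablegibbs}) is at least as involved as what you are trying to prove; the whole point of this proposition is that the RLT alone recovers $\Upsilon_{c,\ast}$.
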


\begin{proof}
When $G$ is Abelian we can rearrange terms
$$
\Upsilon_{c,\ast}(g) = \limsup_{k\to\infty}\lim_{N\to\infty}\frac{\langle \phi_{c;\le N}[t_k]\ast \phi_{c;\le N}[t_k]\ast_\rho f , \rho(g)^{-1} f\rangle_{\mathcal{H}}}{\langle \phi_{c;\le N}[t_k]\ast\phi_{c;\le N}[t_k]\ast_\rho f , f\rangle_{\mathcal{H}}}
$$
In particular given that we assume $\mathcal{H}=\ell^2(G)$, $f=\delta_e$ and that the group extension is for a random walk, we have
\begin{equation}\label{eq:upsilonabelian}
\Upsilon_{c,\ast}(g)  = \limsup_{k\to\infty}\lim_{N\to\infty}\frac{\langle \phi_{c\ast c;\le N}[t_k], \delta_g \rangle_{\mathcal{H}}}{\langle \phi_{c\ast c;\le N}[t_k] ,\delta_e\rangle_{\mathcal{H}}}.
\end{equation}
(In checking this we use Equation \ref{eq:castc} and divergence of the denominator.)

Use the notation $F_1\asymp_{C}F_2$ to mean $C^{-1}F_1\le F_2\le CF_1$.
The ratio limit theorem for random walks (Proposition  \ref{prop:rlt}) says that there are $\epsilon_n(g)\to 0$ as $n\to \infty$ with $\Psi(g)p^{\ast n}(g) = (1+\epsilon_n(g))p^{\ast n}(e)$. Substituting this into equation \ref{eq:upsilonabelian}  gives
$$
\Upsilon_{c,\ast}(g) \asymp_{(1+\epsilon^\prime_M(g))}  \frac{1}{\Psi(g)}\limsup_{k\to\infty}\frac{\sum_{n>M}(c\ast c)_n t_k^{-n}p^{\ast n}(e)}{\sum_{n>M}(c\ast c)_n t_k^{-n}p^{\ast n}(e)},
$$
with $\epsilon^\prime_M(g)=\sup_{n>M} |\epsilon_n(g)|$, and again using divergence of the denominator and boundedness of $\sum_{n\le M}(c\ast c)_n t_k^{-n}p^{\ast n}(e)$ for $M$ fixed. Since $M$ is arbitrary and $\epsilon^\prime_M(g)\to 0$ the conclusion follows.
\end{proof}

We now specialise to the free group $F_{a,b}$ with free basis $S=\left\{ a,a^{-1},b,b^{-1}\right\}$. Consider the uniform random walk $p(s)=4^{-1}$ for each  $s\in S$. One can determine the radius of convergence in this case. (To do the calculation, it is helpful to represent returns to the identity in terms of Catalan numbers.) See also \cite{Pic}.
\begin{proposition}[Theorem 3 in \cite{Kesten2}; Corollary 3.6.\ in \cite{woess}]
For $S^\mathbb{N},\mathfrak{s},p$ given by the simple random walk on the free group $G=F_{a,b}$ we have $$
\gamma(\delta_e) = \frac{\sqrt{3}}{2}.
$$
\end{proposition}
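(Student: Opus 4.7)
The plan is to reduce the computation of $\gamma(\delta_e)$ to the classical spectral radius calculation for the simple random walk on the $4$-regular Cayley tree of $F_{a,b}$. The group extension here is transitive (the support of $p$ generates $G$) and is strongly positively recurrent since $\Sigma=S^{\mathbb{Z}}$ is a full shift on a finite alphabet with a constant potential. Hence Proposition~\ref{prop:gurevic} applies and gives $\gamma(\delta_e)=\exp P(\log p, T_{\mathfrak{s}})$; equivalently, combining equation \eqref{eq:rw} with Lemma~\ref{lemma:changeletter} (so that the one-letter pair $(A,a)$ does not affect the radius of convergence), one has
$$\gamma(\delta_e)=\limsup_{n\to\infty}\bigl(p^{\ast n}(e)\bigr)^{1/n}.$$

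Next, I would write $p^{\ast n}(e) = 4^{-n} N_n$, where $N_n$ is the number of walks of length $n$ in the $4$-regular tree $\mathcal{T}$ that start and end at the identity. To compute $N_n$, I would carry out the standard first-passage decomposition on a tree. Let $H(z)=\sum_{n\ge 1}h_n z^n$ be the generating function counting walks of length $n$ starting at a fixed neighbor $v$ of $e$ that first hit $e$ at step $n$. Any such walk either steps directly from $v$ to $e$ (contributing $z$) or first moves to one of the three other neighbors $w$ of $v$, first-returns from $w$ to $v$ (contributing a factor $H(z)$ by the homogeneity of $\mathcal{T}$), and then first-passes from $v$ to $e$ (another factor $H(z)$). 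This yields the quadratic functional equation
$$H(z)=z+3z\,H(z)^2,\qquad H(z)=\frac{1-\sqrt{1-12z^2}}{6z}.$$

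The first-return generating function at $e$ is $F(z)=4zH(z)$ (four choices for the initial step out of $e$), and the closed-walk generating function is
$$U(z)=\sum_{n\ge 0}N_n z^n=\frac{1}{1-F(z)}=\frac{3}{1+2\sqrt{1-12z^2}}.$$
The only singularity of $U$ on the positive real axis is the branch point at $z=1/(2\sqrt{3})$, so the radius of convergence of $U$ is $1/(2\sqrt{3})$. Therefore $\limsup N_n^{1/n}=2\sqrt{3}$, and dividing by $4$ gives $\limsup p^{\ast n}(e)^{1/n}=\sqrt{3}/2$, as required.

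The only step with any real content is the tree recursion for $H(z)$, whose correctness hinges on the key feature that removing $e$ from $\mathcal{T}$ disconnects it into four isomorphic subtrees, each rooted at a neighbor. There is no serious obstacle: the verification of the strong positive recurrence hypothesis is immediate for the full shift with constant potential, and the remaining work is elementary algebra on the quadratic.
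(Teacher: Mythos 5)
Your argument is correct, and it is essentially the classical computation from Kesten and Woess that the paper cites rather than re-proves; the paper's parenthetical hint about Catalan numbers is the same calculation in disguise, since your first-passage generating function satisfies $H(z)=zC(3z^2)$ with $C(z)=\frac{1-\sqrt{1-4z}}{2z}$ the Catalan generating function and the quadratic $C=1+zC^2$ is exactly your tree recursion. One small wrinkle: invoking Proposition~\ref{prop:gurevic} is a bit loose, because that proposition requires the hypothesis $\exp P(\log R,T_{\mathfrak{s}})>\gamma(\mathrm{SPR})$, not merely that $(\sigma,R)$ is SPR, and checking it here is slightly circular (one computes $\gamma(\mathrm{SPR})=3/4<\sqrt{3}/2$ only after knowing the answer). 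Fortunately the second route you name is self-contained: by Lemma~\ref{lemma:changeletter} the abscissa of convergence of $\phi^{A,a}_{\le N}[t](e)$ is independent of $(A,a)$, so it equals the abscissa of $\sum_{A,a}\phi^{A,a}[t](e)=\sum_n t^{-n}p^{\ast n}(e)$ by equation~\eqref{eq:rw}, and hence $\gamma(\delta_e)=\limsup_n(p^{\ast n}(e))^{1/n}$ directly from Definition~\ref{def:integrability}; your generating-function analysis of $U(z)=3/(1+2\sqrt{1-12z^2})$ then correctly locates the singularity at $z=1/(2\sqrt3)$ and gives $\sqrt3/2$.
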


There is a special class of functions $\Psi:F_{a,b}\to\mathbb{R}$ called \emph{the spherical functions} --- the reference we use for this discussion is the exposition of Fig\`{a}-Talamanca and Picardello \cite{FT}. By definition a spherical function $\Psi:F_{a,b}\to\mathbb{R}$ is constant on every sphere (is radial) and is multiplicative on the convolutional algebra of radial functions (and one tends to normalise so that $\Psi(e)=1$). Alternatively, the spherical functions are precisely eigenfunctions $p\ast \Psi = \hat{\gamma}(\Psi) \Psi$ with $p$ the uniform probability on the free basis, and can be characterized in terms the Poisson-Kernel (we explain a particular case in section \ref{appendix:freegroup}). Our construction of $\phi_c[t]=\phi_c^{A,a}[t]$ and $\Upsilon_c=\Upsilon^{A,a;A,a}_c$ is not optimized to be radial, but we claim that there is a natural linear combination that is spherical for the free group. As we have specialised to the symmetric setting we have $\Upsilon_c=\Upsilon_{c,\ast}$.
\begin{lemma}\label{lemma:spherical}
For any $c$ the function $\overline{\Upsilon} = \sum_{B,b,D,d\in\mathcal{W}_1} \Upsilon^{B,b;D,d}_c$ is spherical, and moreover
$$
\overline{\Upsilon}(g) = C \left( 1 + \frac{|g|}{2}\right) 3^{-|g|/2}
$$
for some constant $C>0$.
\end{lemma}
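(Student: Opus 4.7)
The plan is to exploit the automorphism invariance of the simple random walk on the Cayley tree of $F_{a,b}$ to carry out an explicit generating-function computation, and then extract the spherical function by isolating the critical singularity as $t_q\to\gamma^+$.

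First I would collapse the sum over boundary letters: the summed function $\bar{\phi}_c[t] := \sum_{B,b\in\mathcal{W}_1} \phi^{B,b}_c[t]$ equals $\sum_{n\ge 1} c_n t^{-n} p^{\ast n}$ pointwise, because $\sum_{B,b} \#\{w\in S^n: w_1=B,\, w_n=b,\, \underline{\mathfrak{s}}(w)=g\} = 4^n p^{\ast n}(g)$. Since $p$ is the uniform distribution on a symmetric free generating set, the Cayley graph is the $4$-regular tree and its automorphism group acts transitively on each sphere around $e$, so $p^{\ast n}(g)$ depends only on $|g|$. Hence $\bar{\phi}_c[t]$ is radial, and therefore $\overline{\Upsilon}$ is radial.

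To find the explicit radial form I would first treat $c_n\equiv 1$ (permissible since the relevant norms already diverge at $t=\gamma$). Writing $\bar{\phi}[t] = G(e,\cdot;1/t) - \delta_e$ where $G$ is the Green's function, the tree structure gives the classical factorization $G(e,g;z) = G_0(z)F(z)^{|g|}$ with $F(z) = \tfrac{2}{3z}(1-\sqrt{1-3z^2/4})$; in particular $F(1/\gamma) = 3^{-1/2}$ and the critical singularity sits at $1-3F^2=0$. For $|g|=k$, decomposing $h\in F_{a,b}$ by its shared prefix length $l=(g,h)_e$ with $g$ and offset $j=|h|-l$ (the three cases $l\in\{0,k\}$ versus $0<l<k$ contributing $3^j$ and $2\cdot 3^{j-1}$ tree nodes respectively for $j\ge 1$, and $1$ node for $j=0$) yields
\[
\sum_h F^{|g^{-1}h|+|h|} \;=\; F^k\Bigl[(k+1) \,+\, \tfrac{(2k+4)F^2}{1-3F^2}\Bigr],
\]
with the $g=e$ specialisation giving $\|\bar{\phi}[t]\|^2 \sim G_0^2(1+F^2)/(1-3F^2)$ up to bounded corrections from the $\delta_e$-term. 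As $t_q\to\gamma^+$, both the numerator $\langle \rho(g)\bar{\phi}[t_q],\bar{\phi}[t_q]\rangle$ and the denominator $\|\phi^{A,a}[t_q]\|^2$ diverge like $(1-3F^2)^{-1}$, and comparing leading coefficients gives the ratio $F^{|g|}\cdot(|g|+2)/2 = (1+|g|/2)3^{-|g|/2}$, up to the positive constant $C = \lim_q \|\bar{\phi}_c[t_q]\|^2/\|\phi^{A,a}_c[t_q]\|^2$.

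For general slowly increasing $c$ the same conclusion follows by invoking the Lalley--Picardello local limit theorem $p^{\ast n}(g)\sim C_0\Psi(g)\gamma^n n^{-3/2}$ with $\Psi(g)=(1+|g|/2)3^{-|g|/2}$, which shows that $\bar{\phi}_c[t]$ is asymptotically controlled (after normalising by a $c$-dependent scalar) by the principal spherical function $\Psi$ as $t\to\gamma^+$, so the limit formula for $\overline{\Upsilon}$ is independent of $c$. Sphericalness is then the elementary verification that $\Psi$ solves the radial recurrence $\Psi(k-1) + 3\Psi(k+1) = 4\gamma\Psi(k)$, i.e.\ $p\ast\Psi=\gamma\Psi$. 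The main obstacle is the combinatorial Gromov-product bookkeeping that produces the exact coefficient $(k+2)/2$ in the limit, together with the careful extraction of the leading singular behaviour from two formally divergent quantities whose ratio is finite; the extension from $c\equiv 1$ to arbitrary slowly increasing $c$ is an additional layer that requires the local limit theorem but does not alter the limiting spherical function.
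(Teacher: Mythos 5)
Your proof is correct, but it follows a genuinely different route from the paper's. The paper never computes any Green's function asymptotics: after establishing radiality of $\overline{\Upsilon}$ from the same combinatorial count you use, it invokes \eqref{eq:sum} to translate $\overline{\Upsilon}$ into values of the family of measures $\nu^{a';D,a''}_c$ on cylinders, observes the simple summation identity $\sum_D \nu^{a';D,a''}_c([swB]) = \nu^{a';s^{-1},a''}_c([wB])$, and deduces the eigenvalue equation $p\ast\overline{\Upsilon}=\gamma\,\overline{\Upsilon}$ directly. Since a radial eigenfunction is determined by induction from its value at $e$, the stated formula then follows by citing \cite{FT} for $\Psi$. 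That argument is short and structural, but rests entirely on the twisted-measure machinery built earlier in the paper. Your approach instead does the explicit tree computation; it is more self-contained and produces the formula from scratch, but it is also more laborious and specific to the free-group geometry. Two remarks on your write-up. First, your passage from $c\equiv 1$ to general slowly increasing $c$ by appeal to the Lalley--Picardello local limit theorem is only sketched; as written one cannot simply substitute pointwise asymptotics for $p^{\ast n}(g)$ into the (formally divergent) inner products. The cleanest way to fill this, which also avoids the Gromov-product bookkeeping entirely and subsumes your $c\equiv 1$ computation, is to use symmetry of $p$ to write $\langle \rho(g)\bar\phi_c[t],\bar\phi_c[t]\rangle = \sum_{K}(c\ast c)_K\,t^{-K}\,p^{\ast K}(g)$, after which the ratio with the $g=e$ case is precisely a ratio-limit statement to which the LLT applies uniformly. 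Second, it is worth noting explicitly that your argument computes the full limit as $t\to\gamma^+$, not just a subsequential limit along the $t_q$ from Lemma~\ref{lemma:phiastastphi}; since you show that the full limit exists, the two coincide, but this should be said.
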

\begin{proof}
It is easy to see that $\overline{\Upsilon}$ is constant on spheres from
$$
\#\left\{ w_n\in S^n, u_m\in S^m : gw_n = u_m\right\} = \#\left\{ \hat{w}_n\in S^n, \hat{u}_m\in S^m : \hat{g}\hat{w}_n = \hat{u}_m\right\}, 
$$
for each $g,\hat{g}$ with $|g|=|\hat{g}|$.

If a function $\hat{\Psi}$, constant spheres, satisfies the equation $p\ast \hat{\Psi}=r \hat{\Psi}$ then $\hat{\Psi}(g)$ is completely determined by an induction from the identity term $\hat{\Psi}(e)$. We already know from \cite{FT} that the function $\Psi(g)=\left( 1 + \frac{|g|}{2}\right) 3^{-|g|/2}$ has $p\ast \Psi=2^{-1}3^{1/2} \Psi$. It is enough to check that $\overline{\Upsilon}$ has $p\ast \overline{\Upsilon}=2^{-1}3^{1/2}  \overline{\Upsilon}$.

Equation \ref{eq:sum} implies that
\begin{equation}\label{eq:sum2}
\overline{\Upsilon}(\underline{\mathfrak{s}}(w)) = \sum_{B,a^\prime,D,a^{\prime\prime}\in\mathcal{W}_1} \gamma^{|w|}p^{-|w|}\nu^{a^\prime;D,a^{\prime\prime}}([wB]). 
\end{equation}
Consequently for any $s\in S$ and letting $g=\underline{\mathfrak{s}}(w)$ we have
$$
\overline{\Upsilon}(sg) = \gamma p^{-1} \sum_{B,a^\prime,D,a^{\prime\prime}\in\mathcal{W}_1} \gamma^{|w|}p^{-|w|}\nu^{a^\prime;D,a^{\prime\prime}}([swB]). 
$$
But also
$$
\sum_{D\in\mathcal{W}_1} \nu^{a^\prime;D,a^{\prime\prime}}([swB]) = \nu^{a^\prime;s^{-1},a^{\prime\prime}}([wB]) .
$$
In this way
$$
\sum_{s\in S}p(s)\overline{\Upsilon}(sg) = \gamma \sum_{B,a^\prime,a^{\prime\prime}\in\mathcal{W}_1}\sum_{s\in S} \gamma^{|w|}p^{-|w|}\nu^{a^\prime;s^{-1},a^{\prime\prime}}([wB]) = \gamma \overline{\Upsilon}(g). 
$$
It follows that $\overline{\Upsilon}$ is a constant multiple of $\Psi$.
\end{proof}

\begin{remark}
\begin{enumerate}
\item
We see in the proof of Lemma \ref{lemma:spherical} that the coincidence between $\overline{\Upsilon}$ and a constant multiple of $\Psi$ is entirely due to the eigenvalue. There is not an immediate comparison with the local limit theorem since it concerns $\langle \rho(g) \phi[t]\ast \delta_e, \delta_e\rangle $ and not $\langle \rho(g) \phi[t]\ast \delta_e, \phi[t]\ast \delta_e\rangle $.
\item
For the unacquainted, we include in section \ref{appendix:freegroup} that the curious-looking function $\Psi(g)=\left( 1 + \frac{|g|}{2}\right) 3^{-|g|/2}$ is a matrix coefficient for the boundary representation.
\end{enumerate}
\end{remark}

Lemma \ref{lemma:spherical} has consequences for the lower and upper bound on rate of decay.
\begin{corollary}
For each $\epsilon>0$, a $\mu_c^{\phi}$ typical point has
$$
\langle \rho(\underline{\mathfrak{s}}(x_{-n}\cdots x_{-1}))f,v\rangle \le \frac{|\underline{\mathfrak{s}}(x_{-n}\cdots x_{-1})|}{\sqrt{3}^{|\underline{\mathfrak{s}}(x_{-n}\cdots x_{-1})|}}\frac{(1+\epsilon)^n\sqrt{3}^n}{2^n}
$$
for $n$ sufficiently large (depending on $x$).

For each $\epsilon>0$, a $\mu$ typical point has 
$$
\langle \rho(\underline{\mathfrak{s}}(x_{-n}\cdots x_{-1}))f,v\rangle \le \frac{(1+\epsilon)^n\sqrt{3}^{n}}{2^{n}}
$$
for $n$ sufficiently large (depending on $x$).
\end{corollary}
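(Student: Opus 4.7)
The strategy is to apply the paper's general decay statements to the simple symmetric random walk on $F_{a,b}$, combining them with the explicit spherical evaluation of Lemma \ref{lemma:spherical}. The second bound is a direct consequence of Lemma \ref{lemma:decaystatement}; the first bound combines the $\mu^{\phi^*\ast_\rho f}_c$-variant of Lemma \ref{lemma:sphericaldecay} recorded in Remark \ref{remark:sphericaldecay}(2) with the upper bound on $\Upsilon_{c,\ast}$ furnished by Lemma \ref{lemma:spherical}.

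For the second bound, the key input is that in this setting $\gamma(f)=\sqrt{3}/2$ by Proposition \ref{prop:gurevic} and Kesten's identity for the spectral radius of the free group, and $\gamma(\phi\ast_\rho f)=\gamma(f)$ follows from the hierarchy of convergence parameters combined with $\phi=\phi^*$ (spectral theory of the left regular representation gives equality throughout). Choosing $\gamma=(1+\epsilon)\sqrt{3}/2$ in Lemma \ref{lemma:decaystatement} yields $\langle\rho(\underline{\mathfrak{s}}(x_{-n}\cdots x_{-1}))f,v\rangle \le (1+\epsilon)^n\sqrt{3}^n/2^n$ for $\mu$-a.e.\ $x$ and all sufficiently large $n$.

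For the first bound, symmetry identifies $\mu^{\phi^*\ast_\rho f}_c=\mu_c^\phi$, and the hypothesis $\gamma(\phi\ast_\rho f)=\gamma(f)$ just verified lets us invoke Remark \ref{remark:sphericaldecay}(2), obtaining
$$
\langle\rho(g)f,v\rangle \le (1-\epsilon)^{-n}\Upsilon_{c,\ast}(g)
$$
for $\mu_c^\phi$-typical $x$ eventually, with $g=\underline{\mathfrak{s}}(x_{-n}\cdots x_{-1})$. Positivity of each $\Upsilon^{B,b;D,d}_c$ yields $\Upsilon_{c,\ast}(g)\le\overline{\Upsilon}(g)$, and Lemma \ref{lemma:spherical} evaluates $\overline{\Upsilon}(g)=C(1+|g|/2)3^{-|g|/2}\lesssim |g|/\sqrt{3}^{|g|}$ for $|g|$ large. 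Substitution produces the right-hand side modulo the extra factor $\sqrt{3}^n/2^n$; this sharpening is obtained by re-running the Borel--Cantelli step in the proof of Lemma \ref{lemma:sphericaldecay} with the tighter threshold $A_n\Upsilon_{c,\ast}(g)$ for $A_n=(1+\epsilon)^n(\sqrt{3}/2)^n$, using the cylinder estimate $\mu_c^\phi([u])\le CR_n(uy)\Upsilon_{c,\ast}(\underline{\mathfrak{s}}(u))\gamma^{-n}$ coming from Lemma \ref{lemma:phiastphi}.

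The delicate step is this last Borel--Cantelli estimate: the summability of $\sum_n A_n^{-1}\gamma^{-n}\langle\phi_n^{A,a}[1]\ast_\rho f,v\rangle$ reduces to evaluating the thermodynamic series $\phi^{A,a}[t]$ at $t=(1+\epsilon)\gamma^2$, and this exceeds $\gamma(\phi\ast_\rho f)=\gamma=\sqrt{3}/2$ precisely when $(1+\epsilon)\gamma>1$, i.e.\ only once $\epsilon$ is past a fixed threshold $2/\sqrt{3}-1$. Extending to arbitrary $\epsilon>0$ is where I expect the main difficulty: one must exploit polynomial slack in the spherical bound $(1+|g|/2)3^{-|g|/2}$---analogous in spirit to the $n^{-2}$ correction appearing in Lemma \ref{lemma:sphericaldecay}---when $|g|$ approaches $n$, together with the uniform decay guaranteed by Theorem \ref{theorem:transitive}. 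Once the Borel--Cantelli sums converge, the remaining manipulations (rewriting $(1-\epsilon)^{-n}$ as $(1+\epsilon')^n$, absorbing the polynomial $(1+|g|/2)\mapsto |g|$ for $|g|$ large, and bundling constants into the $(1+\epsilon)^n$) are routine.
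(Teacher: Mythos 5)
Your handling of the second bound is correct: since $\phi=\phi^*$ in the symmetric case, $\gamma(\phi\ast_\rho f)=\gamma(f)=\gamma(\delta_e)=\sqrt{3}/2$, and Lemma~\ref{lemma:decaystatement} applied with $\gamma=(1+\epsilon)\sqrt{3}/2$ gives exactly the stated inequality for $\mu$-typical $x$ (and the case $(1+\epsilon)\sqrt{3}/2\ge 1$ is vacuous).

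For the first bound, your strategy --- combine Remark~\ref{remark:sphericaldecay}(2) with the inequality $\Upsilon_{c,\ast}\le\overline{\Upsilon}$ of Lemma~\ref{lemma:spherical} --- is the right one, and the gap you flag is genuine. To be concrete about it: writing $g=\underline{\mathfrak{s}}(x_{-n}\cdots x_{-1})$, what Remark~\ref{remark:sphericaldecay}(2) plus Lemma~\ref{lemma:spherical} deliver is a bound of the shape
$$
\langle\rho(g)f,v\rangle \le (1-\epsilon)^{-n}\,\Upsilon_{c,\ast}(g)\le C\,(1-\epsilon)^{-n}\,\bigl(1+\tfrac{|g|}{2}\bigr)3^{-|g|/2},
$$
i.e.\ $\frac{|g|}{\sqrt{3}^{|g|}}(1+\epsilon')^n$ after absorbing the constant and the ratio $\bigl(1+\tfrac{|g|}{2}\bigr)/|g|$ into the $(1+\epsilon')^n$. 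The target in the Corollary carries an extra multiplicative factor of $\gamma^n=(\sqrt{3}/2)^n$, which tends to $0$; so for $\epsilon$ below the threshold $2/\sqrt{3}-1$ the stated inequality is strictly stronger than what the remark provides, exactly as you noticed. Re-running the Borel--Cantelli of Lemma~\ref{lemma:sphericaldecay} with the threshold $B_n(g)=(1+\epsilon)^n\gamma^n\,|g|3^{-|g|/2}$ and the cylinder estimate $\mu_c^\phi([w])\lesssim R_n(wx)\gamma^{-n}\Upsilon_{c,\ast}(\underline{\mathfrak{s}}(w))$ from equation~\eqref{equation:xi} gives
$$
\mu_c^\phi(E_n)\lesssim (1+\epsilon)^{-n}\gamma^{-2n}\langle p^{*n}\ast_\rho f,v\rangle,
$$
and even the sharp operator-norm bound $\langle p^{*n}\ast_\rho f,v\rangle\le\|p\|_{\mathrm{op}}^n\|f\|\|v\|=\gamma^n\|f\|\|v\|$ (which your write-up does not invoke, but should) only brings this down to $(1+\epsilon)^{-n}\gamma^{-n}$, which is not summable. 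Neither the ``polynomial slack'' in the spherical function nor the uniform decay of Theorem~\ref{theorem:transitive} can supply the missing exponential factor, since the latter only recovers the same $\gamma=\sqrt{3}/2$.

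So the concrete assessment is: the gap you identified is real, not a technicality you could patch, and none of the leads you propose will close it. As far as I can tell the factor $\frac{\sqrt{3}^n}{2^n}$ in the first displayed inequality does not follow from the cited lemmas for small $\epsilon$; without it, the inequality $\langle\rho(\underline{\mathfrak{s}}(x_{-n}\cdots x_{-1}))f,v\rangle\le\frac{|g|}{\sqrt{3}^{|g|}}(1+\epsilon)^n$ for $\mu_c^\phi$-typical $x$ (and $g\ne e$) does follow directly by the route you outline. You should either establish that extra factor from a sharper cylinder estimate than \eqref{equation:xi}, or treat it as spurious.
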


It is well-known that the drift $l(p)$ for the simple random walk $p$ on the free group on $2$ generators is equal to $1/2$ (using that $l(p) = \int b(g,\xi)dp(g)d\nu(\xi)$ with $b$ the Busemann additive cocycle --- see Proposition 2.2.\ of \cite{GMM}.). That says that for a $\mu$ typical $x$ we have 
$$
\lim_{N\to\infty}\frac{1}{n}|\underline{\mathfrak{s}}(x_{-n}\cdots x_{-1})|=\frac{1}{2}.
$$
This gives in particular a lower bound on the amount of cancellation in $\underline{\mathfrak{s}}(x_{-n}\cdots x_{-1})$. For a $\mu_c^{\phi}$ typical point we have quantitatively more cancellation along a subsequence.
\begin{lemma}
A $\mu_c^{\phi}$ typical point $x$ has
$$
\limsup_{n\to\infty}\frac{1}{n}|\underline{\mathfrak{s}}(x_{-n}\cdots x_{-1})|\le \frac{2\log 2- \log 3}{\log 3}< \frac{1}{2}.
$$

\end{lemma}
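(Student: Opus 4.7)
The plan is to combine the almost-sure lower bound on $\Upsilon_c$ along typical orbits (the two-sided analogue of Lemma \ref{lemma:sphericaldecay}, as in Remark \ref{remark:sphericaldecay}(2)) with the explicit spherical form of $\overline{\Upsilon}$ from Lemma \ref{lemma:spherical}, and then extract the claimed asymptotic on $|\underline{\mathfrak{s}}(x_{-n}\cdots x_{-1})|$ by an elementary log-linear estimate.

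The first step is to upgrade the lower bound. Each $\Upsilon^{B,b;D,d}_c$ is non-negative: indeed $\phi^{B,b}_c[t]$ and $\phi^{D,d}_c[t]$ are non-negative functions, and the left regular representation $\lambda$ permutes the standard basis, so $\langle \lambda(g)\phi^{B,b}_c[t], \phi^{D,d}_c[t]\rangle \ge 0$ for every $g$. In particular
$$
\Upsilon_c(g) = \Upsilon^{A,a;A,a}_c(g) \le \overline{\Upsilon}(g) = C\left(1 + \tfrac{|g|}{2}\right) 3^{-|g|/2}.
$$
Second, since we are in the symmetric random walk setting with $\gamma(\phi\ast\delta_e) = \gamma(\delta_e) = \sqrt{3}/2$, Remark \ref{remark:sphericaldecay}(2) applies to $\mu^\phi_c$ (identified with a shift-invariant measure on $\Sigma$), giving that for $\mu^\phi_c$-almost every $x$ and all sufficiently large $n$,
$$
\Upsilon_c(\underline{\mathfrak{s}}(x_{-n}\cdots x_{-1})) \ge n^{-2}\gamma^n.
$$

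Combining, writing $g_n = \underline{\mathfrak{s}}(x_{-n}\cdots x_{-1})$ and using $\gamma^n = 2^{-n}3^{n/2}$, we obtain
$$
C\left(1+\tfrac{|g_n|}{2}\right) 3^{-|g_n|/2} \ge n^{-2} \cdot 2^{-n} \cdot 3^{n/2}.
$$
Taking logarithms, noting $|g_n|\le n$ so $\log(1+|g_n|/2) = O(\log n)$, and dividing by $n$, the limit $n\to\infty$ gives
$$
-\tfrac{\log 3}{2}\,\limsup_{n\to\infty}\frac{|g_n|}{n} \;\ge\; \tfrac{\log 3}{2} - \log 2,
$$
which rearranges to $\limsup_{n\to\infty} |g_n|/n \le (2\log 2 - \log 3)/\log 3$. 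The strict inequality $(2\log 2-\log 3)/\log 3 < 1/2$ is equivalent to $2\log(4/3) < \log 3$, i.e.\ $16 < 27$, which is immediate.

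The only subtle point—and the main thing to check—is that the almost-sure lower bound on $\Upsilon_c$ along backward orbits under $\mu^\phi_c$ genuinely follows from the framework of Lemma \ref{lemma:sphericaldecay} and Remark \ref{remark:sphericaldecay}(2), rather than only along forward orbits under the one-sided twisted measure. This is where the symmetry of $p$ and the identity $\mu^a_c = \sum_{D\in\mathcal{W}_1}\nu^{a;D,a}_c$ from \ref{eq:rw2} enter: the Gibbs-type estimate $\mu^\phi_c([u]) \le C R_n(uy)\Upsilon_c(\underline{\mathfrak{s}}(u))\gamma^{-n}$ inherited from Lemma \ref{lemma:phiastphi} feeds directly into the same Borel--Cantelli argument used in the proof of Lemma \ref{lemma:sphericaldecay}, and then shift-invariance converts the forward statement into the backward one. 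Once that identification is made, everything else is the substitution above.
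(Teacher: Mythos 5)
Your proposal is correct and follows essentially the same path as the paper: combine the almost-sure lower bound $\Upsilon_c(\underline{\mathfrak{s}}(x_{-n}\cdots x_{-1}))\ge n^{-2}\gamma^n$ from Remark \ref{remark:sphericaldecay}(2) with the pointwise bound $\Upsilon_c\le\overline{\Upsilon}$ (the paper phrases this slightly more awkwardly as $\overline{\Upsilon}\ge C\Upsilon_c^{A,a;A,a}$, but it is the same non-negativity observation), substitute the spherical formula $\overline{\Upsilon}(g)=C(1+|g|/2)3^{-|g|/2}$, and take $\frac{1}{n}\log$. Your direct passage to the limit replaces the paper's $\epsilon_k\to 0$ bookkeeping but does not change the argument.
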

\begin{proof}
We know from Lemma \ref{lemma:sphericaldecay} together with Remark \ref{remark:sphericaldecay}, and the symmetry of the probability,
that a $\mu_c^{\phi^*}$ typical $x$ has
$$
\Upsilon_{c}(\underline{\mathfrak{s}}(x_{-n}\cdots x_{-1}))\ge \sqrt{3}^{n}2^{-n}n^{-2}
$$
for all $n>N_x$.

Since $\overline{\Upsilon}$ is a linear combination of $\Upsilon_{c,\ast}^{B,b;D,d}$ it follows in particular for $\Upsilon_{c,\ast}^{A,a;A,a}$ that there is a constant $C>0$ for which
a $\mu_c^{\phi^*}$ typical $x$ has
$$
\overline{\Upsilon}(\underline{\mathfrak{s}}(x_{-n}\cdots x_{-1}))\ge C\Upsilon_c^{A.a;A,a}(\underline{\mathfrak{s}}(x_{-n}\cdots x_{-1}))\ge \sqrt{3}^{n}2^{-n}n^{-2}
$$
for $n$ larger than some $N_x$.

Now using the identity for $\overline{\Upsilon}$ and writing $g=\underline{\mathfrak{s}}(x_{-n}\cdots x_{-1})$, this gives for every $\epsilon>0$ and $N$ sufficiently large
$$
(\sqrt{3})^{-(n+|g|)}\ge (1+\epsilon)^n2^{-n}
$$
Equivalently
$$
\frac{|g|}{2}\log 3+ \frac{n}{2}\log 3 \le n\log 2+  n\log (1+\epsilon).
$$

Let $\epsilon_k$ be a sequence with $\epsilon_k\to 0$ as $k\to \infty$. Then we have a full measure set $E$ for which
$$
\limsup_{n\to\infty}\frac{1}{n}|\underline{\mathfrak{s}}(x_{-n}\cdots x_{-1})|\le \frac{2\log 2- \log 3}{\log 3} + \frac{\log (1+\epsilon_k)}{\log 3}.
$$
for every $k$. The conclusion follows. (One also checks by computation that $\frac{2\log 2- \log 3}{\log 3}< \frac{1}{2}$.)
\end{proof}

%%%%%%%%%%%%%%%%%%%%%%%%%%%%%%%%%%%%%%%%%%%%%%%%%
%%%%%%%%%%%%%%%%%%%%%ONESIDEDMEASURE%%%%%%%%%%%%%%%%%%%%%%%%%%%%%%%%%%%%%%%%%%%%%%%%%%%%%%%%%%%%%%
\section{One sided twisted measures: preliminary constructions}\label{section:onesidedprelim}

In sections \ref{section:phi} and \ref{section:results} the notation is descriptive of the objects. We consider a fixed $f\in\mathcal{H}_+$ and a fixed slowly increasing function $c$ (that has to satisfy certain properties), and so their dependency is suppressed in the notation that follows. We de-clutter notation by writing $\ast$ for $\ast_\rho$, and $Q[t],Q_{\le N}[t]\in\mathcal{H}$ and $\gamma(Q)$ in place of either $\phi_c[t]\ast f,\phi_{c;\le N}[t]\ast f$ and $\gamma(\phi\ast f)$ or $\phi^*_c[t] \ast f, \phi^*_{c;\le N}[t] \ast f$ and $\gamma(\phi^*\ast f)$ respectively. 
In this section we allow $\Sigma^+$ to be non-compact, however it should be noted that a convergence of the twisted measure for $Q[t]=(\phi_{c;\le N}[t])^* \ast f$ will ultimately only be verified under the assumption that $\Sigma^+$ is compact.

\subsection{Weighted Dirac mass construction}\label{subsection:diracmass}

\begin{definition}\label{def:diracmass}(Approximating measures)
Assume that the slowly increasing function $c:\mathbb{N}\to\mathbb{R}$ has
$$
\lim_{N\to\infty}\langle \phi_{c;\le N}[t]\ast f,Q_{\le N}[t]\rangle\to \infty \; \mathrm{as} \;t\to \gamma(Q)
$$
and
$$
\lim_{N\to\infty}\frac{\langle f ,Q[t]\rangle}{\langle \phi_{c;\le N}[t]\ast f,Q_{\le N}[t]\rangle}\to 0 \; \mathrm{as} \;t\to \gamma(Q).
$$

The \emph{normalising factor} is $F(Q)[t] = \lim_{N\to\infty}\langle \phi_{c;\le N} [t]\ast f, Q_{\le N}[t]\rangle$.
Formally define, for $t>\gamma(Q)$ and $N\in\mathbb{N}$,
$$
\nu_{c;N}^{Q}[t]
=\sum_{B\in\mathcal{W}_1} \sum_{n=1}^{N}  t^{-n}c_n\sum_{v\in\mathcal{W}^{B,a}_n} R_n(vx) \frac{\langle\rho(\underline{\mathfrak{s}}(v))f,Q_{\le N}[t]\rangle}{F(Q)[t]} D(vx),
$$
in which $D(z)$ is the Dirac mass at $z\in\Sigma^+$, and $x\in\Sigma^+$ is chosen with $x\in[A]$ and $\sigma x$ not containing $A$ (this choice relates to a remainder term in Proposition \ref{prop:main} and relates to the choice of first returns in the proof of Theorem \ref{theorem:tightstatement}). 
\end{definition}

For technical reasons we will also make reference to the following family of measures, defined with respect to a group element $g\in G$ and slowly increasing function $d:\mathbb{N}\to\mathbb{R}_+$.
Formally define, for $t>\gamma(Q)$ and $N\in\mathbb{N}$,
\begin{equation}\label{equation:variants}
\nu_{d;N}^{g^{-1}Q}[t]
=\sum_{B\in\mathcal{W}_1} \sum_{n=1}^{N}  t^{-n}d_n\sum_{v\in\mathcal{W}^{B,a}_n} R_n(vx) \frac{\langle\rho(\underline{\mathfrak{s}}(v))f,\rho(g)^{-1}Q_{\le N}[t]\rangle}{F(Q)[t]} D(vx)
\end{equation}
for the same $x\in\Sigma^+$ as in Definition \ref{def:diracmass}. (Recall that $Q$ is defined in terms of the slowly increasing function $c$.)

\begin{remark}
For $Q_{\le N}[t]= \phi_{c;\le N}[t] \ast f$ let us remark that the well definedness of $\nu_{c;N}^{Q}[t]$ follows in the same way as checking for $\nu_N[t]$ given in equation \ref{equation:nun}. The proof is subsumed in Theorem \ref{theorem:tightstatement}.

One could check whether $\nu_{c;N}^{Q}[t]$ is well-defined for $Q_{\le N}[t]= \phi^*_{c;\le N}[t] \ast f$ but ultimately we have no method to verify whether a limit in $N$ is well-defined, unless $\Sigma^+$ is compact
\end{remark}

\begin{lemma}\label{lemma:tseries}
Let $g\in G$. For each $B$ we have that the linear functional $\nu_{c;N}^{gQ}[t]$ has
$$
\sup_{t>\gamma(Q)}\sup_{N\in\mathbb{N}}\nu_{c;N}^{gQ}[t]([B]) <\infty.
$$
If $T_{\mathfrak{s}}$ is transitive then
$$
\liminf_{t\to\gamma(Q)}\sup_{N\in\mathbb{N}}\nu_{c;N}^{gQ}[t]([B]) = \liminf_{t\to\gamma(Q)}\lim_{N\to\infty}\nu_{c;N}^{gQ}[t]([B]) >0.
$$
In general we have
$$
\sup_{N\in\mathbb{N}}\nu_{c;N}^{Q}[t]([A]) = \lim_{N\to\infty}\nu_{c;N}^{Q}[t]([A]) =1
$$
for all $t>\gamma(Q)$.

If the family $\nu_{c;N}^{Q}[t]$ is tight in $N\in\mathbb{N}$, for a fixed $t$, then $\nu_{c;N}^{Q}[t]$ converge to a limit measure, for a fixed $t$.
\end{lemma}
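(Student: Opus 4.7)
The starting observation is that, by the definition of $\phi^{B,a}_{c;\le N}[t]\ast_\rho f$ as a sum over admissible words $v\in \mathcal{W}^{B,a}_n$, the Dirac mass construction collapses to the inner-product identity
\[
\nu_{c;N}^{gQ}[t]([B]) \;=\; \frac{\langle \phi^{B,a}_{c;\le N}[t]\ast f,\ \rho(g^{-1})Q_{\le N}[t]\rangle_{\mathcal{H}}}{F(Q)[t]}.
\]
From this every claim reduces to a combination of Lemma \ref{lemma:changeletter}, positivity on the cone $\mathcal{H}_+$, and monotonicity in $N$. Part 3 is then immediate: with $B=A$ and $g=e$ the numerator is $\langle \phi_{c;\le N}[t]\ast f, Q_{\le N}[t]\rangle$, monotone non-decreasing in $N$ because both factors grow in the cone, and with limit $F(Q)[t]$ by the very definition of $F(Q)[t]$; thus the ratio grows monotonically to $1$.

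For Part 1 apply the right-hand inequality of Lemma \ref{lemma:changeletter}, $\phi^{B,a}_{c;\le N}[t]\le \mathrm{Const.}(B)\,\delta_{g_B}\ast \phi^{A,a}_{c;\le N+K_B}[t]$, push it through $\ast_\rho f$ (which preserves the cone), and pair with $\rho(g^{-1})Q_{\le N}[t]\in\mathcal{H}_+$, where positivity preserves the inequality. Cauchy--Schwarz plus unitarity of $\rho$ then bounds the numerator by $\mathrm{Const.}(B)\|\phi_{c;\le N+K_B}[t]\ast f\|\cdot \|Q_{\le N}[t]\|$; for $Q=\phi\ast f$ both norms are at most $\|\phi_c[t]\ast f\|=F(Q)[t]^{1/2}$, yielding a bound $\mathrm{Const.}(B)$ uniform in $t>\gamma(Q)$ and $N$.

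For Part 2 the left-hand inequality of Lemma \ref{lemma:changeletter} becomes, under transitivity, $\phi^{A,a}_{c;\le N-K_B}[t]\ast f\le \mathrm{Const.}(B)\,\phi^{B,a}_{c;\le N}[t]\ast f$ (since $h_B=e$). A second invocation of transitivity supplies an admissible word $u$ with $\underline{\mathfrak{s}}(u)=g^{-1}$ whose last letter can be prepended to words starting at $A$; the multiplicative cocycle identity for $R_n$, slow growth of $c$, and bounded distortion combine, after a further change-of-letter step, to produce a cone-inequality $\rho(g^{-1})Q_{\le N}[t]\ge \mathrm{Const.}(g)^{-1}Q_{\le N-O(1)}[t]$. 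Multiplying these two inequalities, the numerator dominates $\mathrm{Const.}(g,B)^{-1}\langle \phi_{c;\le M}[t]\ast f, Q_{\le M}[t]\rangle$ for some $M=N-O(1)$, which tends to $F(Q)[t]$ as $N\to\infty$; hence $\lim_N\nu_{c;N}^{gQ}[t]([B])\ge \mathrm{Const.}(g,B)^{-1}$ uniformly in $t>\gamma(Q)$. Equality of $\sup_N$ with $\lim_{N\to\infty}$ again comes from monotonicity, and the positivity of $\liminf_{t\to\gamma(Q)}$ follows.

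For Part 4 each individual Dirac-mass weight $\langle \rho(\underline{\mathfrak{s}}(v))f, Q_{\le N}[t]\rangle$ is monotone in $N$ (as $Q_{\le N}[t]$ grows in the cone), so $\nu_{c;N}^Q[t]([w])$ is monotone non-decreasing in $N$ on every cylinder $[w]$ and bounded above by Part 1; the pointwise limits therefore exist on the generating $\pi$-system of cylinders, and combined with tightness any two subsequential weak limits must agree, giving weak convergence via Prokhorov. The main obstacle is actually Part 1 in the case $Q=\phi^*\ast f$: in the range $\gamma(\phi^*\ast f)<t<\gamma(\phi\ast f)$ the individual norms $\|\phi_{c;\le N}[t]\ast f\|$ and $\|\phi^*_{c;\le N}[t]\ast f\|$ need not be controlled by $F(Q)[t]=\lim_N\langle \phi_{c;\le N}[t]\ast f,\phi^*_{c;\le N}[t]\ast f\rangle$, so the crude Cauchy--Schwarz estimate above must be refined via a cone-based bound, or attention restricted to the compact-$\Sigma^+$ setting flagged in the subsequent remark.
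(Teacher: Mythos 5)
Your proof is essentially correct and uses the same starting formula and tools as the paper (the inner-product identity for $\nu_{c;N}^{gQ}[t]([B])$, Lemma \ref{lemma:changeletter}, positivity of the cone $\mathcal{H}_+$), but you take a genuinely simpler route to the convergence claims. Your key observation — that $\nu_{c;N}^{Q}[t]$ is monotone non-decreasing in $N$ on every cylinder, because increasing $N$ both adds positive terms to the outer sum and increases $Q_{\le N}[t]$ in the cone — disposes of Part~3 and Part~4 in one stroke: monotone bounded sequences converge, and combined with tightness the pointwise cylinder limits assemble into a weak limit measure. The paper instead proves that the cylinder values form a Cauchy sequence in $N$, which for $Q=\phi^*_c\ast f$ requires the rather delicate contradiction argument establishing the tail estimate $\langle \phi_{c;N-r\le n\le N}[t]\ast f,\phi^*_{c;\le N}[t]\ast f\rangle / F(Q)[t]\to 0$; your monotonicity argument bypasses this entirely. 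The one substantive gap you leave is Part~1 in the case $Q=\phi^*_c\ast f$, which you flag accurately: Cauchy--Schwarz does not apply, and the paper indeed fixes it exactly as you anticipate — a pure cone bound via Lemma \ref{lemma:changeletter} giving $\langle \phi^{B,a}_{c;\le N}[t]\ast f, Q_{\le N}[t]\rangle \le \mathrm{Const.}(B)\,\langle \phi^{A,a}_{c;\le N+K_B}[t]\ast f, Q_{\le N}[t]\rangle \le \mathrm{Const.}(B)\,F(Q)[t]$, using monotonicity of the limit defining $F(Q)[t]$. For Part~2 with $g\neq e$, your cone inequality $\rho(g^{-1})Q_{\le N}[t]\ge \mathrm{Const.}(g)^{-1}Q_{\le N-O(1)}[t]$ is correct and is obtained by prepending a fixed admissible word $u\in\mathcal{W}^{A,a}$ with $\underline{\mathfrak{s}}(u)=g$ and using local H\"older distortion and the slowly-increasing property of $c$; the paper's written proof only treats $g=e$ explicitly and relies on the normalising condition in Definition \ref{def:diracmass} to kill the remainder term as $t\to\gamma(Q)$, so your sketch is, if anything, somewhat more complete on this point. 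Overall this is a correct and somewhat cleaner argument, modulo the flagged cone bound in Part~1 that you should actually write out.
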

\begin{proof}
It is immediate that
$$
\nu_{c;N}^{Q}[t]([B]) = \frac{\langle \phi^{B,a}_{c;\le N}[t]\ast f, Q_{\le N}[t]\rangle }{F(Q)[t]}.
$$
If $Q=\phi_c\ast f$ then we use Lemma \ref{lemma:strongconvergence} and the Cauchy-Schwarz inequality to get the upper bound 
$$
\nu_{c;N}^{Q}[t]([B]) \le  \frac{\|\phi^{B,a}_{c}[t]\ast f\|\|Q_c[t]\| }{F(Q)[t]}\le \mathrm{Const.}(B),
$$
using Lemma \ref{lemma:changeletter} in the last inequality. 
If $Q_c=\phi^*_c\ast f$ then we use Lemma \ref{lemma:changeletter} to get the upper bound 
$$
\nu_{c;N}^{Q}[t]([B]) \le \mathrm{Const.}(B)\frac{\langle \phi^{A,a}_{c;\le N+K_B}[t]\ast f, Q_{\le N}[t]\rangle}{F(Q)[t]} \le \mathrm{Const.}(B)\frac{F(Q)[t]}{F(Q)[t]} .
$$

The lower bound is seen using
\begin{align*}
\nu_{c;N}^{Q}[t]([B]) &= \frac{\langle \phi^{B,a}_{c;\le N}[t]\ast f, Q_{\le N}[t]\rangle }{F(Q)[t]}
\\
&\ge \mathrm{Const.}(B)^{-1}\frac{\langle \phi^{A,a}_{c;\le N-K_B}[t]\ast f, Q_{\le N}[t]\rangle }{F(Q)[t]}
\\
&\ge \mathrm{Const.}(B)^{-1}\frac{\langle \phi^{A,a}_{c;\le N}[t]\ast f, Q_{\le N}[t]\rangle }{F(Q)[t]}-\mathrm{Const.}(B)^{-1}\frac{\langle \phi^{A,a}_{c;\le K_B}[t]\ast f,Q_{\le N}\rangle}{F(Q)[t]}.
\end{align*}

We check that the linear functionals $\nu_{c;N}^{g^{-1}Q}[t]$ converge as $N\to \infty$. (We check that they are Cauchy.)
In the first case $Q=\phi_c\ast f$. For arbitrary $N,M$ we have
\begin{align*}
&\nu_{c;N}^{g^{-1}Q}[t]([B]) - \nu_{c;M}^{g^{-1}Q}[t]([B])
\\
&= 
\frac{\langle \phi^{B,a}_{c;\le N}[t]\ast f- \phi^{B,a}_{c;\le M}[t]\ast f,\rho(g)^{-1} \phi_c[t]\ast f\rangle}{\|\phi[t]_c\ast f\|^2}
\\
&\le 
\frac{\|\phi^{B,a}_{c;\le N}[t]\ast f- \phi^{B,a}_{c;\le M}[t]\ast f\|}{\|\phi[t]_c\ast f\|}
.
\end{align*}
Strong convergence (Lemma \ref{lemma:strongconvergence}) gives that $\|\phi^{B,a}_{c;\le N}[t]\ast f- \phi^{B,a}_{c;\le M}[t]\ast f\|\le \epsilon$ for $N,M$ sufficiently large.

In the second case $Q=\phi^*_c\ast f$. For each $g$ we use Lemma \ref{lemma:changeletter} to show that
\begin{align*}
&\nu_{c;N}^{g^{-1}Q}[t]([B]) - \nu_{c;M}^{g^{-1}Q}[t]([B]) \\
& \le \mathrm{Const.}(B) \frac{\langle \phi_{c;\le N+K_B}[t]\ast f, Q_{\le N}[t]\rangle }{F(Q)[t]}  -\mathrm{Const.}(B)^{-1}\frac{\langle \phi_{c;\le M-K_B}[t]\ast f, Q_{\le M}[t]\rangle }{F(Q)[t]}
\end{align*} 
The result follows upon verifying the claim that for a fixed $t>\gamma(Q)$ we have
\begin{equation}\label{eq:remainder}
\frac{\langle \phi_{c;N-r\le n\le N}[t]\ast f,\phi^*_{c;\le N}[t]\ast f\rangle}{F(Q)[t]} \to 0 \;\mathrm{as}\;N\to\infty.
\end{equation}
Were \label{eq:remainder} false, there would be a sequence $(N_j)_{j\in\mathbb{N}}$ with
$$
\frac{\langle \phi_{c;N_j-r\le n\le N_j}[t]\ast f,\phi^*_{c;\le N_j}[t]\ast f\rangle}{F(Q)[t]} > C,
$$
for some constant $C>0$.
In particular for $J>j$,
$$
\frac{\langle \phi_{c;N_j-r\le n\le N_j}[t]\ast f,\phi^*_{c;\le N_J}[t]\ast f\rangle}{F(Q)[t]} > C.
$$
Now assuming $N_j-N_{j-1}>r$ we would conclude
$$
\frac{\langle \phi_{c;n\le N_J}[t]\ast f,\phi^*_{c;\le N_J}[t]\ast f\rangle}{F(Q)[t]}  \ge \sum_{j=1}^J\frac{\langle \phi_{c;N_j-r\le n\le N_j}[t]\ast f,\phi^*_{c;\le N_J}[t]\ast f\rangle}{F(Q)[t]} > JC.
$$
This cannot hold for arbitrarily many $J$ whilst $t$ is fixed.

Now if the family of $\nu_{c;N}^{g^{-1}Q}[t]$ is tight in $N$ we deduce that the limit linear functional is a measure. (See corollary \ref{cor:tight}.) 
\end{proof}

We give the definition of a twisted measure conditional on the existence of limits of $\nu_{c;N}^{g^{-1}Q}[t]$.
\begin{definition}\label{def:phitwist}
Let $(t_q)_{q\in\mathbb{N}}$ be a sequence with $\lim_{q\to\infty}t_q= \gamma(Q)$ and for which $\nu^{g^{-1}Q}_{c}[t_q]$ (exists and) converges as $q\to\infty$ for every $g$. We call
$\nu^{\phi}_{c}$  the limit of $\nu^{\phi}_{c}[t_q]$ as $q\to\infty$ a \emph{$Q^*\ast \phi$ one-sided (twisted) measure}. We also use the notation $\nu^{g\phi}_{c}$ for the limit of $\nu^{g\phi}_{c}[t_q]$ as $q\to\infty$.
\end{definition}

The outcome of subsection \ref{subsection:tightness} is to show that limit points exist (under certain hypotheses). The outcome of section \ref{section:twistproof} is to check that these measure agree with the notion of twisted in Definition \ref{def:twistedbycocycle}. Before proceeding any further we show how to obtain the eigenmeasure $\nu$ in this way.
Define, for $t>1$,
\begin{equation}\label{equation:nun}
\nu_{N}[t]
=\sum_{B\in\mathcal{W}_1} \sum_{n=1}^{N}  t^{-n}\sum_{v\in\mathcal{W}^{B,a}_n} R_n(vx)\frac{1}{\zeta[t]} D(vx),
\end{equation}
recalling that $\zeta[t] = \zeta^{A,a}[t]$. In due course we will check that these measure are well-defined, and show the existence of an accumulation point. 

\begin{lemma}\label{lemma:basecase}
Suppose that $\nu[t]$ is a weak* limit of $\nu_{N}[t]$ as $N\infty$ and $\hat{\nu}$ is a weak* limit $\nu_N[t_k]$ as $k\to\infty$. We have
$$
L^* \hat{\nu} =\hat{\nu}.
$$
\end{lemma}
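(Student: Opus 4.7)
The plan is to compute $L^*\nu_N[t]$ explicitly from the definition and then pass to the limits $N\to\infty$ and $t_k\to 1$. Starting from $\int f\,d(L^*\nu_N[t]) = \int Lf\,d\nu_N[t]$ and expanding $(Lf)(vx) = \sum_{B' : B'v_1\text{ adm.}} R(B'vx)f(B'vx)$, the multiplicativity $R(B'vx)R_n(vx) = R_{n+1}(B'vx)$ lets one rewrite the right-hand side by re-parametrising with $w = B'v$ of length $m = n+1$, absorbing the $B$-sum into the $w$-sum. Comparing with $t\int f\,d\nu_N[t]$ yields the boundary identity
\begin{equation*}
\int Lf\,d\nu_N[t] \;=\; t\int f\,d\nu_N[t] \;+\; E_1(N,t,f) \;-\; E_2(t,f),
\end{equation*}
where $E_1(N,t,f)$ is the contribution of words of length $N{+}1$ (with prefactor $t\cdot t^{-(N+1)}/\zeta[t]$), and $E_2(t,f) = R(ax)f(ax)/\zeta[t]$ arises from the unique length-one word $v = a$ in $\mathcal{W}^{a,a}_1$.

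For fixed $t > 1$, the top boundary is bounded by $\|f\|_\infty$ times the $(N{+}1)$-th tail of the convergent series $\zeta[t]$, so $E_1(N,t,f)\to 0$ as $N\to\infty$. Consequently one obtains the \emph{almost eigenmeasure} identity
\begin{equation*}
L^*\nu[t] \;=\; t\,\nu[t] \;-\; \frac{R(ax)}{\zeta[t]}\,D(ax),
\end{equation*}
i.e.\ $\nu[t]$ is an eigen-measure for $L^*$ up to a weighted Dirac defect at $ax$.

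Now let $t = t_k \to 1$. Strong positive recurrence together with $P(\log R,\sigma) = 0$ forces $\zeta[t_k]\to\infty$: at $t=1$ the series $\zeta[1] = \sum_n \sum_{v\in\mathcal{W}^{A,a}_n} R_n(vx)$ diverges because positive recurrence keeps the inner sums bounded away from zero, so monotone convergence as $t\downarrow 1$ gives the blow-up. Hence the Dirac defect $R(ax)/\zeta[t_k]$ vanishes, and passing to the weak* limit $\hat\nu = \lim_k \nu[t_k]$ against continuous test functions (in particular indicators of cylinders) yields $\int Lf\,d\hat\nu = \int f\,d\hat\nu$, which is exactly $L^*\hat\nu = \hat\nu$.

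The main obstacle is justifying the interchange of the weak* limit with the action of $L$ and with the $N\to\infty$ tail estimate in the (possibly non-compact) CMS setting. Restricting the test class to continuous functions supported in finitely many cylinders — on which the locally H\"older weight $R$ is uniformly controlled and $Lf$ remains continuous, bounded, and supported in a controlled family of cylinders — reduces every step above to a standard dominated-convergence or tail estimate, which is automatic under the standing strong positive recurrence hypothesis.
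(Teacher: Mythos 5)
Your proof is correct and follows essentially the same route as the paper's: expand $\int Lf\,d\nu_N[t]$, reindex by prepending a letter to $v$ so that the sum shifts to a $t\int f\,d\nu_N[t]$ term plus boundary defects, and then let $\zeta[t_k]\to\infty$ (which the paper records in Lemma~\ref{lemma:growthgap}/\ref{basicseries}: recurrence gives $\zeta^{A,a}[1]=\infty$) to kill the bottom Dirac defect at $ax$. The paper computes directly with $\hat\nu(L\mathds{1}_{[w]})$ after both limits; your explicit ``almost eigenmeasure'' identity $L^*\nu[t]=t\,\nu[t]-\frac{R(ax)}{\zeta[t]}D(ax)$ is a cleaner way of organising the same cancellation, and makes transparent both which boundary term dies as $N\to\infty$ and which one requires $\zeta[t_k]\to\infty$. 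One small imprecision: the top boundary $E_1$ involves a sum over all initial letters $u$, so it is not literally a tail of $\zeta^{A,a}[t]$; what you actually need is that $\sum_u\sum_{w\in\mathcal{W}^{u,a}_{N+1}}R_{N+1}(wx)$ grows subexponentially (from $P(\log R,\sigma)=0$), so $t^{-N}$ with $t>1$ still dominates it — in the non-compact setting you should invoke that bound rather than the tail of $\zeta[t]$ itself, and you should also note (as the paper does at the start of its proof) that $L\mathds{1}_{[w]}=R(w_1\cdot)\mathds{1}_{[w_2\cdots w_r]}$ is a legitimate bounded continuous test function so the weak$^*$ limits can be applied to it.
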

\begin{proof}
Let $w=w_1\cdots w_r$ be arbitrary. First observe that 
$$
L(\mathds{1}_{[w]})(z) = \sum_{u\in\mathcal{W}_1: \tau(u,x_0)=1}R(uz)\mathds{1}_{[w_1w_2\cdots w_r]}(uz) = R(w_1z)\mathds{1}_{[w_2\cdots w_r]}(z)
$$ and so $L(\mathds{1}_{[w]})(z)$ is bounded and continuous.
We have, for any presumed limit along $t_k\to1$ as $k\to\infty$,
\begin{align*}
\hat{\nu}\left( L(\mathds{1}_{[w]})\right) &= \lim_{k\to\infty}\lim_{N\to\infty} \sum_{B\in\mathcal{W}_1} \sum_{n=1}^{N}  t_k^{-n}\sum_{v\in\mathcal{W}^{B,a}_n} R_n(vx)\frac{1}{\zeta[t]} R(w_1 vx)\mathds{1}_{[w_2\cdots w_n]}(vx)
\\
&= \lim_{k\to\infty} \sum_{n=1}^{\infty}  t_k^{-n}\sum_{v\in\mathcal{W}^{w_2,a}_n} R_n(vx)\frac{1}{\zeta[t]} R(w_1 vx)\mathds{1}_{[w_2\cdots w_n]}(vx)
\\
&= \lim_{k\to\infty} \sum_{b\in\mathcal{W}_1}\sum_{n=1}^{\infty}  t_k^{-n}\sum_{u\in\mathcal{W}^{b,a}_n} R_{n+1}(w_1\cdots w_n u x)\frac{1}{\zeta[t]} 
\\
&= \lim_{k\to\infty} \lim_{N\to\infty}\sum_{b\in\mathcal{W}_1}\sum_{n=2}^{N+1}  t_k^{1}t^{-(n+1)}\sum_{u\in\mathcal{W}^{b,a}_n} R_{n+1}(w_1\cdots w_n u x)\frac{1}{\zeta[t]} 
\\
&=\hat{\nu}(\mathds{1}_{[w]})
\end{align*}
using the fact that $\zeta[t]$ diverges to obtain the last equality.
\end{proof}

%%%%%%%%%%%%%%%%%%%%%subsection:basic%%%%%%%%%%%%%%%%%%%%%%
\subsection{Basic estimates for the approximating measures}\label{subsection:basic}
\begin{proposition}[The main equality]\label{prop:main}
Let $wB\in\mathcal{W}$ with $|w|=r$. For any $k\ge r$ we have
\begin{align*}
\nu_{c;N}^{g^{-1}Q}[t](t^{r}R^{-1}_{r}\mathds{1}_{[wB]})
&=
 \sum_{n=1}^{N-r}t^{-n}c_n\frac{c_{n+r}}{c_n}
\sum_{v\in\mathcal{W}_n^{B,a}}
  \frac{\langle \rho(\underline{\mathfrak{s}}(w))\rho(\underline{\mathfrak{s}}(v))f,\rho(g)^{-1}Q_{c;\le N}[t]\rangle}{\langle \phi_{c;\le N} [t]\ast f, Q_{c;\le N}[t]\rangle } R_{n}(vx)
\\
&+ 
 \sum_{n=1}^r t^{-n}c_n\sum_{b\in\mathcal{W}_1}\sum_{v\in\mathcal{W}_n^{b,a}}R_{n}(vx)
 \frac{\langle \rho(\underline{\mathfrak{s}}(v))f,Q_{c;\le N}[t]\rangle}{\langle \phi_{c;\le N} [t]\ast f, Q_{c;\le N}[t]\rangle } \frac{t^{r}\mathds{1}_{[wB]}(vx)}{R_{r}(vx)}
\end{align*}
\end{proposition}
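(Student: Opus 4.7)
The plan is to substitute the test function $h := t^{r}R_{r}^{-1}\mathds{1}_{[wB]}$ into the definition of $\nu^{g^{-1}Q}_{c;N}[t]$ from equation \ref{equation:variants}, and then to split the resulting double sum according to whether the length index $m$ of the word $u$ exceeds $r$ or not. The two ranges $m \ge r+1$ and $m \le r$ will produce, respectively, the first and second sums of the proposition, so the proof is essentially a term-by-term reindexing.

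In the range $m \ge r+1$, for a Dirac mass at $ux$ to contribute, $u$ must begin with the word $w_{1}\cdots w_{r}B$, so it factors uniquely as $u = wv$ with $v \in \mathcal{W}^{B,a}_{m-r}$ (and the outer sum over first letters collapses to the first letter of $w$). The cocycle identity $R_{m}(wvx) = R_{r}(wvx)\,R_{m-r}(vx)$ cancels the $R_{r}^{-1}$ factor of $h$ exactly, and multiplicativity of $\underline{\mathfrak{s}}$ splits the group argument in the matrix coefficient as $\rho(\underline{\mathfrak{s}}(w))\rho(\underline{\mathfrak{s}}(v))$. Reindexing $n = m - r$ and writing the weight $t^{-m+r}c_{m}$ as $t^{-n}c_{n}\cdot (c_{n+r}/c_{n})$ reproduces the first sum, with the upper cutoff $n \le N-r$ coming from $m \le N$.

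In the range $m \le r$, the cylinder $[wB]$ specifies more coordinates than $u$ has letters, so no such factorisation is possible. I would simply rename $u \mapsto v$ and leave the indicator $\mathds{1}_{[wB]}(vx)$ in place, collecting the remaining factors $t^{r}/R_{r}(vx)$ from $h$; this yields the second sum verbatim. By the choice of the reference point $x$ (with $x_{0}=A$ and $x_{i}\ne A$ for $i\ge 1$), only finitely many of these terms can be non-zero: those in which $v$ is an initial segment of $w$ whose continuation by $x$ completes the word $wB$.

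Since the argument is nothing more than a reindexing combined with the two multiplicativity identities for $R$ and $\underline{\mathfrak{s}}$, I do not foresee any conceptual obstacle; the only thing to be attentive to is the bookkeeping at the boundary, and I read the hypothesis ``$k \ge r$'' in the statement as $N \ge r$, which is precisely the condition needed for the first range to be non-empty.
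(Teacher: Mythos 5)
Your proof is correct and follows the same route as the paper's: substitute the test function into the defining sum, split at word length $r$, and for the range $m \ge r+1$ factor $u = wv$, use the cocycle identity $R_{m}(wvx) = R_{r}(wvx)R_{m-r}(vx)$, multiplicativity of $\underline{\mathfrak{s}}$, and reindex $n = m - r$ so that $t^{-m}c_m$ becomes $t^{-n}c_n\cdot(c_{n+r}/c_n)$. The paper's proof additionally writes $wB = duB$ with $d = w_1$ to make the collapse of the outer sum over first letters explicit, but this is cosmetic; the stray parameter $k$ in the statement is indeed vestigial, consistent with your reading.
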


\begin{proof}
Write $wB=duB$ where $d\in\mathcal{W}_1$. We have
 \begin{align*}
&\nu_{c;N}^{g^{-1}Q}[t](t^{r}R_{r}^{-1}\mathds{1}_{[dub]}) 
\\
&=
 \sum_{n=r+1}^Nt^{-n}c_n\sum_{v_1\cdots v_r v\in\mathcal{W}_n^{d,a}}R_{n}(v_1\cdots v_r vx)
 \frac{\langle \rho(\underline{\mathfrak{s}}(v_1\cdots v_r v))f,Q_{c;\le N}[t]\rangle}{\langle \phi_{c;\le N} [t]\ast f, Q_{c;\le N}[t]\rangle } \frac{t^{r}\mathds{1}_{[dub]}(v_1\cdots v_r vx)}{R_{r}(v_1\cdots v_rvx)}
 \\
 &+
 \sum_{n=1}^r t^{-n}c_n\sum_{v\in\mathcal{W}_n^{d,a}}R_{n}(vx)
 \frac{\langle \rho(\underline{\mathfrak{s}}(v))f,Q_{c;\le N}[t]\rangle}{\langle \phi_{c;\le N} [t]\ast f, Q_{c;\le N}[t]\rangle } \frac{t^{r}\mathds{1}_{[dub]}(vx)}{R_{r}(vx)}
 \end{align*}
 and 
 \begin{align*}
  & \sum_{n=r+1}^Nt^{-n}c_n\sum_{v_1\cdots v_r v\in\mathcal{W}_n^{d,a}}R_{n}(v_1\cdots v_r vx)
 \frac{\langle \rho(\underline{\mathfrak{s}}(v_1\cdots v_r v))f,Q_{c;\le N}[t]\rangle}{\langle \phi_{c;\le N} [t]\ast f, Q_{\le N}[t]\rangle} \frac{t^{r}\mathds{1}_{[dub]}(v_1\cdots v_r vx)}{R_{r}(v_1\cdots v_rvx)}
\\
&=
 \sum_{n=k-r+1}^{N-r}t^{-n}c_n\frac{c_{n+r}}{c_n}\sum_{v\in\mathcal{W}_n^{B,a}}
 \frac{\langle \rho(\underline{\mathfrak{s}}(du))\rho(\underline{\mathfrak{s}}(v))f,\rho(g)^{-1}Q_{c;\le N}[t]\rangle}{\langle \phi_{c;\le N} [t]\ast f, Q_{\le N}[t]\rangle } R_{n}(.vx)\frac{R_{r}(du vx)}{R_{r}(.du vx)}.
 \end{align*}
\end{proof}

In Definition \ref{def:localgibbs} we gave a local Gibbs definition which is satisfied by an $R$ conformal measure. Eventually Lemma \ref{cylinder} will give a RHS local Gibbs inequality for limits of the twisted measure when $Q=\phi_c\ast f$. In order to even check that the sequence of approximating measures is tight we will need to check some approximation of the RHS local Gibbs inequality.

\begin{lemma}[$c$-heavy RHS local Gibbs when $Q=\phi_c\ast f$]\label{one}
Suppose that $Q=\phi_c\ast f$.
For each $B$ there is a constant $C(B)$ so that for any $BwA\in\mathcal{W}$ we have
\begin{align*}
&\nu_{c;N}^{g^{-1}Q}[t]([wA])
 \le R_{r}(wx)t^{-r} C(B) \sup_{n\in\mathbb{N}}\left( \frac{c_{n+r}}{c_n}\right),
\end{align*}
where $r=|w|$ and $g\in G$ is arbitrary.
\end{lemma}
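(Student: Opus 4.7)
The plan is to invoke the main equality of Proposition \ref{prop:main} applied to the cylinder $[wA]$ (so that the role of the Proposition's last letter $B$ is played by our $A$), and then bound each of the two resulting sums $\mathrm{I}$ and $\mathrm{II}$. First I would use the bounded distortion of $R$, a consequence of the local Hölder hypothesis on $\log R$: for any $z\in[w]$ there is a universal $C_0>0$ with $R_r(z)\le C_0 R_r(wx)$. Via $\mathds{1}_{[wA]}(z)\le C_0 R_r(wx) R_r(z)^{-1}\mathds{1}_{[wA]}(z)$ this converts the Proposition's identity
$$
\nu_{c;N}^{g^{-1}Q}[t]\bigl(t^r R_r^{-1}\mathds{1}_{[wA]}\bigr) = \mathrm{I}+\mathrm{II}
$$
into $\nu_{c;N}^{g^{-1}Q}[t]([wA])\le C_0\,R_r(wx)\,t^{-r}(\mathrm{I}+\mathrm{II})$. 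What remains is to bound $\mathrm{I}+\mathrm{II}$ by a constant times $\|c\|_r:=\sup_{n\in\mathbb{N}}c_{n+r}/c_n$.

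For the first sum, I would upper-bound $c_{n+r}/c_n$ uniformly by $\|c\|_r$; the remaining double sum is exactly
$$
\|c\|_r\,\frac{\langle \rho(\underline{\mathfrak{s}}(w))\bigl(\phi^{A,a}_{c;\le N-r}[t]\ast_\rho f\bigr),\,\rho(g)^{-1}Q_{c;\le N}[t]\rangle}{F(Q)[t]}.
$$
Cauchy--Schwarz together with unitarity of $\rho$ bounds the numerator by the product of norms, and by positivity of all summands these norms are non-decreasing in $N$ and dominated by $\|Q[t]\|$ (cf.\ Lemma \ref{lemma:strongconvergence}). Since $Q=\phi_c\ast f$ forces $F(Q)[t]=\|Q[t]\|^2$, we conclude $\mathrm{I}\le\|c\|_r$.

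For the second sum, the choice of reference point $x\in[A]$ with $x_i\ne A$ for $i\ge 1$ fixed in Definition \ref{def:diracmass} is decisive: for $\mathds{1}_{[wA]}(vx)$ to survive with $|v|=n\le r$ one needs the $(r+1)$-th coordinate of $vx$ to be $A$, and for $n<r$ this coordinate equals $x_{r-n}$, which by the choice of $x$ is not $A$. Thus only $n=r$ with $v=w$ contributes, yielding $\mathrm{II}=c_r\,\langle\rho(\underline{\mathfrak{s}}(w))f,\rho(g)^{-1}Q_{c;\le N}[t]\rangle/F(Q)[t]\le c_r\|f\|/\|Q[t]\|$. For any slowly increasing, non-decreasing $c$ one has $c_r\le c_1\|c\|_r$, and since $\|Q[t]\|\to\infty$ as $t\to\gamma(Q)^+$ (the standing hypothesis of Definition \ref{def:diracmass}), the factor $\|f\|/\|Q[t]\|$ is bounded on the $t$-range of interest. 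Hence $\mathrm{II}\le C'\|c\|_r$, and the result follows with $C(B)=C_0(1+C')$.

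The main obstacle, as I see it, is the boundary term $\mathrm{II}$: controlling it requires both the careful choice of $x$ (which eliminates the $n<r$ contributions and reduces $\mathrm{II}$ to the single surviving term) and the slowly-increasing hypothesis on $c$ (to absorb the explicit $c_r$ into $\|c\|_r$). The label ``$B$'' in the constant $C(B)$ does not in fact enter the estimates: it is merely a placeholder reflecting the admissibility constraint $BwA\in\mathcal{W}$, which plays no essential role here beyond guaranteeing that $[wA]$ is nonempty.
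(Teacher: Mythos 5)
Your proof is essentially the paper's: apply Proposition \ref{prop:main} to $[wA]$, use the choice of $x$ (so that $\sigma x$ avoids $A$) to collapse the second series to its $n=r$ term, bound the first series by Cauchy--Schwarz together with $F(Q)[t]=\|Q[t]\|^2$ from strong convergence, and pass between $\mathds{1}_{[wA]}$ and $R_r^{-1}\mathds{1}_{[wA]}$ by the bounded-distortion consequence of local H\"{o}lder continuity. You are also right that $B$ does not genuinely enter: since the cylinder ends in $A$ the first series involves $\phi^{A,a}=\phi$ rather than $\phi^{B,a}$, so no appeal to Lemma \ref{lemma:changeletter} is needed, and your explicit bound $c_r\le c_1\sup_n c_{n+r}/c_n$ on the boundary term tidies up a small inaccuracy in the paper's computation.
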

\begin{proof}
Write $\gamma=\gamma(Q)$. Strong convergence tells us that $F(Q)[t]=\|\phi_{c} [t]\ast f\|^2$.
Recall that Proposition \ref{prop:main} gives an expression for the term $\nu_{c;N}^{g^{-1}Q}[t](\gamma^rR_r^{-1}\mathds{1}_{[wA]})$. As we chose $\sigma x$ to not contain $A$ this forces that the second series is $0$ except at $n=r$ from which
\begin{align*}
&\sum_{n=1}^r t^{-n}c_n\sum_{b\in\mathcal{W}_1}\sum_{v\in\mathcal{W}_n^{b,a}}R_{n}(vx)
 \frac{\langle \rho(\underline{\mathfrak{s}}(v))f,\phi_{c;\le N}[t]\ast f\rangle}{\|\phi_{c} [t]\ast f\|^2} \frac{t^{r}\mathds{1}_{[wA]}(vx)}{R_{r}(vx)}
\\
&=
 \frac{\langle \rho(\underline{\mathfrak{s}}(w))f,\phi_{c;\le N}[t]\ast f\rangle}{\|\phi_{c} [t]\ast f\|^2}
 \le 1.
\end{align*}

We give an upper bound for the first series in Proposition \ref{prop:main} by
\begin{align*}
&
\left(\sup_{n\ge r} \frac{c_{n+r}}{c_n}\right) \sum_{n=1}^Nt^{-n}c_n \sum_{v\in\mathcal{W}_n^{A,a}}
 \frac{\langle \rho(\underline{\mathfrak{s}}(w))\rho(\underline{\mathfrak{s}}(v))f,\rho(g)^{-1}\phi_{c;\le N}[t]\ast f\rangle}{\|\phi_{c} [t]\ast f\|^2} R_{n}(vx)
 \\
&= \left(\sup_{n\ge k} \frac{c_{n+r}}{c_n}\right)
 \frac{\langle  \rho(\underline{\mathfrak{s}}(w)) \phi^{B,a}_{c;\le N}[t]\ast f,\phi_{c;\le N}[t]\ast f\rangle}{\|\phi_{c} [t]\ast f\|^2} R_{n}(vx)
 \\
&\le
\left(\sup_{n\ge k} \frac{c_{n+r}}{c_n}\right)
\frac{\|\phi^{B,a}_c[t]\ast f\|}{\|\phi_{c} [t]\ast f\|}
\le \left(\sup_{n\ge k} \frac{c_{n+r}}{c_n}\right)\mathrm{Const.}(B).
\end{align*}

Using local H\"{o}lder continuity and recalling that we fixed $x\in[A]$ gives 
$$
\nu_{c;N}^{g^{-1}Q}[t](\gamma^rR_r^{-1}\mathds{1}_{[wA]})\ge \mathrm{const.}(wA)\gamma^rR_r^{-1}(wx) \nu_{c;N}^{g^{-1}Q}[t]([wA]). 
$$
Recalling $\mathrm{const.}(wA)$ does not depend on $w$, the Lemma follows.
\end{proof}

%%%%%%%%%%%%%%%%%%%%%EXISTENCE%%%%%%%%%%%%%%%%%%%%%%
\section{One-sided (twisted) measure: existence}\label{section:existence}
In this section we collect the machinery to show existence of the one-sided (twisted) measures. In particular we must check existence of the slowly increasing functions, and will verify the existence of accumulation points by checking that the family of approximating measures are tight.

%%%%%%%%%%%%%%%%%%%%%subsection:c%%%%%%%%%%%%%%%%%%%%%%
\subsection{The slowly increasing function}\label{subsection:c}
The assumption that $R$ is recurrent (and consequent divergence of the return series \ref{Aacts}) is useful in the construction of the $R$-conformal measure (Lemma \ref{lemma:basecase}). We can always force a series to diverge at its radius of convergence by increasing the summands with a slowly increasing function. This observation is used widely in the literature on conformal measures (for instance \cite{Patterson}, \cite{Roblin}, \cite{urbanski}). The result is stated as Proposition \ref{prop:divseries2}.

\begin{lemma}\label{lemma:cphiastastphi}
There is a slowly increasing $c$ with
$$
\int \langle \rho(h)f,f\rangle d \phi^*_c[t]\ast \phi_c[t] <\infty  \; \mathrm{for}\; t>\gamma(\phi\ast_\rho f)
$$
and
$$
\int \langle \rho(h)f,f\rangle d \phi^*_c[t]\ast \phi_c[t] \to\infty \; \mathrm{as}\; t\to\gamma(\phi\ast_\rho f).
$$ 
\end{lemma}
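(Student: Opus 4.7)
The plan is to reformulate the integral and reduce to a single-variable Patterson-type boosting argument. Setting $u_n := \sum_{w \in \mathcal{W}_n^{A,a}} R_n(wx)\, \rho(\underline{\mathfrak{s}}(w)) f \in \mathcal{H}_+$ for the level-$n$ slice and $v_{n,m} := \langle u_n, u_m\rangle_{\mathcal{H}}$, which is non-negative by $\rho$-invariance of $\mathcal{H}_+$, Definition \ref{def:integrability} identifies the integral as
$$
\beta_c(t) := \int \langle \rho(h)f,f\rangle_{\mathcal{H}}\, d\phi_c^*[t]\ast\phi_c[t](h) = \|\phi_c[t]\ast_\rho f\|_{\mathcal{H}}^2 = \sum_{n,m\ge 1} c_n c_m\, v_{n,m}\, t^{-(n+m)}.
$$
Writing $\gamma := \gamma(\phi\ast_\rho f)$ and $\beta(t) := \sum_{n,m\ge 1} v_{n,m}\,t^{-(n+m)}$ for the unweighted version, $\beta$ is non-increasing on $(\gamma,\infty)$ and finite there by the definition of $\gamma$, so $\beta(\gamma^+) := \lim_{t\downarrow\gamma}\beta(t) \in (0,\infty]$ exists by monotone convergence.

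I would split into two cases. If $\beta(\gamma^+) = \infty$, I would simply take $c \equiv 1$; both conclusions then hold trivially. Otherwise $\beta(\gamma^+) = M < \infty$, and in particular the diagonal subseries $\sum_n v_{n,n}\gamma^{-2n}$ converges (bounded by $M$). To this convergent non-negative series I would apply the classical Patterson boosting procedure: there exists a non-decreasing $e : \mathbb{N} \to [1,\infty)$ with $e_n \to \infty$ and $e_{n+1}/e_n \to 1$ (hence subexponential) satisfying
$$
\sum_n e_n v_{n,n}\gamma^{-2n} = \infty \qquad \text{while} \qquad \sum_n e_n v_{n,n}\, s^{-n} < \infty \text{ for all } s > \gamma^2.
$$
Setting $c_n := \sqrt{e_n}$, also slowly increasing, restricting $\beta_c(t)$ to its diagonal yields
$$
\beta_c(t) \;\ge\; \sum_n c_n^2\, v_{n,n}\, t^{-2n} \;=\; \sum_n e_n v_{n,n}\, t^{-2n},
$$
which diverges to infinity as $t \downarrow \gamma$ by monotone convergence. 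For convergence at $t>\gamma$, subexponentiality of $c_n$ gives, for every $\epsilon>0$, a constant $C_\epsilon$ with $c_n \le C_\epsilon e^{\epsilon n}$, so
$$
\beta_c(t) \;\le\; C_\epsilon^2 \sum_{n,m} v_{n,m}\,(t e^{-\epsilon})^{-(n+m)} \;=\; C_\epsilon^2\, \beta(t e^{-\epsilon}),
$$
which is finite once $\epsilon < \log(t/\gamma)$.

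The main obstacle is conceptual: the weighting on the double sum is the product $c_n c_m$ rather than $c_{n+m}$, which is what a direct application of Patterson to the single series $\sum_k a_k t^{-k}$ with $a_k := \sum_{n+m=k} v_{n,m}$ would produce. Turning a weight of the form $d_{n+m}$ into one of the form $c_n c_m$ requires a sub-multiplicative inequality $c_n c_m \gtrsim d_{n+m}$ that is delicate for slowly varying $d$. Restricting to the diagonal avoids this at the mild cost of needing $v_{n,n}>0$ for infinitely many $n$; in the alternative case $u_n = 0$ for all $n > N_0$, $\beta(t)$ is polynomial in $t^{-1}$, forcing $\gamma = 0$ and collapsing to Case 1.
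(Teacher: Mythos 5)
Your diagonal reduction is a genuinely different, and in some ways cleaner, route than the paper's. The paper boosts the full rearranged double series $\pi[t] = \sum_N t^{-N}\sum_{m+n=N} v_{m,n}$ using Proposition \ref{prop:divseries2}, obtaining a $d$ that is \emph{submultiplicative} ($d_{n+k}\le d_n d_k$), and then sets $c_n = d_{2n}$ so that for $m+n=N$ one has $c_m c_n = d_{2m}d_{2n}\ge d_{2N}\ge d_N$, giving $\beta_c \ge \pi^d$. You keep only the diagonal, boost the single series $\sum_n v_{n,n}(\gamma^2)^{-n}$, and take $c_n = \sqrt{e_n}$; this avoids needing submultiplicativity of the boost entirely, at the cost of throwing away the off-diagonal mass. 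Both give what the lemma asks for.

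There is, however, a step you assert without justification and which is not automatic. Patterson boosting does not produce a slowly increasing $e_n\to\infty$ forcing $\sum_n e_n a_n = \infty$ for an \emph{arbitrary} convergent non-negative series $\sum_n a_n$; it does so only when $\limsup_n a_n^{1/n} = 1$, i.e.\ when one is sitting exactly at the radius of convergence (this is what Proposition \ref{prop:divseries2} actually says). With $a_n = v_{n,n}\gamma^{-2n}$, this requires the abscissa of the diagonal series $\sum_n v_{n,n}s^{-n}$ to equal $\gamma^2$ exactly. Mere convergence of $\sum_n v_{n,n}\gamma^{-2n}$ (which is what you verify) only gives that the abscissa is $\le \gamma^2$; if it were strictly smaller, no subexponential boost could create divergence at $\gamma^2$ and your construction would fail. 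The missing inequality does hold, but it requires a Cauchy--Schwarz argument: $v_{n,m}\le \sqrt{v_{n,n}v_{m,m}}$ gives $\beta(t)\le\bigl(\sum_n v_{n,n}^{1/2}t^{-n}\bigr)^2$, hence $\gamma\le\bigl(\limsup_n v_{n,n}^{1/n}\bigr)^{1/2}$, which combined with the subsum bound pins the diagonal abscissa to exactly $\gamma^2$. You should insert this step before invoking the boost; once added, the argument is sound.
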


\begin{proof}
In particular the Lemma asks us to check that, for any slowly increasing $c$ we have
$$
\int \langle \rho(h)f,f\rangle d \phi^*_c[t]\ast \phi_c[t] <\infty  \; \iff \; \int \langle \rho(h)f,f\rangle d \phi^*_c[t]\ast \phi_c[t] <\infty.
$$
This is seen immediately from the following
Since $c$ is slowly increasing, for each $\delta<1$ there is $C_\delta$ with
\begin{align*}
\langle \phi_{c;\le N}[t]\ast f, \phi_{c;\le N}[t]\ast f\rangle
&\le  \sum_{m\le N}\sum_{n\le N}C_\delta \langle \phi[\delta^{-1} t]\ast f, \phi[\delta^{-1} t]\ast f\rangle
\\
&\le C_\delta \langle \phi[\delta^{-1} t]\ast f, \phi[\delta^{-1} t]\ast f\rangle<\infty.
\end{align*}

Now we check that 
$\gamma:=\gamma(\phi\ast_\rho f)$ is the abscissa of convergence of the real series
$$
\pi[t] = \sum_{N=1}^\infty t^{-N} \sum_{m+n=N} \sum_{V\in\mathcal{W}^{A,a}_m}\sum_{v\in\mathcal{W}_n^{A,a}}R_m(Vx)R_n(vx)\langle \rho(\underline{\mathfrak{s}}(V))^{-1}\rho(\underline{\mathfrak{s}}(v))f,f\rangle.
$$

Write 
$$
\pi_{\le N}[t] = \sum_{K\le N} t^{-K} \sum_{m+n=K} \sum_{V\in\mathcal{W}^{A,a}_m}\sum_{v\in\mathcal{W}_n^{A,a}}R_m(Vx)R_n(vx)\langle \rho(\underline{\mathfrak{s}}(V))^{-1}\rho(\underline{\mathfrak{s}}(v))f,f\rangle
$$

Set $\phi_n[t](g) = \sum_{w\in\mathcal{W}^{A,a}_n: \underline{\mathfrak{s}}(w)=g}R_n(wx)$. It is easy to see that
\begin{align*}
&\pi_{\le N}[t]  \le \sum_{n=1}^{N-1}\sum_{m=1}^{N-1} \langle \phi_n[t]\ast f, \phi_m[t]\ast f\rangle
=\langle  \phi_{\le N}[t]\ast f, \phi_{\le N}[t]\ast f\rangle
\end{align*}
On the other hand
\begin{align*}
&\langle \phi_{\le N}[t]\ast f,\phi_{\le N}[t]\ast f\rangle = 
\sum_{k\le 2N} \sum_{\substack{ m+n=k : \\ m\le N,\, n\le N}}\langle \phi_{n}[t]\ast f,\phi_{m}[t]\ast f\rangle
\\
&\le \sum_{k\le 2N} \sum_{ m+n=k}\langle \phi_{n}[t]\ast f,\phi_{m}[t]\ast f\rangle
= \pi_{\le 2N}[t]
\end{align*}
It follows that the abscissa of convergence of $\pi[t]$ is $\gamma$. 

Using Proposition \ref{prop:divseries2}, choose $d_N$ slowly increasing so that
$$
\pi^d[t] = \sum_{N=1}^\infty t^{-N}d_N \sum_{m+n=N} \sum_{V\in\mathcal{W}^{A,a}_m}\sum_{v\in\mathcal{W}^{A,a}_n}R_m(Vx)R_n(vx)\langle \rho(\underline{\mathfrak{s}}(V))^{-1}\rho(\underline{\mathfrak{s}}(v))f,f\rangle
$$
diverges at $\gamma$.
Now write $c_n= d_{2n}$, and observe that $c$ inherits the slowly increasing property from $d$. Then for $m+n=N$ we have $c_mc_n=d_{2m}d_{2n}\ge d_N$, giving
\begin{align*}
\pi^d_{\le N}[t] &= \sum_{K\le N} t^{-K}d_K \sum_{m+n=K} \sum_{V\in\mathcal{W}^{A,a}_m}\sum_{v\in\mathcal{W}^{A,a}_n}R_m(Vx)R_n(vx)\langle \rho(\underline{\mathfrak{s}}(V))^{-1}\rho(\underline{\mathfrak{s}}(v))f,f\rangle
\\
&\le \sum_{K\le N} t^{-K} \sum_{m+n=K} c_mc_n\sum_{V\in\mathcal{W}^{A,a}_m}\sum_{v\in\mathcal{W}^{A,a}_n}R_m(Vx)R_n(vx)\langle \rho(\underline{\mathfrak{s}}(V))^{-1}\rho(\underline{\mathfrak{s}}(v))f,f\rangle
\\
&\le \langle \phi_c[t]\ast f, \phi_c[t]\ast f\rangle.
\end{align*}
The conclusion follows.
\end{proof}

The divergence statement for $\phi^*$ involves more attention. We begin by verifying that a slowly increasing function does not increase the convergence parameter.

\begin{lemma}\label{subexponential}
Let $c:\mathbb{N}\to\mathbb{R}_+$ be subexponentially increasing. Then
$$
\sup_{N\in\mathbb{N}} \langle \phi_{c;\le N}[t] \ast f,f\rangle <\infty \iff \sup_{N\in\mathbb{N}} \langle \phi_{\le N}[t] \ast f,f\rangle <\infty.
$$
If $c$ is slowly increasing then
$$
\sup_{N\in\mathbb{N}} \langle \phi_{c;\le N}[t] \ast f, \phi^*_{c;\le N}[t]\ast f\rangle <\infty \iff \sup_{N\in\mathbb{N}} \langle \phi_{\le N}[t] \ast f,\phi^*_{\le N}[t]f\rangle <\infty.
$$
If $T_{\mathfrak{s}}$ is transitive then $\gamma(\phi^*\ast f) = \gamma(f)$.
\end{lemma}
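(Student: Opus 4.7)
The plan for the first two equivalences is to exploit the subexponential bound $c_n \le C_\lambda \lambda^n$ (valid for any $\lambda > 1$, with $C_\lambda$ depending on $\lambda$) to compare $\phi_c[t]$ with $\phi[t/\lambda]$ pointwise on $G$. Concretely, $c_n t^{-n} \le C_\lambda (t/\lambda)^{-n}$ immediately gives $\phi_{c;\le N}[t] \le C_\lambda \phi_{\le N}[t/\lambda]$ as functions on $G$; since $f\in\mathcal{H}_+$ and $\rho$ preserves the non-negative cone, taking inner products with $f$ (respectively with $\phi^*_{c;\le N}[t]\ast f$) preserves the inequality, and for $t$ strictly above the relevant convergence exponent one chooses $\lambda>1$ close enough to $1$ so that $t/\lambda$ still lies above it. The converse direction is easier: because $c$ is positive and non-decreasing we have $c_n\ge c_1>0$, so $\phi_{c;\le N}[t] \ge c_1 \phi_{\le N}[t]$ pointwise. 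This proves the first equivalence; for the second I would apply exactly the same argument to the double convolution, using $c_m c_n \le C_\lambda^2 \lambda^{m+n}$ to get $\phi_{c;\le N}[t]\ast\phi^*_{c;\le N}[t] \le C_\lambda^2 \phi_{\le N}[t/\lambda]\ast\phi^*_{\le N}[t/\lambda]$.

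The third statement is the substantive one, and it is where transitivity enters. The hierarchy recorded after Definition \ref{def:integrability} already gives $\gamma(f)\le\gamma(\phi^*\ast f)$, so I would focus on the reverse. Unpacking,
\[
\langle \phi_{\le N}[t]\ast f, \phi^*_{\le N}[t]\ast f\rangle = \sum_{m,n\le N} t^{-(m+n)} \sum_{u\in\mathcal{W}^{A,a}_n,\, w\in\mathcal{W}^{A,a}_m} R_n(ux)R_m(wx)\,\langle\rho(\underline{\mathfrak{s}}(uw))f,f\rangle.
\]
By either the standing assumption that $aA$ is admissible (in the non-compact case) or by using transitivity to insert a connector of uniformly bounded length (in the compact case), $uw$ may be taken to be admissible, and then local Lipschitz control of $\log R$ yields the multiplicative Gibbs estimate $R_n(ux)R_m(wx) \asymp R_{n+m}(uwx)$. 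Re-indexing by $k=m+n$ produces an extra combinatorial factor of $k$ (counting admissible decompositions), which is subexponential and hence absorbed by the same $t\mapsto t/\lambda$ trick as in the first part. The outcome is an estimate of the form
\[
\langle \phi_{\le N}[t]\ast f, \phi^*_{\le N}[t]\ast f\rangle \;\lesssim\; \langle \phi_{\le 2N}[t/\lambda]\ast f, f\rangle,
\]
which is finite whenever $t>\gamma(f)$ and $\lambda>1$ is chosen so close to $1$ that $t/\lambda>\gamma(f)$.

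The hard part will be the Gibbs comparison $R_n(ux)R_m(wx)\asymp R_{n+m}(uwx)$ driving the third claim: the identity $R_{n+m}(uwx)=R_n(u(wx))R_m(wx)$ is exact, and replacing $u(wx)$ by $ux$ costs only a bounded distortion because both base points lie in $[A]$ and $\log R$ is locally Lipschitz in the $\theta$-metric; still, the estimate must be uniform in $u,w,m,n$. The additional wrinkle in the compact, non-trivially transitive case is to track how the bounded-length connector inserted between $a$ and $A$ shifts the group element $\underline{\mathfrak{s}}(uw)$ by a uniformly bounded set and changes the word length by a bounded amount, which I expect to handle by the same type of bookkeeping as in Lemma \ref{lemma:changeletter}.
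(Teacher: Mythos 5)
Your overall strategy mirrors the paper's at every stage: absorb the subexponential weight $c_n$ by decreasing $t$ slightly to $t/\lambda$ (the paper writes the same manipulation in the form $\eta_{c}[t]\le C_\delta\,\eta[\delta t]$ for $\delta<1$), use $c_n\ge c_1>0$ for the easy converse, and for the third claim re-index the double sum by the total word length $k=m+n$, control the Gibbs distortion $R_m(wx)R_n(ux)\asymp R_{k}(wux)$ uniformly in $m,n$, count the at-most-$k$ admissible decompositions, and fold the resulting polynomial multiplicity back into the first equivalence. (The paper routes this through the auxiliary power series $\pi_{c;\le N}$ and $\eta_{c\ast c;\le N}$, but that is the same idea.) The first two equivalences are fine as you state them, and your observation that the bounded-distortion constant is uniform because $\log R$ has summable variation is exactly the needed input.

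There are, however, two genuine gaps in the third claim. First, you dispatch $\gamma(f)\le\gamma(\phi^*\ast_\rho f)$ by citing ``the hierarchy recorded after Definition~\ref{def:integrability},'' but that list is explicitly left unproved at that point (``the verification of the statements follows later''), and the direction you need is in fact established inside this very proof --- it is the paper's closing line ``$\eta[t]\le C\alpha_c[t]$ follows when $T_{\mathfrak{s}}$ is transitive.'' You should supply it: transitivity of $T_{\mathfrak{s}}$ furnishes a fixed word $w_0\in\mathcal{W}^{A,a}$ with $\underline{\mathfrak{s}}(w_0)=e$, and restricting the double sum to the single term $w=w_0$ (all other terms being non-negative, since $f\in\mathcal{H}_+$ and $\rho$ preserves the cone) gives
$\langle\phi_{\le N}[t]\ast_\rho f,\phi^*_{\le N}[t]\ast_\rho f\rangle\ge t^{-|w_0|}R_{|w_0|}(w_0x)\,\langle\phi_{\le N}[t]\ast_\rho f,f\rangle$, whence $\gamma(f)\le\gamma(\phi^*\ast_\rho f)$. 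Second, your fallback of ``inserting a connector'' between $a$ and $A$ when $aA$ is not admissible does not work as stated: replacing the concatenation $uw$ by $u\kappa w$ replaces the group element $\underline{\mathfrak{s}}(u)\underline{\mathfrak{s}}(w)$ by $\underline{\mathfrak{s}}(u)\underline{\mathfrak{s}}(\kappa)\underline{\mathfrak{s}}(w)$, and there is no uniform comparison between $\langle\rho(\underline{\mathfrak{s}}(u)\underline{\mathfrak{s}}(\kappa)\underline{\mathfrak{s}}(w))f,f\rangle$ and the matrix coefficient $\langle\rho(\underline{\mathfrak{s}}(u)\underline{\mathfrak{s}}(w))f,f\rangle$ you are trying to produce; transitivity alone does not hand you a connector with $\underline{\mathfrak{s}}(\kappa)=e$ at the right letters. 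The clean reduction, implicit in the paper's ``assuming that $Aa$ are admissible,'' is that under transitivity the convergence exponents do not depend on the choice of $(A,a)$ (Lemma~\ref{lemma:changeletter} and the discussion following Definition~\ref{def:phi}), so one may from the outset fix $A,a$ with $aA$ admissible, which is always possible in a mixing shift.
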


\begin{proof}
Fix  $f$.
To begin with we only ask that $c:\mathbb{N}\to\mathbb{R}_+$ has subexponential growth $\limsup_{n\to\infty} \frac{1}{n}\log c_n = 0$. (If $c_n$ is slowly increasing then $\log c_n -\log c_k \le \log \frac{c_n}{c_{n-1}}+ \cdots \log \frac{c_{k+1}}{c_{k}} \le (n-k) \log \gamma$ for $\gamma$ arbitrarily close to $1$ and  $k=k_\gamma$.)

We have
$$
\int \langle \rho(g)f,f\rangle d\phi_{\le N}[t](g) = \langle \phi_{c;\le N}[t]\ast_\rho f,f\rangle =: \eta_{\le N}[t],
$$
and
$$
\int \langle \rho(g)f,f\rangle d\phi_{c;\le N}[t](g) = \langle \phi_{c;\le N}[t]\ast_\rho f,f\rangle =: \eta_{c;\le N}[t].
$$
Note $\eta_{c}[t]= \lim_{N\to\infty}  \eta_{c;\le N}[t], \eta[t]= \lim_{N\to\infty} \eta_{\le N}[t]$ are real power series in $t^{-1}$.
For each $\delta<1$ there is $C_\delta$ with $\eta_{c}[t]\le C_\delta \eta_{c}[\delta^{-1}t]$. The first statement follows.

Now set
$$
\langle \phi_{\le N}[t]\ast\phi_{\le N}[t]\ast_\rho f, f\rangle =: \alpha_{\le N}[t],
$$
and
$$
\langle \phi_{c;\le N}[t]\ast\phi_{c;\le N}[t]\ast_\rho f, f\rangle =: \alpha_{c;\le N}[t].
$$
To be clear $\alpha_c[t]=\lim_{N\to\infty} \alpha_{c;\le N}[t]$ does not have form of a real power series in $t^{-1}$, it is
\begin{align*}
\alpha_c[t] 
&= \lim_{N\to\infty}\sum_{K=1}^\infty t^{-K}\sum_{m+n=K, m\le N, n\le N} \sum_{v\in\mathcal{W}_n^{A,a}}\sum_{u\in\mathcal{W}_m^{A,a}}R_n(ux)R_n(vx)\langle \rho(\underline{\mathfrak{s}}(u))\rho(\underline{\mathfrak{s}}(v))f, f\rangle.
\end{align*}
However setting
$$
\pi_{c;\le N}[t] = \sum_{K=1}^N t^{-K}\sum_{m+n=K} c_nc_m \sum_{v\in\mathcal{W}_n^{A,a}}\sum_{u\in\mathcal{W}_m^{A,a}}R_n(ux)R_n(vx)\langle \rho(\underline{\mathfrak{s}}(u))^{-1}\rho(\underline{\mathfrak{s}}(v))f, f\rangle
$$
we have that
$\pi_{c}=\lim_{N\to\infty} \pi_{c;\le N}[t]$ is a real  power series in $t^{-1}$,
and we check that
$$
\pi_{c;\le N}[t]\le \alpha_{c;\le N}[t] \le \pi_{c;\le 2N}[t].
$$
Assuming that $Aa$ are admissible we have 
\begin{equation}\label{eq:castc}
\pi_{c;\le 2N}[t]\le \eta_{c\ast c;\le 2N}[t].
\end{equation}
By assumption $c_n$ is slowly increasing, from which we deduce that $\log (c\ast c)_n \le \log n + \log c_n$ and so $c\ast c$ grows subexponentially. Using the first of the lemma we deduce that
for $t>\gamma(\phi\ast f)$ we have $\alpha_c[t]$ is finite.

We conclude by mentioning that
$$
\eta[t]\le C \alpha_c[t]
$$
follows when $T_{\mathfrak{s}}$ is transitive. This tells us that $\gamma(\phi^*\ast f)=\gamma(f)$.
\end{proof}

We are now ready to prove the divergence statements. Let us mention separately the case $f=\delta_e$ which follows easily. 
\begin{proposition}
Assume that $T_{\mathfrak{s}}$ is transitive. There is $c$ with
$$
\lim_{N\to\infty}\langle \phi_{c;\le N}[t]\ast \delta_e, \phi^*_{c; \le N}[t]\ast \delta_e \rangle\to \infty \; \mathrm{as} \; t\to\gamma(\delta_e)
$$
and
$$
\lim_{N\to\infty}\frac{\langle \phi_c[t]\ast \delta_e, \delta_e \rangle}{\langle \phi_{c; \le N}[t]\ast \delta_e, \phi^*_{c; \le N}[t]\ast \delta_e \rangle}\to 0 \; \mathrm{as} \; t\to\gamma(\delta_e).
$$
\end{proposition}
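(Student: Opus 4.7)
The plan is a Patterson--Sullivan divergence construction combined with a Cauchy-product comparison. Set $\gamma := \gamma(\delta_e)$ and write $\phi_n(g) := \sum_{w \in \mathcal{W}_n^{A,a},\, \underline{\mathfrak{s}}(w)=g} R_n(wx)$. Introduce the auxiliary power series in $t^{-1}$
\[
\pi[t] := \sum_{N=1}^\infty t^{-N}\sum_{m+n=N}\sum_{g\in G}\phi_m(g)\phi_n(g),
\]
and write $\alpha_{\le N}[t] := \langle\phi_{\le N}[t]\ast\delta_e,\phi^*_{\le N}[t]\ast\delta_e\rangle$. Exactly as in the proof of Lemma \ref{subexponential}, one has the sandwich $\pi_{\le N}[t]\le \alpha_{\le N}[t]\le \pi_{\le 2N}[t]$, so $\pi[t]$ and $\alpha[t]$ share the same abscissa of convergence; by transitivity (Lemma \ref{subexponential} again) this common abscissa equals $\gamma$.

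I would then apply Proposition \ref{prop:divseries2} to the power series $\pi[t]$ to produce a non-decreasing slowly increasing $d:\mathbb{N}\to\mathbb{R}_+$ for which
\[
\pi^d[t] := \sum_{N=1}^\infty t^{-N}d_N\sum_{m+n=N}\sum_{g\in G}\phi_m(g)\phi_n(g)
\]
still has abscissa $\gamma$ but diverges as $t\downarrow\gamma$. Setting $c_n := d_{2n}$ produces a slowly increasing function (the slowly increasing property is preserved under doubling the variable), and one checks that $c_mc_n = d_{2m}d_{2n}\ge C\,d_N$ for a fixed constant $C>0$ whenever $m+n=N$, since one of $2m,2n$ must be at least $N$ and the other factor is bounded below by $d_1$. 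Applying the same sandwich to $\pi_c$ and $\alpha_c$ (as in Lemma \ref{subexponential}) then yields
\[
\pi^d_{\le N}[t]\le C^{-1}\pi_{c;\le N}[t]\le C^{-1}\alpha_{c;\le N}[t],
\]
and taking $N\to\infty$ first, then $t\downarrow\gamma$, gives the divergence of $\alpha_c[t]:=\lim_N\alpha_{c;\le N}[t]$, which is the first assertion.

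For the ratio statement, I would use the elementary lower bound obtained by keeping only the term $g=e$ in the inner $G$-sum defining $\pi_c[t]$; the remaining double series is the Cauchy product of $\phi_c[t](e)=\langle\phi_c[t]\ast\delta_e,\delta_e\rangle$ (using that $\langle\lambda(g)\delta_e,\delta_e\rangle = [g=e]$) with itself, giving
\[
\alpha_c[t]\ge \pi_c[t]\ge \bigl\langle\phi_c[t]\ast\delta_e,\delta_e\bigr\rangle^2.
\]
Consequently
\[
\frac{\langle\phi_c[t]\ast\delta_e,\delta_e\rangle}{\alpha_c[t]}\le \frac{1}{\alpha_c[t]^{1/2}},
\]
which tends to $0$ as $t\downarrow\gamma$ by the first assertion.

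The principal technical obstacle is identifying the correct auxiliary object on which to run the Patterson construction: $\alpha[t]$ itself is not a genuine power series in $t^{-1}$, owing to the joint truncation of both convolution factors at the same level $N$, and Proposition \ref{prop:divseries2} is stated for genuine series in $t^{-1}$. One therefore has to pass to $\pi[t]$, verify that it shares abscissa $\gamma$ with $\alpha[t]$, perform the Patterson inflation there, and translate back via the sandwich inequality. Everything else (the inheritance of slow growth by $c_n=d_{2n}$, and the Cauchy-product bound) is routine.
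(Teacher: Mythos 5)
Your proof is correct but takes a heavier route than the paper's. The paper observes that, because $f=\delta_e$, the quantity $\eta[t]:=\langle\phi[t]\ast\delta_e,\delta_e\rangle=\phi[t](e)$ is already a genuine power series in $t^{-1}$ with abscissa $\gamma(\delta_e)$, so Proposition~\ref{prop:divseries2} can be applied directly to $\eta$ rather than to the collapsed bivariate series $\pi[t]$. With $c$ chosen so that $\eta_c[t]\to\infty$ as $t\to\gamma(\delta_e)$, the single pointwise bound
\[
\langle\phi_{c;\le N}[t]\ast\delta_e,\phi^*_{c;\le N}[t]\ast\delta_e\rangle
=\sum_{g}\phi_{c;\le N}[t](g)\,\phi_{c;\le N}[t](g^{-1})
\ge \phi_{c;\le N}[t](e)^2=\eta_{c;\le N}[t]^2,
\]
obtained by keeping only $g=e$, delivers both conclusions at once: the left side diverges as $t\to\gamma(\delta_e)$, and the ratio is bounded by $1/\eta_c[t]\to 0$. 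Your route --- Patterson divergence applied to $\pi[t]$ followed by $c_n=d_{2n}$ and the sandwich $\pi_{\le N}\le\alpha_{\le N}\le\pi_{\le 2N}$ --- is precisely the machinery the paper reserves for general $f$ (Lemma~\ref{lemma:cphiastastphi}), where no univariate $\eta$-type series is available; it works here too, but is unnecessary for $\delta_e$. One correction to your set-up: since $\langle\phi_{\le N}[t]\ast\delta_e,\phi^*_{\le N}[t]\ast\delta_e\rangle = \sum_g\phi_{\le N}[t](g)\phi_{\le N}[t](g^{-1})$, the kernel in your $\pi[t]$ should be $\phi_m(g)\phi_n(g^{-1})$, not $\phi_m(g)\phi_n(g)$; the latter is the kernel for $\gamma(\phi\ast f)$, whose abscissa can be strictly larger than $\gamma(\delta_e)$ for non-symmetric extensions, so the sandwich would fail. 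With the corrected kernel the sandwich holds, the $g=e$ term still gives the Cauchy-product lower bound $\eta_c[t]^2$, and the rest of your argument goes through.
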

\begin{proof}
Note that the transitivity hypothesis ensures that $\gamma(\phi^*\ast\delta_e)=\gamma(\delta_e)$.

Choose $c$ to be a slowly increasing function with $\langle \phi_c[t]\ast \delta_e,\delta_e\rangle\to\infty$ as $t\to\gamma$ (recall that $\eta[t]$ in the proof of Lemma \ref{subexponential} is a real power series in $t^{-1}$). Then immediately we have $\lim_{N\to\infty} \langle \phi_{c;\le N}[t]\ast \phi_{c;\le N}[t]\ast \delta_e,\delta_e\rangle\to\infty$ as $t\to\gamma$.
We have that 
$$
\langle \phi_{c; \le N}[t]\ast \delta_e, \phi^*_{c; \le N}[t]\ast \delta_e \rangle \ge \langle \phi_{c; \le N}[t]\ast \delta_e, \delta_e\rangle \langle \delta_e, \phi^*_{c; \le N}[t]\ast \delta_e \rangle.
$$
It follows that
$$
\lim_{N\to\infty}\frac{\langle \phi_c[t]\ast \delta_e, \delta_e \rangle}{\langle \phi_{c; \le N}[t]\ast \delta_e, \phi^*_{c; \le N}[t]\ast \delta_e \rangle} = \lim_{N\to\infty}\frac{\langle \phi_c[t]\ast \delta_e, \delta_e \rangle}{\langle  \phi_{c; \le N}[t]\ast\delta_e, \delta_e \rangle^2}\to 0 \; \mathrm{as} \; t\to\gamma(\phi^*\ast \delta_e).
$$
\end{proof}
In general we use strong positive recurrence to check that the convolution $\phi_{c; \le N}[t]\ast \phi_{c; \le N}[t]$ ``is bigger" than $\phi_{c; \le N}[t]$. In order to do this we need to use the second outcome of Proposition \ref{prop:divseries2} that says that we can choose $c$ with $c_{n+k}\le c_nc_k$ for all $n,k\in\mathbb{N}$. This helps us estimate the convolution of $c$ with itself.

\begin{lemma}\label{lemma:cphiastphi}
Assume that $T_{\mathfrak{s}}$ is transitive. There is $c$ with
$$
\lim_{N\to\infty}\langle \phi_{c;\le N}[t]\ast f, \phi^*_{c; \le N}[t]\ast f \rangle\to \infty \; \mathrm{as} \; t\to\gamma(f)
$$
and
$$
\lim_{N\to\infty}\frac{\langle \phi_c[t]\ast f, f \rangle}{\langle \phi_{c; \le N}[t]\ast f, \phi^*_{c; \le N}[t]\ast f \rangle}\to 0 \; \mathrm{as} \; t\to\gamma(f).
$$
\end{lemma}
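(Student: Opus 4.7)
The plan mirrors Lemma~\ref{lemma:cphiastastphi}: I apply Proposition~\ref{prop:divseries2} to a suitable power series, set $c_n=\tilde c_{2n}$ for the resulting slowly increasing $\tilde c$, and exploit the multiplicative inequality $\tilde c_{n+k}\le\tilde c_n\tilde c_k$. Unlike the case $Q=\phi_c\ast f$ treated there, where the ratio statement follows automatically from Cauchy--Schwarz, here the denominator $\langle\phi_c\ast f,\phi^{\ast}_c\ast f\rangle$ is a bilinear form that is not a squared norm, so the ratio requires a separate positivity argument.

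For the first divergence, Lemma~\ref{subexponential} and transitivity give $\gamma(\phi^{\ast}\ast_\rho f)=\gamma(f)$. The limit $\alpha[t]:=\lim_{N\to\infty}\langle\phi_{\le N}[t]\ast f,\phi^{\ast}_{\le N}[t]\ast f\rangle$ admits a representation as a power series in $t^{-1}$ with nonnegative coefficients $a_{N'}=\sum_{n+m=N'}\sum_{w_n,w'_m}R_n(w_nx)R_m(w'_mx)\langle\rho(\underline{\mathfrak{s}}(w_n)\underline{\mathfrak{s}}(w'_m))f,f\rangle$ and abscissa $\gamma(f)$. Applying Proposition~\ref{prop:divseries2} (second outcome), I obtain $\tilde c$ slowly increasing with $\tilde c_{n+k}\le\tilde c_n\tilde c_k$ and $\alpha^{\tilde c}[t]:=\sum_{N'}t^{-N'}\tilde c_{N'}a_{N'}\to\infty$ as $t\to\gamma(f)$. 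Setting $c_n=\tilde c_{2n}$ (which inherits slow growth), the multiplicative inequality and monotonicity of $\tilde c$ yield $c_mc_n\ge\tilde c_{2(m+n)}\ge\tilde c_{N'}$ for $m+n=N'$, whence
\[
\lim_{N\to\infty}\langle\phi_{c;\le N}[t]\ast f,\phi^{\ast}_{c;\le N}[t]\ast f\rangle\ge\alpha^{\tilde c}[t]\to\infty\quad\mathrm{as}\;t\to\gamma(f).
\]

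For the ratio, I use positivity combined with normality of the defining subgroups. In the representative case $\mathcal{H}=\ell^2(G/H)$ with $H$ normal, normality gives $\rho(hg')f=\rho(g')f$ for $h\in H$ and $g'\in G$ (writing $hg'=g'(g'^{-1}hg')$ with $g'^{-1}hg'\in H$). Restricting the $h$-sum in the convolution $(\phi_c\ast\phi_c)(g)=\sum_h\phi_c(h)\phi_c(h^{-1}g)$ to $h\in H$ and using this invariance yields
\[
\langle\phi_c\ast f,\phi^{\ast}_c\ast f\rangle=\sum_g(\phi_c\ast\phi_c)(g)\langle\rho(g)f,f\rangle\ge\phi_c(H)\langle\phi_c\ast f,f\rangle,
\]
where $\phi_c(H):=\sum_{h\in H}\phi_c(h)$. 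Hence the ratio is bounded above by $1/\phi_c(H)$. Now $\phi_c(H)$, viewed as a power series in $t^{-1}$, has abscissa $\exp P(\log R,T_{\mathfrak{s}_H})=\gamma(f)$ by Proposition~\ref{prop:gurevic}. I replace $\tilde c$ by the pointwise maximum of itself and a second slowly increasing function (obtained from Proposition~\ref{prop:divseries2} applied to the series $\phi(H)$) forcing $\phi_c(H)\to\infty$ at $\gamma(f)$; slow growth is preserved under pointwise maxima of two slowly increasing functions. The extension to a countable direct sum $\bigoplus_i\ell^2(G/H_i)$ proceeds component-wise, applying the bound to each normal $H_i$ and using $\langle\phi_c\ast f,\phi^{\ast}_c\ast f\rangle\ge\sum_i\phi_c(H_i)\langle\phi_c\ast f_i,f_i\rangle_{\ell^2(G/H_i)}$ together with the elementary inequality $\sum_i a_i/\sum_i b_i\le\max_i a_i/b_i$ for nonnegative $a_i,b_i$, controlling the ratio by $\max_i1/\phi_c(H_i)$.

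The main obstacle is arranging both divergences simultaneously through a single slowly increasing $c$ and matching abscissas $\gamma(f)=\exp P(\log R,T_{\mathfrak{s}_H})$ via transitivity. The normality of the subgroups $H_i$ is essential: without it, the invariance $\rho(h)f=f$ for $h\in H_i$ fails and the positivity-based lower bound cannot be cleanly extracted. In the direct sum case, when infinitely many components achieve the maximal abscissa $\gamma(f)$, one must carefully construct $c$ to control $\min_i\phi_c(H_i)$ uniformly.
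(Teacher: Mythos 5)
Your construction for the first statement is a workable alternate route, but your argument for the second statement (the ratio) is genuinely different from the paper's and has a gap that makes it fail in general. The paper's proof is a single orbit-counting argument: a word $w\in\mathcal{W}_m^{A,a}$ contributes to the convolution $\phi_{c;\le N}[t]\ast\phi_{c;\le N}[t]$ once for each occurrence of the block $Aa$ inside $w$ (each occurrence yields an admissible factorization $w=uv$ with $u,v\in\mathcal{W}^{A,a}$); strong positive recurrence makes the weighted count of words with fewer than $M$ occurrences a series $A_M(t)$ that stays bounded near $\gamma(f)$; and the submultiplicativity $c_mc_k\ge c_{m+k}$ from Proposition~\ref{prop:divseries2} controls the slowly increasing function. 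Pairing against any $f\in\mathcal{H}_+$ then gives $\langle\phi_{c;\le N}[t]\ast\phi_{c;\le N}[t]\ast f,f\rangle\ge C^{-1}M\langle\phi_{c;\le N}[t]\ast f,f\rangle - C^{-1}A_M(t)$; the $c$ is chosen (again via Proposition~\ref{prop:divseries2}) so that $\langle\phi_c[t]\ast f,f\rangle\to\infty$, after which both the divergence and the vanishing of the ratio follow by letting $M\to\infty$. Normality plays no role.

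The gap in your ratio argument: for $1/\phi_c(H)\to 0$ at $t=\gamma(f)$ you need the abscissa of $\phi(H)$ to equal $\gamma(f)$, but the abscissa of $\phi(H)$ is $\gamma(\delta_{eH})$, and the same positivity inequality you invoke, $\langle\phi\ast f,f\rangle\ge\phi(H)\|f\|^2_{\mathcal{H}}$, only gives $\gamma(f)\ge\gamma(\delta_{eH})$. Proposition~\ref{prop:gurevic} identifies $\gamma(\delta_{eH})$ with a pressure but says nothing about $\gamma(f)$ for other $f$, and the framework explicitly allows $\gamma(f)$ to vary with $f$ (cf.\ Proposition~\ref{prop:supdelta}, where a supremum over $f$ is taken). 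If $\gamma(f)>\gamma(\delta_{eH})$, no slowly increasing $c$ can push the abscissa of $\phi_c(H)$ up to $\gamma(f)$, and your upper bound on the ratio stays bounded away from $0$. The direct-sum case compounds this: even when every $\gamma(f_i)=\gamma(\delta_{eH_i})$, these values differ across $i$, so $\min_i\phi_c(H_i)$ cannot be driven to infinity at the single parameter $\gamma(f)=\sup_i\gamma(f_i)$; your crude bound $\sum_ia_i/\sum_ib_i\le\max_ia_i/b_i$ then does not close the argument, and the construction you describe only at the level of an announced ``careful construction'' is precisely the step that is missing. The paper's SPR counting argument estimates $\langle\delta_{\underline{\mathfrak{s}}(w)}\ast_\rho f,f\rangle\ge0$ directly against arbitrary $f\in\mathcal{H}_+$ and so never needs to compare abscissas of the individual $\phi(H_i)$ with $\gamma(f)$.
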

\begin{proof}
The number of ways a word $w\in\mathcal{W}_m^{A,a}$ can be written as $w=uv$, for words in $u\in\mathcal{W}_n^{A,a}$, $v\in\mathcal{W}_k^{A,a}$ with $n+k=m$, depends on the number of times an orbit in $w$ returns to $Aa$. We have
\begin{align*}
&\phi_{c; \le N}[t]\ast \phi_{c; \le N}[t] \ge C^{-1}\sum_{m=1}^N t^{-m} \sum_{w\in\mathcal{W}_m^{A,a}}R_m(wx)\delta_{\underline{\mathfrak{s}}(w)}\sum_{j\le N: w_{m-j}w_{m-j-1}=Aa} c_j c_{m-j} 
\\
&\ge C^{-1}\sum_{m=1}^N t^{-m}c_m \sum_{w\in\mathcal{W}_m^{A,a}}R_m(wx)\delta_{\underline{\mathfrak{s}}(w)}\#\left\{ j\le N: w_{m-j}w_{m-j-1}=Aa\right\} .
\end{align*}
Fix $M$. Consider $\mathcal{W}_m^{A,a}(<M)$ those words with $\#\left\{ j\le N: w_{m-j}w_{m-j-1}=Aa\right\}<M$. By strong positive recurrence we have
$$
\sum_{m=1}^N c_mt^{-m} \sum_{w\in\mathcal{W}_m^{A,a}(<M)}R_m(wx) = A_M(t)
$$
converges for $t\in (\gamma(\mathrm{SPR})+\epsilon,\infty)$ and in particular $\sup_{t\in[\gamma(f),\gamma(f)+1]}A_M(t)<\infty$. We also note that
$$
\sum_{m=1}^N t^{-m}c_m \sum_{w\in\mathcal{W}_m^{A,a}(<M)}R_m(wx)\langle \delta_{\underline{\mathfrak{s}}(w)}\ast f ,f\rangle \le A_M(t).
$$
It follows that
$$
\langle \phi_{c; \le N}[t]\ast \phi_{c; \le N}[t]\ast f, f\rangle \ge C^{-1}M\langle \phi_{c; \le N}[t]\ast f, f\rangle - C^{-1}A_M(t),
$$
i.e.
$$
\limsup_{N\to\infty}\frac{\langle \phi_{c; \le N}[t]\ast \phi_{c; \le N}[t]\ast f, f\rangle}{\langle \phi_{c}[t]\ast f, f\rangle} \ge C^{-1}M - \frac{C^{-1}A_M(t)}{\langle \phi_{c}[t]\ast f, f\rangle} .
$$
Using the divergence of $\langle \phi_{c}[t]\ast f, f\rangle$ gives the conclusion.
\end{proof}

\begin{remark}
If $d,\underline{d}:\mathbb{N}\to\mathbb{R}$ have that $\lim_{n\to\infty} d_n/c_n=1=\lim_{n\to\infty} \underline{d}_n/c_n$ then for the $c$ in Lemma \ref{lemma:cphiastphi} we also have
$$
\lim_{N\to\infty}\frac{\langle \phi_c[t]\ast f, f \rangle}{\langle \phi_{d; \le N}[t]\ast f, \phi^*_{\underline{d}; \le N}[t]\ast f \rangle}\to 0 \; \mathrm{as} \; t\to\gamma(f).
$$
\end{remark}

%%%%%%%%%%%%%%%%%%%%%subsection:tightness%%%%%%%%%%%%%%%%%%%%
\subsection{Tightness results}\label{subsection:tightness}
If $\Sigma^+$ is compact then any collection of measures with bounded mass is tight, and by Lemma \ref{lemma:tseries} we know this for $Q=\phi^*\ast f$ and $\nu^{g\phi^*}_{c}[t]$.

We now let $\Sigma^+$ be a countable Markov shift and assume that $R$ is strongly positively recurrent.  We can only handle the case $Q = \phi\ast f$.
\begin{theorem}\label{theorem:tightstatement}
Assume that $Q[t] = \phi[t]\ast f$. Assume that
$$
t\mapsto \sum_{r=1}^\infty t^{-r}\sup_{n\in\mathbb{N}}\left(\frac{c_{n+r}}{c_r} \right)\sum_{w\in\mathcal{W}^{A,a}(\ast)}R_r(wx)
$$ 
converges at $t=\gamma(Q)$. 
For every $B$ and for every $\epsilon>0$ there is a compact set $\mathcal{K}$ for which
$$
\nu_{c;N}^{g^{-1}Q}[t]((\Sigma^+ - \mathcal{K})\cap [B])\le  \epsilon
$$
and
$$
\nu_N[t^\prime]((\Sigma^+ -\mathcal{K})\cap [B])\le  \epsilon
$$
for every $N\in\mathbb{N}$, $g\in G$ and $t>\gamma(Q)$, $t^\prime >1$.
\end{theorem}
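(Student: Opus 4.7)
The plan is to combine the $c$-heavy right-hand-side local Gibbs bound of Lemma~\ref{one} with a first-return decomposition at the recurrent letter $A$, and to leverage the assumed convergence of the $c$-weighted first-return series to tame the tails.

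First I would partition $[B]$, up to a set not charged by the approximating measure, according to the first time a point meets the letter $A$. Writing $\mathcal{W}^{B,A}(\ast)_r$ for the first-passage words $w=w_1\cdots w_r$ with $w_1=B$, $w_r=A$ and $w_i\ne A$ for intermediate $i$, this gives the disjoint decomposition $[B]=[A]\sqcup\bigsqcup_{r\ge 1}\bigsqcup_{w\in\mathcal{W}^{B,A}(\ast)_r}[w]$ at the level of points eventually hitting $A$. Applying Lemma~\ref{one} to each such $w=w'A$ (with $|w'|=r-1$) yields
$$\nu_{c;N}^{g^{-1}Q}[t]([w])\le R_{r-1}(w'x)\,t^{-(r-1)}\,C(B)\sup_n\frac{c_{n+r-1}}{c_n}.$$
Summing over $r>M$ and comparing the sum $\sum_{w'A\in\mathcal{W}^{B,A}(\ast)_r}R_{r-1}(w'x)$ to the hypothesis sum $\sum_{w\in\mathcal{W}^{A,a}(\ast)_r}R_r(wx)$ via a change-of-letter step in the spirit of Lemma~\ref{lemma:changeletter} (together with mixing of $\sigma$), the convergence of the hypothesis series at $t=\gamma(Q)$ gives a threshold $M=M(B,\epsilon)$ with
$$\nu_{c;N}^{g^{-1}Q}[t]\bigl([B]\cap\{\text{first passage to }A>M\}\bigr)<\tfrac{\epsilon}{2},$$
uniformly in $N\in\mathbb{N}$, $g\in G$, and $t>\gamma(Q)$.

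The above only produces a (possibly infinite) union of cylinders, so to upgrade to a compact subset of $\Sigma^+$ I would impose a diagonal truncation in the alphabet. For each position $j\ge 0$ I would choose a finite letter set $\mathcal{F}_j\subset\mathcal{W}_1$ such that the mass of $\{y\in[B]:y_j\notin\mathcal{F}_j\}$ is less than $\epsilon\cdot 2^{-(j+2)}$; this is possible because for any word $u$ of length $j$ the sum $\sum_{s\in\mathcal{W}_1}\nu_{c;N}^{g^{-1}Q}[t]([us\cdots A])$ is bounded, in the style of Lemma~\ref{one}, by a quantity summable over $s$ (using log-H\"older regularity of $R$ and positive recurrence). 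The set $\mathcal{K}=\{y\in[B]:y_j\in\mathcal{F}_j\ \forall j\}$ is closed in $\prod_j\mathcal{F}_j$ and hence compact, and a union bound combining this with the first-passage estimate gives $\nu_{c;N}^{g^{-1}Q}[t]([B]\setminus\mathcal{K})<\epsilon$.

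The second assertion, for $\nu_N[t']$, is the specialisation to $c\equiv 1$: the factor $\sup_n(c_{n+r}/c_n)$ collapses to $1$ and the required summability reduces to the bare strong positive recurrence of $R$, which is part of the standing hypotheses. The main obstacle, in my view, is not the first-return tail --- which is essentially the SPR condition in disguise --- but the compactness upgrade by diagonal alphabet truncation: in a CMS with countably infinite alphabet a cylinder is not automatically compact, and one must extract Lemma~\ref{one}-type bounds for cylinders carrying a further constraint at a distant coordinate, a routine but mildly delicate extension of the existing $c$-heavy local Gibbs estimate.
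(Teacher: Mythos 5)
Your broad strategy---diagonal truncation of the alphabet to build the compact set $\mathcal{K}=\{y:y_j\in\mathcal{F}_j\ \forall j\}$---is indeed the approach the paper takes (the paper's $\mathcal{K}((M_p))$ is exactly this). However, the crucial estimate you invoke to choose the finite sets $\mathcal{F}_j$ is not correctly justified and, as stated, would be supported by a divergent series.

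The problem is here: you claim that for a word $u$ of length $j$, the sum $\sum_s\nu_{c;N}^{g^{-1}Q}[t]([us\cdots A])$ is bounded ``in the style of Lemma~\ref{one}'' by something summable over $s$. To apply Lemma~\ref{one} you must refine each $[us]$ into $A$-terminated cylinders $[uswA]$ and sum over \emph{all} admissible continuations $w$. Applying the $c$-heavy Gibbs bound and the submultiplicativity $\hat{c}_{m+k}\le\hat{c}_m\hat{c}_k$, the resulting sum in $w$ is a full (not first-return) $\hat{c}$-weighted return series at parameter $t$. Since the estimate must be uniform over $t>\gamma(Q)$ and $\gamma(Q)$ can equal $1$, this is precisely $\zeta^{A,a}[t]$ near its abscissa of convergence, which diverges. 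The hypothesis of the theorem only gives convergence of the \emph{first-return} series $\eta^{A,a}$ at $\gamma(Q)$. To make the sum finite one has to organize it by the number $j$ of returns to $A$, yielding a factor like $\eta^A[t]^{2j+1}$ per return count (finite by hypothesis) multiplied by a first-return tail $\sum_k t^k\hat{c}_k\sum_{w\in\mathcal{A}_k(\ge M_j)}R_k(wx)$ that can be driven to zero by taking $M_j$ large, beating the geometric growth. This is exactly the content of the paper's chain of Claims in the proof, and it is what your ``positive recurrence'' invocation would have to unpack; as written it is a gap, not a routine extension.

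A secondary remark: your opening step, truncating the first-passage \emph{time} to $A$ at $M$, plays no role in the paper's construction and does not contribute to compactness (the set of points with bounded first-passage time is still non-compact in a CMS). It neither replaces nor simplifies the alphabet truncation, which must carry the full weight of the argument. Your observation about the $\nu_N[t']$ case specialising to $c\equiv 1$, with the hypothesis reducing to strong positive recurrence, is correct.
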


\begin{corollary}\label{cor:tight}
Assume that $\gamma(Q)>\gamma(\mathrm{SPR})$ and that $c$ is subexponentially increasing. For each $t>\gamma(Q)$ there are measures $\nu^{g^{-1} Q}_{c}[t]$ on $\Sigma^+$, finite on cylinders, with $\nu^{g^{-1} Q}_{c;N}[t]\to\nu^{g^{-1} Q}_{c}[t]$ as $N\to \infty$ in the weak* topoology. 
There is a sequence $t_k\to \gamma(Q)$ and measures $\nu^{g^{-1} Q}_{c}$ on $\Sigma^+$, finite on cylinders, with $\nu^{g^{-1} Q}_{c}[t_k]\to \nu^{g^{-1} Q}_{c}$ as $k\to\infty$ in the weak* topology. In addition $\nu^{Q}_{c}([A])=1$.
\end{corollary}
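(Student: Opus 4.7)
The plan is to first verify the analytic hypothesis of Theorem \ref{theorem:tightstatement}, then combine the resulting tightness with the mass bounds of Lemma \ref{lemma:tseries} to extract weak* limits, first as $N\to\infty$ and then as $t\to\gamma(Q)$.

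First I will check that the hypothesis of Theorem \ref{theorem:tightstatement} holds. Since $c$ is subexponentially increasing, for each $\delta>0$ there is a constant $C_\delta$ with
$$
\sup_{n\in\mathbb{N}} \frac{c_{n+r}}{c_r} \le C_\delta (1+\delta)^r
\quad\text{for all } r\in\mathbb{N}.
$$
(Recall $\mathcal{W}^{A,a}(\ast)$ should be read as the SPR first-return words, whose weighted sum has exponential growth rate $\gamma(\mathrm{SPR})$.) By the assumption $\gamma(Q)>\gamma(\mathrm{SPR})$, I choose $\delta>0$ so small that $\gamma(Q)/(1+\delta)>\gamma(\mathrm{SPR})$, whence
$$
\sum_{r=1}^\infty \bigl(\gamma(Q)\bigr)^{-r} \sup_{n}\!\left(\frac{c_{n+r}}{c_r}\right) \sum_{w\in\mathcal{W}^{A,a}(\ast)} R_r(wx) \le C_\delta \sum_{r=1}^\infty \bigl(\gamma(Q)/(1+\delta)\bigr)^{-r}\!\!\!\sum_{w\in\mathcal{W}^{A,a}(\ast)} R_r(wx)<\infty.
$$
Thus Theorem \ref{theorem:tightstatement} applies: for every $B$ and every $\epsilon>0$ there is a compact set $\mathcal{K}=\mathcal{K}(B,\epsilon)\subset\Sigma^+$ with $\nu^{g^{-1}Q}_{c;N}[t]((\Sigma^+\setminus\mathcal{K})\cap[B])\le\epsilon$ uniformly in $N\in\mathbb{N}$, $g\in G$ and $t>\gamma(Q)$.

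Next I will construct the limit in $N$ for fixed $t>\gamma(Q)$. By Lemma \ref{lemma:tseries} the masses $\nu^{g^{-1}Q}_{c;N}[t]([B])$ are uniformly bounded in $N$, and moreover the proof of that lemma shows they are Cauchy as $N\to\infty$ (this uses strong convergence of $\phi_{c;\le N}[t]\ast f$). Combined with the uniform tightness on each cylinder from the previous paragraph, a standard subsequential compactness argument (restrict to $[B]$, where measures have uniformly bounded mass and are tight, then diagonalize over the countable family of cylinders) gives a unique weak* limit measure $\nu^{g^{-1}Q}_c[t]$ on $\Sigma^+$, finite on every cylinder, with $\nu^{g^{-1}Q}_{c;N}[t]\to \nu^{g^{-1}Q}_c[t]$ as $N\to\infty$; uniqueness of the limit is forced by the convergence of masses on cylinders and the fact that cylinders generate the Borel $\sigma$-algebra.

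Finally I will extract the limit in $t$. Lemma \ref{lemma:tseries} also gives that the masses $\nu^{Q}_c[t]([B])$ are uniformly bounded for $t$ in a neighborhood of $\gamma(Q)$. Choose a sequence $t_k\to \gamma(Q)$ such that, by a diagonal argument over the countably many cylinders $[B]$, the numerical limits $\lim_{k\to\infty}\nu^{g^{-1}Q}_c[t_k]([B])$ exist. The tightness from Theorem \ref{theorem:tightstatement} is uniform in $t>\gamma(Q)$, so the family $\{\nu^{g^{-1}Q}_c[t_k]\}_k$ is relatively compact in the vague topology on each cylinder; the diagonal choice then pins down a unique weak* limit $\nu^{g^{-1}Q}_c$, again finite on cylinders. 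The normalisation $\nu^Q_c([A])=1$ is inherited from the identity $\nu^Q_{c;N}[t]([A])=1$ of Lemma \ref{lemma:tseries}, passed through both limits.

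The main technical obstacle is the verification of the hypothesis of Theorem \ref{theorem:tightstatement}: matching the subexponential behaviour of $c_{n+r}/c_r$ against the strong positive recurrence gap $\gamma(Q)-\gamma(\mathrm{SPR})>0$. Everything else is soft: uniform mass bounds plus uniform tightness yields the two weak* limits by standard compactness arguments.
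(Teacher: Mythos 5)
Your proof is correct and follows essentially the same route as the paper: verify the analytic hypothesis of Theorem~\ref{theorem:tightstatement} via the subexponential growth of $c$ together with the gap $\gamma(Q)>\gamma(\mathrm{SPR})$, then combine the resulting tightness with the mass bounds of Lemma~\ref{lemma:tseries} and a double diagonal argument to extract the two weak* limits, inheriting the normalisation from $\nu^Q_{c;N}[t]([A])=1$. Two small remarks. First, the displayed bound you write, $\sup_n c_{n+r}/c_r \le C_\delta(1+\delta)^r$, is the correct estimate only if the quantity inside the supremum is $c_{n+r}/c_n$ (the supremum over $n$ of $c_{n+r}/c_r$ would be infinite for unbounded increasing $c$); the paper's hypothesis in Theorem~\ref{theorem:tightstatement} appears to carry the same notational slip, and elsewhere --- notably in Lemma~\ref{one} and in the definition of $\hat c_r$ in the proof of tightness --- the denominator is indeed $c_n$, which is what your estimate uses, so your reading is the intended one. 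Second, the statement you attribute to the "proof of Lemma~\ref{lemma:tseries}" about Cauchy-ness is in fact one of the conclusions of that lemma (its last sentence, conditional on tightness), so you can simply cite it rather than re-deriving it; also note the strong-convergence argument for Cauchy-ness applies in the case $Q=\phi_c\ast f$, which is the only case covered by Theorem~\ref{theorem:tightstatement}, so this is consistent.
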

\begin{proof}
The hypotheses to the corollary imply that for $\beta<1$
$$
\sum_{r=1}^\infty t^{-r}\sup_{n\in\mathbb{N}}\left(\frac{c_{n+r}}{c_r} \right)\sum_{w\in\mathcal{W}^{A,a}(\ast)}R_r(wx)\le C \sum_{r=1}^\infty (\beta t)^{-r}\sum_{w\in\mathcal{W}^{A,a}(\ast)}R_r(wx)
$$
which coverges for $\beta\gamma(Q)>\gamma(\mathrm{SPR})$.

One does have to be careful about extracting accumulation points when $\Sigma^+$ is non-compact --- it is the tightness result of Theorem \ref{theorem:tightstatement} that implies the existence of accumulation points. Let us sketch of the details of  this well-known mechanism. Suppose $m_k$ are tight in $k\in\mathbb{N}$. 
For a fixed $B$ and compact set $\mathcal{K}_n$ in the tightness criterion, we have that the measures $m_k$ restricted to $\mathcal{K}_n\cap [B]$ have an accumulation point that is a positive measure. So, for some subsequence $m_{k_n(q)}$ converge as $q\to\infty$ to a positive measure. Using nesting of $\mathcal{K}_n$ we may assume that $k_n(q)$ is a subsequence of $k_{n-1}(q)$. Then setting $q_n = k_n(n)$ we have that $m_{q_n}$ converges for any $\mathcal{K}_n$, and in particular the limit measure is well-defined on the union, which is $[B]$.
We use a similar diagonal sequence have convergence along a sequence which works for all $B$.
We use a similar diagonal sequence to deduce convergence along a sequence $t_k\to \gamma(Q)$ which works for all $g$. To see that the limit is finite on cylinders we using the first part of Lemma \ref{lemma:tseries} which tells us that the measure of a cylinder is bounded from above in $t$.
\end{proof}
We proceed in a similar fashion as Sarig \cite{Sarig}. In our case we rely on Lemma \ref{one} to estimate the measure of cylinders.

Without loss of generality we may assume that $\Sigma^+ \subset \mathbb{N}^\mathbb{N}$; that is we represent the transitions by natural numbers. Let us assume that $A$ is represented by $1\in\mathbb{N}$.
We need notation governing the first returns. Let $B\in\mathcal{W}_1$. For a sequence of numbers $(M_p)_{p\in\mathbb{N}}$ set:
\begin{itemize}
\item
$\mathcal{K}((M_p)) = \left\{ x\in\Sigma : \forall p\in\mathbb{N} \, x_p\le M_p \right\}.$
\item $\mathcal{B}_n = \left\{ w : w=w_1\cdots w_{n-1}A\in\mathcal{W}_n, \tau(B,w_1)=1 , w_i>1, i=1,\cdots ,n-1\right\}$
\item $\mathcal{A}_n = \left\{ w : w=w_1\cdots w_{n-1}A\in\mathcal{W}_n, \tau(A,w_1)=1, w_i>1 , i=1,\cdots ,n-1\right\}$;
\item $\mathcal{A} = \bigcup_{n=1}^\infty \mathcal{A}_n$;
\item for $k\in\mathbb{N}$, 
$\mathcal{A}_n(\ge M_k)= \left\{ w : w=w_1\cdots w_n\in\mathcal{A}_n  \exists i=1,\cdots ,n w_i> M_k \right\}$;
\item
$\mathcal{A}(\ge M_k)= \left\{ w : w=w_1\cdots w_n\in\mathcal{A}_n, n\in \mathbb{N},  \exists i=1,\cdots ,n w_i> M_k \right\}$;
\item 
$\mathcal{A}^j$ is the usual cartesian product.
\end{itemize}

We write $\hat{c}:\mathbb{N}\to\mathbb{R}$
$$
\hat{c}_r = \sup_{n\in\mathbb{N}}\left(\frac{c_{n+r}}{c_r} \right)
$$
We write 
$$
\eta^A[t]:=\sum_{n=1}^\infty t^{-n} \hat{c}_n \sum_{w\in\mathcal{A}_n}R_n(Aw_1\cdots w_{n-1}x),
$$
and
$$
\xi^B[t]:=\sum_{n=1}^\infty t^{-n} \hat{c}_n\sum_{w\in\mathcal{B}_{n}}R_{n}(Bw_1\cdots w_{n-1}x),
$$
recalling that $x\in [A]$ (see Definition \ref{def:diracmass}).

The set $\mathcal{K}((M_p))$ is compact. And 
$$
\Sigma^+ - \mathcal{K}((M_p)) = \left\{ x\in\Sigma : \exists q\in\mathbb{N}  \, x_q> M_q \right\}.
$$
We will always assume that $M_{k}\le  M_{k+1}$.

For brevity we write $m_N[t]=\nu_{c;N}^{g^{-1}Q}[t]$. The only tool we use is Lemma \ref{one} which applies uniformly in $g$.
\begin{claim}
Assuming $M_p$ are large enough, if $z\in \mathrm{supp}(m_N[t])$ and $z\in (\Sigma - \mathcal{K}((M_p)))\cap [B]$, it must be that either 
\begin{itemize}
\item[1]
$z\in[u]$, with $u\in\mathcal{B}(\ge M_1)$, or
\item[j]
$z\in [uvw]$ with $u\in\mathcal{B}$, $v\in\mathcal{A}^j$, and $w\in \mathcal{A}(\ge M_j)$, for some $j\in\mathbb{N}$.
\end{itemize}
\end{claim}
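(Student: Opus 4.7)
The plan is to decompose a point $z$ in the support of $m_N[t]$ by first returns to the letter $A$, and then to trace which of the resulting blocks must contain the coordinate witnessing $z\notin\mathcal{K}((M_p))$. Throughout I use that each point in $\mathrm{supp}(m_N[t])$ has the form $z=vx$ for some admissible $v=v_1\cdots v_n$ of length $n\le N$ with $v_1=B$ (to land in $[B]$) and $v_n=a$, where $x$ is the fixed sequence from Definition \ref{def:diracmass} with $x_0=A$ and $x_k\neq A$ for $k\ge 1$.

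First I would record the block decomposition. The positions of $A$ in $z$, restricted to $[0,n]$, form a finite increasing sequence $0<p_1<\cdots<p_m=n$ (with $p_m=n$ because $z_n=x_0=A$ and no further $A$ appears by the choice of $x$). Cutting at these positions factorises the initial segment $z_0z_1\cdots z_n$ as $B\,u^{(0)}u^{(1)}\cdots u^{(m-1)}$, where $u^{(0)}\in\mathcal{B}_{p_1}$ (first return to $A$ starting from $B$) and $u^{(k)}\in\mathcal{A}_{p_{k+1}-p_k}$ for $k\ge 1$ (successive first returns to $A$).

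Next I would localise the coordinate $q$ with $z_q>M_q$ guaranteed by $z\notin\mathcal{K}$. Choosing $M_0\ge B$ rules out $q=0$. Choosing $M_p\ge x_p$ for all $p$ (possible since $x$ is fixed and $(M_p)$ may be inflated) rules out $q>n$: there $z_q=x_{q-n}$ and monotonicity gives $M_q\ge M_{q-n}\ge x_{q-n}$. Hence $1\le q\le n$, and $q$ lies in a unique block. If $q\le p_1$, then $u^{(0)}$ contains the offending letter and $M_q\ge M_1$ yields $u^{(0)}\in\mathcal{B}(\ge M_1)$, giving case~(1). If $p_k<q\le p_{k+1}$ for some $k\ge 1$, then $u^{(k)}$ contains the offending letter; the elementary bound $p_k\ge k$ gives $q\ge k+1$, so $M_q\ge M_k\ge M_{k-1}$. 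Setting $j=k-1$, the factorisation $u=u^{(0)}\in\mathcal{B}$, $v=u^{(1)}\cdots u^{(k-1)}\in\mathcal{A}^{j}$, $w=u^{(k)}\in\mathcal{A}(\ge M_{j})$ gives case~(2), interpreting the cylinder $[uvw]$ as the initial word of $z$ after the mandatory first letter $B$.

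The main obstacle I anticipate is the tail contribution from $x$: because $n$ varies with the word $v$, a single sequence $(M_p)$ cannot directly control all coordinates $z_q=x_{q-n}$ for $q>n$ across all $n$ simultaneously. The resolution is to enforce $M_p\ge x_p$ in addition to monotonicity; both are legitimate under the ``sufficiently large'' clause, and together they allow the transfer $M_q\ge M_{q-n}\ge x_{q-n}$. Once the large letter is confined to the $v$-block, the remainder is routine bookkeeping on block indices, powered only by $p_k\ge k$.
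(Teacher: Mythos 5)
Your proof is correct and is the natural argument the paper presupposes (the claim is left unproven in the text): you decompose the initial segment of $z=vx$ into first-return blocks to $A$, inflate $(M_p)$ so that neither the first letter $B$ nor the fixed tail $x$ can witness $z\notin\mathcal{K}((M_p))$, and then locate the offending coordinate using the elementary bound $p_k\ge k$ together with the monotonicity $M_k\le M_{k+1}$; the tail-inflation step is genuinely needed, since the coordinates of $x$ need not be bounded, and it is licensed by the ``$M_p$ large enough'' hypothesis. The one bookkeeping point worth flagging is the edge case $j=k-1=0$ when the large letter sits in the first $\mathcal{A}$-block $u^{(1)}$: the claim as written accommodates this only under the convention $0\in\mathbb{N}$ (so that $\mathcal{A}^0$ is the empty product), an ambiguity or off-by-one in the paper's statement that your argument surfaces but that is harmless for the ensuing tightness estimate, since adding one more term to the geometric series in $j$ does not affect convergence.
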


Recall the constant $C(B)$ appearing in Lemma \ref{one}, which we may assume exceeds the local H\"{o}lder constant.
\begin{claim}\label{meas} We have
\begin{itemize}
\item[1:]
For $u\in\mathcal{B}(\ge M_1)$, $|u|=r+1$,
$$
m_N[t]([u]) \\
\le C(B) t^{-r}\hat{c}_r R_{r}(ux)
$$
\item[j:]
For $u\in\mathcal{B}(\ge M_1)$, $|u|=r+1$,  $v\in\mathcal{A}^j$, $|v|=p$, $w\in \mathcal{A}_k(\ge M_j)$, for some $j\in\mathbb{N}$;
\begin{align*}
&m_N[t]([uvw]) \\
&\le C(B)^3
t^{-k}\hat{c}_kR_{k}(Awx) t^{-r}\hat{c}_rR_{r}(ux) t^{-p}\hat{c}_p R_{p}(Avx)
\end{align*}
\end{itemize}
\end{claim}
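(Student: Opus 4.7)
The plan is to apply Lemma \ref{one} directly to each of the cylinders $[u]$ and $[uvw]$, and then, in case~$j$, to decompose the resulting bound into the stated product form using three ingredients: multiplicativity of $R_n$ along orbits, submultiplicativity of $\hat c_r := \sup_{n\in\mathbb{N}}(c_{n+r}/c_n)$, and local H\"older regularity of $\log R$.

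For case~1, since $u\in\mathcal{B}(\ge M_1)$ has length $r+1$ and ends in $A$, writing $u=u'A$ with $|u'|=r$ puts $[u]$ in the form $[wA]$ required by Lemma \ref{one}; moreover $\tau(B,u_1)=1$ ensures $Bu'A$ is admissible, so Lemma \ref{one} applies and gives
$$m_N[t]([u]) \;\le\; C(B)\,t^{-r}\,\hat c_r\,R_r(u'x).$$
Since $u'x$ and $ux$ agree on their first $r$ coordinates, local H\"older regularity yields $R_r(u'x)$ comparable to $R_r(ux)$ with a uniform multiplicative constant that can be absorbed into $C(B)$.

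For case~$j$, the cylinder $[uvw]$ also ends in $A$ (because each $\mathcal{A}_i$ ends in $A$), so Lemma \ref{one} applies to it and gives
$$m_N[t]([uvw]) \;\le\; C(B)\,t^{-(r+p+k)}\,\hat c_{r+p+k}\,R_{r+p+k}(\widetilde w x),$$
where $\widetilde w$ denotes $uvw$ with its terminal $A$ removed. The submultiplicative estimate $\hat c_{a+b}\le \hat c_a\hat c_b$ (which follows by writing $c_{n+a+b}/c_n=(c_{n+a+b}/c_{n+b})(c_{n+b}/c_n)$ and taking $\sup_n$) iterates to $\hat c_{r+p+k}\le \hat c_r\hat c_p\hat c_k$, and multiplicativity of $R_n$ splits
$$R_{r+p+k}(\widetilde w x) \;=\; R_r(\widetilde w x)\cdot R_p(\sigma^r\widetilde w x)\cdot R_k(\sigma^{r+p}\widetilde w x).$$
Because $u$ ends in $A$ and $v\in\mathcal{A}^j$ ends in $A$, the three points $\widetilde w x$, $\sigma^r\widetilde w x$, $\sigma^{r+p}\widetilde w x$ begin with exactly the same letters as $ux$, $Avx$, $Awx$ respectively; local H\"older regularity then compares each factor to its counterpart with a uniform multiplicative constant, and collecting all constants into $C(B)^3$ yields the claimed inequality.

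The main obstacle I anticipate is the careful bookkeeping of H\"older constants when splitting $R_{r+p+k}(\widetilde w x)$ into three factors evaluated at different ``reference tails'': one needs the constants in $R_r(\widetilde w x)\asymp R_r(ux)$, $R_p(\sigma^r\widetilde w x)\asymp R_p(Avx)$, and $R_k(\sigma^{r+p}\widetilde w x)\asymp R_k(Awx)$ to be uniform across admissible words of arbitrary length, which is precisely what the uniform local H\"older regularity of $\log R$ standing throughout the paper provides, so that three copies of a fixed constant fit comfortably inside $C(B)^3$.
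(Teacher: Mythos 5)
Your proposal matches the paper's own (very terse) indication of proof: apply Lemma \ref{one} once to the whole cylinder, split the resulting $R_{r+p+k}$ by multiplicativity along the orbit, split $\hat c_{r+p+k}$ by the submultiplicativity $\hat c_{a+b}\le\hat c_a\hat c_b$, and use local H\"older regularity to replace the three $R$-factors by their ``reference-tail'' versions $R_r(ux)$, $R_p(Avx)$, $R_k(Awx)$ (noting $v_p=A$ since $v\in\mathcal{A}^j$, so the middle and last comparisons line up). The only cosmetic looseness, shared with the paper, is the exact count of constants (one $C(B)$ from Lemma \ref{one} plus up to three H\"older factors, nominally $C(B)^4$ rather than $C(B)^3$); this is harmless since $C(B)$ can be enlarged and only a fixed polynomial in $C(B)$ is needed for Claim \ref{union}.
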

We use that $C(B)$ exceeds the local H\"{o}lder constant and that $\hat{c}_{m+k}\le \hat{c}_{k}\hat{c}_{m}$.
\begin{claim}\label{union} We have
\begin{itemize}
\item[1:]
\begin{align*}
&m_N[t](\left\{ [u] : u\in\mathcal{B}_k(\ge M_1), k\in\mathbb{N}\right\} ) \\
&\le 
C(B)^2
\sum_{k\in\mathbb{N}} t^{k-1}\hat{c}_{k-1} \sum_{u\in\mathcal{B}_k(\ge M_1)}R_{k-1}(ux) 
\end{align*}
\item[j:]
\begin{align*}
&m_N[t](\left\{ [uvw] : v\in\mathcal{A}^j, u\in\mathcal{B}, w\in\mathcal{A}_k(\ge M_j), j,k\in\mathbb{N}\right\} ) \\
&\le C(B)^2 \xi^B(t)
\left(C(B)
\eta^A[t]\right)^{2j+1}
\sum_{k\in\mathbb{N}}t^k\hat{c}_{k} \sum_{w\in\mathcal{A}_k(\ge M_j)}R_{k}(Awx) 
\end{align*}
\end{itemize}
\end{claim}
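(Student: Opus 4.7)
The strategy is to take the bounds for individual cylinders supplied by Claim \ref{meas} and sum them over the relevant families of admissible words, identifying the resulting partial series with the quantities $\eta^{A}[t]$ and $\xi^{B}[t]$ which are defined precisely to absorb sums over first returns to $A$ from the letter $A$ and the letter $B$ respectively. The cylinders indexed by distinct first-return decompositions are pairwise disjoint, so sub-additivity of $m_N[t]$ on disjoint sets turns the union bounds into straightforward summations.

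For Part (1), I would simply sum the per-cylinder bound from Claim \ref{meas}(1) over $u\in\mathcal{B}_k(\ge M_1)$ and over $k\in\mathbb{N}$. Since each such $u$ encodes a single first return from $B$ to $A$ which contains at least one symbol above $M_1$, the cylinders $[u]$ are disjoint for distinct $u$, and the estimate from Claim \ref{meas}(1) carries across the sum without any further manipulation. The asserted inequality then drops out by collecting the factor $C(B)$ from Claim \ref{meas}(1) together with (at most) one further constant of comparable size coming from the local H\"{o}lder bound used to compare $R_{r}(ux)$ at a reference base point with its value at $x$.

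For Part (j), the essential step is the decoupling of the potential along the concatenation $uvw$. Write $v=v^{(1)}\cdots v^{(j)}$ with each $v^{(i)}\in\mathcal{A}$ of length $p_i$. The bound in Claim \ref{meas}(j) contains the aggregated factor $R_{p}(Avx)$ with $p=\sum p_i$; I would replace this by $\prod_{i=1}^{j} R_{p_i}(Av^{(i)}x)$ using local H\"{o}lder continuity of $\log R$ at each of the $j$ junctions (here the common first letter $A$ at each joint is used to compare the potential of the concatenated word with the product of the potentials of the pieces). Submultiplicativity of $\hat{c}$ (from $\hat{c}_{p_1+p_2}\le \hat{c}_{p_1}\hat{c}_{p_2}$, a property of $c$ arranged via Proposition \ref{prop:divseries2}) lets me analogously split $\hat{c}_{p}\le \prod_i \hat{c}_{p_i}$. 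After these two decouplings, the sum over $v\in\mathcal{A}^{j}$ factorises: each factor $\sum_{v^{(i)}\in\mathcal{A}} t^{-p_i}\hat{c}_{p_i} R_{p_i}(Av^{(i)}x)$ equals $\eta^{A}[t]$ by definition, producing the $(\cdot\,\eta^{A}[t])^{j}$-type contribution. The sum over $u\in\mathcal{B}$ similarly produces $\xi^{B}[t]$, while the $w$-sum remains explicit as it carries the restriction $w\in\mathcal{A}_k(\ge M_j)$ that controls tightness.

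The main obstacle is simply the bookkeeping of multiplicative constants: each H\"{o}lder decoupling at a junction costs one constant bounded by $C(B)$; combined with the base $C(B)^3$ from Claim \ref{meas}(j) and the comparison of reference points $x$ versus $v^{(i)}Ax$, the total constant assembles into the stated $C(B)^{2}\bigl(C(B)\eta^{A}[t]\bigr)^{2j+1}\xi^{B}[t]$. No dynamical input beyond local H\"{o}lder continuity and disjointness of the first-return decomposition is needed; the content of the claim is entirely bookkeeping on top of Claim \ref{meas}, whose proof already absorbed the representation-theoretic input via Lemma \ref{one}.
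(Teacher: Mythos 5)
Your proposal is correct and follows exactly the route the paper intends: the paper gives no explicit proof of this claim beyond the one-line hint immediately preceding it (``We use that $C(B)$ exceeds the local H\"{o}lder constant and that $\hat{c}_{m+k}\le\hat{c}_k\hat{c}_m$''), and your argument is precisely the fleshed-out version of that hint — sum the per-cylinder bound of Claim \ref{meas} over the disjoint first-return cylinders, and for part (j) further decouple $R_p(Avx)$ and $\hat{c}_p$ along the first-return decomposition $v=v^{(1)}\cdots v^{(j)}$, paying one local-H\"{o}lder constant per junction (using that each $v^{(i)}$ ends in $A$ and $x\in[A]$, so consecutive pieces agree on the initial $A$-block), which regroups into powers of $\eta^A[t]$ and one factor $\xi^B[t]$. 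One small accounting remark: a careful count yields $j-1$ junctions, hence an exponent of order $j$ on $\eta^A[t]$ rather than the $2j+1$ stated; the paper's exponent is simply a generous overcount that is harmless for the tightness argument that follows, and your tighter bound implies it in the regime where $C(B)\eta^A[t]\ge 1$.
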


\begin{claim}
For every $\lambda>0$ there is choice of $K_\lambda$ (uniform in $t>\gamma(Q)$) so that
$$
\sum_{k\in\mathbb{N}}t^k\hat{c}_{k} \sum_{w\in\mathcal{A}_k(\ge K_\lambda)}R_{k}(wx) \le \lambda
$$
and
$$
\sum_{k\in\mathbb{N}}t^k\hat{c}_{k} \sum_{w\in\mathcal{B}_{k+1}(\ge K_\lambda)}R_{k}(wx) \le \lambda.
$$
\end{claim}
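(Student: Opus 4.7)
The plan is to reduce the claim to a dominated convergence argument, exploiting the standing hypothesis of Theorem \ref{theorem:tightstatement} to furnish a summable majorant that is simultaneously valid for all $t\ge\gamma(Q)$.

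First I would note the termwise monotonicity in $t$: since $t^{-k}$ is decreasing in $t$ and $\hat{c}_k, R_k(wx)$ are independent of $t$, we have for every $k$, every $w$ and every $t\ge\gamma(Q)$,
\begin{equation*}
t^{-k}\hat{c}_k R_k(wx) \le \gamma(Q)^{-k}\hat{c}_k R_k(wx).
\end{equation*}
So it suffices to prove the statement at the single value $t=\gamma(Q)$, and uniformity in $t>\gamma(Q)$ comes for free. The hypothesis of Theorem \ref{theorem:tightstatement} gives precisely that $\sum_{k\in\mathbb{N}}\gamma(Q)^{-k}\hat{c}_k\sum_{w\in\mathcal{W}^{A,a}(\ast)}R_k(wx)<\infty$, where $\mathcal{W}^{A,a}(\ast)$ is the set of first-return words from $A$ to $a$ --- this contains $\mathcal{A}_k$ (after identifying the trailing letter $A$ with $a$) so the analogous series over $\bigcup_k\mathcal{A}_k$ is also finite.

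Second, I would apply dominated convergence on the counting measure over pairs $(k,w)$ with $w\in\mathcal{A}_k$. For each fixed pair, the indicator $\mathbf{1}[w\in\mathcal{A}_k(\ge K)]$ equals $1$ only when some letter of $w$ exceeds $K$; as $w$ has only finitely many letters, $\mathbf{1}[w\in\mathcal{A}_k(\ge K)]\to 0$ as $K\to\infty$. The dominating sequence $\gamma(Q)^{-k}\hat{c}_kR_k(wx)$ is summable by the previous paragraph, so the restricted sum tends to zero:
\begin{equation*}
\lim_{K\to\infty}\sum_{k\in\mathbb{N}}\gamma(Q)^{-k}\hat{c}_k\sum_{w\in\mathcal{A}_k(\ge K)}R_k(wx) = 0.
\end{equation*}
Choosing $K_\lambda$ so that this quantity is at most $\lambda$, the termwise bound above gives the first inequality uniformly in $t>\gamma(Q)$.

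For the $\mathcal{B}$ case the argument is identical once the analogous convergence at $t=\gamma(Q)$ is known. This is the one point requiring care, since the hypothesis is stated only for first returns based at $A$. I would appeal to Lemma \ref{lemma:changeletter}: translation of the base letter from $A$ to $B$ costs only a multiplicative constant $\mathrm{Const.}(B)$ and a bounded length shift $K_B$, both of which can be absorbed into a (larger) summable majorant after possibly enlarging $\hat{c}$ by a constant. Then replacing $K_\lambda$ by the maximum of the two values selected for $\mathcal{A}$ and $\mathcal{B}$ yields the joint statement. The main obstacle is really this bookkeeping --- verifying that the hypothesis transports cleanly from $\mathcal{A}$-first-returns to $\mathcal{B}$-first-returns without inflating the constants in a way that destroys uniformity in $t$ --- but the monotonicity $t^{-k}\le \gamma(Q)^{-k}$ makes this essentially free.
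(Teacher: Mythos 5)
Your reduction to $t=\gamma(Q)$ via termwise monotonicity $t^{-k}\le\gamma(Q)^{-k}$ is correct, and the tail-of-a-convergent-series (dominated convergence) step for the $\mathcal{A}$ part is sound --- once one grants that the hypothesis of Theorem \ref{theorem:tightstatement} yields $\sum_k\gamma(Q)^{-k}\hat{c}_k\sum_{w\in\mathcal{A}_k}R_k(wx)<\infty$. The paper itself states this claim without a written proof, and your plan is likely what the author had in mind for the $\mathcal{A}$ part.

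However, the $\mathcal{B}$ part has a genuine gap. Lemma \ref{lemma:changeletter} compares $\phi^{B,a}_{c;\le N}[t]$ with $\phi^{A,a}_{c;\le N}[t]$, both of which are sums over \emph{all} admissible words from the given first letter to $a$. These are the full return series, with abscissa of convergence $\exp P(\log R,\sigma)=1$, not the first-return series. By contrast, $\mathcal{B}_{k+1}$ is a set of first-hit-of-$A$ words, and the associated series $\xi^B[t]$ must be shown to converge at $t=\gamma(Q)$, which can lie strictly below $1$. The comparison supplied by Lemma \ref{lemma:changeletter} therefore lands you in a series that diverges at $\gamma(Q)$ and gives no useful bound. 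What you actually need is a first-return-preserving version of that comparison: a connecting word $u_B$ with $Au_B B$ admissible and $u_B$ \emph{avoiding} $A$ in its interior, so that prepending $Au_B$ to an element of $\mathcal{B}_{k+1}$ produces an element of $\mathcal{A}_{k+|u_B|+2}$. Such a $u_B$ exists (take any admissible path from $A$ to $B$ and truncate at its last occurrence of $A$), and with it the estimate $\xi^B[t]\le C_B\,\eta^A[t]$ does hold up to the shift $|u_B|$ and the $\hat{c}$-adjustment you describe. But this extra ``avoid $A$'' requirement is not part of Lemma \ref{lemma:changeletter} as proved in Section \ref{section:ell2}, and invoking the lemma without it leaves the $\mathcal{B}$ case unjustified at exactly the parameter range $\gamma(Q)<t<1$ where the claim is needed.
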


\begin{proof}[Proof of Theorem \ref{theorem:tightstatement}]
 Let $\epsilon>0$. For $q\in\mathbb{N}$, set 
$$
\lambda_q = \frac{\epsilon}{2^{q}}(C(B))^{-1}
\left(C(B)
\eta^A[ t]\right)^{-2q-1}
$$
and let $K_{\lambda_q}$ be given as in the claim. Set $M_q = K_{\lambda_q}$. Then
\begin{align*}
&m_N[t](\Sigma - \mathcal{K}((M_p)))\\
&\le \sum_{q\in\mathbb{N}}
C(B)
\left(C(B)
\eta^A[t]\right)^{2q+1}
\sum_{k\in\mathbb{N}}t^k\hat{c}_k \sum_{w\in\mathcal{A}_k(\ge M_q)}R_{k}(wx)
\\
&\le \epsilon \sum_{q\in\mathbb{N}}\frac{1}{2^{q}} \le \epsilon
\end{align*}
\end{proof}

%%%%%%%%%%%%%%%%%%%%%%%%%%%%%%%%%%%%%%%%%%%
%%%%%%%%%%%%%%%%%%%%%TWISTPROOF%%%%%%%%%%%%%%%%%%%%%%
%%%%%%%%%%%%%%%%%%%%%%%%%%%%%%%%%%%%%%%%%%%
\section{Twist by cocycle, and (limit of) matrix coefficients}\label{section:twistproof}
Section \ref{section:existence} verifies cases where the twisted measures (as in Definition \ref{def:phitwist}) exist. In this section we continue to elaborate on their propertes. First we give a digression into the terminology of twisted measures. Let us recall that the construction of our twisted measures originates (albeit in a different form) in \cite{CoulonDougallSchapiraTapie}, where the twisted measure is operator valued. In \cite{CoulonDougallSchapiraTapie} one may understand a ``twisting" phenomena taking place in the operator space, whereas here we discuss a real-valued counterpart. 

\subsection{Local branches, multiplicative cocycles, and twisted measures}\label{subsection:cocycle}
The map $\sigma:\Sigma^+\to\Sigma^+$ is not invertible but on any cylinder $[w]$ with $|w|=n$ the local branch $\sigma^{(w)}:[w]\to \sigma^n [w]$, $\sigma^{(w)}(z) = \sigma^n(z)$ has a local (left) inverse $\tau^{(w)}:\sigma^n [w]\to  [w]$.
The measures $(\tau^{(w)})_*m$, $(\sigma^{(w)})_*m$ have the defining property
$$
\int F (\tau^{(w)})_*m = \int F\circ \tau^{(w)} (z) \mathds{1}_{\sigma^n  [w]} dm(z), \;\;\; \int F (\tau^{(w)})_*m = \int F\circ \sigma^{(w)} (z) \mathds{1}_{[w]} dm(z).
$$
Choosing $F=\mathds{1}_{[w]}$ we have 
$$
(\tau^{(w)})_*m ([w])  = m(\sigma^n [w]), \; (\sigma^{(w)})_*m ([w]) = m([ww]),
$$
if $ww$ is admissible.
The measure $(\tau^{(w)})_*m$ that is supported in $[w]$ and so it makes sense to ask whether it is absolutely continuous to $m$ restricted to $[w]$; in this way the Radon-Nikdoym derivative $\frac{d{(\tau^{(w)})}_*m}{dm}(z)$ is only defined in $[w]$. Whereas the measure $(\sigma^{(w)})_*m$ that is supported in $\sigma^n [w]$ and the Radon-Nikdoym derivative $\frac{d{(\sigma^{(w)})}_*m}{dm}(z)$ is defined in $\sigma^n[w]$.
It can be checked that
$$
\frac{d{(\sigma^{(w)})}_*m}{dm}(z) = \left(\frac{d{(\tau^{(w)})}_*m}{dm}(\tau^{(w)} z)\right)^{-1}
$$

In general we have no reason to be able to extend $\tau^{(w)}$ and $\sigma^{(w)}$ to a group action (compare with section \ref{appendix:freegroup}). We are, however, able to generalise the cocycle Radon-Nikodym derivate aspect of the group action using the structure of a group extension $\mathfrak{s}:\mathcal{W}_1\to G$. We introduce some terminology.

\begin{definition}\label{def:twistedbycocycle}
Let $m$ be a probability measure on $\Sigma^+$. If there exist $\gamma>0$ and $h:G \times \Sigma^+\to\mathbb{R}$ satisfying
\begin{equation}\label{harmonic}
\frac{d{(\tau^{(w)})}_*m}{dm}(z) = \gamma^{n}R_n(z)^{-1}h(\underline{\mathfrak{s}}(w),z),
\end{equation}
for every $w\in\mathcal{W}_n$ and $z\in[w]$ then we say that $m$ is \emph{twisted} by $h$. We call such an $h$ a \emph{generalised multiplicative cocycle}.
\end{definition}
 In \cite{CoulonDougallSchapiraTapie} the word ``twisted" can be thought of as referring to a unitary twist. Here we use the term ``twist" to mean twisted by $h$. We will check that the measure $m=\nu_{Q}^{c}$ is twisted in the sense of Definition \ref{def:twistedbycocycle}, for $Q=\phi\ast f, \phi^*\ast f$.
 
Let us conclude the digression with the following. We have the identity
$$
h(e,z) = h(g^{-1},z) h(\underline{\mathfrak{s}}(w),\tau^{(w)}z)
$$
whenever $g  = \underline{\mathfrak{s}}(w)$ and $z\in\sigma^n[w]$.
This is of interest because, on the one hand for $g=\underline{\mathfrak{s}}(w)$, $|w|=n$,
\begin{align*}
\int_{\sigma^n [w]} \frac{h(g^{-1},z)}{h(e, z )} dm(z)
&\int_{\sigma^n [w]} \frac{1}{h(g, \tau^{(w)}z )} dm(z)
=\int \mathds{1}_{[w]}(\tau^{(w)} z)\frac{1}{h(\underline{\mathfrak{s}}(w),\tau^{(w)} z)}d m(z)
\\
&
=\int \mathds{1}_{[w]}(z)\frac{1}{h(\underline{\mathfrak{s}}(w),z )}d(\tau^{(w)})_*m(z)
\\
&
=\int \mathds{1}_{[w]}(z)\frac{1}{h(\underline{\mathfrak{s}}(w),z )} \frac{d(\tau^{(w)})_*m}{dm}(z)dm(z)
\\
&
=\int \mathds{1}_{[w]}(z)  \gamma^n R_n^{-1}(z)dm(z).
\end{align*}
And on the other hand when $m=\nu_{\phi\ast f}^{c}$ we evaluate the final term in terms of (limits of) matrix coefficients at $g$.

\subsection{Technical lemmas}

\begin{lemma}\label{lemma:dtight}
Assume $\gamma(Q)>\gamma(\mathrm{SPR})$. Then $\nu_{d;N}^{g^{-1}Q}[t]$ converge to a measure, finite on cylinders, as $N\to\infty$. In this way
\[
\nu^{g^{-1}Q}_{d}[t] =\sum_{b\in\mathcal{W}} \sum_{n=1}^\infty  t^{-n}d_n\sum_{v\in\mathcal{W}^{b,a}_n} R_n(vx)\frac{\langle\rho(\underline{\mathfrak{s}}(v))f,\rho(g)^{-1}Q[t]\rangle}{F(Q)[t]} D(vx)
\]
is well defined, and for each $B$ the $\nu_{d;N}^{Q}[t]$ measure of $[B]$ is bounded uniformly in $t>\gamma(Q)$.
\end{lemma}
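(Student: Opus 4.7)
The plan is to adapt the arguments of Lemma~\ref{lemma:tseries} and Corollary~\ref{cor:tight} with the slowly increasing $d$ in place of $c$ inside the summand, the normalising factor $F(Q)[t]$ remaining governed by $c$. For any $B\in\mathcal{W}_1$ the definition \eqref{equation:variants} rearranges to
\[
\nu_{d;N}^{g^{-1}Q}[t]([B]) = \frac{\langle \phi^{B,a}_{d;\le N}[t]\ast_\rho f,\,\rho(g)^{-1}Q_{\le N}[t]\rangle_{\mathcal{H}}}{F(Q)[t]}.
\]
Applying Cauchy--Schwarz (using unitarity of $\rho(g)$) followed by Lemma~\ref{lemma:changeletter}, I would bound this above by $\mathrm{Const.}(B)\|\phi^{A,a}_{d;\le N+K_B}[t]\ast_\rho f\|\|Q_{\le N}[t]\|/F(Q)[t]$. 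Since $d$ is slowly increasing, Lemma~\ref{subexponential} gives $\gamma(\phi_d\ast f)=\gamma(\phi\ast f)=\gamma(Q)$, and a routine comparison ($d_n\le C_\beta\beta^{-n}c_n$ for any $\beta<1$) combines with the cylinder bound of Lemma~\ref{lemma:tseries} for $\nu_{c;N}$ to produce $\sup_{t>\gamma(Q)}\sup_N\nu_{d;N}^{g^{-1}Q}[t]([B])<\infty$.

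Next I would establish convergence of $\nu_{d;N}^{g^{-1}Q}[t]([B])$ as $N\to\infty$ by a Cauchy argument mirroring the second half of Lemma~\ref{lemma:tseries}. Writing the difference $\nu_{d;N}^{g^{-1}Q}[t]([B])-\nu_{d;M}^{g^{-1}Q}[t]([B])$ as $\langle (\phi^{B,a}_{d;\le N}[t]-\phi^{B,a}_{d;\le M}[t])\ast_\rho f,\rho(g)^{-1}Q_{\le N}[t]\rangle/F(Q)[t]$ plus a $Q$-side tail, I would invoke strong convergence (Lemma~\ref{lemma:strongconvergence}) in the case $Q=\phi_c\ast f$, and the tail-vanishing identity \eqref{eq:remainder} in the case $Q=\phi^*_c\ast f$. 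The limit is precisely
\[
\sum_{n=1}^\infty t^{-n}d_n\sum_{v\in\mathcal{W}^{B,a}_n}R_n(vx)\frac{\langle\rho(\underline{\mathfrak{s}}(v))f,\rho(g)^{-1}Q[t]\rangle_{\mathcal{H}}}{F(Q)[t]},
\]
as required. The same Cauchy-in-$N$ argument, applied to an arbitrary cylinder $[w]$ via Proposition~\ref{prop:main}, delivers pointwise convergence on the full cylinder algebra.

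To upgrade cylinder-wise convergence to weak-$*$ convergence of measures, I would appeal to the tightness result of Theorem~\ref{theorem:tightstatement}. The hypothesis $\gamma(Q)>\gamma(\mathrm{SPR})$ is exactly what is needed for the defining series $\sum_r t^{-r}\sup_n(d_{n+r}/d_n)\sum_{w\in\mathcal{W}^{A,a}(\ast)}R_r(wx)$ to converge at $t=\gamma(Q)$: the slowly-increasing bound $d_{n+r}/d_n\le C_\beta\beta^{-r}$ with $\beta$ close enough to $1$ ensures $\beta\gamma(Q)>\gamma(\mathrm{SPR})$. The diagonal-subsequence construction of Corollary~\ref{cor:tight} then produces the positive limit measure $\nu_d^{g^{-1}Q}[t]$, finite on cylinders.

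The main obstacle is to maintain uniformity in $t>\gamma(Q)$ simultaneously with uniformity in $N$: both the denominator $F(Q)[t]$ and the norms $\|\phi^{B,a}_d[t]\ast_\rho f\|$, $\|Q_{\le N}[t]\|$ blow up as $t\to\gamma(Q)$, and the ratio is only controlled because Lemma~\ref{lemma:changeletter} is applied symmetrically in numerator and denominator and because the slowly-increasing functions $c$ and $d$ have comparable growth at any rate $\beta^n$ with $\beta$ arbitrarily close to $1$. The $Q=\phi^*_c\ast f$ branch is the genuinely delicate one, since there $Q_{\le N}[t]$ need not strongly converge in $N$ and one must substitute the tail-vanishing identity \eqref{eq:remainder} for strong convergence throughout the argument.
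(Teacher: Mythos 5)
Your proposal reproves the lemma from first principles (Cauchy--Schwarz, Lemma~\ref{lemma:changeletter}, strong convergence or the remainder estimate, then the tightness mechanism of Theorem~\ref{theorem:tightstatement}). The paper's own proof is a one-line comparison: set $C=\max\bigl(\sup\{d_k/c_k : c_k\ne 0\},\ \sup\{c_k/d_k : d_k\ne 0\}\bigr)$ and observe that $C^{-1}\nu_{d;N}^{Q}[t](E)\le\nu_{c;N}^{Q}[t](E)\le C\nu_{d;N}^{Q}[t](E)$ for every open set $E$, so every cylinder bound, the Cauchy-in-$N$ property, the tightness, and the uniform-in-$t$ bound already established for $\nu_{c;N}$ in Lemma~\ref{lemma:tseries} and Corollary~\ref{cor:tight} transfer verbatim. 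This works because the $d$'s that actually arise in equation~\eqref{equation:variants} (e.g.\ $d_n=\mathds{1}_{[r,\infty)}(n)\,c_{n+r}$) are comparable to $c$ by a \emph{constant}, not merely by a subexponential factor.

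That distinction exposes a genuine gap in your argument. You invoke the comparison $d_n\le C_\beta\beta^{-n}c_n$ for $\beta<1$ and propose to combine it with the uniform cylinder bound of Lemma~\ref{lemma:tseries}. But this exponential comparison only yields
$\phi^{B,a}_{d;\le N}[t]\le C_\beta\,\phi^{B,a}_{c;\le N}[\beta t]$, which would shift the parameter $t$ to $\beta t$ in the numerator while the denominator $F(Q)[t]$ and the factor $Q_{\le N}[t]$ still sit at $t$; and for $t$ near $\gamma(Q)$ one has $\beta t<\gamma(Q)$, so you cannot appeal to the $\sup_{t>\gamma(Q)}$ bound of Lemma~\ref{lemma:tseries} at all. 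The uniform-in-$t$ conclusion therefore does not follow from the subexponential comparison. You need the stronger fact that $d_k/c_k$ and $c_k/d_k$ are uniformly bounded, which is the actual content of the paper's proof. A secondary issue: your opening Cauchy--Schwarz step $\|\phi^{A,a}_{d;\le N+K_B}[t]\ast_\rho f\|\,\|Q_{\le N}[t]\|/F(Q)[t]$ is not usable when $Q=\phi^*_c\ast f$, since $\|Q_{\le N}[t]\|$ is not controlled relative to $F(Q)[t]$ there; the paper's Lemma~\ref{lemma:tseries} instead uses Lemma~\ref{lemma:changeletter} and positivity to bound $\langle\phi^{A,a}_{c;\le N+K_B}[t]\ast f,Q_{\le N}[t]\rangle$ by $F(Q)[t]$ directly, without Cauchy--Schwarz. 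You acknowledge the $\phi^*$ branch is delicate, but your stated opening move would not survive in that branch.
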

\begin{proof}
Let $C = \max(\sup \left\{ \frac{d_k}{c_k} : c_k\ne 0 \right\}, \sup \left\{ \frac{c_k}{d_k} : d_k\ne 0 \right\})$. We can transfer any cylinder bounds for $\nu_{c;N}^{Q}[t]$ to $\nu_{d;N}^{Q}[t]$ since $C^{-1} \nu_{d;N}^{Q}[t](E) \le \nu_{c;N}^{Q}[t](E) \le C \nu_{d;N}^{Q}[t](E)$ for every open set.
\end{proof}

``The main equality" of Proposition \ref{prop:main} can be interpreted as saying that 
$$
\int Fdt^kR^{-1}_k \nu_{c;N}^{Q}[t] = \nu_{d;N}^{g^{-1}Q}[t](F) + \mathrm{rem.}^{g^{-1}Q}_{d;N;[1,r]}[t](t^kR^{-1}_kF) +\mathrm{rem.}^{g^{-1}Q}_{d;N;[N-r, N]}[t](F)
$$
with $d_n = \mathds{1}_{[r,\infty)}(n)c_{n+r}$ and $\mathrm{rem.}^{g^{-1}Q}_{d;N;I}[t]$ the measure
$$
\mathrm{rem.}^{g^{-1}Q}_{d;N;I}[t] = \sum_{n\in I} d_n t^{-n}\sum_{v\in\mathcal{W}_{n}^{B,a}}\frac{\langle \rho(\underline{\mathfrak{s}}(v))f,\rho(g)^{-1}Q_{\le N}[t]\rangle}{F(Q)[t]}.
$$

\begin{proposition}\label{prop:changerange}
For any $k$
$$
 \nu_{d;N}^{g^{-1}Q}[t] =  \nu_{\underline{d};N}^{g^{-1}Q}[t] +\mathrm{rem.}^{g^{-1}Q}_{d,N;[1,k]}[t]
$$
with $\underline{d}_n = \mathds{1}_{[k,\infty)}(n)c_{n}$
\end{proposition}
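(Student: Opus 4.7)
The plan is to derive the identity directly from the definitions. Since both measures and the remainder are defined as finite sums over the same index set $n \in \{1,\ldots,N\}$ with identical per-$n$ summands, no analytic input is needed. I therefore expect no genuine obstacle; the only care required is notational, reconciling the convention for $\underline{d}_n$.

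First I would unfold Definition~\ref{def:diracmass} (with the $g$-twisted variant \eqref{equation:variants}) to write
\[
\nu_{d;N}^{g^{-1}Q}[t] = \sum_{B \in \mathcal{W}_1}\sum_{n=1}^{N} d_n\, t^{-n}\!\!\sum_{v \in \mathcal{W}_n^{B,a}}\! R_n(vx)\,\frac{\langle \rho(\underline{\mathfrak{s}}(v))f, \rho(g)^{-1} Q_{\le N}[t]\rangle}{F(Q)[t]}\, D(vx).
\]
The key observation is that the summands in the inner sums depend on $n, v, B, g, t$ but not at all on the weight sequence $(d_n)$, so the assignment $d \mapsto \nu_{d;N}^{g^{-1}Q}[t]$ is linear in $d$. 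The same linearity holds for the remainder measure $\mathrm{rem.}^{g^{-1}Q}_{d,N;\cdot}[t]$, which by the formula introduced just before the statement is nothing but the restriction of the same sum to a prescribed index set.

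Next I would split the index range $\{1,\ldots,N\}$ as $\{1,\ldots,k-1\}\sqcup\{k,\ldots,N\}$. Reading the proposition's $\underline{d}_n = \mathds{1}_{[k,\infty)}(n)c_n$ in the base case $d=c$ (and more generally as $\underline{d}_n = \mathds{1}_{[k,\infty)}(n)d_n$), the tail piece $n \ge k$ reassembles precisely to $\nu_{\underline{d};N}^{g^{-1}Q}[t]$, while the head piece $n \le k-1$ coincides with the defining expression for $\mathrm{rem.}^{g^{-1}Q}_{d,N;[1,k]}[t]$.

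With this bookkeeping in place, the identity is immediate. The main purpose of recording Proposition~\ref{prop:changerange} seems to be to license subsequent manipulations — for instance, the replacement in the remainder terms appearing after Proposition~\ref{prop:main} — where one isolates tails of the defining sum in order to transfer convergence or tightness estimates along a shift in the weight sequence.
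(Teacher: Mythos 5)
Your proposal is correct, and it supplies the (very short) argument that the paper omits: Proposition~\ref{prop:changerange} is stated in the paper with no proof at all, since it is just a splitting of the finite sum defining $\nu_{d;N}^{g^{-1}Q}[t]$ over $n\in\{1,\ldots,N\}$. You correctly recognize that the assignment $d\mapsto\nu_{d;N}^{g^{-1}Q}[t]$ (and likewise $d\mapsto\mathrm{rem.}^{g^{-1}Q}_{d;N;I}[t]$) is linear in the weight sequence, so the identity is exactly the decomposition of the index range into a head and a tail. You also correctly flag the notational wrinkles: the $c_n$ in the paper's statement $\underline{d}_n=\mathds{1}_{[k,\infty)}(n)c_n$ should be read as $d_n$ (otherwise the identity only holds for $d=c$; the application in Lemma~\ref{lemma:cchange}, where $d$ is an arbitrary sequence with $c_n/d_n\to 1$ and the bound $\nu_{\underline{d}}\le(1+\epsilon)\nu_c$ is derived rather than automatic, confirms the $d_n$ reading), and there is an off-by-one between $\mathds{1}_{[k,\infty)}$ and the remainder range $[1,k]$, so one should take $\underline{d}_n = d_n\mathds{1}_{[k+1,\infty)}(n)$ for the index $k$ not to be double-counted. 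One small imprecision on your side: you split as $\{1,\ldots,k-1\}\sqcup\{k,\ldots,N\}$ and then match the head to $\mathrm{rem.}^{g^{-1}Q}_{d,N;[1,k]}$, which is $\{1,\ldots,k\}$; this is the same off-by-one you diagnosed, but you should commit to one resolution rather than leaving it implicit. With that fixed, the proof is complete.
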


We first check that the two remainder terms to go zero.
\begin{lemma}\label{lemma:remone}
For each $g$, $r$, and $d$, we have
$$
\mathrm{rem.}^{g^{-1}Q}_{d,N;[1,r]}[t] \to 0 \;\mathrm{as}\; t\to \gamma.
$$
\end{lemma}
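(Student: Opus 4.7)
The plan is to use the fact that the remainder involves only finitely many values of $n$ (namely $n\in[1,r]$), so its numerator involves only $\mathcal{H}$-inner products with a fixed bounded vector $\Theta_{n,F}$, while the denominator $F(Q)[t]$ is required to diverge as $t\to\gamma(Q)$ by Definition~\ref{def:diracmass}. The boundedness of the numerator together with the divergence of the denominator will deliver the result.

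Applied to a bounded Borel function $F$, the remainder measure equals
\[
\mathrm{rem.}^{g^{-1}Q}_{d,N;[1,r]}[t](F) = \sum_{n=1}^{r} d_n\, t^{-n}\,\frac{\langle \Theta_{n,F},\, \rho(g)^{-1}Q_{\le N}[t]\rangle_{\mathcal{H}}}{F(Q)[t]},
\]
where
\[
\Theta_{n,F} := \sum_{B\in\mathcal{W}_1}\sum_{v\in\mathcal{W}_n^{B,a}} R_n(vx)\, F(vx)\, \rho(\underline{\mathfrak{s}}(v))f \in \mathcal{H}_+.
\]
For each fixed $n\le r$, $\Theta_{n,F}$ is a positive-cone vector whose $\mathcal{H}$-norm is uniformly bounded: by the strong positive recurrence hypothesis, $\sum_B\sum_{v\in\mathcal{W}_n^{B,a}} R_n(vx)<\infty$ (indeed dominated by a geometric series of ratio close to $\gamma(\mathrm{SPR})$), and hence $\|\Theta_{n,F}\|_{\mathcal{H}}\le C_n\,\|F\|_\infty\,\|f\|_{\mathcal{H}}$ for a constant $C_n$ independent of $N,t,g$.

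In the case $Q=\phi_c\ast_\rho f$, Lemma~\ref{lemma:strongconvergence} gives $Q_{\le N}[t]\to \phi_c[t]\ast_\rho f$ strongly, and $F(Q)[t]=\|\phi_c[t]\ast f\|^2$. Cauchy--Schwarz produces
\[
\left|\frac{\langle \Theta_{n,F},\, \rho(g)^{-1}Q_{\le N}[t]\rangle}{F(Q)[t]}\right| \;\le\; \frac{\|\Theta_{n,F}\|_{\mathcal{H}}\,\|\phi_c[t]\ast f\|}{\|\phi_c[t]\ast f\|^2} \;\le\; \frac{C_n\|F\|_\infty\|f\|}{\|\phi_c[t]\ast f\|}.
\]
Summing the finitely many $n\le r$ and noting $\|\phi_c[t]\ast f\|\to\infty$ as $t\to\gamma(Q)$ (the divergence hypothesis in Definition~\ref{def:diracmass}), the displayed quantity tends to $0$ uniformly in $N$, proving the lemma in this case. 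For $Q=\phi^*_c\ast_\rho f$ the strong convergence of $Q_{\le N}[t]$ is unavailable, so one instead uses positivity of the cone together with the change-of-letter Lemma~\ref{lemma:changeletter}: the vector $\Theta_{n,F}$ is bounded pointwise by $\|F\|_\infty\,t^n$ times a finite positive combination of translates $\rho(h_B)\phi^{A,a}_{c;\le n+K_B}[1]\ast_\rho f$, whose pairing with $\rho(g)^{-1}Q_{\le N}[t]$ is then directly comparable to the normalising factor $F(Q)[t]$ itself (via a similar change-of-letter and a further translate), so that the ratio is $O(1/\|\phi_c[t]\ast f\|)$ and tends to $0$ as $t\to\gamma(Q)$.

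The main obstacle is the $Q=\phi^*_c\ast f$ case, where one has to replace the clean Cauchy--Schwarz-plus-strong-convergence argument by a careful use of positivity and change of letter in order to reduce the numerator to a quantity controlled by $F(Q)[t]$ and the growth $\|\phi_c[t]\ast f\|\to\infty$; all the necessary ingredients are, however, already collected in Lemmas~\ref{lemma:strongconvergence} and \ref{lemma:changeletter} together with the hypotheses built into Definition~\ref{def:diracmass}.
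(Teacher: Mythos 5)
Your treatment of the case $Q=\phi_c\ast_\rho f$ is essentially the paper's: reduce the remainder to finitely many terms $n\le r$, each of which contributes a fixed vector, apply Cauchy--Schwarz together with strong convergence (Lemma~\ref{lemma:strongconvergence}), and then use $F(Q)[t]=\|\phi_c[t]\ast_\rho f\|^2\to\infty$ (Lemma~\ref{lemma:cphiastastphi}) to kill the ratio. The only cosmetic difference is that you bound $\|\Theta_{n,F}\|$ directly rather than first passing through the change-of-letter inequality (Lemma~\ref{lemma:changeletter}); both routes work for this case.

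The case $Q=\phi^*_c\ast_\rho f$ is where your sketch has a real gap. There is first an internal inconsistency: you say the numerator is ``directly comparable to the normalising factor $F(Q)[t]$'' and then conclude the ratio is $O\bigl(1/\|\phi_c[t]\ast f\|\bigr)$ --- comparability gives $O(1)$, not decay. Second, and more seriously, $\|\phi_c[t]\ast f\|$ is not the right quantity here: by transitivity $\gamma(Q)=\gamma(\phi^*\ast_\rho f)=\gamma(f)$ (Lemma~\ref{subexponential}), and since in general $\gamma(f)<\gamma(\phi\ast_\rho f)$, the norm $\|\phi_c[t]\ast f\|$ can already be \emph{infinite} for all $t$ in a neighbourhood of $\gamma(Q)$. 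So neither Cauchy--Schwarz nor the claimed $O\bigl(1/\|\phi_c[t]\ast f\|\bigr)$ bound is available. What the paper actually does, and what you need, is: (i) by the change-of-letter inequality and transitivity, choose the translate $h_B=g^{-1}$ so that the $\rho(g)^{-1}$ cancels and the numerator reduces to $\langle \phi^{A,a}_{c;\le r+K_B}\ast f, Q_{\le N}[t]\rangle$, which (since $r+K_B$ is fixed) is dominated by a constant multiple of $\langle \phi_c[t]\ast f, f\rangle$; (ii) invoke the divergence comparison built into the choice of $c$ in Lemma~\ref{lemma:cphiastphi}, namely $\lim_{N}\langle \phi_c[t]\ast f,f\rangle \big/ \langle \phi_{c;\le N}[t]\ast f, \phi^*_{c;\le N}[t]\ast f\rangle \to 0$ as $t\to\gamma(f)$. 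Your final paragraph gestures at ``positivity and change of letter'' but never isolates this comparison, which is the sole source of decay in the $\phi^*$ case.
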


\begin{proof}
We have
$$
\mathrm{rem.}^{g^{-1}Q}_{d,N;[1,r]}[t] ([B]) = \frac{\langle \phi^{B,a}_{d;\le r}\ast f, \rho(g)^{-1}Q_{\le N}[t]\rangle}{F(Q)[t]}.
$$
First note that $d/c$ is bounded in the range $[1,r]$ so
$$
\mathrm{rem.}^{g^{-1}Q}_{d,N;[1,r]}[t] ([B]) \le C \frac{\langle \phi^{B,a}_{v;\le r}\ast f, \rho(g)^{-1}Q_{\le N}[t]\rangle}{F(Q)[t]},
$$
for some $C>0$.
Second, use Lemma \ref{lemma:strongconvergence} to show that
$$
\mathrm{rem.}^{g^{-1}Q}_{d,N;[1,r]}[t] ([B]) \le \mathrm{Const.}(B)C\frac{\langle \rho(h_B)\phi^{A,a}_{c;\le r+K_{B}}\ast f, \rho(g)^{-1}Q_{\le N}[t]\rangle}{F(Q)[t]},
$$
for some $h_B$.

If $T_{\mathfrak{s}}$ is transitive we may assume $h_B=g^{-1}$ and then use that 
$$
\frac{\langle\phi^{A,a}_{c;\le r+K_{B}}\ast f, Q[t]\rangle}{F(Q)[t]} \to 0  \;\mathrm{as}\; t\to \gamma,
$$ 
using Lemma \ref{lemma:cphiastphi} and that $r,K_B$ are fixed.

If $Q=\phi_c\ast f$ then we use the Cauchy-Schwarz inequality and then see that 
$$
\frac{\|\phi^{A,a}_{c;\le r+K_{B}}\ast f \|\|Q_{\le N}[t]\|}{F(Q)[t]} \to 0 \;\mathrm{as}\; t\to \gamma,
$$
 using Lemma \ref{lemma:cphiastastphi} and that $r,K_B$ are fixed.
\end{proof}

\begin{lemma}\label{lemma:remtwo}
For each $g$, $r$ and $d$ with $d/c$ bounded, we have
$$
\mathrm{rem.}^{g^{-1}Q}_{d,N;[N-r,N]}[t] \to 0 \;\mathrm{as} \; N\to\infty.
$$
\end{lemma}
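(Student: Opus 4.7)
The plan is to mimic the contradiction argument used to establish equation \eqref{eq:remainder} inside the proof of Lemma \ref{lemma:tseries}, adapting it to accommodate the twist by $\rho(g)^{-1}$ and the slowly increasing function $d$.

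First I would reduce to the case $d=c$. Tested on a cylinder $[B]$, the remainder takes the shape
$$
\mathrm{rem.}^{g^{-1}Q}_{d,N;[N-r,N]}[t]([B]) = \frac{\langle \phi^{B,a}_{d;\, N-r\le n\le N}[t]\ast_\rho f,\, \rho(g)^{-1}Q_{\le N}[t]\rangle}{F(Q)[t]}.
$$
Since $d_n/c_n$ is bounded by some $C'$, and every vector in sight lies in $\mathcal{H}_+$ (here I use that $\rho(g)^{-1}$ preserves the non-negative cone, as $\rho$ is a countable sum of quotient representations), the expression is at most $C'$ times the corresponding expression with $c$ in place of $d$. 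It therefore suffices to prove the statement for $d=c$.

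Next I would argue by contradiction: suppose the quantity does not tend to $0$. Then there exist $\epsilon>0$ and a subsequence $N_j\to\infty$, which we may thin so that $N_j-N_{j-1}>r$ (the index windows $[N_j-r,N_j]$ are then disjoint), for which
$$
\langle \phi^{B,a}_{c;\, N_j-r\le n\le N_j}[t]\ast_\rho f,\, \rho(g)^{-1}Q_{\le N_j}[t]\rangle \ge \epsilon F(Q)[t].
$$
Because $Q_{\le N}[t]$ is monotone non-decreasing in $N$ inside $\mathcal{H}_+$ (its summands are non-negative) and $\rho(g)^{-1}$ preserves the cone, the inequality survives after replacing $Q_{\le N_j}[t]$ by $Q_{\le N_J}[t]$ for any $J\ge j$. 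Summing over $j\le J$ and using
$$
\sum_{j=1}^J \phi^{B,a}_{c;\, N_j-r\le n\le N_j}[t] \le \phi^{B,a}_{c;\le N_J}[t]
$$
pointwise on $G$ (the left side is a sub-sum of the same non-negative series as the right), one arrives at
$$
J\epsilon F(Q)[t] \le \langle \phi^{B,a}_{c;\le N_J}[t]\ast_\rho f,\, \rho(g)^{-1}Q_{\le N_J}[t]\rangle = F(Q)[t]\cdot \nu^{g^{-1}Q}_{c;N_J}[t]([B]).
$$
By Lemma \ref{lemma:tseries} the last factor is bounded uniformly in $N_J$, which forces a contradiction for $J$ large.

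The main subtlety I anticipate is that the argument should apply uniformly whether $Q=\phi\ast_\rho f$ or $Q=\phi^*\ast_\rho f$. What makes the cone/telescoping approach work in both cases is that $Q_{\le N}[t]$ is non-decreasing in $N$ (as partial sums of non-negative terms) and that $F(Q)[t]$ is finite and positive for fixed $t>\gamma(Q)$. In the first case one could alternatively invoke Lemma \ref{lemma:strongconvergence} to see $\|\phi^{B,a}_{c;\, N-r\le n\le N}[t]\ast_\rho f\|\to 0$ and conclude via Cauchy--Schwarz; but the telescoping argument has the virtue of covering both cases at once.
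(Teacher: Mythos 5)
Your proof is correct, but it takes a genuinely different (and more unified) route than the paper. The paper's proof splits into two cases: for $Q=\phi_c\ast f$ it passes to $\phi^{A,a}$ via Lemma \ref{lemma:changeletter}, applies Cauchy--Schwarz, and uses strong convergence (Lemma \ref{lemma:strongconvergence}) to see that $\|\phi^{A,a}_{c;N-r\le n\le N+K_B}[t]\ast f\|\to 0$; for $Q=\phi^*_c\ast f$ (assuming transitivity) it again changes the first letter to $A$ and then invokes the earlier claim \eqref{eq:remainder}, which was established inside the proof of Lemma \ref{lemma:tseries} by precisely the telescoping/contradiction argument you describe. What you do instead is run that telescoping argument directly on the object $\phi^{B,a}_{c;N-r\le n\le N}[t]\ast f$ paired against $\rho(g)^{-1}Q_{\le N}[t]$, using positivity and monotonicity in $N$ to pass to a common upper index $N_J$ and the uniform cylinder bound from Lemma \ref{lemma:tseries} to close the contradiction. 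The gain is that one argument covers both choices of $Q$ at once and does not invoke strong convergence or Lemma \ref{lemma:changeletter}; the cost is that you lean directly on the uniform bound $\sup_N \nu_{c;N}^{g^{-1}Q}[t]([B])<\infty$ for arbitrary $g$, a statement that Lemma \ref{lemma:tseries} does assert, though its written proof spells out the estimate only for $g=e$ (the general case follows by unitarity for $Q=\phi_c\ast f$, and via transitivity and a change of letter for $Q=\phi^*_c\ast f$ --- worth noting so the reader sees the reliance is legitimate and not circular). With that one clarification your argument stands as a clean alternative.
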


\begin{proof}
We have
$$
\mathrm{rem.}^{g^{-1}Q}_{d,N;[N-r,N]}[t] ([B]) = \frac{\langle \phi^{B,a}_{d;N-r\le n\le N}\ast f, \rho(g)^{-1}Q_{\le N}[t]\rangle}{F(Q)[t]}
$$
The easier case is $Q=\phi_c\ast f$. We have the upper bound
$$
\mathrm{rem.}^{g^{-1}Q}_{d,N;[N-r,N]}[t] ([B]) \le \left( \sup_{n\in[N-r,N]} \frac{d_n}{c_n}\right)\mathrm{Const.}(B)\frac{\langle \rho(h_B)\phi^{A,a}_{c;N-r \le n\le N+K_{B}}[t]\ast f, \rho(g)^{-1}Q_{\le N}[t]\rangle}{F(Q)[t]}.
$$
Using Cauchy-Schwarz gives
$$
\mathrm{rem.}^{g^{-1}Q}_{d,N;[N-r,N]}[t] ([B]) \le \left( \sup_{n\in[N-r,N]} \frac{d_n}{c_n}\right)\mathrm{Const.}(B)\frac{\|\phi^{A,a}_{c;N-r \le N+K_{B}}\ast f\| \|Q_{\le N}[t]\|}{F(Q)[t]}.
$$
We know that $\|\phi^{A,a}_{c;N-r \le N+K_{B}}\ast f\|\to 0$ as $N\to \infty$ by strong convergence of $\phi_{c;\le N}\ast f$ to $\phi[t]\ast f$. It follows that $\mathrm{rem.}^{g^{-1}Q}_{d,N;[N-r,N]}[t] ([B]) \to 0$ as $N\to\infty$.

In the other case $Q=\phi^*_c\ast f$ and we assume that $T_{\mathfrak{s}}$ is transitive.
We obtain
$$
\mathrm{rem.}^{g^{-1}Q}_{d,N;[N-r,N]}[t] ([B]) \le \left( \sup_{n\in[N-r,N]} \frac{d_n}{c_n}\right)\mathrm{Const.}(B)\frac{\langle \phi^{A,a}_{c;N-r \le n\le N+K_{B}}[t]\ast f, \phi^*_{c;\le N}[t]\ast f\rangle}{F(Q)[t]}.
$$
We use \ref{eq:remainder} to conclude.
\end{proof}

Now we check how the limit of $\nu_{d}^{g^{-1}Q}$ depends (or doesn't depend) on $d$.
\begin{lemma}\label{lemma:cchange}
If $\frac{c_n}{d_n}\to 1$ as $n\to\infty$ then ($\nu_{d}^{g^{-1}Q}[t_k]$ converge as $k\to\infty$, the limit measure has) $\nu_{d}^{g^{-1}Q}=\nu_{c}^{g^{-1}Q}$.
\end{lemma}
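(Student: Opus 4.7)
The plan is to show that for every cylinder $[u]$ (with last letter $B$, say) the difference
$$
\nu_{d;N}^{g^{-1}Q}[t]([u]) - \nu_{c;N}^{g^{-1}Q}[t]([u])
$$
tends to zero as $N\to\infty$ and then $t\to\gamma(Q)$ along the sequence $t_k$ that defines $\nu_c^{g^{-1}Q}$. Writing $e_n := d_n-c_n$, the hypothesis $c_n/d_n\to 1$ gives $e_n/c_n\to 0$; in particular $M:=\sup_n |e_n|/c_n<\infty$ and for every $\epsilon>0$ we can fix $N_0=N_0(\epsilon)$ with $|e_n|\le\epsilon c_n$ for all $n\ge N_0$.

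First I would split the defining series according to $n\le N_0$ and $n>N_0$:
$$
\nu_{d;N}^{g^{-1}Q}[t]([u]) - \nu_{c;N}^{g^{-1}Q}[t]([u]) = \mathrm{I}_{N_0}(t,N) + \mathrm{II}_{N_0}(t,N),
$$
where $\mathrm{I}_{N_0}$ collects the $n\le N_0$ contributions and $\mathrm{II}_{N_0}$ the $n>N_0$ contributions. The term $\mathrm{I}_{N_0}$ is exactly (a signed version of) $\mathrm{rem.}^{g^{-1}Q}_{|e|,N;[1,N_0]}[t]([u])$, with coefficient sequence $|e_n|\le M c_n$ on $[1,N_0]$. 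The proof of Lemma \ref{lemma:remone} applies with $|e|$ in place of $d$ on the finite range $[1,N_0]$ (the bound $|e_n|\le Mc_n$ suffices to reduce to the $c$-case), yielding $\mathrm{I}_{N_0}(t,N)\to 0$ as $t\to\gamma(Q)$, uniformly in $N$.

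For $\mathrm{II}_{N_0}$, using positivity (of $R_n(vx)$ and of the matrix coefficients $\langle\rho(\underline{\mathfrak{s}}(v))f,\rho(g)^{-1}Q_{\le N}[t]\rangle$, since $f,Q_{\le N}[t]\in\mathcal{H}_+$ and $\rho$ preserves $\mathcal{H}_+$) I would dominate in absolute value:
$$
|\mathrm{II}_{N_0}(t,N)| \le \epsilon \sum_{n>N_0} t^{-n} c_n \sum_{\substack{v\in\mathcal{W}_n^{B,a}\\ v\text{ extends }u}} R_n(vx) \frac{\langle \rho(\underline{\mathfrak{s}}(v))f, \rho(g)^{-1}Q_{\le N}[t]\rangle}{F(Q)[t]} \le \epsilon\, \nu_{c;N}^{g^{-1}Q}[t]([u]).
$$
The right side is bounded uniformly in $N$ and $t>\gamma(Q)$ by $\epsilon \cdot \mathrm{Const.}(B)$ thanks to Lemma \ref{lemma:tseries}.

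Combining the two bounds and letting $N\to\infty$ and $k\to\infty$ along the sequence $t_k$ gives
$$
\limsup_{k\to\infty}\lim_{N\to\infty}\left|\nu_{d;N}^{g^{-1}Q}[t_k]([u]) - \nu_{c;N}^{g^{-1}Q}[t_k]([u])\right| \le \epsilon\cdot\mathrm{Const.}(B).
$$
As $\epsilon>0$ is arbitrary, the two limits agree on every cylinder. The existence of the limit of $\nu_{d}^{g^{-1}Q}[t_k]$ as $k\to\infty$ is then automatic: by Lemma \ref{lemma:dtight} the family $\nu_{d}^{g^{-1}Q}[t]$ is well-defined, finite on cylinders, and inherits tightness from the $c$-family (the approximating Dirac sums are comparable up to a factor $C=\sup|d_n/c_n|\vee\sup|c_n/d_n|$), so any subsequential weak$^*$ limit along $t_k$ has the same cylinder values as $\nu_c^{g^{-1}Q}$, forcing convergence of the whole sequence to $\nu_c^{g^{-1}Q}$.

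The main potential obstacle is controlling the $n\le N_0$ range when $e_n/c_n$ is not small: it is here that we need the remainder-vanishing Lemma \ref{lemma:remone}, and the fact that $N_0$ is fixed (depending only on $\epsilon$) before we let $t\to\gamma(Q)$. The order of limits (first $N\to\infty$, then $t\to\gamma$) must be respected throughout, and the interchange is legitimate only because both $\mathrm{I}_{N_0}$ and $\mathrm{II}_{N_0}$ are handled uniformly in $N$.
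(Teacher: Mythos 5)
Your proof is correct and takes essentially the same approach as the paper: both split the defining series at a fixed finite threshold, bound the tail by a factor close to~$1$ times the $c$-measure using $d_n/c_n\to 1$ together with positivity and the uniform cylinder bounds of Lemma \ref{lemma:tseries}, and dispose of the finite initial range via Lemma \ref{lemma:remone}. The paper packages the decomposition in Proposition \ref{prop:changerange} rather than introducing $e_n=d_n-c_n$ explicitly, but the underlying argument is identical.
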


\begin{proof}
Let the cylinder $[w]$ be arbitrary. Let $\epsilon>0$ be arbitrary. Using the hypothesis on $d$, choose $K$ sufficiently large with $c_n/d_n\le 1+\epsilon$ for $n\ge K$. Using Proposition \ref{prop:changerange} we have
$$
 \nu_{d;N}^{g^{-1}Q} [t_q]([w]) =  \nu_{\underline{d};N}^{g^{-1}Q}[t_q]([w]) +\mathrm{rem.}^{g^{-1}Q}_{d,N;[1,k]}[t].
$$
Now,
$$
 \nu_{\underline{d}}^{g^{-1}Q}[t_q]([w]) \le (1+\epsilon) \nu_{c}^{g^{-1}Q}[t_q]([w])
$$
We use Lemma \ref{lemma:remone} to conclude that the remainder term vanishes as $q\to\infty$. Since $\epsilon$ was arbitrary the conclusion follows.
\end{proof}

\begin{remark}\label{remark:antihomom}
The propositions and lemmas \ref{prop:changerange} \ref{lemma:remone} \ref{lemma:remtwo} \ref{lemma:cchange} are all valid when we replace $\rho$ with the anti-homomorphism $\rho^*(g) = \rho(g)^{-1}$. This observation will save us from almost identical arguments in section \ref{section:shift}.
\end{remark}

\begin{lemma}\label{lemma:pushforward}
If $T_{\mathfrak{s}}$ is transitive then $\nu_{c}^{g^{-1}Q}$ is absolutely continuous with respect to $\nu_{c}^{Q}$.
\end{lemma}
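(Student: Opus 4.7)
The plan is to compute $\nu_c^{g^{-1}Q}$ on cylinders via a direct analog of Lemma \ref{lemma:phiastastphi}, extract an explicit positive ratio as a cylinder-level density against $\nu_c^{Q}$, and then deduce absolute continuity by a stratification argument.

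First I would extend Lemma \ref{lemma:phiastastphi} to compute $\nu_c^{g^{-1}Q}$ on cylinders. Rerunning the proof of that lemma with $\rho(g)^{-1}Q[t_k]$ replacing one copy of $Q[t_k]$ in the inner product, and passing via a diagonal extraction (compatible with Corollary \ref{cor:tight}) to a subsequence along which the relevant $g$-twisted matrix coefficients converge simultaneously for every $g \in G$, yields, for $w\in\mathcal{W}^{a,A}_{n+1}$,
\[\gamma^n\,\nu_c^{g^{-1}Q}\bigl(R_n^{-1}\mathds{1}_{[w]}\bigr) \;=\; \lim_k \frac{\langle \rho(\underline{\mathfrak{s}}(w))\,Q[t_k],\,\rho(g)^{-1}Q[t_k]\rangle}{\|Q[t_k]\|^2} \;=\; \Upsilon_c\!\bigl(g\,\underline{\mathfrak{s}}(w)\bigr),\]
the last equality using unitarity of $\rho(g)$ to transfer it across the inner product.

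Next I would verify that $\Upsilon_c(h)>0$ for every $h\in G$. Since $\rho$ preserves $\mathcal{H}_+$ and $\phi_c[t_k]\ast_\rho f \in \mathcal{H}_+$, the inner product $\langle \rho(h)\phi_c[t_k]\ast_\rho f,\phi_c[t_k]\ast_\rho f\rangle$ is a sum of non-negative terms, indexed by pairs of admissible words whose $\underline{\mathfrak{s}}$-images are $h$-related. Transitivity of $T_{\mathfrak{s}}$ produces such pairs for every $h$, and the uniform positive lower bound $\liminf_{t\to\gamma(Q)} \lim_N \nu_{c;N}^{h^{-1}Q}[t]([B]) > 0$ from Lemma \ref{lemma:tseries} (after normalisation by $\|\phi_c[t_k]\ast_\rho f\|^2$) transfers to strict positivity of the limit $\Upsilon_c(h)$. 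Combining with the previous step gives, for every $w\in\mathcal{W}^{a,A}_{n+1}$,
\[\frac{\nu_c^{g^{-1}Q}(R_n^{-1}\mathds{1}_{[w]})}{\nu_c^Q(R_n^{-1}\mathds{1}_{[w]})} \;=\; \frac{\Upsilon_c(g\,\underline{\mathfrak{s}}(w))}{\Upsilon_c(\underline{\mathfrak{s}}(w))} \;\in\; (0,\infty),\]
and local H\"older regularity of $\log R$ transfers this to a comparison $\nu_c^{g^{-1}Q}([w]) \asymp \bigl(\Upsilon_c(g\,\underline{\mathfrak{s}}(w))/\Upsilon_c(\underline{\mathfrak{s}}(w))\bigr)\,\nu_c^Q([w])$ up to bounded multiplicative Gibbs distortion.

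Finally I would conclude. Given a Borel set $E$ with $\nu_c^Q(E)=0$, write $\Sigma^+ = \bigcup_M F_M$ with
\[F_M \;=\; \Bigl\{x \in \Sigma^+ : \Upsilon_c\bigl(g\,\underline{\mathfrak{s}}(x_1\cdots x_n)\bigr)\big/\Upsilon_c\bigl(\underline{\mathfrak{s}}(x_1\cdots x_n)\bigr) \le M \text{ for all }n\Bigr\},\]
each a countable union of cylinders. On $F_M$ the cylinder comparison gives the uniform bound $\nu_c^{g^{-1}Q}|_{F_M} \le C\,M\,\nu_c^Q|_{F_M}$, whence $\nu_c^{g^{-1}Q}(E \cap F_M)=0$; the complement $\Sigma^+ \setminus \bigcup_M F_M$ is $\nu_c^{g^{-1}Q}$-null by a Doob-type martingale argument applied to the finite-valued ratios on shrinking cylinders. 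Summing over $M$ yields $\nu_c^{g^{-1}Q}(E)=0$. The main obstacle is the simultaneous-in-$g$ extraction of a subsequence $(t_k)$ and the strict positivity of $\Upsilon_c$ on all of $G$; both genuinely require transitivity of $T_{\mathfrak{s}}$, via the cone preservation by $\rho$ and the lower bound of Lemma \ref{lemma:tseries}.
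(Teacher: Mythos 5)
Your first two steps are correct and give a clean description of $\nu_c^{g^{-1}Q}$ on cylinders via the $g$-translated matrix coefficient $\Upsilon_c(g\,\underline{\mathfrak{s}}(w))$. But the final step does not close the argument, and this is where your route diverges from the paper's and runs into trouble.

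The gap is in the claim that the set $\Sigma^+\setminus\bigcup_M F_M$ is $\nu_c^{g^{-1}Q}$-null ``by a Doob-type martingale argument.'' What martingale convergence gives you is a Lebesgue decomposition $\nu_c^{g^{-1}Q}=Z_\infty\,\nu_c^{Q}+\nu_s$ in which the singular part $\nu_s$ is concentrated exactly on $\{x : Z_n(x)=\Upsilon_c(g\,\underline{\mathfrak{s}}(x_1\cdots x_n))/\Upsilon_c(\underline{\mathfrak{s}}(x_1\cdots x_n))\to\infty\}$, a $\nu_c^Q$-null set. It does \emph{not} tell you that $\nu_c^{g^{-1}Q}$ assigns measure zero to this set; that is precisely the statement $\nu_s=0$, which is equivalent to the absolute continuity you are trying to prove. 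So the argument is circular. Pointwise strict positivity of $\Upsilon_c(h)$ (your step two) is not enough: the ratio $\Upsilon_c(g\,\underline{\mathfrak{s}}(w))/\Upsilon_c(\underline{\mathfrak{s}}(w))$ is finite for each fixed $w$ but nothing in your argument rules out that it is unbounded as $w$ ranges over admissible words, and the stratification plus martingale theorem cannot repair this. To conclude you would need either a uniform ($L^\infty$) bound on $Z_n$, or uniform integrability of $(Z_n)$, neither of which you establish.

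The paper's proof supplies exactly the missing uniform bound, and it does so at a more elementary level, before any limits in $k$ or $N$ are taken. By transitivity of $T_{\mathfrak{s}}$, $g$ is represented by an admissible word, which gives a domination in the positive cone $\mathcal{H}_+$ of the form $\rho(g)^{-1}Q_{c;\le N}[t]\ast f \le \mathrm{Const.}(g)\,Q_{c;\le N+K_g}[t]\ast f$ with $K_g,\mathrm{Const.}(g)$ independent of $N$, $t$, and the test word. Pairing this against $\rho(\underline{\mathfrak{s}}(w))\phi^{B,a}_{c;\le N-|w|}[t]\ast f\in\mathcal{H}_+$ and handling the remainder terms via Lemma \ref{lemma:remone} yields the \emph{uniform} cylinder inequality $\nu_c^{g^{-1}Q}([wB])\le C(g)\,\nu_c^{Q}([wB])$, hence $\nu_c^{g^{-1}Q}\le C(g)\,\nu_c^{Q}$ as measures, which is stronger than absolute continuity and immediately implies it. In your language this is the statement $\Upsilon_c(gh)\le\mathrm{Const.}(g)\,\Upsilon_c(h)$ for all $h\in G$; if you add this cone-domination step (which is where transitivity is really used) your stratification becomes unnecessary and the proof collapses to the paper's.
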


\begin{proof}
It is enough to check that for each $w$ we have
$$
\nu_{c}^{g^{-1}Q}(R_{|w|}^{-1}[wB])\le C\nu_{c}^{Q}(R_{|w|}^{-1}[wB]).
$$
There is a constant $K_g, \mathrm{Const.}(g)$ with
\begin{align*}
&\nu_{c}^{g^{-1}Q}[t](R_{|w|}^{-1}[wB]) -\mathrm{rem.}^{g^{-1}Q}_{c,N;[1,|w|]}[t]
= \frac{\langle \rho(\underline{\mathfrak{s}}(w)) \phi^{B,a}_{c;\le N-|w|}[t]\ast f, \rho(g) Q_{c; \le N}[t]\ast f\rangle }{F(Q)[t]}
\\
&\le \mathrm{Const.}(g) \frac{\langle \rho(\underline{\mathfrak{s}}(w)) \phi^{B,a}_{c;\le N-|w|}[t]\ast f, Q_{ \le N+K_g}[t]\ast f\rangle }{F(Q)[t]}
\\
&\le \mathrm{Const.}(g) \frac{\langle \rho(\underline{\mathfrak{s}}(w)) \phi^{B,a}_{c;\le N+K_g-|w|}[t]\ast f, Q_{ \le N+K_g}[t]\ast f\rangle }{F(Q)[t]}
\\
&\le  \mathrm{Const.}(g) \left( \nu^{Q}_{c;N+K_g}(R_{|w|}^{-1}[w]) - \mathrm{rem.}^{Q}_{c,N+K_g;|w|}[t] \right).
\end{align*}
Using the results on remainders (Lemma \ref{lemma:remone}) the conclusion follows.
\end{proof}

\subsection{The twist and the (limits of) matrix coefficients}
\begin{lemma}
If $T_{\mathfrak{s}}$ is transitive then $\nu_{c}^{Q}$ is twisted in the sense of Definition \ref{def:twistedbycocycle}.
\end{lemma}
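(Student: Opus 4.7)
The plan is to realise the cocycle $h$ as the family of Radon--Nikodym derivatives of the $g$-translated measures with respect to $\nu_c^Q$, and to verify the defining relation \ref{harmonic} on a generating system of cylinders by invoking the main equality of Proposition \ref{prop:main}.

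Since $T_{\mathfrak{s}}$ is transitive, Lemma \ref{lemma:pushforward} gives $\nu_c^{g^{-1}Q}\ll\nu_c^Q$ for every $g\in G$, so I can define
$$h(g,z):=\frac{d\nu_c^{gQ}}{d\nu_c^Q}(z),$$
where $\nu_c^{gQ}$ denotes the measure from Definition \ref{def:diracmass} with $g^{-1}$ substituted for the generic parameter, so that its defining Dirac coefficients involve $\langle\rho(\underline{\mathfrak{s}}(v))f,\rho(g)Q\rangle$. Next, I apply Proposition \ref{prop:main} to $\nu_{c;N}^{g^{-1}Q}[t]$ and use the unitarity of $\rho$ to rewrite
$$\langle \rho(\underline{\mathfrak{s}}(w))\rho(\underline{\mathfrak{s}}(v))f,\rho(g)^{-1}Q_{c;\le N}[t]\rangle=\langle \rho(\underline{\mathfrak{s}}(v))f,\rho(g\underline{\mathfrak{s}}(w))^{-1}Q_{c;\le N}[t]\rangle,$$
so that the leading series on the RHS of the main equality identifies with $\nu_{d;N}^{(g\underline{\mathfrak{s}}(w))^{-1}Q}[t]([B])$ for the shifted sequence $d_n:=c_{n+r}$. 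Specialising to $g:=\underline{\mathfrak{s}}(w)^{-1}$ makes $g\underline{\mathfrak{s}}(w)=e$; then sending $N\to\infty$ and $t_k\to\gamma(Q)$ along the sequence guaranteed by Corollary \ref{cor:tight}, the two remainder terms vanish by Lemmas \ref{lemma:remone} and \ref{lemma:remtwo}, and the shifted weights $c_{n+r}$ are absorbed using Lemma \ref{lemma:cchange}. The outcome is the single-letter identity
$$\int_{[wB]}\gamma(Q)^{r}R_r^{-1}(z)\,h(\underline{\mathfrak{s}}(w),z)\,d\nu_c^Q(z)=\nu_c^Q([B]\cap\sigma^r[w])=(\tau^{(w)})_*\nu_c^Q([wB])$$
for every admissible $w$ of length $r$ and every letter $B$ (both sides being zero when $wB$ is inadmissible, since $[B]\cap\sigma^r[w]=\emptyset$ precisely then).

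To upgrade this single-letter version to the full density \ref{harmonic} on all of $[w]$, I would repeat the argument with $w$ replaced by an arbitrary admissible extension $wu$ and a single letter $B$, exploiting the multiplicative factorisations $R_{r+|u|}(z)=R_r(z)R_{|u|}(\sigma^r z)$ and $\underline{\mathfrak{s}}(wu)=\underline{\mathfrak{s}}(w)\underline{\mathfrak{s}}(u)$. Requiring compatibility between the identities so obtained at $w$ and at $wu$ forces a cocycle-type relation for $h$ of the form alluded to in Subsection \ref{subsection:cocycle}; with this compatibility in hand, the measures $(\tau^{(w)})_*\nu_c^Q$ and $\gamma(Q)^{r}R_r^{-1}(\cdot)h(\underline{\mathfrak{s}}(w),\cdot)\,\nu_c^Q$ agree on the $\pi$-system $\{[wuB]\}$ generating the Borel $\sigma$-algebra on $[w]$ and therefore coincide, which is precisely \ref{harmonic}. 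The main obstacle is verifying this cocycle compatibility across refinement levels; it is obtained by comparing two applications of the main equality and invoking the essential uniqueness of the Radon--Nikodym derivatives $h(g,\cdot)$, with the bookkeeping for the double limit $N\to\infty$, $t_k\to\gamma(Q)$ being the delicate technical point already handled by Lemmas \ref{lemma:remone}--\ref{lemma:cchange}.
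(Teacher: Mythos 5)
Your argument draws on the same tool-kit as the paper's — Proposition \ref{prop:main} together with Lemmas \ref{lemma:remone}, \ref{lemma:remtwo}, \ref{lemma:cchange} to manage the double limit and the shifted weights, and Lemma \ref{lemma:pushforward} for absolute continuity — and the single-letter identity you derive is correct. The gap is exactly where you flag it, and it is not a minor bookkeeping issue. Your $\pi$-system argument requires, for fixed $w$ of length $r$ and arbitrary admissible $uB$,
$$
(\tau^{(w)})_*\nu_c^Q([wuB]) \;=\; \int_{[wuB]}\gamma(Q)^{r} R_r^{-1}(z)\,h(\underline{\mathfrak{s}}(w),z)\,d\nu_c^Q(z),
$$
with the density normalised at the \emph{fixed} scale $r=|w|$. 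Repeating your derivation with $wu$ in place of $w$ instead produces the analogous identity at scale $r+|u|$, with density $\gamma(Q)^{r+|u|}R_{r+|u|}^{-1}\,h(\underline{\mathfrak{s}}(wu),\cdot)$. Passing between the two forms needs precisely the multiplicative cocycle relation for $h$ that Definition \ref{def:twistedbycocycle} is designed to encode, so invoking ``compatibility'' here is circular; and ``essential uniqueness of Radon--Nikodym derivatives'' does not help, since that tells you the density is unique \emph{once} the measure identity is established, whereas the measure identity is exactly what is open.

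The paper avoids the circularity by working one level up: instead of the pointwise $h$, it introduces the measure $m_w$ on $[w]$ with $dm_w=\gamma^{-r}R_r\,d(\tau^{(w)})_*m$ (where $m=\nu_c^Q$), and establishes from the same ingredients that $m_w$ coincides \emph{as a measure} with $m^{g^{-1}}=\nu_c^{gQ}$ for $g=\underline{\mathfrak{s}}(w)$. In particular $m_w$ depends only on $g$, and $m_w\ll m$ by Lemma \ref{lemma:pushforward}, so $h'(g,\cdot):=\tfrac{dm_w}{dm}$ is well defined; the chain rule for Radon--Nikodym derivatives applied to $dm_w=\gamma^{-r}R_r\,d(\tau^{(w)})_*m$ then yields \ref{harmonic} in a single line, and the compatibility across refinement levels is automatic, being a property of the one measure $m_w$ rather than something to reconcile cylinder-by-cylinder. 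The clean repair for your write-up is to make this move: package your cylinder identities as the measure identity $m_w=m^{g^{-1}}$, and let the chain rule finish.
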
 

\begin{proof}
For brevity write $m=\nu_{c}^{Q}$, $m^g=\nu_{c}^{g^{-1}Q}$ and $\gamma =\gamma(Q)$. The measure $m_w$ defined by
$$
\int Fdm_w = \int F \gamma^{-n} R_n d(\tau^{(w)})_* m
$$
has $m_w =  m^{g^{-1}}$ for a certain $d$ and $g=\underline{\mathfrak{s}}(w)$.
In addition $m^g$ is absolutely continuous with respect to $m$ by Lemma \ref{lemma:pushforward}.
Since $m_w(F) = m_v(F)$ for every $w,v$ with $\underline{\mathfrak{s}}(w)=\underline{\mathfrak{s}}(v)=g$ it follows that $\frac{dm_w}{dm}(z)=\frac{dm_v}{dm}(z)=: h^\prime(g,z)$, i.e. a function indexed by the group element. Now using the chain rule for Radon-Nikodym derivatives we have
$$
h^\prime(g,z)= \frac{dm_w}{dm}(z) =  \frac{d\gamma^{-n}R_n(\tau^{(w)})_*m}{d(\tau^{(w)})_*m}(z)\frac{d(\tau^{(w)})_*m}{dm}(z) =  \gamma^{-n}R_n\frac{d(\tau^{(w)})_*m}{dm}(z).
$$
\end{proof}
It is now easier to write the conclusions for the (limits of) matrix coefficients.
\begin{lemma}\label{lemma:matrixcoeff}
For any $wA$ we have
$$
\int_{[wA]}\gamma(Q)^nR_n^{-1} d\nu_{c}^{Q}= \lim_{q\to\infty}\lim_{N\to\infty}\frac{\langle \rho(\underline{\mathfrak{s}}(w))\phi_{c;\le N}[t_q]\ast f,Q_{\le N}[t_q]\rangle}{\langle\phi_{c;\le N}[t_q]\ast f, Q_{\le N}[t_q]\rangle}
$$
If $Q=\phi_c\ast f$ then
$$
\int_{[wA]}\gamma(Q)^n R_n^{-1} d\nu_{c}^{Q} = \lim_{q\to\infty}\frac{\langle \rho(\underline{\mathfrak{s}}(w))\phi_{c}[t_q]\ast f,\phi_{c}[t]\ast f\rangle}{\langle\phi_{c}[t_q]\ast f, \phi_c[t_q]\ast f\rangle}.
$$
\end{lemma}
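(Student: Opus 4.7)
The plan is to apply Proposition~\ref{prop:main} to $\nu_{c;N}^{Q}[t_q]$ integrated against $t_q^{r}R_r^{-1}\mathds{1}_{[wA]}$ (with $r=|w|$), pass to the iterated limits $N\to\infty$ then $q\to\infty$, and match the resulting expression with the claimed quotient. Since $R$ is log-H\"{o}lder continuous and $[wA]$ is clopen, $R_r^{-1}\mathds{1}_{[wA]}$ is a bounded continuous function on $\Sigma^+$. Together with the weak-$*$ convergence $\nu_{c;N}^Q[t_q]\to\nu_c^Q$ (Corollary~\ref{cor:tight}, or compactness of $\Sigma^+$ in the $\phi^*\ast f$ case) and $t_q\to\gamma:=\gamma(Q)$, this gives
\[
\int_{[wA]}\gamma^{r}R_r^{-1}\,d\nu_c^Q \;=\; \lim_{q\to\infty}\lim_{N\to\infty}\int t_q^{r}R_r^{-1}\mathds{1}_{[wA]}\,d\nu_{c;N}^Q[t_q].
\]

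Applying Proposition~\ref{prop:main} with $g=e$, $B=A$, $k=r$ decomposes the right-hand integral as $M_N[t_q]+E_N[t_q]$, where
\[
M_N[t_q]=\frac{\langle \rho(\underline{\mathfrak{s}}(w))\,\phi^{A,a}_{d;\le N-r}[t_q]\ast f,\;Q_{\le N}[t_q]\rangle}{F(Q)[t_q]},\qquad d_n:=c_{n+r},
\]
and, since $\sigma x$ avoids $A$, the second sum in the proposition collapses to at most the single term $n=r$, $v=w$, so $|E_N[t_q]|\le c_r\langle \rho(\underline{\mathfrak{s}}(w))f,Q_{\le N}[t_q]\rangle/F(Q)[t_q]$. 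This remainder vanishes in the iterated limit by the argument in the proof of Lemma~\ref{lemma:remone}: Cauchy-Schwarz combined with $F(Q)[t_q]=\|Q_c[t_q]\|^2\to\infty$ (Lemma~\ref{lemma:cphiastastphi}) for $Q=\phi_c\ast f$, and Lemma~\ref{lemma:cphiastphi} for $Q=\phi^*_c\ast f$.

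It then remains to identify $\lim_q\lim_N M_N[t_q]$ with the target. For part~(2), Lemma~\ref{lemma:strongconvergence} gives strong convergence of both $\phi^{A,a}_{d;\le N-r}[t_q]\ast f$ and $Q_{c;\le N}[t_q]$, so
\[
\lim_{N\to\infty}M_N[t_q]=\frac{\langle \rho(\underline{\mathfrak{s}}(w))\phi^{A,a}_d[t_q]\ast f,\;\phi_c[t_q]\ast f\rangle}{F(Q)[t_q]}.
\]
Since $c$ is slowly increasing, $d_n/c_n=c_{n+r}/c_n\to 1$, and Lemma~\ref{lemma:cchange} (applied to these inner-product expressions) replaces $d$ by $c$ in the $q$-limit, giving the stated right-hand side of~(2). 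For part~(1), denote the target quotient by $T_N[t_q]$; its denominator converges to $F(Q)[t_q]$ by definition, and its numerator differs from $F(Q)[t_q]\cdot M_N[t_q]$ by the inner product of $\rho(\underline{\mathfrak{s}}(w))(\phi^{A,a}_{c;\le N}[t_q]-\phi^{A,a}_{d;\le N-r}[t_q])\ast f$ against $Q_{\le N}[t_q]$. The difference $\phi^{A,a}_{c;\le N}[t_q]-\phi^{A,a}_{d;\le N-r}[t_q]$ splits into a tail supported on $n\in[N-r+1,N]$ (whose contribution to the ratio vanishes by the argument of Lemma~\ref{lemma:remtwo}) and a signed shift-correction supported on $n\in[1,N-r]$ with coefficients $c_n-c_{n+r}$. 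Slow increase of $c$ bounds the absolute value of the shift-correction in the positive cone by $\epsilon\,\phi^{A,a}_{c;\le N}[t_q]+C_K$ for arbitrary $\epsilon>0$ and some finite vector $C_K=C_{K(\epsilon)}$; dividing by $F(Q)[t_q]$, the $\epsilon$-term contributes at most $\epsilon\cdot\nu^Q_{c;N}[t_q]([A])\to\epsilon$ (Lemma~\ref{lemma:tseries}), while the $C_K$-term vanishes since $F(Q)[t_q]\to\infty$. Sending $\epsilon\to 0$ identifies $T_N[t_q]$ with $M_N[t_q]$ in the iterated limit.

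The main obstacle will be this last step for part~(1) in the $Q=\phi^*\ast f$ case, where strong convergence of $Q_{c;\le N}[t_q]$ is not available to directly commute the $N$-limit into the inner product. The workaround is to keep the shift-correction inside a positive-cone comparison against $\phi^{A,a}_{c;\le N}[t_q]$, for which Lemma~\ref{lemma:tseries}'s normalisation $\nu^Q_{c;N}[t_q]([A])\to 1$ supplies uniform control before any limit is taken.
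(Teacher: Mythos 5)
Your proposal is correct and follows essentially the same route as the paper: apply Proposition~\ref{prop:main} to $\nu_{c;N}^{Q}[t_q]$ against $t_q^{r}R_r^{-1}\mathds{1}_{[wA]}$, kill the two remainder terms via the arguments of Lemmas~\ref{lemma:remone} and~\ref{lemma:remtwo}, exchange the shifted coefficients $d_n=c_{n+r}$ for $c_n$ (the paper does this through Proposition~\ref{prop:changerange} and Lemma~\ref{lemma:cchange}, you do it by an explicit positive-cone comparison which amounts to the same thing), and then invoke weak-$*$ convergence of $\nu_{c;N}^Q[t_q]$ together with strong convergence (Lemma~\ref{lemma:strongconvergence}) in the $Q=\phi_c\ast f$ case to collapse the $N$-limit into the inner product. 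The only cosmetic difference is that you carry the comparison of denominators ($F(Q)[t_q]$ versus $\langle\phi_{c;\le N}\ast f,Q_{\le N}\rangle$) explicitly, whereas the paper simply notes that both numerator and denominator have finite $N$-limits; both are valid.
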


\begin{lemma}\label{lemma:coeffgencyl}
Suppose $T_{\mathfrak{s}}$ is transitive. We have
$$
\int_{[wB]}\gamma(Q)^n R_n^{-1} d\nu_{c}^{Q}\le C_B \lim_{q\to\infty}\lim_{N\to\infty}\frac{\langle \rho(\underline{\mathfrak{s}}(w))\phi_{c;\le N}[t_q]\ast f,Q_{\le N}[t_q]\rangle}{\langle\phi_{c;\le N}[t_q]\ast f, Q_{\le N}[t_q]\rangle}.
$$
If $Q=\phi_c\ast f$ then
$$
\int_{[wB]} \gamma(Q)^nR_n^{-1} d\nu_{c}^{Q} \le C_B \lim_{q\to\infty}\frac{\langle \rho(\underline{\mathfrak{s}}(w))\phi_{c}[t_q]\ast f,\phi_{c}[t]\ast f\rangle}{\langle\phi_{c}[t_q]\ast f, \phi_c[t_q]\ast f\rangle}.
$$
\end{lemma}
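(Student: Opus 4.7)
The plan is to combine the main equality (Proposition~\ref{prop:main}) with the change-of-first-letter estimate (Lemma~\ref{lemma:changeletter}), the transitivity of $T_{\mathfrak{s}}$ ensuring that $g_B = e$ in the latter. For $r = |w|$, $g = e$, and $d_n = c_{n+r}$, Proposition~\ref{prop:main} rewrites
\[
\nu^Q_{c;N}[t]\bigl(t^r R_r^{-1}\mathds{1}_{[wB]}\bigr)
= \frac{\langle \rho(\underline{\mathfrak{s}}(w))\,\phi^{B,a}_{d;\le N-r}[t]\ast f,\, Q_{\le N}[t]\rangle}{F(Q)[t]}
+ \mathrm{rem.}^{Q}_{d,N;[1,r]}[t]([B]),
\]
the remainder being of the type handled by Lemma~\ref{lemma:remone}. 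Transitivity then gives the vector inequality $\phi^{B,a}_{d;\le N-r}[t]\ast f \le \mathrm{Const.}(B)\,\phi^{A,a}_{d;\le N-r+K_B}[t]\ast f$ in $\mathcal{H}_+$; since $\rho(\underline{\mathfrak{s}}(w))$ preserves $\mathcal{H}_+$ (as $\rho$ is a countable sum of permutation representations), pairing with $Q_{\le N}[t]\in\mathcal{H}_+$ preserves the inequality.

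To replace the weight $d_n = c_{n+r}$ with $c_n$, exploit the slow increase of $c$: for any $\epsilon > 0$ pick $n_\epsilon = n_\epsilon(r,\epsilon)$ with $d_n/c_n \le 1+\epsilon$ for $n \ge n_\epsilon$, whence
\[
\phi^{A,a}_{d;\le M}[t]\ast f \le (1+\epsilon)\,\phi^{A,a}_{c;\le M}[t]\ast f + \phi^{A,a}_{d;\le n_\epsilon}[t]\ast f
\]
in $\mathcal{H}_+$. After pairing with $Q_{\le N}[t]$ and dividing by $F(Q)[t]$, the finite-range last term has the shape in Lemma~\ref{lemma:remone} and vanishes as $t_q\to\gamma(Q)$. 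Proposition~\ref{prop:changerange} together with Lemma~\ref{lemma:remtwo} identifies the $N$-limit of the main piece with $M = N-r+K_B$ with that for $M = N$, at the cost of one further vanishing remainder.

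Taking $N\to\infty$ first at fixed $t_q > \gamma(Q)$, the LHS converges to $\int t_q^r R_r^{-1}\mathds{1}_{[wB]}\,d\nu^Q_c[t_q]$ by weak-$\ast$ convergence of $\nu^Q_{c;N}[t_q]$ and the continuity/boundedness of the integrand (cylinders are clopen and $R$ is locally H\"older). On the right, rearranging the main equality itself furnishes the $N$-limit of the matrix-coefficient ratio (for $Q = \phi_c\ast f$ this step is made transparent by the strong convergence of Lemma~\ref{lemma:strongconvergence}). Sending $t_q\to\gamma(Q)$ annihilates every remainder, and finally letting $\epsilon\to 0$ collapses the factor $(1+\epsilon)$, leaving the conclusion with $C_B = \mathrm{Const.}(B)$. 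The main obstacle is that $n_\epsilon$ depends on $r = |w|$, so a naive substitution would leave an $r$-dependent constant; the point is that the $r$-dependent piece survives only inside the normalised quantity $\langle\rho(\underline{\mathfrak{s}}(w))\phi^{A,a}_{d;\le n_\epsilon}[t_q]\ast f, Q_{\le N}[t_q]\rangle/F(Q)[t_q]$, which is killed by the divergence $F(Q)[t_q]\to\infty$, while the $r$-independent multiplier $(1+\epsilon)$ is removed by taking $\epsilon\to 0$ last.
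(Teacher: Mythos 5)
Your proof is correct and follows essentially the same route as the paper's: apply the main equality (Proposition~\ref{prop:main}) with $g=e$, control the $[1,r]$ and $[N-r,N]$ remainders via Lemmas~\ref{lemma:remone} and~\ref{lemma:remtwo}, swap the weight $d_n = c_{n+r}$ back to $c_n$ (the paper packages this in Lemma~\ref{lemma:cchange}/Proposition~\ref{prop:changerange}; you reprove it directly with the $(1+\epsilon)$ trick), and then invoke Lemma~\ref{lemma:changeletter} with $g_B=e$ by transitivity to pass from $\phi^{B,a}$ to $\phi^{A,a}$ at the cost of $C_B$. The only difference is presentational: the paper first identifies $\nu^Q_c(\gamma(Q)^nR_n^{-1}\mathds{1}_{[wB]})=\nu^{g^{-1}Q}_c([B])$ as a clean intermediate step and then expresses the latter as a double limit of matrix coefficients, whereas you carry out the same manipulations inline.
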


\begin{proof}[Proof of Lemmas \ref{lemma:matrixcoeff} and \ref{lemma:coeffgencyl}]
By the main equality and Lemmas \ref{lemma:remone} and \ref{lemma:remtwo},
$$
\nu_{c}^{Q}(\gamma(Q)^n R_n^{-1}\mathds{1}_{[wB]}) = \nu_{c}^{g^{-1}Q}([B]),
$$
for $g=\underline{\mathfrak{s}}(w)$. We are always using that fact that $R_n^{-1}\mathds{1}_{[wB]}$ is bounded and continuous so that we can deduce its integral with respect to the weak limit $\nu_{c}^{Q}$.
We know that
$$
 \nu_{c}^{g^{-1}Q}([B])  = \lim_{q\to\infty}  \lim_{N\to\infty}\frac{\langle \rho(g)\phi^{B,a}_{c;\le N}[t_q]\ast_\rho f , Q_{\le N}[t_q]\rangle_{\mathcal{H}}}{\lim_{M\to\infty}\langle \phi_{c;\le M}[t_q]\ast_\rho f , Q_{\le M}[t_q]\ast_\rho f\rangle_{\mathcal{H}}},
$$
and since the numerator and denominator have bounded limits we may say
$$
 \nu_{c}^{g^{-1}Q}([B])  = \lim_{q\to\infty}  \lim_{N\to\infty}\frac{\langle \rho(g)\phi^{B,a}_{c;\le N}[t_q]\ast_\rho f , Q_{\le N}[t_q]\rangle_{\mathcal{H}}}{\langle \phi_{c;\le N}[t_q]\ast_\rho f , Q_{\le N}[t_q]\ast_\rho f\rangle_{\mathcal{H}}}.
$$

In the case that $Q=\phi_c\ast f$ we use strong convergence to say, for $B=A$,
$$
\lim_{N\to\infty}\frac{\langle \rho(g)\phi_{c;\le N}[t_q]\ast_\rho f , \phi_{c;\le N}[t_q]\rangle_{\mathcal{H}}}{\lim_{M\to\infty}\langle \phi_{c;\le M}[t_q]\ast_\rho f , \phi_{c;\le M}[t_q]\ast_\rho f\rangle_{\mathcal{H}}}= \frac{\langle \rho(g)\phi_{c}[t_q]\ast_\rho f , \phi_{c}[t_q]\rangle_{\mathcal{H}}}{\langle \phi_{c}[t_q]\ast_\rho f , \phi_{c}[t_q]\ast_\rho f\rangle_{\mathcal{H}}}.
$$

If $T_{\mathfrak{s}}$ is transitive then using Lemma \ref{lemma:changeletter}
$$
 \nu_{c}^{g^{-1}Q}([B])  \le C_B \lim_{q\to\infty}   \lim_{N\to\infty}\frac{\langle \rho(g)\phi^{A,a}_{c;\le N}[t_q]\ast_\rho f , Q_{\le N}[t_q]\rangle_{\mathcal{H}}}{\langle \phi_{c;\le N}[t_q]\ast_\rho f , Q_{\le N}[t_q]\ast_\rho f\rangle_{\mathcal{H}}}
$$
completing the proof.
\end{proof}

\subsection{Further estimates}
We check that the (limits of) matrix coefficients are non-trivial.
\begin{corollary}
Assume $\Sigma^+$ compact. Suppose $T_{\mathfrak{s}}$ is transitive. For each $g$ we have 
$$
\lim_{q\to\infty}\lim_{N\to\infty}\frac{\langle \rho(g)\phi_{c;\le N}[t_q]\ast f,Q_{\le N}[t_q]\ast f\rangle}{\langle \phi_{c;\le N}[t_q]\ast f,Q_{\le N}[t_q]\ast f\rangle}>0.
$$
If $\gamma(Q)<1$ then for every $\epsilon>0$ there exist $h$ with 
$$
\lim_{q\to\infty}\lim_{N\to\infty}\frac{\langle \rho(g)\phi_{c;\le N}[t_q]\ast f,Q_{\le N}[t_q]\ast f\rangle}{\langle \phi_{c;\le N}[t_q]\ast f,Q_{\le N}[t_q]\ast f\rangle}<\epsilon.
$$
\end{corollary}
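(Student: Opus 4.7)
The first statement is a positivity assertion that unwinds via the identifications already in hand. Fix $g \in G$ and use transitivity of $T_{\mathfrak{s}}$ to choose an admissible word $w$ of length $n$ with $\underline{\mathfrak{s}}(w) = g$. Lemma \ref{lemma:matrixcoeff} identifies the matrix coefficient limit with $\int_{[wA]} \gamma(Q)^n R_n^{-1}\,d\nu_c^Q$, and unwinding this via the main equality (Proposition \ref{prop:main}) together with the vanishing of the two remainder terms (Lemmas \ref{lemma:remone} and \ref{lemma:remtwo}) realises the integral as $\nu_c^{g^{-1}Q}([A])$. The transitivity clause of Lemma \ref{lemma:tseries} says that $\liminf_{t\to\gamma(Q)}\lim_{N\to\infty}\nu_{c;N}^{g^{-1}Q}[t]([A]) > 0$, and since $[A]$ is clopen in $\Sigma^+$, the weak$^*$ limit of $\nu_{c;N}^{g^{-1}Q}[t_q]$ along the sequence $t_q\to\gamma(Q)$ assigns to $[A]$ a value bounded below by this positive liminf.

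For the second statement, the plan is to argue by contradiction. Suppose the matrix coefficient limit were $\ge \epsilon$ for every $h \in G$. For $g=\underline{\mathfrak{s}}(w)$ with $|w|=n$, combine Lemma \ref{lemma:matrixcoeff} with the local H\"{o}lder control of $\log R$ on the cylinder $[wA]$ (which makes $R_n$ essentially constant there, up to a uniform multiplicative constant $C$) to obtain the two-sided estimate $\Upsilon(\underline{\mathfrak{s}}(w)) \asymp \gamma(Q)^n R_n^{-1}(wx)\,\nu_c^Q([wA])$, hence
\[
\nu_c^Q([wA]) \;\ge\; C^{-1}\epsilon\,\gamma(Q)^{-n} R_n(wx).
\]
Next, fix a letter $a \in \mathcal{W}_1$ such that $aA$ is admissible (such an $a$ exists by transitivity of $T_{\mathfrak{s}}$) and sum over $w \in \mathcal{W}^{A,a}_n$: the cylinders $[wA]$ are pairwise disjoint subsets of $[A]$, so using $\nu_c^Q([A])=1$ we get
\[
1 \;\ge\; C^{-1}\epsilon\,\gamma(Q)^{-n}\sum_{w\in\mathcal{W}^{A,a}_n} R_n(wx).
\]

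To conclude, invoke positive recurrence of $R$: it provides a uniform positive lower bound on $\sum_{w} R_n(wx)$ along the prescribed words (via the usual comparison between sums over admissible words of length $n$ between prescribed letters and sums over periodic cycles, controlled by the H\"{o}lder distortion of $\log R$). Writing this bound as $c_{A,a} > 0$, we arrive at $\gamma(Q)^n \ge C^{-1}\epsilon\, c_{A,a}$ for every sufficiently large $n$, which is absurd since $\gamma(Q) < 1$. Hence no such uniform $\epsilon$ can exist, and for every $\epsilon>0$ there is $h\in G$ on which the limit is less than $\epsilon$.

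The main obstacle I anticipate is the transfer of positive recurrence into a uniform lower bound for the partition sum $\sum_{w\in\mathcal{W}^{A,a}_n} R_n(wx)$ with prescribed first and last letter. The positive recurrence hypothesis is stated as a two-sided bound on the return sum over periodic orbits at $A$, so the remaining work is to use a bounded-length filler to convert periodic cycles into admissible words ending in $a$, absorbing the discrepancy into a constant by the bounded distortion of $\log R$. A secondary technical point in Part 1 is that the \emph{liminf} bound from Lemma \ref{lemma:tseries} must be promoted to a genuine lower bound on the limit along the chosen sequence $t_q$; this is immediate because any subsequential limit dominates the liminf and $[A]$ is clopen.
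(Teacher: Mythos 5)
Your proposal is essentially correct and follows the same overall strategy as the paper, but with a noteworthy variation in the first part. For the positivity claim, the paper chooses an admissible $v$ with $\underline{\mathfrak{s}}(wAv)=e$ and bounds the matrix-coefficient limit below by $C\,\nu_c^Q([wAv])$, using that the cylinder maps to the identity coset so its $\nu_c^Q$-measure has an explicit positive lower bound via $\Upsilon_c(e)=1$. You instead unwind the matrix-coefficient limit to $\nu_c^{g^{-1}Q}([A])$ via the main equality and the remainder estimates, and then invoke the transitivity clause of Lemma \ref{lemma:tseries} directly. Both routes are valid; yours is slightly more modular because it leans on an already-stated liminf bound, whereas the paper's is more self-contained (it only needs the RHS local Gibbs behaviour of $\nu_c^Q$ and transitivity to produce the filler word $v$). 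Your observation that the limit along the specific sequence $t_q$ dominates the liminf, and that $[A]$ is clopen, correctly closes the gap.

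For the second claim your contradiction argument is the same mechanism as the paper's: if $\Upsilon_c\ge\epsilon$ everywhere, sum the local Gibbs inequality over cylinders of length $n$ inside $[A]$ and use finiteness of the total mass against divergence of $\gamma(Q)^{-n}\sum_w R_n(wx)$. The paper sums over $\mathcal{W}^{b,A}_n$ and evaluates against $\nu_c^Q(1)$; you sum over $\mathcal{W}^{A,a}_n$ and evaluate against $\nu_c^Q([A])=1$ — a cosmetic difference. The "obstacle" you flag (a uniform lower bound on $\sum_{w\in\mathcal{W}^{A,a}_n} R_n(wx)$) is not actually delicate here: under the standing hypothesis that $\Sigma^+$ is a compact mixing subshift with H\"older $\log R$ and $P(\log R,\sigma)=0$, this two-letter-conditioned partition sum is comparable to the periodic return sum conditioned at $A$ (bounded distortion plus a bounded-length filler from $a$ to $A$), which positive recurrence pins to $[M_A^{-1},M_A]$. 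So your argument goes through.
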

\begin{proof}
Choose $w$ with $\underline{\mathfrak{s}}(w)=g$. Then 
$$
\limsup_{q\to\infty}\frac{\langle \rho(g)\phi_c[t_q]\ast f,\phi_c[t_q]\ast f\rangle}{\|\phi_c[t_q]\ast f\|^2}\ge C\nu_{c}^{Q}(\mathds{1}_{[wA]}) \ge C\nu_{c}^{Q}(\mathds{1}_{[wAv]})
$$
for some constant $C$ and any $v$. If we choose $v$ with $\underline{\mathfrak{s}}(wAv)=e$ then we get positive lower bound for $\nu_{c}^{Q}(\mathds{1}_{[wAv]})$.

For the second part we have that there is some constant $C$ so that for each $n$
$$
C\nu_{c}^{Q}(1)\ge \sum_{b\in\mathcal{W}_1}\sum_{w\in\mathcal{W}^{b,A}_n} \gamma(Q)^{-n} R_n(wx)
\lim_{q\to\infty}\lim_{N\to\infty}\frac{\langle \rho(\underline{\mathfrak{s}}(w))\phi_{c;\le N}[t_q]\ast f,Q_{\le N}[t_q]\ast f\rangle}{\langle \phi_{c;\le N}[t_q]\ast f,Q_{\le N}[t_q]\ast f\rangle}.
$$
Since the $\sum_{b\in\mathcal{W}_1}\sum_{w\in\mathcal{W}^{b,A}_n} \gamma(Q)^{-n} R_n(wx)$ diverges in $n$ there must be $h$ with
$$
\lim_{q\to\infty}\lim_{N\to\infty}\frac{\langle \rho(h)\phi_{c;\le N}[t_q]\ast f,Q_{\le N}[t_q]\ast f\rangle}{\langle \phi_{c;\le N}[t_q]\ast f,Q_{\le N}[t_q]\ast f\rangle}
$$
arbitrarily small.
\end{proof}

%%%%%%%%%%%%%%%%%%%%%%%%%%%%%%%%%%%%%%%%%%%
%%%%%%%%%%%%%%%%%%%%%GIBBS%%%%%%%%%%%%%%%%%%%%%%
%%%%%%%%%%%%%%%%%%%%%%%%%%%%%%%%%%%%%%%%%%%

\section{The local RHS Gibbs property and absolute continuity at the maximal parameter}\label{section:gibbs}
These manipulations are only valid for the case $\nu_{c}^{\phi_c\ast f}$ measure. We set $\gamma=\gamma(\phi_c\ast f)$.
Using that $\Upsilon_c(g)\le 1$ we have the following lemma.
\begin{lemma}[RHS local Gibbs]\label{cylinder}
There is a constant $C$ so that for any word $w$ with admissible $wA$ and $|w|=r$ we have
\begin{align*}
&\nu^{\phi\ast f}_{c}([wA])
 \le C R_{r}(wx)\gamma^{-r}
\end{align*}
\end{lemma}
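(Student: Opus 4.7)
The plan is to combine the identification of the integral of $\gamma^r R_r^{-1}$ over $[wA]$ with $\Upsilon_c(\underline{\mathfrak{s}}(w))$ from Lemma \ref{lemma:matrixcoeff}, the elementary bound $\Upsilon_c(g)\le 1$, and the local H\"older continuity of $\log R$ to pull $R_r$ outside the integral.

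First I would verify the hint $\Upsilon_c(g)\le 1$. Since $\rho$ is a unitary representation, the Cauchy--Schwarz inequality gives
\[
\langle \rho(g)\phi_c[t_k]\ast_\rho f, \phi_c[t_k]\ast_\rho f\rangle_{\mathcal{H}} \le \|\rho(g)\phi_c[t_k]\ast_\rho f\|\,\|\phi_c[t_k]\ast_\rho f\| = \|\phi_c[t_k]\ast_\rho f\|^2,
\]
so dividing by the denominator and taking the limit as $k\to\infty$ yields $\Upsilon_c(g)\le 1$ for every $g\in G$.

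Next I would apply Lemma \ref{lemma:matrixcoeff} to identify
\[
\int_{[wA]} \gamma^{r} R_r^{-1}\, d\nu^{\phi\ast f}_c = \Upsilon_c(\underline{\mathfrak{s}}(w)).
\]
On the cylinder $[wA]$, the local H\"older property of $\log R$ (equation \ref{equation:lip}) furnishes a constant $C_0$, depending only on the H\"older norm, such that $R_r(y)\ge C_0^{-1} R_r(wx)$ for every $y\in[wA]$. Therefore
\[
\Upsilon_c(\underline{\mathfrak{s}}(w)) = \int_{[wA]} \gamma^{r} R_r^{-1}\, d\nu^{\phi\ast f}_c \;\ge\; C_0^{-1}\,\gamma^{r} R_r(wx)^{-1}\,\nu^{\phi\ast f}_c([wA]).
\]
Rearranging and using $\Upsilon_c(\underline{\mathfrak{s}}(w))\le 1$ gives the desired
\[
\nu^{\phi\ast f}_c([wA]) \le C_0\,R_r(wx)\,\gamma^{-r}\,\Upsilon_c(\underline{\mathfrak{s}}(w)) \le C_0\,R_r(wx)\,\gamma^{-r}.
\]

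There is no serious obstacle here; the only small issue is to confirm that all ingredients are available at this point of the paper. Lemma \ref{lemma:matrixcoeff} is already established, unitarity of $\rho$ is built into the hypotheses on the representations under consideration, and the local H\"older control on $\log R$ is a standing assumption. The argument intrinsically requires $Q=\phi_c\ast f$ (as opposed to $\phi^*_c\ast f$), which is consistent with the remark that this local Gibbs bound is only stated in that case.
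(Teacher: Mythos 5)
Your proposal is correct and reconstructs exactly the argument the paper leaves implicit: the remark preceding the lemma (``Using that $\Upsilon_c(g)\le 1$'') points to the same chain --- Cauchy--Schwarz plus unitarity for $\Upsilon_c\le 1$, the matrix-coefficient identity of Lemma~\ref{lemma:matrixcoeff} with $Q=\phi_c\ast f$, and a local H\"{o}lder bound to replace $R_r^{-1}$ on $[wA]$ by $R_r(wx)^{-1}$. One small slip in the prose: you assert $R_r(y)\ge C_0^{-1}R_r(wx)$, but the displayed inequality you then derive requires the complementary bound $R_r(y)\le C_0 R_r(wx)$ (equivalently $R_r^{-1}(y)\ge C_0^{-1}R_r^{-1}(wx)$); local H\"{o}lder continuity gives both directions with a constant independent of $r$, so the conclusion stands as written.
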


\begin{theorem}\label{theorem:abscts}
If $\gamma=1$ then $\nu_{c}^{\phi_c\ast f}$ is absolutely continuous with respect to $\nu$. 
\end{theorem}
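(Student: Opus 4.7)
The plan is to derive absolute continuity from a uniform Gibbs-type comparison of cylinder masses of $\nu_c^{\phi_c \ast f}$ and $\nu$, which becomes available precisely because $\gamma = 1$.

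First I would combine Lemma \ref{cylinder} at $\gamma = 1$, which reads $\nu_c^{\phi_c \ast f}([wA]) \le C\,R_{|w|}(wx)$, with the $R$-conformality of $\nu$ from Lemma \ref{lemma:basecase}. Iterating $L^*\nu = \nu$ and using the local H\"{o}lder continuity of $\log R$ yields the matching lower bound
\[
\nu([wA]) = \int_{[A]} R_{|w|}(wy)\, d\nu(y) \ge c\, R_{|w|}(wx)\, \nu([A]),
\]
so dividing gives the uniform comparison $\nu_c^{\phi_c \ast f}([wA]) \le K\,\nu([wA])$ on every cylinder ending in the fixed letter $A$, with $K$ independent of $w$. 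Replacing Lemma \ref{cylinder} by Lemma \ref{lemma:coeffgencyl} and invoking $\Upsilon_c \le 1$ upgrades the bound to $\nu_c^{\phi_c \ast f}([wB]) \le K_B\,\nu([wB])$ for arbitrary terminal letters $B$, with $K_B$ depending only on $B$.

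From here absolute continuity follows by a standard outer-regularity argument. Given Borel $E$ with $\nu(E) = 0$, cover $E$ by a disjoint union of cylinders of total $\nu$-mass $\le \epsilon$; the cylinder bound then dominates the corresponding $\nu_c^{\phi_c \ast f}$-mass. In the compact (finite-alphabet) case the constants $K_B$ are uniformly bounded, so $\nu_c^{\phi_c \ast f}(E) \le K\epsilon \to 0$. In the general countable-alphabet case one partitions the cover by terminal letter and chooses, for each $B$ separately, a subcylinder cover of $\nu$-mass $\le \epsilon/(K_B\,2^B)$; the pieces with last letter $B$ then contribute at most $\epsilon/2^B$ to $\nu_c^{\phi_c \ast f}(E)$, summing to $\epsilon$.

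The main obstacle is managing the $B$-dependence of $K_B$ when the alphabet is infinite. A cleaner alternative is a martingale / Besicovitch differentiation along the nested cylinders $[x_0 \cdots x_{n_k}]$ with $x_{n_k} = A$: by strong positive recurrence and transitivity such $n_k$ exist infinitely often for $\nu$-almost every $x$, and the comparison on cylinders ending in $A$ is \emph{uniform} in $w$, so along this differentiation basis the ratio $\nu_c^{\phi_c \ast f}([x_0\cdots x_{n_k}])/\nu([x_0\cdots x_{n_k}])$ is bounded by $K$; this yields a Radon--Nikodym density pointwise bounded by $K$ and hence $\nu_c^{\phi_c \ast f} \ll \nu$. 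The crucial role of the hypothesis $\gamma = 1$ is that the factor $\gamma^{-|w|}$ in Lemma \ref{cylinder} drops out, producing a bound uniform in the cylinder length.
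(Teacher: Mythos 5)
Your core cylinder comparison is exactly the paper's: at $\gamma=1$, Lemma \ref{cylinder} gives $\nu_c^{\phi_c\ast f}([wA]) \le C R_{|w|}(wx)$, and the local Gibbs inequality for $\nu$ (via $R$-conformality and local H\"older continuity, equation \ref{ratiol}) gives a matching lower bound, so the two measures are comparable on cylinders ending in $A$. The paper writes this as $\nu_c^{\phi_c\ast f}([BwA]) \le C R_{|w|}(Bwx) \le C^2 \nu([BwA])$. Up to this point you are doing the same thing.

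The gap is in how you pass from cylinders ending in $A$ (or in a fixed letter) to arbitrary Borel sets when $\Sigma^+$ is non-compact. The ``partition by terminal letter'' step does not work as stated: outer regularity gives you \emph{one} open cover of $E$ with total $\nu$-mass $< \epsilon$; it does not let you choose, for each terminal letter $B$, a separate cover with $\nu$-mass $\le \epsilon/(K_B\,2^B)$. If the cover of $E$ happens to consist only of cylinders whose terminal letters carry large $K_B$, your bound is uncontrolled. You could instead try to refine each cylinder $[u]$ into first-return cylinders $[uvA]$ and use the uniform constant $K_A$, but then you must know that the residual set (points of $[u]$ never returning to $A$) is $\nu_c^{\phi_c\ast f}$-null, which is not free. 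Your martingale alternative has the same lurking issue: you note that the differentiation basis of cylinders ending in $A$ is an a.e.\ basis for $\nu$ (by recurrence), but for it to certify $\nu_c^{\phi_c\ast f}\ll\nu$ it must also be an a.e.\ basis for $\nu_c^{\phi_c\ast f}$; if $\nu_c^{\phi_c\ast f}$ charged the non-returning set it would carry a singular part invisible to your differentiation.

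The paper supplies exactly this missing ingredient by invoking the tightness result, Theorem \ref{theorem:tightstatement}: it extracts a compact $\mathcal{K}_\epsilon$ outside of which \emph{both} $\nu$ and $\nu_c^{\phi_c\ast f}$ have mass $<\epsilon$, uniformly. The remaining piece inside the compact set is then decomposed into cylinders of the form $[BuA]$ by an inductive construction whose exhaustion (``up to measure zero'') uses compactness of $\mathcal{K}_\epsilon$, and only there does the cylinder comparison close the argument. Your proposal never mentions this reduction, and that is the genuine missing idea. In the compact case (finite alphabet), your argument would indeed close, since $K_B$ is uniformly bounded; but the theorem is stated for strongly positively recurrent countable Markov shifts, and there the SPR-driven tightness is doing essential work that your proof skips.
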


\begin{proof}
Fix $B$. Let $\epsilon>0$. We must find $\eta>0$ for which $\nu_{c}^{\phi_c\ast f}(E\cap [B])<\epsilon$, for any open set $E$ with $\nu(E\cap [B])<\eta$.
For brevity we assume always that $E\subset [B]$.

Recall in Theorem \ref{theorem:tightstatement} the construction of a compact set $\mathcal{K}_\epsilon=\mathcal{K}((M_p)_p)$ with $\nu_{c}^{\phi_c\ast f}(E)<\epsilon$ and $\nu(E)<\epsilon$ for $E\subset \Sigma-\mathcal{K}_\epsilon$. 
On the other hand, for $E \subset \mathcal{K}_\epsilon$ we have the following. 
\begin{claim}
There is a finite set $S\subset {W}_1$ and a sequence $w(k)\in\mathcal{W}^{s,a}$, $s\in S$ with
$$
\nu_{c}^{\phi_c\ast f}(E) = \sum_{k=1}^\infty \nu_{c}^{\phi_c\ast f}([w(k)])
$$
$$
\nu(E) = \sum_{k=1}^\infty \nu([w(k)])
$$
\end{claim}
\noindent\textit{Proof of claim.} 
Inductively define $u(B,n)$ to be the shortest word with $[Bu(B,n)A]\subset E-\cup_{k<n}[Bu(B,k)A]$. Either the sequence terminates or $|u(B,n)|\to\infty$ by compactness of $\mathcal{K}_\epsilon$. Then, up to measure zero sets, $E=\bigcup_n [Bu(B,n)A]$. 
\qed

Let $C$ be the maximum of the constant in Lemma \ref{cylinder} and in \ref{RHSlocalgibbs}. For a cylinder $[BwA]$ the hypothesis that $\gamma=1$ gives
\begin{align*}
\nu_{c}^{\phi_c\ast f}([BwA])&\le C R_{|w|}(Bwx)
\\
&\le C^2 \nu([Bwa]).
\end{align*}
In conclusion, we choose $\eta=\min(\epsilon,  C^2\epsilon)$.
\end{proof}

\section{Proof of Theorems \ref{theorem:transitive} and \ref{theorem:LVR}}\label{section:proof}
\begin{lemma}\label{DCT}
Assume that $f$ is some vector with $\gamma(\phi\ast f)=1$.
Let $\nu_{c}^{\phi_c\ast f}$ be given.
Then, for any bounded $H\in L^1(\mu)$ and any $B\in\mathcal{W}_1$,
$$
\int_{[B]} \frac{1}{N}\sum_{n=1}^NH\circ\sigma^n(z) d\nu_{c}^{\phi_c\ast f}(z) \to \mu(H)\nu_{c}^{\phi_c\ast f}([B]) \; \mathrm{as}\; N\to\infty.
$$
\end{lemma}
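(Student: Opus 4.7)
The plan is to invoke the absolute continuity result of Theorem \ref{theorem:abscts} to reduce the statement to the classical Birkhoff ergodic theorem for the equilibrium state $\mu$, and then to conclude by dominated convergence.

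First, because we are at the boundary $\gamma(\phi\ast f)=1$, Theorem \ref{theorem:abscts} gives $\nu_{c}^{\phi_c\ast f}\ll\nu$, where $\nu$ is the $R$-conformal eigenmeasure from Lemma \ref{lemma:basecase}. In the strongly positively recurrent setting the standard thermodynamic formalism (Sarig) produces a strictly positive eigenfunction $h$ of the transfer operator with $Lh=h$, and the equilibrium state satisfies $d\mu = h\,d\nu$ up to normalisation. In particular, on each cylinder, $\mu$ and $\nu$ are equivalent, and chaining absolute continuities gives $\nu_{c}^{\phi_c\ast f}\ll\mu$.

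Next, since $\sigma$ is mixing, $(\sigma,\mu)$ is ergodic, and Birkhoff's ergodic theorem applied to $H\in L^1(\mu)$ yields
$$
\frac{1}{N}\sum_{n=1}^N H\circ\sigma^n(z)\to\mu(H) \quad\textrm{as}\quad N\to\infty
$$
for $\mu$-almost every $z$, hence (by the absolute continuity chain above) for $\nu_{c}^{\phi_c\ast f}$-almost every $z\in[B]$. Since $H$ is bounded, the Cesàro averages are uniformly bounded by $\|H\|_\infty$; and $\nu_{c}^{\phi_c\ast f}([B])<\infty$ by Corollary \ref{cor:tight}. Dominated convergence then delivers the desired integral limit.

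The one technical point to check with care is the equivalence $\mu\sim\nu$ on cylinders, equivalently the strict positivity of the eigenfunction $h$. This is standard in Sarig's SPR/mixing theory, but it is the place where one could otherwise harbour a set of positive $\nu_{c}^{\phi_c\ast f}$-measure on which Birkhoff averages fail to converge. Once this is in hand, nothing more than the ergodic theorem and dominated convergence is needed.
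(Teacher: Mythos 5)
Your proposal is correct and follows essentially the same route as the paper: reduce to the Birkhoff ergodic theorem for $(\sigma,\mu)$ via absolute continuity, then conclude by dominated convergence using the bound $\|H\|_\infty\mathds{1}_{[B]}$ and $\nu_{c}^{\phi_c\ast f}([B])<\infty$. The paper's proof asserts ``$\nu_{c}^{\phi_c\ast f}$ is absolutely continuous with respect to $\mu$'' without further comment; you have done the useful extra work of making explicit that this follows by chaining Theorem~\ref{theorem:abscts} (which gives $\nu_{c}^{\phi_c\ast f}\ll\nu$) with the equivalence $\mu\sim\nu$ coming from $d\mu=h\,d\nu$ and strict positivity of the eigenfunction $h$ in the SPR setting.
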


\begin{proof}
Let $H\in L^1(\mu)$, bounded, and $B\in\mathcal{W}_1$. As $\mu$ is ergodic we have
$$
\frac{1}{N}\sum_{n=0}^{N-1}H\circ \sigma^n (z) \to \mu(H) \; \mathrm{as} \; N\to\infty;
$$
pointwise convergence for $\mu$ almost every $z$.
In particular
$$
\mathds{1}_{B}(z)\frac{1}{N}\sum_{n=0}^{N-1}H\circ \sigma^n (z) \to \mu(H)\mathds{1}_{B}(z) \; \mathrm{as} \; N\to\infty;
$$
pointwise convergence for $\mu$ almost every $z$. And we have the domination
$$
\mathds{1}_{B}(z)\frac{1}{N}\sum_{n=0}^{N-1}H\circ \sigma^n (z) \le \|H\|_\infty \mathds{1}_{B}(z),
$$
As $\nu_{c}^{\phi_c\ast f}$ is absolutely continuous with respect to $\mu$ we have that $\mathds{1}_{B}H\circ \sigma^n \in L^1(\nu_{c}^{\phi_c\ast f})$ and pointwise convergence of Birkhoff sums $\nu_{c}^{\phi_c\ast f}$-almost surely. We check integrability of the domination
$$
\int \mathds{1}_{B}(z)\frac{1}{N}\sum_{n=0}^{N-1}H\circ \sigma^n (z) d\nu_{c}^{\phi_c\ast f}(z) \le \|H\|_\infty \nu_{c}^{\phi_c\ast f}([B])<\infty.
$$
In conclusion, using the Dominated Convergence Theorem,
$$
\int \mathds{1}_{B}(z)\frac{1}{N}\sum_{n=0}^{N-1}H\circ \sigma^n (z) d\nu_{c}^{\phi_c\ast f}(z) \to  \mu(H)\int \mathds{1}_{B}(z)d\nu_{c}^{\phi_c\ast f} \; \mathrm{as}\; N\to\infty.
$$
\end{proof}

Let us give a different description of $\sigma_*^r\nu_{c}^{\phi_c\ast f}$. Suppose $awA$ is admissible.
We have 
$$
\mathds{1}_{[awA]}\circ \sigma^r=\sum_{B\in\mathcal{W}_1}\sum_{v\in\mathcal{W}^{B,a}_{r}} \mathds{1}_{[vwA]}.
$$. By Lemma \ref{cylinder} we have
\begin{align*}
&\int \mathds{1}_{[awA]}(\sigma^r z) d\nu_{c}^{\phi_c\ast f}(z) \\
&\le
 \limsup_{k\to\infty} \sum_{B\in\mathcal{W}_1}\sum_{v\in\mathcal{W}^{B,a}_{r}}
 C R_{r+|w|-1}(vwx)\frac{\langle \rho(\underline{\mathfrak{s}}(w))\phi[t_k]\ast f, \rho(\underline{\mathfrak{s}}(v))^{-1}\phi[t_k]\ast f \rangle}{\| \phi[t_k]\ast f \|^2}
\end{align*}

\begin{lemma}
Assume that $\gamma(\phi\ast f)=1$. Let $\mathcal{Q}$ be a finite subset of $\mathcal{W}^{a,A}$, and for each $awA\in\mathcal{Q}$ let $q(w)\in(0,\infty)$. There is a constant $C>0$ with
$$
\int \sum_{awA\in\mathcal{Q}}\frac{q(w)}{R_{|aw|}(awx)}\mathds{1}_{[awA]}\circ\sigma^r(z) \mathds{1}_{[B]}(z) d\nu_{c}^{\phi_c\ast f}(z) \le C\|\sum_{awA\in\mathcal{Q}}q(w)\mathds{1}_{[awA]}\rho(\underline{\mathfrak{s}}(w))\|.
$$
\end{lemma}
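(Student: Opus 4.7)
The strategy is to combine the sharp RHS local Gibbs inequality for $\nu^{\phi\ast f}_c$ (Lemma \ref{cylinder}, refined via the $\Upsilon_c$-identification of Lemma \ref{lemma:phiastastphi}) with the homomorphism property and unitarity of $\rho$, and to conclude by Cauchy--Schwarz.

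First I would pass the finite sum over $\mathcal{Q}$ outside the integral and, for each $awA\in\mathcal{Q}$, decompose
$$
\mathds{1}_{[B]}(z)\,\mathds{1}_{[awA]}(\sigma^r z) = \sum_{v} \mathds{1}_{[v\cdot awA]}(z),
$$
where $v$ ranges over admissible words of length $r$ starting at $B$ and transitioning into $a$. This rewrites the left-hand side as a sum of $\nu_c^{\phi\ast f}$-masses of cylinders ending in $A$. Apply the $\Upsilon_c$-form of the Gibbs bound: at $\gamma(\phi\ast f)=1$ this reads $\nu^{\phi\ast f}_c([u])\le C\,R_{|u|-1}(ux)\,\Upsilon_c(\underline{\mathfrak{s}}(u))$ for $u$ ending in $A$. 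Applied to $u=v\cdot awA$ and combined with the telescoping identity $R_{r+|aw|}(vawx)=R_r(v\cdot y)\cdot R_{|aw|}(awx)$ for an appropriate continuation $y$, the factor $R_{|aw|}(awx)^{-1}$ in the weight is absorbed exactly, which accounts for the specific form of the weighting in the statement.

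Next, substitute the matrix-coefficient representation of $\Upsilon_c$ from Lemma \ref{lemma:phiastastphi}, splitting $\underline{\mathfrak{s}}(vaw)=\underline{\mathfrak{s}}(v)\underline{\mathfrak{s}}(aw)$ and using unitarity of $\rho$:
$$
\Upsilon_c(\underline{\mathfrak{s}}(vaw)) = \lim_{k\to\infty}\frac{\langle \rho(\underline{\mathfrak{s}}(aw))\phi_c[t_k]\ast f,\,\rho(\underline{\mathfrak{s}}(v))^{-1}\phi_c[t_k]\ast f\rangle}{\|\phi_c[t_k]\ast f\|^2}.
$$
Pulling the limsup through the finite double sum produces an upper bound of the form $C\,\limsup_{k\to\infty}\langle \xi_k,\eta_k\rangle/\|\phi_c[t_k]\ast f\|^2$, where $\xi_k=\sum_{awA\in\mathcal{Q}} q(w)\rho(\underline{\mathfrak{s}}(aw))\phi_c[t_k]\ast f$ and $\eta_k=\sum_v R_r(vy)\rho(\underline{\mathfrak{s}}(v))^{-1}\phi_c[t_k]\ast f$; by construction $\eta_k$ is essentially a length-$r$ truncation of $\phi^*_c[t_k]\ast f$, whose norm is controlled by $C'\|\phi_c[t_k]\ast f\|$ via Lemma \ref{subexponential} under the standing transitivity.

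Finally, apply Cauchy--Schwarz to bound $\langle\xi_k,\eta_k\rangle$ by $\|\xi_k\|\cdot\|\eta_k\|$. The $\eta_k$-factor cancels with $\|\phi_c[t_k]\ast f\|$ in the denominator up to the constant $C'$, leaving $\|\xi_k\|/\|\phi_c[t_k]\ast f\|$. Using absolute continuity $\nu^{\phi\ast f}_c\ll\mu$ (Theorem \ref{theorem:abscts}) together with the pairwise disjointness of the cylinders $[awA]$, one recognises the limit of this ratio as precisely the norm on the right-hand side, interpreted as the $L^2(\Sigma^+,\mu)\otimes\mathcal{H}$-norm of the operator-valued function $\sum_{awA\in\mathcal{Q}}q(w)\mathds{1}_{[awA]}\rho(\underline{\mathfrak{s}}(w))$. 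The main obstacle will be this last identification: the norm on the right-hand side is implicit in the statement and one must justify that $\lim_k \|\xi_k\|^2/\|\phi_c[t_k]\ast f\|^2$ coincides with $\int \bigl\|\sum_{awA\in\mathcal{Q}} q(w)\mathds{1}_{[awA]}(z)\rho(\underline{\mathfrak{s}}(w))v\bigr\|^2\,d\mu(z)$ for a suitable reference vector $v$, which uses the $\Upsilon_c$-Gibbs representation in the reverse direction combined with the disjoint support of the cylinders.
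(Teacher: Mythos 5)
Your opening moves match the paper's: decompose $\mathds{1}_{[awA]}\circ\sigma^r\cdot\mathds{1}_{[B]}$ into cylinders indexed by $v$, apply the local Gibbs/matrix-coefficient bound for $\nu^{\phi\ast f}_c$ at $\gamma(\phi\ast f)=1$, split $\underline{\mathfrak{s}}(vw)=\underline{\mathfrak{s}}(v)\underline{\mathfrak{s}}(w)$ by unitarity, and factor the finite $\mathcal{Q}$-sum into the operator $M=\sum q(w)\rho(\underline{\mathfrak{s}}(w))$. The Cauchy--Schwarz variant you use (bundling the $v$-sum into one vector $\eta_k$) differs from the paper's term-by-term application, but either works, provided one controls the extra norm: $\|\eta_k\|\le\bigl(\sum_v t_k^{-r+1}R_{r-1}(vx)\bigr)\|\phi_c[t_k]\ast f\|$ follows from the triangle inequality and unitarity, with the scalar prefactor uniformly bounded because $t_k>1=\exp P(\log R,\sigma)$ and $R$ is positive recurrent. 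Your citation of Lemma~\ref{subexponential} for this is a misattribution; that lemma is about slowly-increasing weights, not about this elementary estimate.

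The genuine gap is in your endgame. After Cauchy--Schwarz you are left with $\limsup_k\|\xi_k\|/\|\phi_c[t_k]\ast f\|$ where $\xi_k=M(\phi_c[t_k]\ast f)$ (up to an irrelevant unitary $\rho(\mathfrak{s}(a))$). What is needed, and what finishes the proof immediately, is the trivial operator-norm bound $\|\xi_k\|=\|M(\phi_c[t_k]\ast f)\|\le\|M\|_{\mathrm{op}}\,\|\phi_c[t_k]\ast f\|$; the right-hand side of the lemma is exactly $\|M\|_{\mathrm{op}}$ (the indicator factors $\mathds{1}_{[awA]}$ are dropped in the paper's own inequality, and the downstream application in Theorem~\ref{theorem:transitive} needs the operator norm of $\sum q(w)\rho(\underline{\mathfrak{s}}(w))$, e.g.\ $\|M_p^k\|\le\beta^k$). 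Your proposal instead attempts to \emph{identify} the limit of $\|\xi_k\|/\|\phi_c[t_k]\ast f\|$ with an $L^2(\Sigma^+,\mu)\otimes\mathcal{H}$ norm of an operator-valued function, invoking absolute continuity (Theorem~\ref{theorem:abscts}) and disjointness of the cylinders $[awA]$. This is both unnecessary and incorrect: by disjointness of the supports, that tensor-product norm would reduce to something like $\max_w q(w)$ (since each $\rho(\underline{\mathfrak{s}}(w))$ is an isometry), which is in general strictly smaller than $\|\sum q(w)\rho(\underline{\mathfrak{s}}(w))\|_{\mathrm{op}}$ and would fail to produce the contradiction that the theorem needs. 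You flagged this final step as the main obstacle, and rightly so; replacing the identification by the one-line Cauchy--Schwarz bound above closes the argument.
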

\begin{proof}
\begin{align*}
&\int \sum_{awA\in\mathcal{Q}}\frac{q(w)}{R_{|aw|}(awx)}\mathds{1}_{[w]}\circ\sigma^r(z) \mathds{1}_{[B]}(z) d\nu_{c}^{\phi_c\ast f}(z)
\\
&=\lim_{k\to\infty} t_k^{|w|}\int \sum_{awA\in\mathcal{Q}}\frac{q(w)}{R_{|aw|}(awx)}\mathds{1}_{[w]}\circ\sigma^r(z) \mathds{1}_{[B]}(z) d\nu_{c}^{\phi_c\ast f}[t](z)
\\
&\le
\limsup_{k\to\infty} C \sum_{awA\in\mathcal{Q}}\frac{q(w)}{R_{|aw|}(awx)}\sum_{v\in\mathcal{W}^{B,a}_r}t_k^{|w|}t_k^{-r-|w|+1}R_{r+|w|-1}(vwx)\frac{\langle \rho(\underline{\mathfrak{s}}(w))\phi[t_k]\ast f, \rho(\underline{\mathfrak{s}}(v))^{-1}\phi[t_k]\ast f\rangle}{\|\phi[t_k]\ast f\|^2}
\\
&\le
\limsup_{k\to\infty} C^2 \sum_{v\in\mathcal{W}^{B,a}_r}t_k^{-r+1}R_{r-1}(vx)\frac{\langle  \sum_{awA\in\mathcal{Q}}q(w)\rho(\underline{\mathfrak{s}}(w))\phi[t_k]\ast f, \rho(\underline{\mathfrak{s}}(v))^{-1}\phi[t_k]\ast f\rangle}{\|\phi[t_k]\ast f\|^2}
\\
&\le
\limsup_{k\to\infty}C^2 \sum_{v\in\mathcal{W}^{B,a}_r}t_k^{-r+1}R_{r-1}(vx)\| \sum_{awA\in\mathcal{Q}}q(w)\rho(\underline{\mathfrak{s}}(w))\|
\end{align*}
Now we use the fact that $t_k>\gamma(\phi\ast f)=1$ to conclude 
$$
\sup_{r\in\mathbb{N}}  \limsup_{k\to\infty}\sum_{v\in\mathcal{W}^{B,a}_r}t_k^{-r+1}R_{r-1}(vx)<\infty.
$$
\end{proof}

\begin{proposition}\label{prop:supdelta}
There is $v\in\mathcal{H}_+$ with
$$
\sup_{0\ne f\in\mathcal{H}_+} \gamma(\phi^*\ast f)=  \gamma(\phi^*\ast v).
$$
\end{proposition}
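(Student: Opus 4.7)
The plan is to realize the supremum by a single vector obtained as a positive linear combination of a maximising sequence, exploiting two features that are standing in the paper: the representation $\rho$ preserves the non-negative cone $\mathcal{H}_+$, and $\phi_{\le N}[t]$, $\phi^*_{\le N}[t]$ are non-negative functions on $G$.

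First I would set $\Gamma=\sup\{\gamma(\phi^*\ast_\rho f):0\ne f\in\mathcal{H}_+\}$ and pick unit vectors $f_n\in\mathcal{H}_+$ with $\gamma(\phi^*\ast_\rho f_n)\to\Gamma$. Then I would define
$$v=\sum_{n=1}^\infty 2^{-n}f_n.$$
The partial sums are Cauchy in $\mathcal{H}$ since $\sum 2^{-n}<\infty$, so $v\in\mathcal{H}$; since $\mathcal{H}_+$ is closed in the norm topology, $v\in\mathcal{H}_+$. By construction $v\ne 0$ and $v-2^{-n}f_n\in\mathcal{H}_+$ for every $n$.

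The key step will be the lower bound $\gamma(\phi^*\ast_\rho v)\ge \gamma(\phi^*\ast_\rho f_n)$ for each $n$. As $\rho$ preserves $\mathcal{H}_+$ and $\phi_{\le N}[t]$, $\phi^*_{\le N}[t]$ are non-negative on $G$,
$$\phi_{\le N}[t]\ast_\rho v-2^{-n}\phi_{\le N}[t]\ast_\rho f_n =\phi_{\le N}[t]\ast_\rho(v-2^{-n}f_n)\in\mathcal{H}_+,$$
and likewise with $\phi^*_{\le N}[t]$ in place of $\phi_{\le N}[t]$. The inner product is monotone in each slot on $\mathcal{H}_+$ (via the identity $\langle a,b\rangle-\langle c,d\rangle=\langle a-c,b\rangle+\langle c,b-d\rangle\ge 0$ whenever $a,b,c,d$ and the differences $a-c,b-d$ all lie in the cone), so
$$\langle \phi_{\le N}[t]\ast_\rho v,\phi^*_{\le N}[t]\ast_\rho v\rangle \ge 4^{-n}\langle \phi_{\le N}[t]\ast_\rho f_n,\phi^*_{\le N}[t]\ast_\rho f_n\rangle.$$
For any $t<\gamma(\phi^*\ast_\rho f_n)$ the right-hand side is unbounded in $N$ by definition, forcing the same for the left, whence $\gamma(\phi^*\ast_\rho v)\ge\gamma(\phi^*\ast_\rho f_n)$. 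Letting $n\to\infty$ gives $\gamma(\phi^*\ast_\rho v)\ge\Gamma$, and the reverse inequality is immediate since $v\in\mathcal{H}_+$.

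This is essentially a soft positivity argument rather than a hard estimate; the only places needing care are the norm-closedness of $\mathcal{H}_+$ (so that $v$ stays in the cone) and the cone-preserving nature of convolution through $\rho$ against a non-negative function on $G$. Both are part of the standing framework, so I do not anticipate a substantive obstacle.
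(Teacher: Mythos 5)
Your proof is correct and takes essentially the same approach as the paper: both construct $v=\sum_k 2^{-k}f_k$ from a maximising sequence of unit vectors in $\mathcal{H}_+$ and use the positivity/cone-preservation of $\rho$ to get the lower bound $\gamma(\phi^*\ast_\rho v)\ge\gamma(\phi^*\ast_\rho f_k)$ for each $k$. You spell out the cone monotonicity and norm-closedness of $\mathcal{H}_+$ a bit more explicitly, but the core argument is identical.
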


\begin{proof}
Let $f_k$ be unit vectors approaching the supremum. Set $v=\sum_{k=1}^\infty 2^{-k}f_k$. By positivity $\int \langle \rho(g)v,v\rangle_{\mathcal{H}} d(\phi^* \ast \phi)[t]\ge 2^{-2k}\int \langle \rho(g)f_k,f_k\rangle_{\mathcal{H}} d(\phi^* \ast \phi)[t]$. In particular, for any $t>\gamma(\phi\ast_\rho v)$ we have $\int \langle \rho(g)v,v\rangle_{\mathcal{H}} d(\phi^* \ast \phi)[t]$ converges and so $\int \langle \rho(g)f_k,f_k\rangle_{\mathcal{H}} d(\phi^* \ast \phi)[t]$ converges giving $t\ge \gamma(\phi\ast_\rho f_k)$. In conclusion $\gamma(\phi\ast_\rho v)\ge \gamma(\phi\ast_\rho f_k)$ for every $k$.
\end{proof}

\begin{proof}[Proof of Theorem \ref{theorem:transitive}]
By Proposition \ref{prop:supdelta} it suffices to prove that the supremum of $\gamma(\phi \ast v)$ is strictly less that $1$. We proceed by contradiction, assume that $f$ is some vector with $\gamma(\phi \ast f)=1$. Using transitivity we may choose
$\mathcal{Q}$ and $q(w)$ with $\sum_{awA\in\mathcal{Q}} q(w)\rho(\underline{\mathfrak{s}}(w)) = M_p^k$, where $M_p$ is a symmetric random walk operator on $G$. In particular there is some $\beta<1$ with $\|M_p\|<\beta^k$. 

Set $H=\sum_{awA\in\mathcal{Q}} \frac{q(w)}{R_{|w|}(wx)}\mathds{1}_{[awA]}$. Note that $H$ is bounded because $\mathcal{Q}$ is finite.
By Lemma \ref{DCT},
$$
\int \frac{1}{N}\sum_{n=0}^{N-1}H\circ \sigma^n (z)\mathds{1}_{[B]} (z) d\nu_{c}^{\phi \ast f}(z) \to \mu(H)\nu^{\phi\ast f}_c([B])
$$
as $N\to \infty$. We compute that 
\begin{align*}
\mu(H) &= \sum_{awA\in\mathcal{Q}} \frac{q(w)}{R_{|w|}(wx)} \int \mathds{1}_{[awA]}hd\nu 
\\
&\ge  \sum_{awA\in\mathcal{Q}}  \frac{q(w)}{R_{|w|}(wx)}(h(z)-\beta^{w\wedge z}) \nu([w]) 
\\
&\ge (h(z)-\beta^{w\wedge z}) C^{-1}.
\end{align*}
This is a contradiction to the upper bound
$C \beta^k$.
\end{proof}

\begin{proof}[Proof of Theorems \ref{theorem:LVR}]
Aiming for a contradiction, we assume that  $\sup_H\gamma(\phi \ast \mathds{1}_{e_H})=1$. Let $\rho$ be the direct sum representation. Then there is $f$ in the direct sum with $\gamma(\phi \ast f)=1$. Using the visibility hypothesis we may choose $\mathcal{Q}$ and $q(w)$ so that
$$
\sum_{awA\in\mathcal{Q}} q(w)\rho(\underline{\mathfrak{s}}(w))v \le \sum_{u_1,u_2\in F} \rho(u_1)M_p^k\rho(u_2)
$$
for a finite set $F$.
Now the upper bound is
\begin{align*}
&\sum_{awA\in \mathcal{Q}}\frac{q(w)}{R_{|w|}(wx)}\frac{1}{N}\sum_{n=0}^{N-1}\int \mathds{1}_{[awA]}\circ\sigma^n(z) d\nu_{c}^{\phi_c\ast f}(z) 
\\
&\le C^2(\#F)^2 \beta^k
\end{align*}
Again, we have arrived at a contradiction.
\end{proof}

%%%%%%%%%%%%%%%%%%%%%%%%%%%%%%%%%%%%%%%%%%%%%%
%%%%%%%%%SHIFT%%%%%%%%%%%%%%%%%%%%%%%%%%%%%%%%%
%%%%%%%%%%%%%%%%%%%%%%%%%%%%%%%%%%%%%%%%%%%

\section{A shift invariant $\phi^*$ (twisted) measure}\label{section:shift}
In order to construct the shift invariant measure we prefer to work in the two sided shift space $\Sigma$, and we will need to assume that the phase space is compact. Therefore, assume throughout that $\Sigma$ is a subshift of finite type. We also always assume that $T_{\mathfrak{s}}$ is transitive. 

We set up some notation. There is a basis of open sets given by cylinders
$$
[u_1\cdots u_m.v_1\cdots v_n] = \left\{ z\in\Sigma : z_{i} = v_{i+1}, 0\le i \le n-1, z_{-i} = u_{m+1-i}, 1\le i \le m \right\}
$$
For any subset $\Lambda\subset \mathbb{Z}$ we will say that a sequence $y:\Lambda\to \mathcal{W}_1$ is admissible if $\tau(y_i,y_{i+1})=1$ for all $i\in \Lambda$. For $Y:\mathbb{Z}_{<0}\to \mathcal{W}_1$ and $u=u_1\cdots u_m$ a finite word we denote the concatenation $Yu:\mathbb{Z}_{<0}\to \mathcal{W}_1$, $(Yu)_i = u_{m+1-i}$ if $1\le i \le m$ and $(Yu)_i = Y_{m+1-i}$ otherwise. Similarly, for $y:\mathbb{Z}_{\ge 0}\to \mathcal{W}_1$ and $v=v_1\cdots v_n$ a finite word we denote the concatenation $vy:\mathbb{Z}_{\ge 0}\to \mathcal{W}_1$, $(vy)_i = v_{i+1}$ if $0\le i \le n-1$ and $(vy)_i = y_{i-n}$ otherwise. The concatenation of $Y:\mathbb{Z}_{<0}\to \mathcal{W}_1$ and $y:\mathbb{Z}_{\ge 0}\to \mathcal{W}_1$ is denoted $Y.y:\mathbb{Z}\to\mathcal{W}_1$, $(Y.y)_i = Y_i$ if $i<0$ and $(Y.y)_i = y_i$ if $i\ge 0$. 
We assume that $R:\Sigma\to\mathbb{R}$ depends only on future coordinates, meaning for any $z=Y.y$ $z^\prime=Y^\prime.y$, we have $R(z)=R(z^\prime)$. We denote the common value as $R(.y)$.
The Dirac mass at an arbitrary $z\in\Sigma$ is denoted $D(z)$.

The normalizing factor is
$$
F(\phi^*)[t] =\lim_{N\to\infty} \langle\phi_{c;\le N}[t]\ast \phi_{c;\le N}[t]\ast f,f\rangle.
$$
\begin{definition}
For $t>\gamma(\phi,f)$ and $N\in\mathbb{N}$ denote,
\begin{align*}
\mu^{\phi^*\ast f}_{c;N}[t] 
&=\sum_{B,b\in\mathcal{W}_1: \tau(B,b)=1} \sum_{n=1}^{N}  t^{-n}c_n\sum_{m=1}^{N}  t^{-m}c_m
\\
&\sum_{v\in\mathcal{W}^{b,a}_n} R_n(vx)\sum_{u\in\mathcal{W}^{A,B}_m} R_m(uvx) \frac{\langle\rho(\underline{\mathfrak{s}}(u)\underline{\mathfrak{s}}(v))f,f\rangle}{F(\phi^*)[t]} D(Xu.vx)
\end{align*}
for fixed $x:\mathbb{Z}_{\ge 0}\to\mathcal{W}_1$, $X:\mathbb{Z}_{< 0}\to\mathcal{W}_1$ chosen with $x_i=A$ uniquely at $i=0$, and $X_i= a$ uniquely at $i=-1$.
\end{definition}
The measure $\mu^{\phi^*\ast f}_{c;N}[t]$ is a positive finite linear combination of Dirac masses and is therefore well defined.  By definition, the mass of $\mu^{\phi^*\ast f}_{c;N}[t]$ is bounded for each $t$ and the mass of $[a.A]$ tends to $1$ as $N\to\infty$.
By weak* compactness there are accumulation points. We check convergence along $N$ and a rearrangement for the limiting measure.
\begin{lemma}
There is weak convergence of $\mu^{\phi^*\ast f}_{c;N}[t]$ as $N\to\infty$. In addition the weak limit coincides with $\mu^{\phi^*\ast f}_{c}[t]$ defined by
\begin{align*}
\mu^{\phi^*\ast f}_{c}[t] = \sum_{N=1}^\infty t^{-N}\sum_{w\in\mathcal{W}^{A,a}_N}R_N(.wx)\frac {\langle \rho(\underline{\mathfrak{s}}(w))f, f  \rangle}{F(\phi^*)[t]} \sum_{k=1}^{N-1} c_kc_{N-k}\sigma^k_*D(X .wx).
\end{align*}
\end{lemma}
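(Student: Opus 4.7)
The plan is two-fold: first an algebraic rearrangement that re-expresses $\mu^{\phi^*\ast f}_{c;N}[t]$ as a truncation of the proposed limit measure, then a monotone positivity argument to conclude weak convergence on the compact space $\Sigma$. The core rearrangement rests on a combinatorial bijection: an indexing tuple $(B,b,u,v,m,n)$ from the definition, with $u\in\mathcal{W}^{A,B}_m$, $v\in\mathcal{W}^{b,a}_n$, $\tau(B,b)=1$, corresponds uniquely to a pair $(w,m)$ with $w=uv\in\mathcal{W}^{A,a}_{m+n}$ (take $B=w_m$, $b=w_{m+1}$; every admissible such $w$ splits uniquely at the cut-point $m$). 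Combine this with the cocycle identity $R_m(uvx)R_n(vx)=R_{m+n}(wx)$, the homomorphism property $\rho(\underline{\mathfrak{s}}(u))\rho(\underline{\mathfrak{s}}(v))=\rho(\underline{\mathfrak{s}}(w))$, and the point-level geometric identity
\[
Xu.vx \;=\; \sigma^{m}(X.wx),
\]
which I would verify by unpacking the concatenation conventions: on both sides, coordinate $-1$ carries $u_m$, coordinate $-m$ carries $u_1$, coordinate $0$ carries $v_1$, coordinate $n-1$ carries $v_n=a$, coordinate $n$ carries $x_0=A$, and the common tail $X$ sits to the left of $u$ at coordinate $-m-1$.

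After this substitution,
\[
\mu^{\phi^*\ast f}_{c;N}[t] = \sum_{m=1}^{N}\sum_{n=1}^{N} t^{-(m+n)}c_m c_n \sum_{w\in\mathcal{W}^{A,a}_{m+n}} R_{m+n}(wx)\frac{\langle\rho(\underline{\mathfrak{s}}(w))f,f\rangle}{F(\phi^*)[t]}\, \sigma^m_* D(X.wx),
\]
and reindexing via $N_{\mathrm{tot}}=m+n$, $k=m$ produces exactly the proposed $\mu^{\phi^*\ast f}_{c}[t]$ with the inner sum over $k$ restricted to $1\le k\le N$, $1\le N_{\mathrm{tot}}-k\le N$. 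Since $f\in\mathcal{H}_+$ and $\rho$ preserves $\mathcal{H}_+$, every matrix coefficient $\langle\rho(g)f,f\rangle$ is non-negative, so every summand is non-negative and $\mu^{\phi^*\ast f}_{c;N}[t]$ is monotone non-decreasing in $N$ on every cylinder: it consists of precisely those terms in $\mu^{\phi^*\ast f}_{c}[t]$ with $\max(k, N_{\mathrm{tot}}-k)\le N$.

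To conclude weak convergence, I would invoke the standing observation (flagged right after the definition) that the total mass of $\mu^{\phi^*\ast f}_{c;N}[t]$ is uniformly bounded in $N$ for $t>\gamma(\phi\ast f)$---this follows since the mass is of the form $\langle \tilde\phi_{c\ast c}[t]\ast f,f\rangle / F(\phi^*)[t]$ for a thermodynamic density built with weights $c\ast c$, which grows subexponentially by Lemma \ref{subexponential}. This bounds the mass of the candidate limit, and the monotone positivity of the sum then forces $\bigl(\mu^{\phi^*\ast f}_{c}[t] - \mu^{\phi^*\ast f}_{c;N}[t]\bigr)(\Sigma) \to 0$ as $N\to\infty$. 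For any $F\in C(\Sigma)$,
\[
\bigl|\mu^{\phi^*\ast f}_{c;N}[t](F) - \mu^{\phi^*\ast f}_{c}[t](F)\bigr| \le \|F\|_\infty \bigl(\mu^{\phi^*\ast f}_{c}[t] - \mu^{\phi^*\ast f}_{c;N}[t]\bigr)(\Sigma) \longrightarrow 0,
\]
which is the required weak convergence. The only delicate step in this plan is the bookkeeping for the geometric identity $Xu.vx=\sigma^m(X.wx)$, since the concatenations on the two half-lines are indexed in opposite directions; once that is nailed down, the rest reduces to the bijection together with a standard monotone positivity argument.
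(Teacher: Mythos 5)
Your proposal is correct and takes essentially the same route as the paper: the reindexing bijection $(u,v,m,n)\leftrightarrow(w,k)$ together with the identities $R_m(uvx)R_n(vx)=R_{m+n}(wx)$, $\rho(\underline{\mathfrak{s}}(u))\rho(\underline{\mathfrak{s}}(v))=\rho(\underline{\mathfrak{s}}(w))$ and $Xu.vx=\sigma^m(X.wx)$, followed by monotone positivity and boundedness of the mass. The only cosmetic difference is that the paper phrases the final step as a sandwich between truncations $\hat\mu_L$ (cut at $m+n\le L$) of the limit measure, whereas you argue directly that the deficit $(\mu^{\phi^*\ast f}_{c}[t]-\mu^{\phi^*\ast f}_{c;N}[t])(\Sigma)\to 0$; both hinge on the same monotone positivity observation.
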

\begin{proof}
First, let us assert that $\mu^{\phi^*\ast f}_{c;N}[t]$ weak* converge as $N\to \infty$.  Indeed for $t$ fixed, any cylinder $E$ has that $\mu^{\phi^*\ast f}_{c;N}[t](E)$ is a monotonically increasing bounded sequence in $N$.

The limit of the measure of any cylinder $E$ is
\begin{align*} 
&\frac{1}{F(\phi^*)[t]}\lim_{M\to\infty} \sum_{N=1}^M t^{-N}\sum_{m,n\le N}
\\
&\sum_{B,b\in\mathcal{W}_1: \tau(B,b)=1}\sum_{v\in\mathcal{W}^{B,a}_n}\sum_{u\in\mathcal{W}^{A,b}_m}R_{m+n}(uvx)\frac {\langle \rho(\underline{\mathfrak{s}}(uv))f, f  \rangle}{F(\phi^*)[t]}  c_nc_{m}\sigma^n_*D(X .uvx)(E)
\end{align*}
It is clear there is an lower (resp. upper) bound by $\hat{\mu}_{M/2}(E)$ (resp. $\hat{\mu}_{M}(E)$) given by
\begin{align*}
&\hat{\mu}_M(E) =\frac{1}{F(\phi^*)[t]}\lim_{M\to\infty} \sum_{N=1}^M t^{-N}\sum_{w\in\mathcal{W}^{A,a}_N}R_N(wx)\langle \rho(\underline{\mathfrak{s}}(w))f, f  \rangle \sum_{k=1}^{N-1} c_kc_{N-k}\sigma^k_*D(X .wx)(E).
\end{align*}

Hence for any cylinder $E$,
$$
\mu^{\phi^*}_{c;M}[t](E) \ge \hat{\mu}_{M/2}(E)\ge\mu^{\phi^*}_{c;M/2}[t](E).
$$
Using convergence along $M$ and $M/2$ the conclusion follows.
\end{proof}

\begin{definition}
Let $c$ be slowly increasing with 
$$
\lim_{N\to\infty}\langle \phi_{c;\le N}[t]\ast \phi_{c;\le N}[t]\ast f,f\rangle\to \infty \;\mathrm{and}  \; \lim_{N\to\infty}\frac{\langle \phi_{c;\le N}[t]\ast \phi_{c;\le N}[t]\ast f,f\rangle}{\langle \phi_{c}[t]\ast f,f\rangle}\to \infty \; \mathrm{as} \; t\to \gamma(f).
$$
Any accumulation point of $\mu^{\phi^*\ast f}_{c}[t_k]$ as $t_k\to\gamma(f)$ is denoted $\mu^{\phi^*}_c$ and called a $\phi^*$ (twisted) measure.
\end{definition}

\subsection{Shift invariance}
\begin{theorem}\label{theorem:shiftinv}
Any $\phi^*$ (twisted) measure $\mu^{\phi^*}_c$ is $\sigma$-invariant.
\end{theorem}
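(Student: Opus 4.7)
The plan is to work directly with the explicit formula for $\mu^{\phi^*\ast f}_c[t]$ at each $t>\gamma(f)$, compute $\sigma_*\mu^{\phi^*\ast f}_c[t]$ by pushing forward each Dirac mass, and show that for every bounded continuous $h$ on $\Sigma$,
$$
\left|\int h\, d\mu^{\phi^*\ast f}_c[t] - \int h\, d\sigma_*\mu^{\phi^*\ast f}_c[t]\right| \longrightarrow 0 \quad \textrm{as } t\to\gamma(f).
$$
Combined with the weak$^*$ convergence $\mu^{\phi^*\ast f}_c[t_q]\to \mu^{\phi^*}_c$ and the continuity of $\sigma$ on the compact subshift $\Sigma$, this yields $\sigma_*\mu^{\phi^*}_c=\mu^{\phi^*}_c$.

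The key identity is $\sigma_*\sigma^k_*D(X.wx) = \sigma^{k+1}_*D(X.wx)$, so after reindexing $j=k+1$ the inner sum of $\sigma_*\mu^{\phi^*\ast f}_c[t]$ becomes $\sum_{j=2}^{N} c_{j-1}c_{N-j+1}\sigma^j_*D(X.wx)$, to be compared with $\sum_{j=1}^{N-1}c_jc_{N-j}\sigma^j_*D(X.wx)$ in $\mu^{\phi^*\ast f}_c[t]$. The discrepancy between integrals against $h$ is then bounded by
$$
\frac{\|h\|_\infty}{F(\phi^*)[t]}\sum_N t^{-N}\!\!\sum_{w\in\mathcal{W}^{A,a}_N}\!\! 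R_N(wx)\langle \rho(\underline{\mathfrak{s}}(w))f,f\rangle\Bigl(2c_1c_{N-1}+\sum_{j=2}^{N-1}\bigl|c_{j-1}c_{N-j+1}-c_jc_{N-j}\bigr|\Bigr),
$$
where I use that $\rho$ preserves $\mathcal{H}_+$ so all matrix coefficients appearing are non-negative.

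The boundary piece equals $2c_1\langle \phi_{c'}[t]\ast f,f\rangle/F(\phi^*)[t]$ with $c'_N=c_{N-1}$ (still slowly increasing), and vanishes as $t\to\gamma(f)$ by the defining property of $c$ in the construction of $\mu^{\phi^*}_c$ together with Lemma \ref{subexponential}. For the interior weight-difference term, I exploit slow variation: for any $\epsilon>0$ there is $K_\epsilon$ with $c_{n+1}/c_n\in[1,1+\epsilon]$ for $n\ge K_\epsilon$. In the range $K_\epsilon\le j\le N-K_\epsilon$ we obtain $|c_{j-1}c_{N-j+1}-c_jc_{N-j}|\le\epsilon' c_jc_{N-j}$ with $\epsilon'\to 0$ as $\epsilon\to 0$; this contributes at most $\epsilon'\|\mu^{\phi^*\ast f}_c[t]\|$ to the bound. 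The remaining $O(K_\epsilon)$ indices near the endpoints are absorbed into a further term of the form $c_{K_\epsilon}\langle\phi_{c'}[t]\ast f,f\rangle/F(\phi^*)[t]$, which again vanishes. Sending $t=t_q\to\gamma(f)$ and then $\epsilon\to 0$ completes the argument.

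The main obstacle is the non-uniform character of the slow-variation condition $c_{n+1}/c_n\to 1$: the interior weight difference cannot be bounded by a single $\epsilon c_jc_{N-j}$ valid for all $j$, which forces the threshold $K_\epsilon$ and the splitting strategy above. A secondary point is ensuring $\|\mu^{\phi^*\ast f}_c[t]\|$ stays bounded as $t\to\gamma(f)$; this should follow by the Gibbs property together with strong positive recurrence, since up to a multiplicative constant $F(\phi^*)[t]$ coincides with the same sum that defines $\|\mu^{\phi^*\ast f}_c[t]\|$ but restricted to those factorisations of $w\in\mathcal{W}_N^{A,a}$ at the pattern $aA$, and SPR guarantees that a typical admissible word admits a positive density of such factorisations.
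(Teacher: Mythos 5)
Your proof takes essentially the same route as the paper's: both reduce $\mu^{\phi^*\ast f}_c[t]-\sigma_*\mu^{\phi^*\ast f}_c[t]$ to the coefficient differences $|c_{j-1}c_{N-j+1}-c_jc_{N-j}|$ plus the two endpoint terms of size $c_1 c_{N-1}$ (the paper packages this as the claim bounding $\mathbb{P}(z,c\ast c,N)-\sigma_*\mathbb{P}(z,c\ast c,N)$ via shift operators $\tau_{\pm1}$ acting on $c$), control the interior range by slow variation, and absorb the finitely many near-endpoint indices into a term proportional to $\langle\phi_c[t]\ast f,f\rangle/F(\phi^*)[t]$, which tends to zero by the defining property of $c$ (Lemma~\ref{lemma:cphiastphi}). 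One small correction to your final paragraph: the uniform bound on $\mu^{\phi^*\ast f}_c[t](\Sigma)$ as $t\to\gamma(f)$ does not require an SPR density-of-factorisations argument; since $\Sigma$ is a compact subshift of finite type the alphabet is finite, and the change-of-letter estimates (Lemma~\ref{lemma:changeletter}, as exploited in Lemma~\ref{lemma:tseries}) bound each $\mu^{\phi^*\ast f}_c[t]([b.B])$ by a constant depending only on $b,B$ and uniform in $t$, so summing over the finitely many one-coordinate cylinders gives the mass bound directly.
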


The proof of shift invariance will rely on a technical lemma regarding the slowly increasing function.
For $z\in\Sigma$ set
$$
\mathbb{P}(z,c \ast c,N) =  \sum_{k=1}^{N-1} c_kc_{N-k}\sigma^k_*D(z).
$$
Then
$$
\mu^{\phi^*\ast f}_{c}[t]-\sigma_*\mu^{\phi^*\ast f}_{c}[t] = \sum_{N=1}^\infty t^{-N}\sum_{w\in\mathcal{W}_{a,a}^N}R_N(.wx)\frac {\langle \rho(\underline{\mathfrak{s}}(w))f,f \rangle}{F(\phi^*)[t]} \left(
 \mathbb{P}(z,c \ast c,N) - \sigma_* \mathbb{P}(z,c \ast c,N)\right)
$$

\begin{claim}
There is $\gamma_k\to 0$ with
$$
|\mathbb{P}(z,c \ast c,N)(E)-\sigma_*\mathbb{P}(z,c \ast c,N)(E)|\le \gamma_k\mathbb{P}(z,c \ast c,N)(E) + 6k(c_Nc_k + c_Nc_k)
$$
\end{claim}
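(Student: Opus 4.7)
My first step would be to rewrite the difference $\mathbb{P}(z,c \ast c,N)(E)-\sigma_*\mathbb{P}(z,c \ast c,N)(E)$ explicitly. Using the definition, $\sigma_*\mathbb{P}(z,c\ast c,N)$ shifts each summand $c_k c_{N-k}\sigma^k_* D(z)$ to $c_k c_{N-k}\sigma^{k+1}_* D(z)$. Reindexing $j=k+1$, I get
\begin{align*}
&\mathbb{P}(z,c\ast c,N)(E) - \sigma_*\mathbb{P}(z,c\ast c,N)(E) \\
&\qquad= c_1 c_{N-1}\mathds{1}_E(\sigma z) - c_1 c_{N-1}\mathds{1}_E(\sigma^N z) + \sum_{j=2}^{N-1}\bigl(c_j c_{N-j} - c_{j-1} c_{N-j+1}\bigr)\mathds{1}_E(\sigma^j z).
\end{align*}
Taking absolute values inside the sum and outside the two boundary terms produces a bound in which the interior sum is controlled by $|c_j c_{N-j}-c_{j-1}c_{N-j+1}|\mathds{1}_E(\sigma^j z)$.

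Next, fix the threshold $k$ and split $\{2,\dots,N-1\}$ into the ``bulk'' range $k\le j\le N-k$ and its complement. On the bulk, I invoke the slowly increasing property of $c$: since $c_j/c_{j-1}\to 1$ and (WLOG) $c$ is non-decreasing, one sets
\[
\gamma_k := \sup_{j \ge k}\left|\frac{c_{j-1}c_{N-j+1}}{c_j c_{N-j}} - 1\right|,
\]
which tends to $0$ as $k\to\infty$ uniformly in $N$ (each factor $c_{j-1}/c_j$ and $c_{N-j+1}/c_{N-j}$ is sandwiched between $1$ and $1+o(1)$ when $j,N-j\ge k$). Hence
\[
\sum_{j=k}^{N-k}\bigl|c_j c_{N-j}-c_{j-1}c_{N-j+1}\bigr|\mathds{1}_E(\sigma^j z) \le \gamma_k \sum_{j=k}^{N-k} c_j c_{N-j}\mathds{1}_E(\sigma^j z) \le \gamma_k \,\mathbb{P}(z,c\ast c,N)(E).
\]

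For the boundary contributions ($2\le j<k$ or $N-k<j\le N-1$, together with the two exterior terms at $j=1$ and $j=N$), monotonicity of $c$ gives $c_j c_{N-j}\le c_k c_N$ and $c_{j-1}c_{N-j+1}\le c_k c_N$; the number of such indices is at most $2k$, and each contributes at most $2c_k c_N$ after the triangle inequality, yielding a bound of $4k\, c_k c_N$; the two exterior terms contribute an additional $2 c_1 c_{N-1}\le 2c_k c_N$. Adding everything gives exactly the desired bound of the form $\gamma_k\mathbb{P}(z,c\ast c,N)(E) + 6k\, c_N c_k$.

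The only point requiring care is producing the uniform $\gamma_k\to 0$ in the bulk estimate: this is where the slowly increasing hypothesis (as enforced in Proposition~\ref{prop:divseries2} and subsection~\ref{subsection:c}) is essential. The boundary estimate is routine once monotonicity of $c$ is used to replace individual products by $c_k c_N$.
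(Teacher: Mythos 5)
Your proof is correct and follows essentially the same strategy as the paper's: expand the telescoping difference, split the index set into a bulk range where the slowly increasing property yields the $\gamma_k$-proportional term, and control the $O(k)$ boundary terms by monotonicity of $c$. The only cosmetic point is that your definition of $\gamma_k$ should explicitly restrict to pairs with both $j\ge k$ and $N-j\ge k$ (as your parenthetical already indicates and as your sum range $k\le j\le N-k$ uses), so that the supremum is taken over exactly the bulk indices; with that reading the bound is correct and the argument is complete.
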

\begin{proof}
Write $\tau_{-1}c_m=c_{m-1}$ $\tau_{+1}c_{n}=c_{n+1}$.
\begin{align*}
&\sigma_*\mathbb{P}(z,c \ast c,N) = \sum_{k=2}^N c_{k-1}c_{N-(k-1)}\sigma_*^kD(z)
\\
&= \sum_{k=1}^{N-1} \tau_{-1}c_{k}\tau_{+1}c_{N-k}\sigma_*^kD(z)
- \tau_{-1}c_{1}\tau_{+1}c_{N-1}\sigma_*D(z) +  \tau_{-1}c_{N}\tau_{+1}c_{0} \sigma_*^ND(z)
\end{align*}
We aim to show that $\sum_{k=1}^{N-1} \tau_{-1}c_{k}\tau_{+1}c_{N-k}\sigma_*^kD(z)=\mathbb{P}(z,\tau_{-1}c \ast \tau_{+1}c,N)$ is close to $\mathbb{P}(z,c \ast c,N)$.

Since $c$ is increasing we have $\tau_{-1}c_k\le c_k$ and $c_k\le \tau_{+1}c_k$, giving
$$
[c \ast c - \tau_{-1}c \ast \tau_{+1}c]_N \le [(c- \tau_{-1}c)\ast c]_N
\;,\;
[\tau_{-1}c \ast \tau_{+1}c - c \ast c ]_N \le [c\ast (\tau_{+1}c-c)]_N.
$$
Choose $K$ sufficiently large so that $c_n-c_{n-1}\le \epsilon c_{n-1}$ and $c_n-c_{n-1}\le \epsilon c_{n-1}$ for $n>K$. Then
$$
[(c- \tau_{-1}c)\ast c]_N \le \epsilon c\ast c_N - \sum_{k=1}^K (c_k-\tau_{-1}c_k)c_{N-k}\le \epsilon c\ast c_N +2Kc_Kc_{N}
$$
and
$$
[c\ast (\tau_{+1}c-c)]_N \le \epsilon c\ast c_N - \sum_{k=N-K}^N c_k(\tau_{+1}c_{N-k}-c_{N-k})\le \epsilon c\ast c_N +2(1+\epsilon)Kc_Nc_{K}
$$
Putting these together gives
\begin{align*}
&|\mathbb{P}(z,c \ast c,N)(E)-\sigma_*\mathbb{P}(z,c \ast c,N)(E)| 
\le |\mathbb{P}(z,c \ast c,N)(E)-\mathbb{P}(z,\tau_{-1}c \ast \tau_{+1}c,N)(E)| + c_1c_N
\\
&\le \epsilon \mathbb{P}(z,c \ast c,N)(E)+ 2(1+\epsilon)Kc_Kc_N+ c_1c_N
\end{align*}
\end{proof}

\begin{proof}[Proof of Theorem \ref{theorem:shiftinv}]
Let $E$ be an open set. We will show that for for every $\epsilon>0$ we have $|\mu^{\phi^*\ast f}_{c}[t]-\sigma_*\mu^{\phi^*\ast f}_{c}[t](E)|\le \epsilon$ for $t$ sufficiently close to $\gamma(f)$. 
Assuming the claim, choose $k$ with $2\gamma_k\max_{t\ge\gamma(f)}\mu^{\phi^*\ast f}_{c}[t](E)\le \epsilon$. We have

\begin{align*}
 |\mu^{\phi^*\ast f}_{c}[t]-\sigma_*\mu^{\phi^*\ast f}_{c}[t](E)| &\le \gamma_k\mu^{\phi^*\ast f}_{c}[t](E)+ \sum_{N=1}^\infty t^{-N}\sum_{w\in\mathcal{W}_{a,a}^N}R_N(.wx)\frac {\langle \rho(\underline{\mathfrak{s}}(w))f,f \rangle}{F(\phi^*)[t]}6kc_Nc_k
\\
&= \gamma_k\mu^{\phi^*\ast f}_{c}[t](E)+ 6k \frac {\langle \phi_c[t]\ast f, f \rangle}{F(\phi^*)[t]}.\end{align*}
By Lemma \ref{lemma:cphiastphi}, when $k$ is fixed, for $t$ sufficiently close to $\gamma(f)$ we have $6k \frac {\langle \phi_{c}[t]\ast f,f \rangle}{F(\phi^*)[t]} \le \frac{\epsilon}{2}$,
whence
$$
 |\mu^{\phi^*\ast f}_{c}[t]-\sigma_*\mu^{\phi^*\ast f}_{c}[t](E)| \le \epsilon.
$$
\end{proof}

\subsection{The (limit of) matrix coefficients}
In the goal of checking conformal properties it is easier to work with a measure that is one-sided. Recall that $R$ is Lipschitz in the $\theta$-metric (see equation \ref{equation:lip}). It follows that there is a constant $C>1$ so that for any $u$ and any word $b=b_1\cdots b_p$ with $ub$ admissible we have 
\begin{equation}\label{equation:lip2}
C^{-\theta^{p}}\le \frac{R_{|u|}(uy)}{R_{|u|}(uz)}\le C^{\theta^{p}}
\end{equation}
for all $y,z\in[b_1\cdots b_p]$.

For brevity write $m=\pi_* \mu^{\phi^*\ast f}_{c}$ and $\mathrm{Lip}(p)=C^{\theta^{p}}$. 
\begin{lemma}\label{lemma:mabscts}
The measures $\sigma^{(a)}_*m$ and $\nu_{c}^{\phi^*_c\ast f}$ are equivalent. For any $w$ with $|w|=n$ and $awA$ admissible we have,
$$
\mathrm{Lip}(p)^{-1}\le \frac{\int_{[wA]} R_n^{-1} d\nu_{c}^{\phi^*_c\ast f}}{\int_{[awA]} R_n^{-1}\circ\sigma dm} \le  \mathrm{Lip}(p)
$$
for $p=w\wedge x$.
\end{lemma}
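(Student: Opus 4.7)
The plan is to directly match the defining series for the two integrals term by term, then bound the residual ratio via local Lipschitz continuity of $\log R$.

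First I would rewrite the denominator using the local branch pushforward: $\int_{[awA]}R_n^{-1}\circ\sigma\,dm = \int_{[wA]}R_n^{-1}\,d\sigma^{(a)}_*m$. Thus the lemma reduces to a quantitative comparison of $\sigma^{(a)}_*m$ and $\nu^{\phi^*_c\ast f}_c$ on cylinders ending in $A$, tested against $R_n^{-1}$, and the ``equivalence'' half will follow from the two-sided multiplicative bound.

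Next I would expand $m[t]=\pi_*\mu_c^{\phi^*\ast f}[t]$, using the one-sided projection $\pi(\sigma^k(X.w'x))=w'_{k+1}\cdots w'_N x$. Writing $W$ for the cylinder word (to avoid clash with the summation variable $w'$), the Dirac masses contributing to $[aWA]$ arise from the factorisations $w' = u\cdot aWA\cdot v$ with $|u|=k$, $w'\in\mathcal{W}_N^{A,a}$ and $N=k+n+2+|v|$. Expanding $\phi^*_{c;\le N}[t]\ast f$ inside the definition of $\nu^{\phi^*_c\ast f}_{c;N}[t]$ yields a double sum over pairs $(u',v')$ with $u'\in\mathcal{W}_{m'}^{A,a}$ and $v'\in\mathcal{W}_{n'}^{?,a}$; the Dirac at $v'x$ contributes to $[WA]$ precisely when $v'=WA\hat{v}'$ with $|\hat{v}'|=n'-n-1$. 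Identifying $u'=u$, $\hat{v}'=v$, $m'=k$ and $n'=|v|+n+1$ matches the indexing sets, the scalar weights $t^{-m'-n'}c_{m'}c_{n'}/F(\phi^*)[t]$, and the group coefficients $\langle\rho(\underline{\mathfrak{s}}(u)\cdot\underline{\mathfrak{s}}(WAv))f,f\rangle$ exactly. The telescoping identities $R_N(uWAvx)R_n(WAvx)^{-1}=R_{|u|}(uWAvx)\,R(Avx)\,R_{|v|}(vx)$ on the $m$ side and $R_{n'}(WA\hat{v}'x)R_n(WA\hat{v}'x)^{-1}=R(A\hat{v}'x)\,R_{|\hat{v}'|}(\hat{v}'x)$ on the $\nu$ side collapse the ratio of a matched pair of terms to precisely $R_{|u|}(uWAvx)/R_{|u|}(ux)$.

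Finally I apply \eqref{equation:lip2}. The two sequences $uWAvx$ and $ux$ both begin with $u$; their continuations $WAvx$ and $x=Ax_1x_2\ldots$ share exactly the first $p$ letters, where $p=W\wedge x$ by definition of the common prefix (the first discrepancy after position $|u|$ occurs at position $|u|+p$). Hence $R_{|u|}(uWAvx)/R_{|u|}(ux)\in[\mathrm{Lip}(p)^{-1},\mathrm{Lip}(p)]$ uniformly in $u$ and $v$, and the bound passes to the (nonnegative) sum over matched pairs and then to the iterated limit $N\to\infty$, $t_k\to\gamma(f)$. Equivalence of $\sigma^{(a)}_*m$ and $\nu^{\phi^*_c\ast f}_c$ follows because both are positive and finite on cylinders (Corollary \ref{cor:tight} for $\nu$, and $\sigma^{(a)}_*m([wA])\le m(\Sigma^+)<\infty$) and the two-sided inequality forces them to share the same null $[wA]$-cylinders.

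The main obstacle is the handling of boundary contributions: index configurations with $|u|=0$ or $|v|=0$, and configurations where the cylinder pattern $aWA$ overlaps the fixed basepoint $x$ rather than an internal portion of the summation word. Once the cylinder is fixed these give only finitely many summands, and their contribution vanishes in the iterated limit by Lemma \ref{lemma:remone} applied on both sides, so they do not spoil the multiplicative bound.
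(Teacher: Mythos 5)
Your overall strategy (rewrite the denominator via the pushforward, expand both $m$ and $\nu$ as double sums, match terms, and bound the residual ratio by local Lipschitz continuity) is the same strategy the paper uses; the paper merely packages the $\nu$-side expansion into the already-proved Lemma \ref{lemma:matrixcoeff} and handles the residuals via the remainder lemmas and Lemma \ref{lemma:cchange}, rather than doing a bare term-by-term comparison.

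However there is a genuine gap in the proposed term-by-term matching, and it stems from the initial letter $a$ of the cylinder $[aWA]$. With your stated identification $u'=u$, $m'=k$, $n'=|v|+n+1$ the two sides do \emph{not} match exactly, contrary to what you claim. On the $\mu$ side the summand carries the group coefficient $\langle\rho(\underline{\mathfrak{s}}(w'))f,f\rangle$ with $w'=u\cdot aWA\cdot v$, that is $\langle\rho(\underline{\mathfrak{s}}(u)\mathfrak{s}(a)\underline{\mathfrak{s}}(WAv))f,f\rangle$, whereas the $\nu$ side carries $\langle\rho(\underline{\mathfrak{s}}(u)\underline{\mathfrak{s}}(WAv))f,f\rangle$: they differ by the group element $\mathfrak{s}(a)$, and the ratio of these inner products is not bounded. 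Simultaneously $m'+n'=k+|v|+n+1=N-1$ while the $\mu$-summand has weight $t^{-N}c_kc_{N-k}$, so the claimed exact match of the scalar weights also fails (it is off by $t^{-1}c_{N-k}/c_{N-k-1}$). Your telescoping identity has $R_N(uWAvx)$ on the $m$ side, but the actual weight is $R_N(w'x)=R_N(u\,aWAv\,x)$ — the $a$ has been dropped.

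The repair is to re-break the words so that the $a$ is absorbed into the $\phi^*$-block: take $u'=ua$ (length $k+1$), $v'=WAv$ (length $N-k-1$). This restores $m'+n'=N$ and makes the group coefficients genuinely coincide, and the surviving $R$-ratio collapses to $R_{k+1}(u'\,WAvx)/R_{k+1}(u'x)$, which \eqref{equation:lip2} controls by $\mathrm{Lip}(p)^{\pm 1}$ with $p=W\wedge x$, as desired. But one is still left with the residual $c_kc_{N-k}/(c_{k+1}c_{N-k-1})$, which does not vanish term-by-term and must be absorbed using the slowly-varying property of $c$ together with Proposition \ref{prop:changerange} and Lemma \ref{lemma:cchange} (or the $\overline{\mathrm{rem.}}$ bookkeeping the paper's proof performs). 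This is machinery beyond the ``finitely many boundary summands'' you dispose of via Lemma \ref{lemma:remone}: it applies to every summand, and the argument cannot simply appeal to exact cancellation. So the plan is salvageable and is in the same spirit as the paper's, but the claim that the weights and group coefficients match exactly is false as written and is precisely where the proof needs to do real work.
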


\begin{proof}
Let $awA$ be given with $|w|=k$. Write $p=w\wedge x$.
Both statements are verified upon checking the second. Suppose $w=w^\prime  B$. From the definition of $m$ we have
\begin{align*}
&m(R_k^{-1}\mathds{1}_{[awA]}) = \mu_c^{\phi^*\ast f}(R_k^{-1}\circ\sigma^k \mathds{1}_{[aw.A]}) \\
&= \lim_{N\to\infty}
\sum_{B,b\in\mathcal{W}_1: \tau(B,b)=1} \sum_{n=1}^{N}  t^{-n}c_n\sum_{m=1}^{N}  t^{-m}c_m
\\
&\sum_{v\in\mathcal{W}^{b,a}_n} R_n(vx)\sum_{u_1\cdots  u_m\in\mathcal{W}^{A,B}_m} \frac{R_m(uvx)}{R_k(u_{m-k}\cdots u_mvx)} \frac{\langle\rho(\underline{\mathfrak{s}}(u)\underline{\mathfrak{s}}(v))f,f\rangle}{F(\phi^*)[t]} \mathds{1}_{[aw.A]}(Xu.vx)
\end{align*}
Let $g=\underline{\mathfrak{s}}(w)$. Using H\"{o}lder continuity we have an upper bound (lower bound) by a $\mathrm{Lip}(p)^{-1}$ ($\mathrm{Lip}(p)$) multiple of the $\limsup$ ($\liminf$) of
\begin{align*}
&\sum_{m=k}^{N}  t^{-m}c_m
\sum_{u\in\mathcal{W}^{A,a}_m} R_{m-k}(uax) \frac{\langle\rho(g)\phi_{c;\le N}\ast f,\rho(\underline{\mathfrak{s}}(u)^{-1}f\rangle}{F(\phi^*)[t]}
\\
&+ \overline{\mathrm{rem.}}^{Q^*}_{c,N;[1,k]}(R_k^{-1})
\end{align*}
where $\overline{\mathrm{rem.}}^{Q^*}_{c,N;I}$ is defined for the anti-homomorphism $\rho^*$; and the first term is equal to $\nu^{gQ^*}_{N;d}([A]) + \overline{\mathrm{rem.}}^{gQ^*}_{c,N;[N-k,N]}([A])$ defined for the anti-homomorphism $\rho^*$ (see by Remark \ref{remark:antihomom}).
It follows that
\begin{align*}
&\mathrm{Lip}(p)^{-1} \lim_{q\to\infty}\lim_{N\to\infty} \frac{\langle\rho(g)\phi_{c;\le N}\ast f,\phi^*_{c;\le N}f\rangle}{F(\phi^*)[t]}
\\
&\le
m(R_k^{-1}\mathds{1}_{[awA]})
\\
&\le \mathrm{Lip}(p) \lim_{q\to\infty}\lim_{N\to\infty} \frac{\langle\rho(g)\phi_{c;\le N}\ast f,\phi^*_{c;\le N}f\rangle}{F(\phi^*)[t]}
\end{align*}
Now by Lemma \ref{lemma:matrixcoeff} and since $g=\underline{\mathfrak{s}}(w)$,
$$
\lim_{q\to\infty}\lim_{N\to\infty} \frac{\langle\rho(\underline{\mathfrak{s}}(w))\phi_{c;\le N}\ast f,\phi^*_{c;\le N}f\rangle}{F(\phi^*)[t]} = \nu^{\phi^*_c\ast f}_{\phi_c}(R_{k}^{-1}\mathds{1}_{[wA]}).
$$
\end{proof}

\begin{remark}
If $R$ is depends on one letter and $\Sigma$ is the full shift then in fact $\sigma^{(a)}_*m=\nu_{c}^{\phi^*_c\ast f}$.
\end{remark}

\begin{corollary}\label{cor:matrixcoeff}
\begin{align*}
&\mathrm{Lip}(p)^{-2}\lim_{q\to\infty}\lim_{N\to\infty}\frac{\langle \rho(\underline{\mathfrak{s}}(w))\phi_{c;\le N}[t_q]\ast f, \phi^*_{c;\le N}[t_q]\ast f\rangle}{\langle\phi_{c;\le N}[t_q]\ast f, \phi^*_{c;\le N}[t_q]\ast f\rangle}
\\
&\le \frac{\mu^{\phi^*\ast f}_c([a.wA])}{R_k(wx)}
\\
& \le  \mathrm{Lip}(p)^2 \lim_{q\to\infty}\lim_{N\to\infty}\frac{\langle \rho(\underline{\mathfrak{s}}(w))\phi_{c;\le N}[t_q]\ast f, \phi^*_{c;\le N}[t_q]\ast f\rangle}{\langle\phi_{c;\le N}[t_q]\ast f, \phi^*_{c;\le N}[t_q]\ast f\rangle}
\end{align*}
\end{corollary}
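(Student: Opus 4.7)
The plan is to chain together three ingredients that have already been set up: the shift-invariance of $\mu^{\phi^*\ast f}_c$ from Theorem \ref{theorem:shiftinv}, the push-forward comparison from Lemma \ref{lemma:mabscts}, and the matrix-coefficient identification from Lemma \ref{lemma:matrixcoeff}. A local H\"older estimate for $R_k$ on the one-sided cylinder $[awA]$ bridges the measure $\mu^{\phi^*\ast f}_c([a.wA])$ to an integral, after which the other two lemmas do the rest.

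First, I would exploit shift-invariance to rewrite $\mu^{\phi^*\ast f}_c([a.wA])$ as $\mu^{\phi^*\ast f}_c([.awA])$, and then use $m=\pi_*\mu^{\phi^*\ast f}_c$ together with $\pi^{-1}[awA]=[.awA]$ to identify this with $m([awA])$, where the latter is now a one-sided cylinder. Second, I would compare $(R_k\circ\sigma)(z)$ with $R_k(wx)$ for $z$ ranging over $[awA]$: because $R$ depends only on future coordinates and because $\sigma z$ and $wx$ both begin with the word $wA$, summing the $k$ H\"older variations using \ref{equation:lip2} term-by-term gives a uniform bound $\big|\log R_k(\sigma z)-\log R_k(wx)\big|\le C\theta/(1-\theta)$, independent of $k$. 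Absorbing this constant into the definition of $\mathrm{Lip}(p)$, integration against $m$ yields
\[
\mathrm{Lip}(p)^{-1}\,\frac{\mu^{\phi^*\ast f}_c([a.wA])}{R_k(wx)} \;\le\; \int_{[awA]} R_k^{-1}\circ\sigma\,dm \;\le\; \mathrm{Lip}(p)\,\frac{\mu^{\phi^*\ast f}_c([a.wA])}{R_k(wx)}.
\]
Third, Lemma \ref{lemma:mabscts} bounds $\int_{[awA]}R_k^{-1}\circ\sigma\,dm$ by $\int_{[wA]}R_k^{-1}\,d\nu_c^{\phi^*_c\ast f}$ within a further factor of $\mathrm{Lip}(p)$, and Lemma \ref{lemma:matrixcoeff} applied to $Q=\phi^*_c\ast f$ identifies that last integral with (the $\gamma(Q)^{k}$-scaled version of) the matrix-coefficient limit appearing on the right-hand side of the corollary. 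Multiplying the two Lipschitz inequalities combines into the claimed $\mathrm{Lip}(p)^2$.

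The main obstacle is the bookkeeping of Lipschitz constants: one factor of $\mathrm{Lip}(p)$ has to come from the H\"older-distortion estimate used to pass from $R_k(wx)^{-1}\mu^{\phi^*\ast f}_c([a.wA])$ to the integral $\int_{[awA]} R_k^{-1}\circ\sigma\,dm$, and the other from Lemma \ref{lemma:mabscts}. The delicate point is ensuring uniformity in $k$: it is essential that the variation of $\log R_k$ across $[awA]$ sums to a bounded geometric series rather than growing with $k$, which is what makes it legitimate to replace $R_k^{-1}\circ\sigma$ by the point value $R_k(wx)^{-1}$ at the cost of a single constant. Everything else is a direct combination of existing lemmas, together with the observation that the $\gamma(Q)$-normalizations of Lemma \ref{lemma:matrixcoeff} are consistent with the weights $t_q^{-N}$ used in the construction of $\mu^{\phi^*\ast f}_c$.
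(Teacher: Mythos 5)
Your high-level plan is sensible --- rewrite $\mu^{\phi^*\ast f}_c([a.wA])$ via the push-forward $m = \pi_*\mu^{\phi^*\ast f}_c$, compare the constant integrand $R_k(wx)^{-1}$ with the running integrand $R_k^{-1}\circ\sigma$ by a distortion estimate, and then invoke Lemma \ref{lemma:mabscts} together with Lemma \ref{lemma:matrixcoeff}. That chain is essentially what the corollary is recording. But there is a genuine gap in your second step, and it is exactly the step that carries the content.

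You claim that for $z\in[awA]$ the distortion bound $\bigl|\log R_k(\sigma z)-\log R_k(wx)\bigr|\le C\theta/(1-\theta)$ can be ``absorbed into the definition of $\mathrm{Lip}(p)$,'' thereby producing a factor $\mathrm{Lip}(p)$. This is not legitimate. The quantity $\mathrm{Lip}(p)=C^{\theta^p}$ with $p=w\wedge x$ is not a free parameter: it decays to $1$ as $p\to\infty$, and the paper relies on exactly this decay in the proof of Theorem \ref{theorem:phiastphi}, where $u$ is chosen with $u\wedge x\ge p$ and $p\to\infty$ to squeeze $\mathrm{Lip}(p)^{\pm 2}\to 1$ and upgrade a two-sided comparison into an exact identity $\Upsilon_{c,\ast}=\exp\psi$. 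Your naive cylinder-level estimate only yields a fixed constant of the form $\mathrm{Lip}(1)=C^{\theta}$ (the two points $\sigma z$ and $wx$ are only forced to agree on the $k+1$ symbols $wA$, i.e.\ one common symbol beyond the $k$ coordinates that $R_k$ reads directly, no matter how large $w\wedge x$ is). Renaming that fixed constant ``$\mathrm{Lip}(p)$'' would produce a bound of the shape $C\cdot\mathrm{Lip}(p)$, not $\mathrm{Lip}(p)^2$, and the factor $C>1$ would not disappear as $p\to\infty$; Theorem \ref{theorem:phiastphi} would then fail. To actually get a second $\mathrm{Lip}(p)$ one must not treat $m$ as an arbitrary measure on the cylinder: as in the paper's proof of Lemma \ref{lemma:mabscts}, one must pass through the defining approximating measures $\mu^{\phi^*\ast f}_{c;N}[t]$, which are finite combinations of Dirac masses at points of the very specific form $Xu.vx$, with tail equal to the fixed reference point $x$. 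The distortion comparison there is between points both of which eventually coincide with $x$, which is where the overlap parameter $p=w\wedge x$ enters and why the error is $\theta^p$-small rather than merely bounded. Your proposal, by replacing this with a one-line H\"older estimate over the whole cylinder, loses precisely the $p$-dependence that the corollary records.
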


%%%%%%%%%%%%%%%%%%%%%%%%%%%%%%
%%%%%%%AMENABLEGIBBS
%%%%%%%%%%%%%%%%%%%%%%%%%%%%
\section{Amenability implies $\phi\ast\phi$ (twisted) measure is Gibbs}\label{section:amenablegibbs}

\subsection{The periodic point variant}
In the goal of finding a shift invariant measure it would be more obvious to define a measure supported on periodic points. In order to use equidistribution arguments from the thermodynamic formalism we will find ourselves preferring combinations of periodic points. Define
\begin{align*}
\bar\mu^{\phi^*\ast f}_{c;N}[t] 
&=\sum_{n=1}^{N}t^{-n}\sum_{n=m+k}c_mc_k \sum_{z\in \Sigma: \sigma^n z = z} R_n(z)\frac{\langle\rho(\underline{\mathfrak{s}}(z_0\cdots z_n))f,f\rangle}{F(\phi^*)[t]} D(z).
\end{align*}
Denote $d_n=\sum_{m+k=n}c_mc_k$ and 
$$
\bar\phi_{d;\le N}[t](g) = \sum_{n=1}^{N}t^{-n}\sum_{n=m+k}c_mc_k \sum_{z\in\Sigma: \sigma^n z = z, \underline{\mathfrak{s}}(z_1\cdots z_n)=g} R_n(z)\underline{\mathfrak{s}}(z_0\cdots z_n)).
$$
It is clear that any accumulation point of $\bar\mu^{\phi^*\ast f}_{c;N}[t]$ has mass $\lim_{N\to\infty} \frac{\langle \bar\phi_{d;\le N}[t] \ast f,f\rangle }{F(\phi^*)[t]}$.
We check absolute continuity and boundedness of the mass of the measure. We use the notation
$$
F \asymp_C H\; \iff \;C^{-1} F\le H \le CF,
$$
for a constant $C>1$.
\begin{lemma}\label{periodicmeasure}
There is a constant $C$ so that for any cylinder $[WB.bw]$ we have
$$
\mu^{\phi^*\ast f}_{c}([WB.bw])\asymp_C \bar\mu^{\phi^*\ast f}_{c}([WB.bw]) .
$$
\end{lemma}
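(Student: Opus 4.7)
The plan is to expand both sides as explicit triple sums indexed by admissible words (for $\mu$) and by periodic orbits (for $\bar\mu$), match summands via a cyclic-rotation bijection, and pass to the accumulation point $t\to\gamma(f)$ at the end. Write $s=|W|$, $r=|w|$. From Definition~\ref{def:diracmass} and the rewriting just above the lemma we have for $t>\gamma(f)$
\[
\mu^{\phi^*\ast f}_c([WB.bw])=\frac{1}{F(\phi^*)[t]}\sum_N t^{-N}\sum_{u\in\mathcal W^{A,a}_N}R_N(.ux)\langle\rho(\underline{\mathfrak s}(u))f,f\rangle\sum_{k}c_kc_{N-k},
\]
with the inner sum restricted to $k\in[s+1,N-r-1]$ satisfying $u_{k-s}\cdots u_{k+r+1}=WBbw$, and
\[
\bar\mu^{\phi^*\ast f}_c([WB.bw])=\frac{1}{F(\phi^*)[t]}\sum_n t^{-n}d_n\sum_{z\in[WB.bw],\,\sigma^nz=z}R_n(z)\langle\rho(\underline{\mathfrak s}(z_0\cdots z_{n-1}))f,f\rangle.
\]

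The combinatorial engine of the proof is the bijection between pairs $(u,k)$ and triples $(z,p,j)$, where $z$ is a periodic point of period $N$ in $[WB.bw]$, $p$ records a cyclic position of $WBbw$ in $z$, and $j$ records a cyclic position of the transition $aA$ in $z$. This works because the choice of $X,x$ in Definition~\ref{def:diracmass} makes $aA$ admissible, so each $u\in\mathcal W^{A,a}_N$ closes into a legitimate periodic sequence $z^{(u)}$, and conversely each periodic $z$ containing $aA$ at position $j$ rotates into an element of $\mathcal W^{A,a}_N$. Under the bijection, the local H\"older property of $\log R$ (equation~\eqref{equation:lip2}) yields bounded distortion $R_N(.ux)\asymp_{C_1}R_N(z^{(u)})$ with $C_1$ uniform in $N,u$, obtained by splitting $R_N(.ux)=R_k(.ux)R_{N-k}(\sigma^k.ux)$ and matching against $R_N$ of the periodic extension on a common cylinder of length $N-k$, with the tails controlled by a geometric series in $\theta$. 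For the combinatorial weights, summing $c_kc_{N-k}$ over all $j$-choices for a fixed orbit amounts to summing over cyclic $aA$-rotations, which by the submultiplicativity $c_{n+k}\le c_nc_k$ from Proposition~\ref{prop:divseries2} together with transitivity of $T_\mathfrak s$ (providing a positive density of $aA$-transitions) compares with $d_N=\sum_{m+k=N}c_mc_k$ up to a bounded ratio.

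The main obstacle is the matrix coefficient: the bijection only gives cyclic conjugates $\underline{\mathfrak s}(u)=h^{-1}\underline{\mathfrak s}(z^{(u)}_0\cdots z^{(u)}_{N-1})h$ with $h=\underline{\mathfrak s}(z^{(u)}_0\cdots z^{(u)}_{j-1})$, so unitarity of $\rho$ produces only $\langle\rho(\underline{\mathfrak s}(u))f,f\rangle=\langle\rho(\underline{\mathfrak s}(z))\rho(h)f,\rho(h)f\rangle$, which is in general distinct from $\langle\rho(\underline{\mathfrak s}(z))f,f\rangle$. The hard step is to show that after averaging over the $j$-rotations this discrepancy collapses to a uniformly bounded multiplicative factor; this is where amenability enters, since by Theorem~\ref{theorem:phiastphi} the limiting diagonal matrix coefficient is $\Upsilon_{c,*}(g)=\exp\psi(g)$, a $1$-dimensional additive character with $\psi(h^{-1}gh)=\psi(g)$, hence invariant under cyclic conjugation. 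Passing to the accumulation point $t\to\gamma(f)$ therefore absorbs the cyclic-rotation discrepancy into a bounded error, and combining with the Gibbs estimate for $R_N$ and the combinatorial comparison for the $c_kc_{N-k}$ sums yields the uniform constant $C$ asserted by the lemma.
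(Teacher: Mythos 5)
Your proposal rests on a cyclic‐rotation matching that forces you to compare $\langle\rho(\underline{\mathfrak{s}}(u))f,f\rangle$ with the cyclically conjugated $\langle\rho(\underline{\mathfrak{s}}(z_0\cdots z_{n-1}))f,f\rangle$, and you then propose to resolve the resulting discrepancy by appealing to amenability via Theorem~\ref{theorem:phiastphi}. This is circular: Theorem~\ref{theorem:phiastphi} is proved later in this very section, and its proof explicitly invokes Lemma~\ref{periodicmeasure} (``Finiteness of the scaling is given by Lemma~\ref{periodicmeasure}''). It is also out of scope: the lemma is stated before the restriction ``for the remainder of this section we have only results for the case $f=\delta_e$,'' carries no amenability hypothesis, and is meant to hold for arbitrary $f\in\mathcal{H}_+$; your argument only addresses $\mathcal{H}=\ell^2(G)$, $f=\delta_e$, $G$ amenable.

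The deeper issue is that the cyclic conjugation problem is an artifact of the matching you chose. The paper's proof never encounters it: rather than rotate the period word of a periodic $z$ so that an $aA$ transition sits at the origin, it splits the period word $z_0\cdots z_{n-1}$ as $uv$ at the position $m$ determined by the requirement $\sigma^m z\in E=[WB.bw]$, so that $\underline{\mathfrak{s}}(uv)=\underline{\mathfrak{s}}(z_0\cdots z_{n-1})$ exactly and the matrix coefficients in $\bar\mu$ and in the expanded expression agree with no conjugation. The price is that the boundary letters of $uv$ are arbitrary ($u$ begins with some $D'$, $v$ ends with some $D$, with $\tau(D',D)=1$), not the fixed $A,a$; the comparison with $\mu^{\phi^*\ast f}_c=\mu^{A,a,x}_c$ is then handled by summing $\mu^{D',D,x_D}_c$ over the admissible transitions $(D',D)$, peeling off remainder terms of the type controlled in Lemmas~\ref{lemma:remone}--\ref{lemma:remtwo}, and invoking the change-of-letter equivalence (as in Lemma~\ref{lemma:changeletter}/transitivity), which replaces cyclic conjugation by a one-sided multiplication by a fixed group element together with a uniform multiplicative error. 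Incidentally, your bijection is also ill-defined as stated: not every periodic orbit in $[WB.bw]$ passes through the transition $aA$, so there is in general no position $j$ to record, whereas the paper's split point $m$ always exists. To repair the proposal you should abandon the $aA$-rotation matching in favor of the $E$-split decomposition and the letter-change argument.
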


\begin{proof}
Write $\mu^{\phi^*\ast f}_{c} = \mu^{A,a,x}$ (with $A,a,x$ defining $\phi$).
To begin with we denote $E=[WB.bw]$. For each $D\in \mathcal{W}_1$ choose a one-sided $x_{D}\in[D]\cap \Sigma^+$. Using H\"{o}lder continuity we have,
\begin{align*}
&\bar\mu^{\phi^*\ast f}_{c;N}[t] (E)
\\
&=\sum_{n=1}^{N}t^{-n}\sum_{n=m+k}c_mc_k \sum_{z: \sigma^n z = z} R_n(z)\frac{\langle\rho(\underline{\mathfrak{s}}(z_0\cdots z_n))f,f\rangle}{F(\phi^*)[t]} \mathds{1}_{E}(\sigma^m z)
\\
&=\sum_{D^\prime,D: \tau(D^\prime,D)=1}\sum_{n=1}^{N}t^{-n}\sum_{n=m+k}c_mc_k \sum_{z: \sigma^n z = z, z_n=D^\prime,z_0=D} R_n(z)\frac{\langle\rho(\underline{\mathfrak{s}}(z_0\cdots z_n))f,f\rangle}{F(\phi^*)[t]} \mathds{1}_{E}(\sigma^m z) 
\\
&\asymp_C \sum_{n=1}^{N}t^{-n}\sum_{n=m+k}c_mc_k \sum_{D^\prime,D: \tau(D^\prime,D)=1}\sum_{u\in\mathcal{W}_m^{D^\prime,B}}\sum_{v\in\mathcal{W}_k^{b,D}}R_{m+k}(.uvx_D)\frac{\langle\rho(\underline{\mathfrak{s}}(uv))f,f\rangle}{F(\phi^*)[t]} \mathds{1}_{E}((vu)_\infty u.v(uv)_\infty) 
\end{align*}
where $(uv)_\infty$ (resp. $(vu)_\infty$) is the right-infinite (resp. left-infinite) concatenation of $uv$ (resp. $vu$).
Now recall that $E=[WB.bw]$, and suppose that $|WB|=P$, $|bw|=p$. Then
\begin{align*}
&\sum_{D^\prime,D: \tau(D^\prime,D)=1}\mu^{D^\prime,D,x_D}_{c;N}[t](E) - \frac{\langle \phi^{b,a}_{c;\le p}[t]\ast f, (\phi^{A,B}_{c;\le N})^*[t]\ast f\rangle}{F(\phi^*)[t]} -  \frac{\langle \phi^{b,a}_{c;\le N}[t]\ast f, (\phi^{A,B}_{c;\le P})^*[t]\ast f\rangle}{F(\phi^*)[t]} 
\\
&\le
\sum_{n=1}^{2N}t^{-n}\sum_{n=m+k}c_mc_k \sum_{D^\prime,D: \tau(D^\prime,D)=1}\sum_{u\in\mathcal{W}_m^{D^\prime,B}}\sum_{v\in\mathcal{W}_n^{b,D}}R_{m+k}(.uBUVbvx_D)\frac{\langle\rho(\underline{\mathfrak{s}}(VBwbv))f,f\rangle}{F(\phi^*)[t]} 
\\
&\le 
\sum_{D^\prime,D: \tau(D^\prime,D)=1}\mu^{D^\prime,D,x_D}_{c;2N}[t](E)
\end{align*}
The limit in $N$ and $2N$ agrees. Taking a limit in $t$ makes the remainder term vanish. Then it is straightforward to check that $\mu^{D,d,x_d}$ is equivalent to $\mu^{A,a,x}$.
\end{proof}

For the remainder of this section we have only results for the case $f=\delta_e$. Recall that $\phi_c\ast \delta_e = \phi_c$ and in particular $\langle \bar\phi_d[t]\ast \delta_e,\delta_e\rangle = \bar\phi_d[t](e)$.

For amenable groups the work of \cite{DougallSharp2} shows that the Gurevi\v{c} pressure $P(\log R,T_{\mathfrak{s}})$ is equal $P(\log R+ \psi)$ for a unique real one-dimensional representation $\pi:G\to\mathbb{R}$ and $\psi=\pi\circ\mathfrak{s}$. (In other words: let $\bar{\psi}_{\mathrm{ab}}$ be the composition of $\mathfrak{s}$ with $G\to G/[G:G]\cong \mathbb{Z}^d\oplus F \to \mathbb{Z}^d$, with $F$ the finite torsion group. Then there is a unique $\xi\in\mathbb{R}$ that determines $\psi(x)=\langle \xi, \bar{\psi}_{\mathrm{ab}}(x)\rangle_{\mathbb{R}^d}$.) 
We use similar ideas to \cite{Sharp93} describing drift for abelian extensions; and the same ideas behind the equidistribution result of \cite{DougallSharp2}.
Our goal is to prove Theorem \ref{theorem:phiastphi}:
if $G$ is amenable then any $\phi^*$ (twisted) measure $\mu^{\phi^*}_{c}$ is equal to the equilibrium state $\mu_{R\exp{\psi}}$; morever for each $g$
$$
\lim_{k\to\infty} \frac{\lim_{N\to\infty}\langle \rho(g) \phi_{c;\le N}[t_k], \phi^*_{c;\le N}[t_k]\rangle}{\langle  \phi_{c;\le N}[t_k], \phi^*_{c;\le N}[t_k]\rangle} = \psi(g).
$$

We use the following large deviation estimate
\begin{lemma}\label{ld}
Let $\mathcal{K}$ be a weak$*$ compact set not containing $\mu_{R\exp \psi}$. Then
$$
\frac{1}{n}\log
\sum_{\left\{ (w)=x\in \mathrm{Per}(n), \underline{\mathfrak{s}}(w)=e, \frac{1}{n}\sum_{k=0}^{n-1} D_{\sigma^k x}\in\mathcal{K} \right\}}R_n(x)
\to \beta < {P}(\log R+\psi)
$$
\end{lemma}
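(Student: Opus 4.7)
The plan is to reduce the group-constrained sum to a standard large deviation upper bound for weighted periodic orbit sums on a mixing subshift of finite type. The key observation is that because $\pi:G\to\mathbb{R}$ factors through the abelianization, it is a group homomorphism, so the additive cocycle
\begin{align*}
\psi_n(x) := \sum_{k=0}^{n-1}\psi(x_k) = \pi\bigl(\underline{\mathfrak{s}}(x_0\cdots x_{n-1})\bigr)
\end{align*}
vanishes on precisely those periodic orbits with $\underline{\mathfrak{s}}(w)=e$. In particular, for such $x$ we have $R_n(x)=R_n(x)\exp(\psi_n(x))$, and hence we may drop the group constraint at the price of inserting the exponential weight:
\begin{align*}
\sum_{\substack{x\in\mathrm{Per}(n),\,\underline{\mathfrak{s}}(w)=e\\ \frac{1}{n}\sum_k D_{\sigma^k x}\in\mathcal{K}}}R_n(x)\;\le\;\sum_{\substack{x\in\mathrm{Per}(n)\\ \frac{1}{n}\sum_k D_{\sigma^k x}\in\mathcal{K}}}R_n(x)\exp(\psi_n(x)).
\end{align*}

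Next, I would apply the classical large deviation upper bound for weighted sums over periodic orbits on a mixing SFT (Bowen equidistribution combined with Kifer's thermodynamic LDP). Since $\log R+\psi$ is H\"{o}lder, with pressure $P=P(\log R+\psi,\sigma)$ and unique equilibrium state $\mu_{R\exp\psi}$, this yields
\begin{align*}
\limsup_{n\to\infty}\frac{1}{n}\log\sum_{x\in\mathrm{Per}(n),\,\frac{1}{n}\sum D_{\sigma^k x}\in\mathcal{K}}R_n(x)\exp(\psi_n(x))\;\le\;\sup_{\nu\in\mathcal{K}}\left(h(\nu)+\int(\log R+\psi)\,d\nu\right)=:\beta.
\end{align*}

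Finally, the strict inequality $\beta<P$ follows from three standard facts for mixing SFTs with H\"{o}lder potentials: the variational functional $\nu\mapsto h(\nu)+\int(\log R+\psi)\,d\nu$ is weak$^*$ upper semi-continuous on the compact space of shift-invariant probabilities; the supremum on the weak$^*$ compact set $\mathcal{K}$ is therefore attained; and the equilibrium state $\mu_{R\exp\psi}$ is the unique maximizer of this functional, with value $P$. Since $\mathcal{K}$ avoids $\mu_{R\exp\psi}$, the attained maximum is strictly below $P$, and we may take $\beta$ to be this attained value.

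The main obstacle is ensuring one has the large deviation upper bound for \emph{weighted periodic orbit sums} (as opposed to $\mu_{R\exp\psi}$-typical points), but this is precisely what Kifer's theorem, combined with the Bowen--Parry--Pollicott equidistribution of periodic orbits, provides in the mixing SFT setting; since the ambient assumption of this section is already that $\Sigma$ is a subshift of finite type, no further reduction is required. It is worth flagging that we do not need to invoke the identity $P(\log R,T_{\mathfrak{s}})=P(\log R+\psi)$ from \cite{DougallSharp2} within the proof itself --- it is the choice of $\psi$ (and hence the identification of $\mu_{R\exp\psi}$ as the correct ``target measure") that is drawn from that reference.
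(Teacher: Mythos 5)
Your proposal is correct and rests on the same two pillars as the paper's proof: the observation that $\underline{\mathfrak{s}}(w)=e$ forces $\psi^n(x)=\pi(\underline{\mathfrak{s}}(w))=0$, so the constrained sum embeds into the unconstrained $(\log R+\psi)$-weighted periodic orbit sum; and the strict exponential drop coming from $\mathcal{K}$ avoiding the unique equilibrium state $\mu_{R\exp\psi}$. Where you differ is in how the large deviation upper bound over $\mathcal{K}$ is established. You invoke it as a black box (Kifer's thermodynamic LDP for periodic-orbit empirical measures on a mixing SFT), reducing the strictness to upper semi-continuity of $\nu\mapsto h(\nu)+\int(\log R+\psi)\,d\nu$ on the compact set $\mathcal{K}$ together with uniqueness of the maximizer. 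The paper instead derives the bound from scratch: it sets $\rho=\inf_{\nu\in\mathcal{K}}\sup_F\bigl(\int F\,d\nu - P(F+\log R+\psi)\bigr)$, shows $\rho>0$ via the variational principle and uniqueness of $\mu_{R\exp\psi}$ (upper semi-continuity of entropy enters here, under the paper's ``lower semi-continuity'' remark), covers $\mathcal{K}$ by finitely many half-spaces $\{\nu:\int F_i\,d\nu - P(F_i+\log R+\psi)>\rho-\gamma\}$, and for each $F_i$ multiplies the constrained periodic orbit sum by $\exp\bigl(F_i^n - nP(F_i+\log R+\psi)-n(\rho-\gamma)\bigr)\ge 1$ so that it can be compared to the full pressure of $F_i+\log R+\psi$. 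Your route is shorter if the LDP for periodic-orbit measures is granted; the paper's is self-contained, avoids any external equidistribution input, and furnishes an explicit positive constant $\rho$. Both arrive at the same limiting exponent.
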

\begin{proof}
We allow ourselves to use $\nu$ to denote an arbitrary shift invariant measure (previously it was reserved for the $R$ conformal measure).
Set
$$
\rho= \inf_{\nu \in\mathcal{K}}\sup_{F} \left( \int F d\nu - {P}(\log R + \psi +F)\right)
$$
We claim that $\rho+{P}(\log R+\psi)>0$. First, for any $F$
\begin{align*}
&\left(\int Fd\nu - {P}(F+\log{ R}+\psi) + {P}(\log{ R} + \psi) \right)
\\
&=-\left({P}(F+\log{ R}+\psi) - \int F+\psi+\log{ R} d\nu \right) + {P}(\log{ R} + \psi) -\int \psi+\log{ R} d\nu
\end{align*}
and so
\begin{align*}
&\sup_{F}\left(\int Fd\nu - {P}(F+\log{ R}+\psi) + {P}(\log{ R} + \psi) \right)
\\
&=-\inf_{F}\left({P}(F+\log{ R}+\psi) -\int  F+\psi+\log{ R} d\nu\right)  + \left({P}(\log{ R} + \psi)  -\int  \psi+\log{ R} d\nu\right)
\\
&=-h(\nu) + \left({P}(\log{ R} + \psi)  -\int  \psi+\log{ R} d\nu\right)>0
\end{align*}
Since $\inf_{F}\left({P}(F+\log{ R}+\psi) -\int  F+\psi+\log{ R} d\nu\right) = \inf_{G}\left({P}(G) -\int G d\nu\right)=h(\nu)$ and is strictly positive
by uniqueness of the equilibrium state.
The lower bound is uniform in $\nu$ in $\mathcal{K}$ by lower semi-continuity.

Now, by definition of $\rho$, for every $\nu\in\mathcal{K}$ we have
$$
\sup_F\int Fd\nu - {P}(F+\log{ R}+\psi)>\rho
$$ 
and so we may choose $\gamma>0$ and $F$ with
$$
\int Fd\nu - {P}(F+\log{ R}+\psi)>\rho-\gamma
$$ 
We deduce that
$$
\mathcal{K}\subset \left\{ \nu: \int Fd\nu - {P}(F+\log{ R}+\psi)>\rho-\gamma \right\},
$$
and since $\mathcal{K}$ is weak$*$ compact there are $F_1,\ldots, F_k$ with
$$
\mathcal{K}\subset \bigcup_{i=1}^k\left\{ \nu: \int F_id\nu - {P}(F_i+\log{ R}+\psi)>\rho-\gamma \right\}.
$$

We need the following two observations
\begin{equation}\label{eq:k}
\tau_{x,n}=\sum_{k=0}^{n-1} D_{\sigma^k x}\in\mathcal{K}\implies \exp(F_i^n(x)-n{P}(F_i+\log{ R}+\psi) -n\rho+n\gamma)\ge 1,
\end{equation}
\begin{equation}\label{eq:psi}
(w)=x\in \mathrm{Per}(n), \underline{\mathfrak{s}}(w)=e \implies \psi^n(x) =0.
\end{equation}
Putting this together gives
\begin{align*}
&\sum_{\left\{ (w)=x\in \mathrm{Per}(n), \underline{\mathfrak{s}}(w)=e, \frac{1}{n}\sum_{k=0}^{n-1} D_{\sigma^k x}\in\mathcal{K} \right\}}R_n(x)
\\
&\le
\sum_{\left\{ (w)=x\in \mathrm{Per}(n), \underline{\mathfrak{s}}(w)=e, \frac{1}{n}\sum_{k=0}^{n-1} D_{\sigma^k x}\in\mathcal{K} \right\}}R_n(x)\exp(-n{P}(F_i+\log{ R}+\psi)- n(\rho-\gamma) + F_i^n)
\\
&\le
\exp(-n{P}(F_i+\log{ R}+\psi)- n(\rho-\gamma) )\sum_{\left\{ (w)=x\in \mathrm{Per}(n), \underline{\mathfrak{s}}(w)=e, \frac{1}{n}\sum_{k=0}^{n-1} D_{\sigma^k x}\in\mathcal{K} \right\}}R_n(x)\exp( F_i^n+\psi^n)
\end{align*}
So
\begin{align*}
&\frac{1}{n}\log\sum_{\left\{ (w)=x\in \mathrm{Per}(n), \underline{\mathfrak{s}}(w)=e, \frac{1}{n}\sum_{k=0}^{n-1} D_{\sigma^k x}\in\mathcal{K} \right\}}R_n(x) \\
&\le -{P}(F_i+\log{ R}+\psi)-\rho+\gamma + {P}(\log{ R}+\psi+F_i)
\\
&\le -\rho+\gamma 
\\
&< -\epsilon+  {P}(\log{ R} + \psi) + \gamma.
\end{align*}
\end{proof}

\begin{proof}[Proof of Theorem \ref{theorem:phiastphi}]
Let $H:\Sigma\to \mathbb{R}$ be continuous and non-negative. Let $\mathcal{K} = \left\{ m : \left|\int Hdm -\int H d\mu_{R\exp \psi}\right|\ge \epsilon \right\}$, a compact set that clearly does not contain $\mu_{R\exp \psi}$. Writing $\tau_{x,n} = \frac{1}{n}\sum_{k=0}^{n-1} D_{\sigma^k x}$,
\begin{align*}
t^n\sum_{x\in\mathrm{Per}(n), \psi_n(x)=0, \tau_x\in \mathcal{K}} R_n(x)\int H d\tau_{x,n} &\le \|H\|_{\infty}t^n\sum_{x\in\mathrm{Per}(n), \psi_n(x)=0, \tau_{x,n}\in \mathcal{K}} R_n(x),
\end{align*}
\begin{align*}
\sum_{n=1}^\infty d_n t^n\sum_{x\in\mathrm{Per}(n), \psi_n(x)=0, \tau_x\in \mathcal{K}} R_n(x)\int H d\tau_{x,n} &\le \|H\|_{\infty}\sum_{n=1}^\infty d_n t^n\sum_{x\in\mathrm{Per}(n), \psi_n(x)=0, \tau_{x,n}\in \mathcal{K}} R_n(x),
\end{align*}
By Lemma \ref{ld} the series on the right converges at $t={P}(\log{ R}+\psi)$, denote the value as $C$.

Therefore,
\begin{align*}
\bar\mu^{\phi^*}_c[t](H) &= 
\frac{1}{F(\phi^*)[t]}\sum_{n=1}^\infty d_n t^n\sum_{(w)=x\in\mathrm{Per}(n)} \langle \rho(\underline{\mathfrak{s}}(w))\delta_e,\delta_e\rangle R_n(x)\int H d\tau_{x,n} 
\\
&\le  \frac{\|H\|_{\infty}}{F(\phi^*)[t]}\sum_{n=1}^\infty d_n t^n\sum_{(w)=x\in\mathrm{Per}(n), \underline{\mathfrak{s}}(w)=e, \tau_{x,n}\in \mathcal{K}} R_n(x)
\\
&+\frac{1}{F(\phi^*)[t]}\sum_{n=1}^\infty d_n t^n\sum_{(w)=x\in\mathrm{Per}(n),\underline{\mathfrak{s}}(w)=e, \tau_{x,n}\notin \mathcal{K}} R_n(x)\left(\int Hd\mu_{R\exp \psi}+\epsilon\right)
\\
&\le \|H\|_{\infty}\frac{C}{F(\phi^*)[t]}
+ \left(\int Hd\mu_{R\exp \psi}+\epsilon\right)\frac{\lim_{N\to\infty}\bar\phi_{d;\le  N}[t](e)}{F(\phi^*)[t]}
\end{align*}
A lower bound follows similarly. We conclude
$$
\lim_{k\to\infty}\frac{\lim_{N\to\infty}\bar\phi_{d;\le  N}[t_k](e)}{\lim_{N\to\infty}\phi_{c;\le N}[t_k]\ast \phi_{c;\le N}[t_k](e)} \bar\mu^{\phi\ast\phi}_{\delta_e}(H) = \int Hd\mu_{R\exp \psi}.
$$
Up to scaling, the measures coincide. (Finiteness of the scaling is given by Lemma \ref{periodicmeasure}.)
Now it follows that $\bar\mu^{\phi^*}_{c}$, $\mu^{\phi^*}_{c}$ and $\mu_{R\exp \psi}$ are equivalent measures, and by ergodicity are proportional.

We are ready to harvest the (limit of) matrix coefficients property. For every $p$ we can choose $u,v$ with $|u|=|v|=n\ge p$, $u\wedge v \ge p$, $u \wedge x\ge p$ and $\underline{\mathfrak{s}}(u)=g$, $\underline{\mathfrak{s}}(v)=e$. Now we have, on the one hand using \ref{abelianratio},
$$
\frac{\mu_{R\exp \psi} (R^{-1}_n\mathds{1}_{[u]})}{\mu_{R\exp \psi} (R^{-1}_n\mathds{1}_{[v]})} \in \exp(\psi(g))[\alpha_p^{-1},\alpha_p].
$$
And on the other hand using Corollary \ref{cor:matrixcoeff}
$$
\frac{\mu^{\phi^*}_{c}(R_{n}^{-1}\mathds{1}_{[u]})}{\mu^{\phi\ast\phi}_{\delta_e}(R_{n}^{-1}\mathds{1}_{[v]})}
 \in \frac{\lim_{N\to\infty}\langle\rho(g) \phi_{c;\le N}[t_k], \phi_{c;\le N}^*[t_k]\rangle}{\lim_{N\to\infty}\langle  \phi_{c;\le N}[t_k],  \phi^*_{c;\le N}[t_k]\rangle} [(1+\epsilon(t_k))\mathrm{Lip}(p)^{-2}, (1-\epsilon(t_k))\mathrm{Lip}(p)^2],
$$
where $\epsilon(t_k)\to 0$.
Since $p$ was arbitrary this completes the proof.
\end{proof}

\section{Countable Markov shifts and strong positive recurrence}\label{section:cms}
In this section we expand on the basic machinery for countable Markov shifts. Our use of the notion of strong positive recurrence is non-standard, and moreover our choice of ``first-return series" is non-standard. It will be important to make clear the allowed estimations regarding local H\"{o}lder continuity. Equilibrium states in a CMS need not satisfy the Gibbs property but it is still possible to estimate ratios of certain cylinders, as we make clear.

\subsection{Basic definitions}
Let $x \in \mathbb{N}^{\mathbb{Z}}$; that is, $x$ is a bi-infinite sequence in the countable set $\mathbb{N}$ --- we will often write $\mathcal{W}_1$ in place of $\mathbb{N}$, and describe $\mathcal{W}_1$ as the alphabet of the CMS. As is common usage, we use $x_i$ to denote the $i$th element in the sequence. In the theory of Markov shifts, it is usual to write $x = \ldots x_{-1}.x_0x_1\ldots$; that is, to separate the negative coordinates from the non-negative coordinates by a period.
In order to define a CMS it is useful to make reference to a \emph{transition matrix} $\tau: \mathcal{W}_1\times \mathcal{W}_1\to \left\{ 0,1\right\}$. The \emph{(two-sided) countable Markov shift (with transition matrix $\tau$)} is 
$$
\Sigma = \left\{ x\in \mathcal{W}_1^{\mathbb{Z}} : \tau(x_i,x_{i+1})=1\right\}.
$$
We use $\sigma$ to denote the left shift $\sigma : \Sigma\to \Sigma$, $(\sigma x)_i = x_{i+1}$. We always assume that the dynamics are transitive for $\sigma :\Sigma \to \Sigma$.
The \emph{(two-sided) countable Markov shift (with transition matrix $\tau$)} is 
$$
\Sigma^+ = \left\{ x\in \mathcal{W}_1^{\mathbb{N}} : \tau(x_i,x_{i+1})=1\, \forall i\in\mathbb{N}\right\}.
$$ 
We can project from $\Sigma$ to $\Sigma^+$ by ``forgetful" map. Many of the constructions that follow naturally pass from $\Sigma$ to $\Sigma^+$.

We equip $\Sigma$ with the product topology, which can be metrized: write $x\wedge y = \min ( k+1 : x_i=y_i, -k\le i \le k )$, and $d(x,y) = 2^{- x\wedge y}$. We use some convenient notation for a basis of open balls,
$$
[W_1\cdots W_m.w_1\cdots w_n] = \left\{ x\in \Sigma : x_j=w_j, j=0,\ldots, n, \, x_{-m-1+i} = W_i, i=1,\ldots , m \right\}
$$
where $w_j,w_{j+1},W_i,W_{i+1}\in\mathcal{W}_1$, $\tau(w_j, w_{j+1})=1$, $\tau(W_i, W_{i+1})=1$, $\tau(W_m,w_0)=1$,  $i=0,\ldots, n-1$, $j=1,\ldots , m-1$. We say that $w=w_1\cdots w_n$, $W=W_1\cdots W_n$, $Ww$ are \emph{admissible}, and $|w|=n$, $|W|=m$. For $B,b\in\mathcal{W}_1$ we write
$$
\mathcal{W}^{B,b}_n = \left\{ B u_1\cdots u_{n-2} b : u_j,u_{j+1}\in\mathcal{W}_1, \tau(u_j, u_{j+1})=1, j=1,\ldots , n-3 \right\}
$$
and
$$
\mathcal{W}^{B,b} = \left\{ B u_1\cdots u_{n} b : u_j,u_{j+1}\in\mathcal{W}_1, \tau(u_j, u_{j+1})=1, j=1,\ldots , n; n\in\mathbb{N} \right\}
$$

Let $R^+:\Sigma^+ \to \mathbb{R}_+$ be a positive function. We say that $\log R^+$ is locally H\"{o}lder continuous if there is $\theta<1$ for which
$$
\mathrm{Var}_n(\log R^+) = \sup \left\{ |\log{ R}^+(x)-\log R^+(y)| : x\wedge y = n \right\}
$$
has 
\begin{equation}\label{equation:lip}
\mathrm{Var}_n(\log R^+)\le C\theta^n
\end{equation}
for $n>1$. It should be noted that $\log R^+$ can be unbounded on any cylinder when $\Sigma^+$ is non-compact.
We lift $R^+$ to $\Sigma$ by defining $R(\cdots x_{-1}.x_1\cdots) = R^+(x_1\cdots)$. We will simply write $R$ for both functions.

We write $\mathrm{const.}(Bb)$ for the constant with
\begin{equation}\label{localholder}
(\mathrm{const.}(Bb))^{-1}\le \frac{R_{k+1}(uBby)}{{R_{k+1}(uBbz)}} \le \mathrm{const.}(Bb)
\end{equation}
for any admissible $uBb$, $|u|=k$, $|b|=|B|=1$, and $y,z\in\sigma [b]$. Local H\"{o}lder continuity implies that $\mathrm{const.}(Bb)$ does \textbf{not} depend on $B,b$. Nevertheless we use this notation to \textbf{remind} us to condition on \textbf{two} letters, as local H\"{o}lder demands.

\subsection{Strong positive recurrence by discriminants}\label{sprdef}
Following \cite{Sarig} we say that $R$ is positive recurrent if there is a constant $M_a$ with
$$
\sum_{x: \sigma^n x = x, x_0=B}R_n(x) \in [M_B^{-1}\exp nP(\log R),M_B\exp nP(\log R)]
$$
for all $n\in\mathbb{N}$.

The definition of strong positive recurrence is in introduced by Sarig \cite{Sarig2001} in terms of discriminants. Let us borrow some of the notation for this discussion. We switch to additive notation (e.g. $R=\exp r$ for $r$ locally H\"{o}lder continuous). Write $\varphi_a = \mathds{1}_{[a]}(x) \inf\left\{ n\ge 1: \sigma^n x \in [a]\right\}$,
\begin{equation}\label{sarig1}
Z_n(r) = \sum_{\sigma^n x  = x} (\exp r)_n(x)\mathds{1}_{[a]}(x) \; ;\; Z^*_n(r) = \sum_{\sigma^n x  = x} (\exp r)_n(x)\mathds{1}_{[\varphi_a=n]}(x)
\end{equation}
\begin{equation}\label{sarig2}
P(r):=P(r,\sigma) = \limsup_{n\to\infty} \frac{1}{n}\log Z_n(r) \; ;\; -p^*:=\limsup_{n\to\infty}\frac{1}{n}\log Z^*_n( r) 
\end{equation}
We write $P(\overline{ r})$ for the pressure in the system induced on returning to $a$. (So $\overline{ r} = \exp (\sum_{k=0}^{\varphi_a -1} r\circ \sigma^n \circ \pi)$, and $\pi$ maps the induced phase space to $[a]\subset \Sigma^+$.)
 The Discriminant Theorem (Theorem 2 of  \cite{Sarig2001}) states that $P(\overline{r+p})=0$ has a unique solution $p=-P( r)$ if $r$ is not transient (``not  transient" is the same as ``recurrent"). The first return series controls the range of $p$ for which $P(\overline{r+p})<\infty$, indeed $\sup\left\{ p : P(\overline{r+p})<\infty \right\} = p^*$. The $a$-discriminant of $r$ is $\Delta_a[r] = \sup \left\{ P(\overline{r+p}) : p<p^*\right\}$. Non-transience implies that $\Delta_a[r]\ge 0$, and indeed $P(\overline{r-P(r)})=0$. Following Sarig \cite{Sarig2001}, one says that $r$ is \emph{strongly positively recurrent} if $\Delta_a[r]>0$.
(At $\Delta_a[r]=0$ the behaviour can be either positive recurrence or null recurrence.)
 
The main thing for us to note is Proposition 3 that $P(\overline{r+p})$ is strictly increasing in $(-\infty, p^*]$. Now
if $P(r)\ne -p^*$ then there is some $-P(r)<t<p^*$ with $0<P(\overline{r+t})<\infty$ from which $\Delta_a[r]>0$. Conversely, if $P(r)= -p^*$ then $\Delta_a[r]=0$. In conclusion, strong positive recurrence is equivalent to 
\begin{equation}\label{pstar}
P(r)\ne -p^*,
\end{equation} 
and this is the formulation we will use.

We will use the notation 
\begin{equation}\label{spr}
-p^* = \gamma(\mathrm{SPR}).
\end{equation}

\subsection{Return series}
\begin{definition}\label{realdirichlet}
We will say that $\eta$ is a \emph{power series in $t^{-1}$} if $\eta$ is a function of a real variable $t\in[0,\infty)$ defined as
$$
\eta[t] = \lim_{N\to\infty}\sum_{n=0}^N t^{-n} a_n
$$
with $a_n\in[0,\infty)$.
\end{definition}
Observe that $\eta[t]$ coincides with the Dirichlet series $\sum_{n=1}^\infty \exp(-sn)a_n$ evaluated at $s=\log t$ and so we borrow terminology such as abscissa of convergence but applied to the variable $t=\exp(s)$. As $\eta[t-\epsilon]\ge \eta[t]$ for any $0<t-\epsilon<t$ we have that
$$
\left\{ t>0 : \eta[t] <\infty \right\} \subset [\sigma, \infty),
$$ 
$$
(0, \sigma) \subset \left\{ t>0 : \eta[t] =\infty \right\}, 
$$ 
for some $\sigma\in[0,\infty]$ which we call the \emph{abscissa of convergence (of the power series in $t^{-1}$)}.

Let $R :\Sigma^+\to\mathbb{R}_+$ with $\Sigma^+$ some CMS and $\log R$ locally H\"{o}lder continuous. Recall we always assume that $R$ has $P(\log R,\sigma)=0$.
To begin with we do not even ask that $R$ is recurrent --- in this way the discussion applies to both $\sigma:\Sigma\to \Sigma$ and to the skew product $T_{\mathfrak{s}}:\Sigma\times G\to \Sigma \times G$, upon realising an isomorphism with a CMS.
For a letter $B\in \mathcal{W}_1$, the \emph{($B$-conditioned) (periodic) return series} is
\begin{equation}\label{Bcpts}
t\mapsto \sum_{n=1}^\infty t^{-n} \sum_{x: \sigma^n x = x, x_0=B}R_n(x),
\end{equation}
where $R_n(z) = R(z)R(\sigma z) \cdots R(\sigma^{n-1}z)$.
Using local H\"{o}lder continuity \ref{Bcpts} has the same abscissa of convergence as the \emph{($B$-conditioned) return series}
\begin{equation}\label{Bcts}
t\mapsto \sum_{n=1}^\infty t^{-n} \sum_{b: \tau(b,B)=1}\sum_{w\in\mathcal{W}^{B,b}_n} R_n(wBy),
\end{equation}
with fixed \emph{initial condition} $y\in\sigma[B]$. And indeed if $\sigma$ is transitive then the abscissa of convergence is equal to $\exp P(\log R,\sigma)=1$.

The \emph{periodic $B$ first return series},
\begin{equation}\label{pfrs}
t\mapsto \sum_{n=1}^\infty t^{-n} \sum_{x : \sigma ^nx =x, x_0= B, x_i\ne B, 0<i<n}R_n(x),
\end{equation}
is within a constant multiple of 
\begin{equation}\label{frs}
\sum_{n=1}^\infty t^{-n} \sum_{b: \tau(b,B)=1}\sum_{w\in\mathcal{W}^{B,b}_n: w_i\ne B, i>1} R_n(wBy).
\end{equation}
We say that $R$ has a \emph{growth gap} if \ref{pfrs}, or equivalently \ref{frs}, converges for $t=1-\epsilon$, for some $\epsilon>0$. The existence of a growth gap will allow us to consider two-letter conditioned returns.

Fix $A,a\in\mathcal{W}_1$ with $aA$ admissible. Define the \emph{($A,a$ conditioned) return series} as
\begin{equation}\label{Aacts}
\zeta^{A,a}[t] = \sum_{n=1}^\infty t^{-n} \sum_{w\in\mathcal{W}^{A,a}_n} R_n(wAx).
\end{equation}
with \emph{initial condition} $Ax\in\Sigma^+$.
\begin{lemma}\label{lemma:growthgap}
If $R$ has a growth gap then 
$$
\left\{ t>0 : \zeta^{A,a}[t]<\infty \right\} = 1.
$$
If $R$ is recurrent then $\zeta^{A,a}[1] = \infty$.
\end{lemma}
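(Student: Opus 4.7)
The proof naturally splits into two independent statements and I would treat them in the reverse order from how they are stated, since the tool for the second half also does most of the work for the first.

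My interpretation of the conclusion of part 1 is that the abscissa of convergence of the power series in $t^{-1}$ defining $\zeta^{A,a}$ equals $1$, i.e.\ $\inf \{ t>0 : \zeta^{A,a}[t]<\infty \} = 1$ (otherwise the displayed equality is a type mismatch).

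\medskip

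\textbf{Part 2 via a concatenation trick.} Since $aA$ is admissible and $\sigma$ is (assumed) mixing, I would fix a single admissible word $u = A u_1 \cdots u_{m-2} a \in \mathcal{W}^{A,a}_m$ once and for all. For any $b\in \mathcal{W}_1$ with $\tau(b,A)=1$ and any $w\in \mathcal{W}^{A,b}_n$, the concatenation $wu$ lies in $\mathcal{W}^{A,a}_{n+m}$. The key deterministic estimate is the uniform local-H\"older bound
\[
R_{n+m}(wux) \;\geq\; c\, R_n(wAy)\, R_m(ux),
\]
with $c>0$ depending on $u$ (through $R_m(ux)$) and on the H\"older constant of $\log R$, but independent of $w$ and $n$. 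Summing over all $w\in \mathcal{W}^{A,b}_n$, all such $b$, and all $n$, and absorbing one final local-H\"older constant to pass between the initial condition $Ay$ and the admissible partners $b$ with $\tau(b,A)=1$, gives
\[
\zeta^{A,a}[t] \;\geq\; c\, t^{-m} R_m(ux)\, \sum_{b:\,\tau(b,A)=1} \zeta^{A,b}[t] \;=\; c\, t^{-m} R_m(ux)\, \zeta^A[t],
\]
using the identity $\zeta^A[t]=\sum_b \zeta^{A,b}[t]$ which holds by positivity even though the alphabet is countable. Setting $t=1$, recurrence gives $\zeta^A[1]=\infty$, hence $\zeta^{A,a}[1]=\infty$.

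\medskip

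\textbf{Part 1.} The upper bound on the abscissa is immediate: $\zeta^{A,a}[t]$ is a sub-series (up to a uniform constant) of the $A$-conditioned return series $\zeta^A[t]$, whose abscissa of convergence is $\exp P(\log R,\sigma)=1$; hence $\zeta^{A,a}[t]<\infty$ for $t>1$. The lower bound is exactly the concatenation estimate of Part 2 applied now at $t<1$: since $\zeta^A[t]=\infty$ whenever $t<1$ (as $t^{-1}$ exceeds the exponential growth rate $\exp P=1$ of the coefficients), we get $\zeta^{A,a}[t]=\infty$ for all $t<1$. The abscissa is therefore exactly $1$.

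\medskip

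\textbf{Where I expect difficulty.} The only non-cosmetic obstacle is proving the uniform multiplicative H\"older bound $R_{n+m}(wux)\asymp R_n(wAy)R_m(ux)$ with a constant independent of $n$ and of the choice of $w$. This reduces to noting that at each position $0\le j\le n-1$ the sequences $\sigma^j(wux)$ and $\sigma^j(wAy)$ agree on the letters of $w$ from position $j$ through position $n-1$ and on the subsequent $A$, so the agreement depth is at least $n-j+1$; the total H\"older correction is then $\sum_{j=0}^{n-1}C\theta^{n-j+1}\le C\theta^2/(1-\theta)$, a bound uniform in $n$. A similar telescoping handles the $R_m(ux)$ factor. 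The growth-gap hypothesis itself does not enter the above arguments; its role in this lemma seems only to be to motivate the subsequent ``two-letter conditioned return'' point of view rather than to be needed for the stated inequality.
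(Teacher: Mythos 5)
Your proof is correct and takes a genuinely different route from the paper's. The paper's proof multiplies $\zeta^{A,a}[t]$ by the first-return series at $A$ and concatenates a word $w\in\mathcal{W}^{A,a}_n$ with a first-return word $u$, so that extracting information about $\zeta^{A,a}$ requires the first-return factor to be finite --- which is exactly what the growth-gap hypothesis supplies. You instead fix a single bridge word $u\in\mathcal{W}^{A,a}_m$ once and for all, right-multiply each $w\in\bigcup_{b:\tau(b,A)=1}\mathcal{W}^{A,b}_n$ by $u$, and observe that the map $(n,b,w)\mapsto wu$ is injective into $\mathcal{W}^{A,a}_{n+m}$; with the telescoping H\"older estimate this yields the single pointwise bound $\zeta^{A,a}[t]\ge c\,t^{-m}R_m(uAx)\,\zeta^A[t]$, where $c$ depends only on the H\"older constant of $\log R$. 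Both conclusions of the lemma then follow at once, and --- as you correctly observe --- without invoking the growth-gap hypothesis: the abscissa of $\zeta^{A,a}$ is unconditionally $1$, and $\zeta^{A,a}[1]=\infty$ holds under recurrence alone. This is the more economical argument; what the paper's version buys is expository alignment with the first-return series, which resurfaces in the SPR and tightness machinery later. Two cosmetic corrections to your write-up: the fixed factor should be $R_m(uAx)$, matching the initial condition $Ax$ in the definition of $\zeta^{A,a}$, and the constant $c$ in your key estimate is in fact independent of $u$ --- the $u$-dependence is carried entirely by the explicit factor $R_m(uAx)$, so the parenthetical remark that $c$ depends on $u$ should be dropped.
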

\begin{proof}
Notice that
\begin{align*}
&\sum_{n=1}^\infty t^{-n} \sum_{w\in\mathcal{W}^{A,a}_n} R_n(wAx) \sum_{m=1}^\infty t^{-m} \sum_{b: \tau(b,A)=1}\sum_{u\in\mathcal{W}^{A,b}_m: u_i\ne A, i>1} R_m(uAx)
\\
&\ge \mathrm{const.}(aA)\sum_{n=1}^\infty t^{-n} \sum_{w\in\mathcal{W}^{A,a}_n} \sum_{m=1}^\infty t^{-m} \sum_{b: \tau(b,A)=1}\sum_{u\in\mathcal{W}^{A,b}_m: u_i\ne A, i>1} R_{n+m}(wuAx)
\\
&= \mathrm{const.}(aA)\sum_{n=1}^\infty\sum_{m=1}^\infty t^{-m} t^{-n} \sum_{w\in\mathcal{W}^{A,a}_n} \sum_{b: \tau(b,A)=1}\sum_{u\in\mathcal{W}^{A,b}_m: u_i\ne A, i>1} R_{n+m}(wuAx)
\\
&= \mathrm{const.}(aA)\sum_{n=1}^\infty\sum_{m=1}^\infty t^{-m} t^{-n} \sum_{b: \tau(b,A)=1}\sum_{v\in\mathcal{W}^{A,b}_{n+m}} R_{n+m}(vAx)
\\
&\ge \mathrm{const.}(aA)\sum_{k=1}^\infty t^{-k} \sum_{b: \tau(b,A)=1}\sum_{v\in\mathcal{W}^{A,b}_{k}} R_{k}(vAx).
\end{align*}
\end{proof}

This says that conditioning on one or two letters contains the same information in the case that $R$ is has a growth gap. We update the notion of first returns.
We write $\mathcal{A}_n = \left\{ Awa : w\in\mathcal{W}_{n-2}, w_iw_{i+1} \ne Aa, i=1,\ldots, n-3 \right\}$. Define 
\begin{equation}\label{Aafrs}
\eta^{A,a}[t]:=\sum_{n=1}^\infty t^{-n} \sum_{w\in\mathcal{A}_n}e^{R_n(wx)}.
\end{equation}
Then $
\gamma(\mathrm{SPR}) \ge \inf \left\{ t>0 : \eta^{A,a}[t]<\infty\right\}$.

We summarize what we have learnt.
\begin{lemma}\label{basicseries}
Let $R :\Sigma^+\to\mathbb{R}_+$ with $\Sigma^+$ a mixing CMS and $\log R$ locally H\"{o}lder continuous.
Assume $R$ is strongly positively recurrent. Then $\gamma(\mathrm{SPR})<\exp P(\log R)$ and
$$
(\gamma(\mathrm{SPR}) ,\infty) \subset \left\{ t>0 : \eta^{A,a}[t]<\infty\right\},
$$
$$
(\gamma(\mathrm{SPR}) ,\infty) \subset \left\{ t>0 : \zeta^{A,a}_{\le N}[t]<\infty \right\} ,
$$
$$
(1 ,\infty) = \left\{ t>0 : \zeta^{A,a}[t]<\infty \right\} .
$$
\end{lemma}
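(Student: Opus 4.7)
The inequality $\gamma(\mathrm{SPR}) < \exp P(\log R) = 1$ is immediate from the characterisation of strong positive recurrence in \ref{pstar}: SPR amounts to $P(\log R) \ne -p^*$, and recurrence of $R$ forces $P(\log R) \ge -p^*$, so with our normalisation $P(\log R) = 0$ we get $\gamma(\mathrm{SPR}) = e^{-p^*} < 1$.

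For the second inclusion my plan is a generating-function decomposition. Every first-return word to the two-letter block $[Aa]$ factorises uniquely as a concatenation of first-return words to the single letter $A$: all intermediate returns to $A$ have next letter $\ne a$, while only the final return has next letter $= a$. Writing $G(t)$ and $H(t)$ for the correspondingly conditioned first-return series to $[A]$ (so that $G + H = F^A$, the full first-return series to $[A]$), the local H\"older estimate \ref{localholder} yields, up to a multiplicative constant,
\[
\eta^{A,a}[t] \asymp \sum_{k \ge 0} G(t)^k H(t) = \frac{H(t)}{1 - G(t)},
\]
provided $G(t) < 1$. Sarig's definition \ref{sarig1}--\ref{sarig2} gives $F^A(t) < \infty$ (hence $G, H$ finite) for $t > \gamma(\mathrm{SPR})$. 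The condition $G(t) < 1$ holds in a neighbourhood of $t = 1$ by positive recurrence (which gives $F^A(1) = 1$) and mixing (which forces $H(1) > 0$); its extension to the whole range $(\gamma(\mathrm{SPR}), \infty)$ is precisely the structural statement that SPR with base letter $A$ is equivalent to SPR with base cylinder $[Aa]$ and the same value of $\gamma(\mathrm{SPR})$, a property of SPR due to Sarig.

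The third inclusion reduces to showing each inner sum $\sum_{w \in \mathcal{W}_n^{A,a}} R_n(wx)$ is finite; these sums are independent of $t$, so finiteness at any single $t$ suffices, and this follows from the fourth equality. For the fourth equality, the second inclusion (together with local H\"older) implies that the periodic first-return series \ref{pfrs} converges at $t = 1 - \epsilon$ for some $\epsilon > 0$, so $R$ has a growth gap and Lemma \ref{lemma:growthgap} gives $\zeta^{A,a}[t] < \infty$ for $t > 1$. Conversely, the computation in the proof of Lemma \ref{lemma:growthgap} shows that $\zeta^{A,a}[t] \cdot \eta^{A,a}[t]$ dominates the full $A$-conditioned return series \ref{Bcts}, which diverges at $t = 1$ by recurrence and $P(\log R) = 0$; combined with $\eta^{A,a}[1] < \infty$ from the second inclusion (using $1 > \gamma(\mathrm{SPR})$), this forces $\zeta^{A,a}[t] = \infty$ for all $t \le 1$. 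The hardest step throughout is justifying $G(t) < 1$ on the whole of $(\gamma(\mathrm{SPR}), \infty)$: it is easy near $t = 1$ from positive recurrence and mixing, but requires structural results on SPR in full generality.
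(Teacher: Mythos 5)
The paper does not give a dedicated proof of Lemma \ref{basicseries}; it is presented, with the phrase ``we summarize what we have learnt,'' as a compilation of what the surrounding discussion in Section \ref{section:cms} has already established. Your first part matches that discussion exactly (recurrence gives $P(\log R) \ge -p^*$, SPR makes it strict, and $\gamma(\mathrm{SPR}) = e^{-p^*} < 1$). Your treatment of the third and fourth parts also follows the paper's route through Lemma \ref{lemma:growthgap}, but with two small inaccuracies: the dominating quantity in that lemma's proof is $\zeta^{A,a}[t]$ times the single-letter first-return series \eqref{frs}, not times $\eta^{A,a}[t]$, and the growth gap hypothesis needed there already follows from the first part ($\gamma(\mathrm{SPR}) < 1$ is precisely finiteness of \eqref{pfrs} at some $t = 1-\epsilon$), so routing it through the second inclusion introduces an unnecessary and in fact circular dependency.

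The real problem is the second inclusion. You correctly identify that your generating-function decomposition $\eta^{A,a} \asymp H/(1-G)$ reduces matters to $G(t) < 1$ for all $t > \gamma(\mathrm{SPR})$, and you invoke as ``the hardest step'' a structural result attributed to Sarig asserting that $\gamma(\mathrm{SPR})$ is the same for the base letter $A$ and the base cylinder $[Aa]$. That is not what Sarig proves: Corollary~3 of \cite{Sarig2001} shows that the property $\Delta_a[r] > 0$ (SPR vs.\ not SPR) is base-independent, but the parameter $p^*$ --- and hence $\gamma(\mathrm{SPR}) = e^{-p^*}$ --- genuinely depends on the base. Concretely, take the full two-shift with constant $R \equiv 1/2$ and $A = a = 0$. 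Single-letter first returns to $0$ are the words $01^k$, giving abscissa $1/2 = \gamma(\mathrm{SPR})$. But the words counted by $\eta^{0,0}$ are those $0w0$ with $w$ avoiding the pattern $00$, which are Fibonacci in number, so $\eta^{0,0}$ has abscissa $\phi/2 \approx 0.809 > 1/2$. Thus $G(t) \ge 1$ on the interval $(1/2,\phi/2)$ and the argument cannot be completed as planned. This also indicates that the second inclusion of the lemma is false if $\gamma(\mathrm{SPR})$ is Sarig's single-letter parameter; either the inclusion should be stated for the single-letter first-return series (which is what the paper's $\mathcal{A}_n$ becomes in the proof of Theorem \ref{theorem:tightstatement}, where it is redefined with the one-letter condition $w_i > 1$), or $\gamma(\mathrm{SPR})$ must be read as the abscissa of $\eta^{A,a}$ itself, in which case the inclusion is tautological and the generating-function machinery is not needed.
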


We are now able to prove Proposition \ref{prop:gurevic}.
\begin{proof}[Proof of Proposition \ref{prop:gurevic}]
$\exp P(\log R, T_{\mathfrak{s}})>\gamma(\mathrm{SPR})$ implies that $R:\Sigma^+\times G\to\mathbb{R}_+$ (viewed as a CMS) has a growth gap. In particular by Lemma \ref{lemma:growthgap}, for any fixed $h$
$$
\inf \left\{ t>0 : \zeta^{(A,e),(a,h)}[t]<\infty \right\} = \exp P(\log R, T_{\mathfrak{s}}).
$$
But also $\phi[t](e)$ is bounded from above by the $(A,e)$-conditioned periodic return series, and so $\gamma(\phi^*\ast \delta_e)\le \exp P(\log R, T_{\mathfrak{s}})$.
\end{proof}

\subsection{Equilibrium states}
Recall that we assume $P(\log R,\sigma ) = 0$. Positive recurrence hypothesis guarantees that the transfer operator $L$ has an eigenfunction $Lh = h$, with $h$ a positive locally H\"{o}lder continuous and eigenmeasure $L^* \nu =  \nu$;  see \cite{Sarig}. The equilibrium state $d\mu = hd\nu$ is finite, $\sigma$ invariant and ergodic. 

When $\Sigma$ is compact $\mu$ has the \emph{Gibbs property} (see \cite{PP}): there is a constant $C>0$ with
\begin{equation}\label{equation:gibbs}
C^{-1}R_n(wy)\le \mu([w])\le CR_n(wy)
\end{equation}
for any $w$ of length $n$ and $y\in\sigma^n [w]$.
In general we cannot expect to have the Gibbs property in the CMS setting but we are able to make use of the conformal property for $\nu$.
\begin{definition}
A measure $\hat{\nu}$ is said to be $R$-conformal if there is $\lambda$ with 
\begin{equation}\label{conformal}
\int R_n^{-1}\mathds{1}_{[wb]} d\hat{\nu} =  \lambda^{-n} \hat{\nu}([b]) 
\end{equation}
for any $w\in\mathcal{W}_n$ and $b\in\mathcal{W}_1$ with $wb$ admissible. 
\end{definition}
In particular $\nu$ is $R$-conformal with $\lambda = 1$.
We also need a local version of a Gibbs inequality  
\begin{definition}\label{def:localgibbs}
A measure $\hat{\nu}$ has a \emph{RHS local Gibbs inequality} if there is a constant $C_b$ with 
\begin{equation}\label{RHSlocalgibbs}
\hat{\nu}([wb])\le C_b R_n(wbx)
\end{equation}
for any admissible $wb$ with $|w|=n$ and for any $x\in \sigma [b]$.
  
A measure $\hat{\nu}$ has a \emph{local Gibbs inequality} if there is a constant $C_b$ with
\begin{equation}\label{localgibbs}
C_b^{-1}  R_n(wbx)\le \hat{\nu}([wb])\le C_b R_n(wbx)
\end{equation}
for any admissible $wb$ with $|w|=n$ and for any $x\in \sigma [b]$.
\end{definition}
We check that $\nu$ has a local Gibbs inequality: inside the conformal property \ref{conformal} we substitute $R_n(wbx)\le \mathrm{const.}(w_nb) R_n(wby)$ using \ref{localholder}. This gives
\begin{equation}\label{ratiol}
\mathrm{const.}(w_nb)^{-1}R_n(wbx) \le \frac{\nu([wb])}{\nu([b])}\le \mathrm{const.}(w_nb)R_n(wbx)
\end{equation}
for any $x\in \sigma [b]$.

We make a similar estimate for the equilibrium state $\mu$. First recall the standard manipulations to check shift invariance:
\begin{align*}
\int f\circ \sigma^n d\mu &= \int f\circ \sigma^n hd\nu =  \int f\circ \sigma^n hd(L^*)^n\nu
\\
&= \int f L^n hd\nu= \int f hd\nu.
\end{align*}
Now we check the integral of $R^{-1}_n \mathds{1}_{[w]}$,
\begin{align*}
\int R^{-1}_n \mathds{1}_{[w]} d\mu &= \int R^{-1}_n \mathds{1}_{[w]} hd\nu 
\\
&=\int L^n(R^{-1}_n \mathds{1}_{[w]} h) d\nu 
\\
&= \int_{\sigma^n[w]} h(w\cdot) d\nu 
\\
&\le h(w\xi) +|h|_{\theta}\beta^n,
\end{align*}
where $|h|_{\theta}$ is the local H\"{o}lder constant for $h$.
 
Suppose $R\exp \psi$ is another locally H\"{o}lder strongly positively recurrent function and that $\psi$ depends only on one letter (such is the case for Abelian extensions). Write $\nu^\prime$ the eigenmeasure of $R\exp \psi$, $\lambda = \exp P(\log R+\psi,\sigma)$, $h^\prime$ the eigenfunction. We have
\begin{align*}
\int R^{-1}_n \mathds{1}_{[w]} h^\prime d\nu^\prime &= \lambda^{-n}\int L^n(R^{-1}_n \mathds{1}_{[w]} h^\prime) d\nu^\prime 
\\
&=  \lambda^{-n}\int_{\sigma^n[w]} \exp{\psi^n(w)}h^\prime(w\cdot) d\nu^\prime 
\\
&\le \exp{\psi^n(w)} \lambda^{-n}(h^\prime(w\xi) +|h^\prime|_\theta \beta^n) 
\end{align*}
and a lower bound given by $\exp{\psi^n(w)} \lambda^{-n}(h^\prime(w\xi) -|h^\prime|_\theta \beta^n)$.
Then in particular for $u,v$ with $u \wedge v = p$, some fixed $\xi\in \sigma^p([u])$, and 
$$
\alpha_p = \frac{(h^\prime(u\xi) +|h^\prime|\beta^p)}{(h^\prime(u\xi) - |h^\prime|\beta^p)},
$$
we have
\begin{equation}\label{abelianratio}
\alpha_p^{-1} \exp {\psi^n(u)}\exp {-\psi^n(v)}\le 
\frac{\int R^{-1}_n \mathds{1}_{[u]} h^\prime d\nu^\prime}
{\int R^{-1}_n \mathds{1}_{[v]} h^\prime d\nu^\prime}
\le \alpha_p\exp {\psi^n(u)}\exp {-\psi^n(v)}
\end{equation}

\section{Convergence in $\ell^2$ }\label{section:ell2}
In this section we present the basic properties relating to convergence of the thermodynamic densities.
The main feature is that $\rho$ permutes an orthonormal basis of the Hilbert space. We use the notation $\delta_h$ to denote the indicator function on the coset $h\in G/H$.

We begin by showing that for $t>\gamma(\mathrm{SPR})$, $\phi_{\le N}[t]$ belongs to $\ell^1(G)$. Recall that for any $f:G\to \mathbb{R}$ we define $\|v\|_{\ell^1(G)}=\sum_{g\in G} |v(g)|$. (The definition merely uses a countable series of non-negative terms.) Substituting $v=\phi_{\le N}[t]$ in the definition gives
$$
\|\phi_{\le N}[t]\|_{\ell^1(G)} = \zeta^{A,a}_{\le N}[t],
$$
which we know to be finite by Lemma \ref{basicseries}. 

By definition, if $t>\gamma(\phi,\delta_e)$ then $\sup_{N\in\mathbb{N}}\phi_{\le N}[t](e)<\infty$. Using transitivity, this implies that for each $g$ we have $\sup_{N\in\mathbb{N}}\phi_{\le N}[t](g)<\infty$. A bounded series of non-negative terms in $\mathbb{R}$ converges and hence $\phi[t](g)$ is well-defined (in whichever way we arrange the series). It is clear that $\phi[t](g)\le \zeta^{A,a}[t]$ and so $\gamma(\phi,\delta_e)\le 1$.

We have the identity
$$
\|\phi[t]\|_{\ell^1(G)}= \sum_{g\in G} \phi[t](g) = \zeta^{A,a}[t].
$$ 
So $\|\phi[t]\|_{\ell^1(G)}<\infty$ for $t> 1$,
but $\|\phi[t]\|_{\ell^1(G)}=\zeta^{A,a}[t]=\infty$ for $t\le 1$.

\begin{proof}[Proof of Lemma \ref{lemma:strongconvergence}]
Recall that in a Hilbert space we say that $X_N$ weakly converges to $X_\infty$ if $\langle X_N-X_\infty,v\rangle_{\mathcal{H}}\to 0$ as $N\to\infty$ for every vector $v$. And strongly converge if $\|X_N-X_\infty\|_{\mathcal{H}}\to 0$ as $N\to\infty$. The reader may already be familiar with this fact: if $\| X_N \|_{\mathcal{H}}$ is uniformly bounded and $X_N$ weakly converge to $X_\infty$ on an orthonormal basis then in fact $X_N$ strongly converge to $X_\infty$. We provide the details of this argument.
Denote the orthonormal basis vectors permuted by $\rho$ as $\mathrm{e_i}$ for $i\in\mathbb{N}$.

The argument is applied to $X_N = \phi_{\le N}[t]\ast f$ and $X_\infty$ formally defined by $\langle X_\infty,\mathrm{e}_i\rangle_{\mathcal{H}} = \lim_{N\to\infty} \langle \phi[t]\ast f,\mathrm{e}_i\rangle_{\mathcal{H}}$.
Let us check that $X_\infty$ is the weak limit with respect to the orthonormal basis $\mathrm{e}_i$. 
Let $v\in\mathcal{H}_+$ be arbitrary (it is sufficient to check against vectors in $\mathcal{H}_+$). We will show that $\langle X_N, v\rangle_{\mathcal{H}}$ are a Cauchy sequence. Write $v=\sum_{q=1}^\infty a_{q} \mathrm{e}_{q}$.
Choose Q with
$$
\|
 \sum_{n=Q}^\infty a_{n}\mathrm{e}_{n}
\|_{\mathcal{H}}
\le \epsilon/2.
$$
Then we have
$$
\langle X_N - X_M, v\rangle_{\mathcal{H}} \le \sum_{q\le Q}a_q
\langle 
X_N - X_M, \mathrm{e}_{q}
\rangle_{\mathcal{H}}
+
\langle 
X_N - X_M, \sum_{q> Q}a_q\mathrm{e}_{q}
\rangle_{\mathcal{H}}.
$$
Using the Cauchy-Schwarz inequality we have 
$$
\langle 
X_N - X_M, \sum_{q> Q}a_{q}\mathrm{e}_{q}
\rangle_{\mathcal{H}}
\le
\|X_N - X_M\|_{\mathcal{H}} \|
\sum_{q> Q}
a_{q} \mathrm{e}_{q}
\|_{\mathcal{H}}.
\le
\epsilon
\sup_{n\in\mathbb{N}} \|X_n \|_{\mathcal{H}}
$$
Now for any weak limit $X^v_\infty$ (i.e. $\langle X_N-X^v_\infty, v\rangle_{\mathcal{H}} \to 0$) we also have
$$
\epsilon \ge \langle X_N-X^v_\infty, v\rangle_{\mathcal{H}} \ge  \langle X_N-X^v_\infty, a_{i}\mathrm{e}_i\rangle_{\mathcal{H}},
$$
so that $\langle X^v_\infty,\mathrm{e}_i\rangle_{\mathcal{H}} = \langle X_\infty,\mathrm{e}_i\rangle_{\mathcal{H}}$ for each $i$.
In conclusion, $X_N$ weakly converges to $X_\infty$. 

Recall that the norm is lower semicontinuous: 
$$
\| X_\infty \|_{\mathcal{H}} = \sup_v \langle X_\infty, v\rangle_{\mathcal{H}} = \sup_v \lim_N \langle X_N, v\rangle_{\mathcal{H}} \le \liminf_{n\to\infty} \| X_N \|_{\mathcal{H}},
$$
using Cauchy-Schwarz in the last line. It follows that $\| X_\infty \|_{\mathcal{H}}<\infty$.
On the other hand we have by the monotonicity
$$
\| X_\infty \|_{\mathcal{H}}  \ge \limsup_{n\to\infty}  \| X_N \|_{\mathcal{H}}.
$$

Denote $P_{i>Q}$ the projection on to the span of $\left\{ \mathrm{e}_i : i>Q\right\}$. We have 
$$
\|P_{i>Q}(X_\infty - X_N)\|_{\mathcal{H}} \le \|P_{i>Q}X_\infty\|_{\mathcal{H}} + \|P_{i>Q} X_N \|_{\mathcal{H}} \le 2\|P_{i>Q}X_\infty \|_{\mathcal{H}},
$$
by monotonicity. Using that $\| X_\infty \|_{\mathcal{H}}<\infty$, we can choose $Q$ large enough to make $\| P_{i>Q}X_\infty \|_{\mathcal{H}}<\epsilon/2$, for some $\epsilon$. (Indeed $\| X_\infty \|_{\mathcal{H}}<\infty$ implies that $\sum_{i\in\mathbb{N}} |\langle X_\infty, \mathrm{e}_i\rangle|^2$ is a convergent series.) For a fixed $Q$ we can choose $N$ large enough with
$$
\| P_{i\le Q}(X_\infty - X_N)\|_{\mathcal{H}}^2 = \sum_{q\le Q} \langle X_\infty - X_N, \mathrm{e}_{q}\rangle_{\mathcal{H}}^2 \le \epsilon
$$
In conclusion $\| X_N -X_\infty\|_{\mathcal{H}}\to 0$ as  $N\to\infty$. That is, we have strong convergence of $X_N$ to $X_\infty$.
\end{proof}

\begin{proof}[Proof of Lemma \ref{lemma:changeletter}]
We need to upper bound $\phi^{B,a}$ in terms of $\phi^{A,a}$. Choose $u_B$ with $Au_BB$ admissible, in this way any $v\in\mathcal{W}^{B,a}_n$ has $Au_B v\in\mathcal{W}^{A,a}_{n+|u_B|+1}$. Moreover, 
 $$
 R_{|Au_B|+n}(Au_Bvx) = R_{|Au_B|}(Au_Bvx)R_{n}(vx) \ge \mathrm{const.}(u_BB)^{-1} R_{|Au_B|}(Au_BBz)R_{n}(vx)
 $$
 for some fixed $z\in\sigma [B]$. Let $C$ be the constant with $c_n\le C c_{n-|u_B|}$.
We deduce that
\begin{align*}
& \sum_{n=1}^Nt^{-n}c_n\sum_{v\in\mathcal{W}_n^{B,a}}
  R_{n}(vx) \langle \rho(\underline{\mathfrak{s}}(v))f,\delta_h\rangle
 \\
 &\le
   \frac{\mathrm{const.}(u_BB)}{R_{|u_B|+1}(Au_BBz)}
\sum_{n=1}^Nt^{-n}c_n\sum_{v\in\mathcal{W}_n^{B,a}}
  R_{|u_B|+1+n}(Au_Bvx)\langle \rho(\underline{\mathfrak{s}}(Au_B))^{-1}\rho(\underline{\mathfrak{s}}(Au_B)\underline{\mathfrak{s}}(v))f,\delta_h\rangle
 \\
&\le
   C\frac{\mathrm{const.}(u_BB)}{R_{|u_B|+1}(Au_BBz)}t^{|u_B|+1}
\langle \phi_{c;\le N+|u_B|}[t]\ast f,\rho(\underline{\mathfrak{s}}(Au_B))\delta_h\rangle
\end{align*}
The result follows.
\end{proof}

\begin{lemma}
Transitivity implies $\gamma(f)>0$.
\end{lemma}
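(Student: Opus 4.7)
The plan is to reduce to showing that $\phi[t](e)$ has a strictly positive abscissa of convergence, and then to exploit transitivity to produce infinitely many admissible words in $\mathcal{W}^{A,a}$ whose $\underline{\mathfrak{s}}$-image is $e$, together with an exponential lower bound on their $R$-weights.

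First I would observe that because $\rho$ permutes an orthonormal basis of $\mathcal{H}$ and preserves $\mathcal{H}_+$, each matrix coefficient $\langle \rho(g)f,f\rangle_{\mathcal{H}}$ is non-negative, and in particular $\langle \rho(e)f,f\rangle_{\mathcal{H}} = \|f\|^2 > 0$ whenever $f\in \mathcal{H}_+\setminus\{0\}$. Retaining only the $g=e$ term in the defining integral gives
$$
\int \langle \rho(h)f,f\rangle_{\mathcal{H}}\, d\phi_{\le N}[t](h) \;\ge\; \|f\|^2 \, \phi_{\le N}[t](e),
$$
so it suffices to exhibit $t_0>0$ such that the power series in $t^{-1}$
$$
\phi[t](e) \;=\; \sum_{n=1}^\infty t^{-n} \sum_{w \in \mathcal{W}^{A,a}_n:\,\underline{\mathfrak{s}}(w)=e} R_n(wx)
$$
is infinite for every $t < t_0$.

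Next I would apply transitivity of $T_{\mathfrak{s}}$ twice to produce two building blocks. Transitivity on $\Sigma\times G$ says that for any pair of letters $B_1,B_2$ and any $g\in G$, there is an admissible word $B_1 w_2\cdots w_{n-1} B_2$ with $\underline{\mathfrak{s}}(B_1w_2\cdots w_{n-1})=g$. Applying this with $(B_1,B_2,g)=(A,A,e)$ yields a loop word $u$ with $u_1=A$, last letter $u_{|u|}$ satisfying $\tau(u_{|u|},A)=1$, and $\underline{\mathfrak{s}}(u)=e$; iterated powers $u^k$ are then admissible for every $k\in\mathbb{N}$. Applying it with $(B_1,B_2,g)=(A,a,\mathfrak{s}(a)^{-1})$ yields $v\in\mathcal{W}^{A,a}$ with $\underline{\mathfrak{s}}(v)=e$. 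For every $k\ge 0$, the concatenation $u^k v$ belongs to $\mathcal{W}^{A,a}_{k|u|+|v|}$ with $\underline{\mathfrak{s}}(u^k v)=e$.

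It then remains to produce a uniform exponential lower bound on $R_{k|u|+|v|}(u^k v x)$. Using multiplicativity of $R_n$ under concatenation together with local H\"older continuity of $\log R$, one obtains a constant $c_0>0$ with $R_{|u|}(uy)\ge c_0 R_{|u|}(ux)$ whenever $y_0=A$; iterating $k$ times gives
$$
R_{k|u|+|v|}(u^k v x) \;\ge\; R_{|v|}(vx)\, r^k, \qquad r := c_0\, R_{|u|}(ux) > 0.
$$
Keeping only the terms $w=u^k v$ in $\phi[t](e)$ yields
$$
\phi[t](e) \;\ge\; R_{|v|}(vx)\, t^{-|v|} \sum_{k=0}^\infty \left( r\, t^{-|u|}\right)^k,
$$
which diverges whenever $t < r^{1/|u|}$, so that $\gamma(f)\ge r^{1/|u|}>0$. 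The only mildly delicate step is the second application of transitivity: one has to pick the target group element $\mathfrak{s}(a)^{-1}$ (rather than $e$) so that after appending the final letter $a$ the resulting word has $\underline{\mathfrak{s}}=e$; everything else is an elementary geometric-series estimate.
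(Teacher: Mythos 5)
Your proof is correct and takes essentially the same approach as the paper, which records only a one-sentence sketch (``We only need two distinct periodic orbits, represented by $u,v$ with $\underline{\mathfrak{s}}(u)=\underline{\mathfrak{s}}(v)=e$, to create non-trivial Gurevi\v{c} pressure''). You have made the sketch complete: the reduction to $\phi[t](e)$ via $\langle\rho(e)f,f\rangle=\|f\|^2>0$, the two applications of transitivity producing a loop $u$ based at $A$ and a return word $v\in\mathcal{W}^{A,a}$ both with trivial $\underline{\mathfrak{s}}$-image, and the local-H\"older estimate $R_{|u|}(uy)\ge c_0R_{|u|}(ux)$ (valid because $uy$ and $ux$ agree on the first $|u|+1$ coordinates once $y_0=A$) leading to $R_{k|u|+|v|}(u^kvx)\ge R_{|v|}(vx)r^k$ with $r=c_0R_{|u|}(ux)>0$. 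The geometric series then forces divergence for all $t<r^{1/|u|}$, giving $\gamma(f)\ge r^{1/|u|}>0$. The only detail worth noting is that your $v$ is itself a loop (since $aA$ is admissible), so in effect you also exhibit two loops with trivial image, matching the paper's phrasing; the argument works regardless of whether $u$ and $v$ are distinct.
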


\begin{proof}
We only need two distinct periodic orbits, represented by $u,v$ with $\underline{\mathfrak{s}}(u)=\underline{\mathfrak{s}}(v)=e$, to create non-trivial Gurevi\v{c} pressure.
\end{proof}

\section{The boundary measure for the free group}\label{appendix:freegroup}
We explain the action of $F_{a,b}$ on its visual boundary in terms of a subshift of finite type, the related unitary representation and spherical function.

Identify the visual boundary of $F_{a,b}$ with $\Sigma^+$, then an element $x$ is an infinite reduced word, and for any $g\in F_{a,b}$ we have that $g^{-1}x$ is an infinite word but may not be reduced. The $F_{a,b}$ action is defined as $g^{-1}\cdot x = y$ where $y$ is the infinite reduced form of $g^{-1}x$. The $F_{a,b}$ action does not preserve the Markov measure $\nu$ associated to $R=1/3$ but it does preserve the measure class. This gives rise to a (unitarizable) representation of $F_{a,b}$ in $L^2(\Sigma^+,\nu)$. Set $c(g,x) =  \frac{dg_*\nu}{d\nu}(x)$; this can easily be computed as $c(g,x)=3^{-q(g,x)}$ where $q(g,x) = |g|$ if $g^{-1}x$ is already reduced, and $q(g,x) = |g| - 2 (g^{-1}\wedge x)$ in general. To see this it is enough to check cylinders: for $m>n$, $u=u_1\cdots u_m$ and $g_1\cdots g_n$, we have $g^{-1}u = g^{-1}_n\cdots  g^{-1}_{k+1} u_{k+1}\cdots u_m$ so that  
\begin{align*}
\int \mathds{1}_{[u]}(y)dg_*\nu(y) &= \int \mathds{1}_{[u]}(y) d\nu(g^{-1}\cdot y) \\
& =\int \mathds{1}_{[u]}(g\cdot x) d\nu(x) =\int \mathds{1}_{[g^{-1}u]}(x) d\nu(x) \\
&= \frac{1}{4}3^{-|u|+k-n+k}
\end{align*}
The conclusion follows upon observing that $g^{-1}\wedge u = k$ and $|g|=n$.  Denote $\pi_{1/2}$ for the unitary representation
$$
\pi_{1/2}(g) F(x) = c(g,x)^{1/2}F(g^{-1}\cdot x).
$$
Let us compute the matrix coefficient $g\mapsto \langle \pi_{1/2}(g)\mathds{1},\mathds{1}\rangle_{L^2(\Sigma^+,\nu)}$, where $\mathds{1}$ denotes the constant unit function.
We have
$$
\langle \pi_{1/2}(g)\mathds{1},\mathds{1}\rangle_{L^2(\Sigma^+,\nu)} = \int_{\Sigma^+} c(g,x)^{1/2} d\nu(x)
$$
Write $E_{k}$ for the set with $q(g,x)=|g|-2k$; or equivalently $x$ with $g^{-1}\wedge x = k$. The measure of $E_k$ takes values: 
  \begin{equation}
    \nu(E_k)=
    \begin{cases}
      \frac{3}{4}, &\;\text{for}\;k=0; \\
      \frac{1}{4}3^{-k+1}\frac{2}{3}, & \;\text{for}\; 0<k<|g| \\
       \frac{1}{4}3^{-|g|+1}, & \;\text{for}\; k=|g|.
    \end{cases}
  \end{equation}
Therefore
\begin{align*}
\langle \pi_{1/2}(g)\mathds{1},\mathds{1}\rangle_{L^2(\Sigma^+,\nu)} &=\frac{3}{4}\sqrt{3}^{-|g|}+ \frac{6}{4}3^{-|g|+1}\sqrt{3}^{|g|}+ \sum_{k=1}^{|g|-1}\sqrt{3}^{-|g|+2k} \frac{1}{2}3^{-k} \\
&= \frac{3}{2}\sqrt{3}^{-|g|} + \frac{1}{2}\sqrt{3}^{-|g|}\sum_{k=1}^{|g|}\sqrt{3}^{2k}3^{-k}
\\
&=  \frac{3}{2}\sqrt{3}^{-|g|} + \frac{1}{2}\sqrt{3}^{-|g|}(|g|-1)
\\
&= \sqrt{3}^{-|g|}\left(\frac{3}{2} + \frac{1}{2}(|g|-1)\right)
\end{align*}
Then $g\mapsto \langle \pi_{1/2}(g)\mathds{1},\mathds{1}\rangle_{L^2(\Sigma^+,\nu)}$ is identically $g\mapsto (1+\frac{|g|}{2})\sqrt{3}^{-|g|}$.

Spherical functions for the free group are explored in more detail by Fig\`{a}-Talamanca and Picardello \cite{FT}.

\section{The slowly increasing function}
A formula for the slowly increasing function is given in \cite{urbanski}. We present the details needed to verify it works.
\begin{proposition}\label{prop:divseries2}
For any real series $\eta(t)= \sum_{n=1}^\infty t^{-n}B_n$ there is a slowly increasing function $c:\mathbb{N}\to\infty$ so that
$\eta_c(t)= \sum_{n=1}^\infty t^{-n}c_nB_n$ has $\eta_c(\gamma)=\infty$ at $\gamma = \limsup B_n^{1/n}$ and satisfies
$$
c_{n+k}\le c_nc_k.
$$
\end{proposition}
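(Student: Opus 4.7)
Setting $a_n = \gamma^{-n}B_n$, the hypothesis $\gamma = \limsup B_n^{1/n}$ gives $\limsup a_n^{1/n} = 1$. Since any $c_n$ with $c_n^{1/n}\to 1$ preserves the abscissa of convergence, it is enough to produce a slowly increasing, submultiplicative $c_n \ge 1$ with $c_n\to\infty$ and $\sum_n c_n a_n = \infty$. My plan is to split according to whether $\sum_n a_n$ is already infinite.

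In the first case, $\sum_n a_n = \infty$, I would take the explicit choice $c_n = 1 + \log(1+n)$. This is clearly slowly increasing (so subexponential) and bounded below by $1$, so $\sum c_n a_n \ge \sum a_n = \infty$. Submultiplicativity reduces to the elementary estimate
$$
(1+\log(1+n))(1+\log(1+k)) \ge 1 + \log(1+n) + \log(1+k) \ge 1 + \log\bigl((1+n)(1+k)\bigr) \ge 1 + \log(1+n+k),
$$
using $(1+n)(1+k) = 1+n+k+nk \ge 1+n+k$.

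In the second case, $\sum_n a_n < \infty$, the idea is the standard ``Patterson trick'': set $c_n = e^{f(n)}$ where $f:[0,\infty)\to[0,\infty)$ is concave, increasing, with $f(0)=0$, $f(n)\to\infty$, and $f(n)/n\to 0$, constructed so that $\sum_n a_n e^{f(n)} = \infty$. Concavity plus $f(0)=0$ forces $x\mapsto f(x)/x$ to be non-increasing, which gives subadditivity $f(n+k)\le f(n)+f(k)$ (since $f(n+k) = \tfrac{n}{n+k}f(n+k) + \tfrac{k}{n+k}f(n+k) \le f(n) + f(k)$); this translates to submultiplicativity of $c_n$. Slow growth is exactly $f(n)/n\to 0$, and divergence is imposed by construction.

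The function $f$ is built inductively. Setting $r_N = \sum_{n>N}a_n$ and choosing target heights $h_k = k$, I would select an increasing sequence $0=N_0 < N_1 < N_2 < \cdots$ with $N_k - N_{k-1}$ strictly increasing, such that the block mass $b_k := r_{N_{k-1}} - r_{N_k}$ satisfies $b_k \ge \delta_k$ for a chosen summable sequence $\delta_k$ with $\sum_k \delta_k < r_0$ but $\sum_k e^{k-1}\delta_k = \infty$ (for instance $\delta_k = r_0 \cdot c \cdot 2^{-k}/k^2$ for small $c>0$). Then define $f$ by piecewise-linear interpolation between the nodes $(N_k, h_k)$; the slopes $(h_k - h_{k-1})/(N_k - N_{k-1}) = 1/(N_k - N_{k-1})$ are decreasing (giving concavity) and tend to zero (giving $f(n)/n\to 0$). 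The divergence follows since for $n \in (N_{k-1}, N_k]$ one has $c_n \ge e^{h_{k-1}} = e^{k-1}$, hence
$$
\sum_n c_n a_n \;\ge\; \sum_k e^{k-1} b_k \;\ge\; \sum_k e^{k-1}\delta_k \;=\; \infty.
$$
The hard point is to verify that the inductive construction of the $N_k$ is always feasible: one needs simultaneously $b_k\ge\delta_k$ (which demands enough remaining mass $r_{N_{k-1}}$) and $N_k - N_{k-1}$ as large as one wishes (to make slopes small). This is handled by keeping $\sum_k\delta_k$ bounded by a small fraction of $r_0$, leaving slack for padding $N_k$ beyond the minimal choice without exhausting the budget.
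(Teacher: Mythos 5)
Your Case 1 ($\sum_n a_n = \infty$) is a correct and self-contained treatment of a degenerate case the paper's proof glosses over, and your observations that a concave $f$ with $f(0)=0$ gives both subadditivity of $f$ (hence submultiplicativity of $c$) and the equivalence of the slow-growth condition with $f(n)/n\to 0$ are exactly the right reductions. But Case 2 has a genuine gap, and it is in the step you flag as "the hard point."

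The problem is that the target heights $h_k = k$ are fixed \emph{before} looking at $(a_n)$, and no fixed summable budget $(\delta_k)$ with $\sum_k e^{k-1}\delta_k = \infty$ can then always be met. Take $a_{2^m} = 4^{-m}$ and $a_n = 0$ otherwise: then $\sum_n a_n < \infty$ and $\limsup_n a_n^{1/n}=1$, so the hypotheses hold. The mass sits on a sparse geometric subsequence, so each block $(N_{k-1},N_k]$ picks up essentially one new atom once the gaps $N_k - N_{k-1}$ are forced to increase, and $b_k = r_{N_{k-1}}-r_{N_k}$ decays like $4^{-k}$, while $r_{N_{k-1}}$ itself decays like $4^{-k}$. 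No choice of $\delta_k \le b_k$ can have $\sum_k e^{k-1}\delta_k = \infty$, since $e/4<1$; and any budget decaying as slowly as $2^{-k}$ exhausts the available tail mass after finitely many steps. Reserving a fraction of $r_0$ does not repair this: the issue is not a mild overshoot of $\delta_k$ at the minimal feasible $N_k$, but that the tail $r_{N_{k-1}}$ can decay faster than any budget you fix in advance that satisfies the divergence requirement.

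The paper sidesteps the entire bookkeeping problem by choosing the \emph{node values}, not the node spacings, to cancel the summands: it picks a sparse sequence $N_r$ along which $d(r):=N_r^{-1}\log D_{N_r}^{-1}$ decreases to $0$ (possible precisely because $\limsup_n D_n^{1/n}=1$), and sets $c_n = e^{ng(n)}$ with $g$ the piecewise-linear non-increasing function taking value $d(r)$ at $N_r$. Then $c_{N_r}D_{N_r}=1$ for every $r$, so $\sum_n c_n D_n \ge \sum_r 1 = \infty$ for free; submultiplicativity follows immediately from $g$ non-increasing, since $(n+k)g(n+k)\le ng(n)+kg(k)$; and the slow-growth condition comes from $g\to 0$ together with $N_r/N_{r+1}\to 0$ controlling the jump $n(g(n+1)-g(n))$. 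In other words, your $h_k$ needs to be tied to $\log a_{N_k}^{-1}$ rather than to $k$. Once you make that change the $\delta_k$ machinery becomes unnecessary, and your construction essentially coincides with the paper's.
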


\begin{proof}
By hypothesis,
$$
\limsup_{n\to\infty} B_n^{1/n} = \gamma, \; \sum_{n=1}^\infty \gamma^{-n} B_n <\infty.
$$
First shift so that
$$
D_n = \gamma^{-n}B_n
$$
and now the problem is 
\begin{equation}\label{eq:Dn}
\limsup_{n\to\infty} D_n^{1/n} = 1, \; \sum_{n=1}^\infty D_n <\infty.
\end{equation}
Certainly $D_n$ are decreasing (in order to have convergence in Eq \ref{eq:Dn}) so $D_n^{-1}$ are increasing.
A naive idea would be to set $c_n = D_n^{-1}$ so that every summand is $1$! (Far too optimistic!) A slowly increasing function necessarilty has $\limsup c_n^{1/n} = 1$, but this is only sufficient, and so equation \ref{eq:Dn} is not enough. Failing this we might think to choose $c_n = D^{-1}_{q(n)}$ with $q(n)\le n$. In order to have 
$$
\sum_{n=1}^\infty D_n D_{q(n)}^{-1} = \infty
$$
it is sufficient that the equation $q(n)=n$ is satisfied for infinitely many $n$ (no matter how sparse the subset may be!). For instance if $q(n)$ were constant in the range $[N,N+M]$ then certainly
$$
D^{-1}_{q(n+k)} \le D^{-1}_{q(n)} \le D^{-1}_{q(n)}(1+\epsilon(k)).
$$
provided $n\in [N,N+M]$, $n+k\in[N,N+M]$. (But this is too optimistic!)

The solution is to take a sparse set (a collection $N_r$ having $N_rN_{r+1}^{-1}\to 0$) and ``linearly interpolate" in such a way to make the extension of $c_{N_r}=D_{N_r}^{-1}$ slowly increasing.
For $n\in [N_{r},N_{r+1}]$, set $\alpha_r(n)\in [0,1]$,
$$
\alpha_r(n) = \frac{N_{r+1}-n}{N_{r+1}-N_{r}}.
$$
(So $\alpha_r(N_r) = 1$, $\alpha_r(N_{r+1}) = 0$.)
Set 
$$
c_n = \left( D^{-1}_{N_r} \right)^{\frac{n\alpha_r(n) }{N_r}}\left(D^{-1}_{N_{r+1}}\right)^{\frac{n(1-\alpha_r(n))}{N_{r+1}}}.
$$
For brevity write $d(r) = N_{r}^{-1}\log D^{-1}_{N_{r}}$, whence
$$
\frac{1}{n}\log c_n = \alpha_r(n)d(r) + (1-\alpha_r(n))d(r+1).
$$
We may assume $r$ are chosen with $D^{-1}_{N_{r}}>1$ (so $d(r)>0$) and with $N_{r}^{-1}\log D^{-1}_{N_{r}}$ monotonically decreasing to $0$ (so $0<d(r+1)<d(r)$). It is immediate that $\frac{1}{n}\log c_n\to 0$.

We check the slowly increasing condition. Let $k$ be arbitrary.
We must show that
$$
\lim_{n\to\infty} \log c_n -\log c_{n-k} = 0.
$$
Equivalently, that
$$
\lim_{n\to\infty} \log c_n -\frac{n}{n-k}\log c_{n-k} = 0.
$$

If $n,n-k\in [N_{r},N_{r+1}]$ then
$$
\frac{1}{n}\log c_n - \frac{1}{n-k}\log c_{n-k} = [\alpha_r(n)-\alpha_r(n-k)] d(r) - (\alpha_r(n)-\alpha_r(n-k))d(r+1).
$$
When $n$ is large enough (so $r$, $N_r$ are large enough) we have
$$
\frac{1}{n}\log c_n - \frac{1}{n-k}\log c_{n-k}] = [\alpha_r(n)-\alpha_r(n-k)] \epsilon - [\alpha_r(n)-\alpha_r(n-k)]\epsilon-\delta = -\delta  [\alpha_r(n)+\alpha_r(n-k)].
$$
Note that
$$
\alpha_r(n)-\alpha_r(n-k) = \frac{k}{N_{r+1}-N_r} =  \frac{\frac{k}{N_{r+1}}}{1-\frac{N_{r}}{N_{r+1}}} = \frac{k}{N_{r+1}}(1-\epsilon_r) .
$$
So
$$
|\log c_n - \frac{n}{n-k}\log c_{n-k}| \le N_{r+1} \delta  [\alpha_r(n)+\alpha_r(n-k)] \le k\delta(1-\epsilon_r).
$$

The reason for interpolation is to cover the disjoint ranges. It is sufficient to check for $k=1$. Now if $n= N_r$ and $n-1< N_r$ we have
$$
\frac{c^{1/n}_n}{c^{1/(n-1)}_{n-1}}=\left(D^{-1}_{N_{r-1}}\right)^{\frac{1}{N_r}} \left(D^{-1}_{N_{r}}\right)^{\frac{1}{N_{r}}}.
$$
So
$$
\frac{1}{n}\log c_n - \frac{1}{n-1}\log c_{n-1} = \frac{1}{N_r}d(r) + \frac{1}{N_r} d(r-1).
$$
Since $d(r)$ tends to $0$ the conclusion follows.

Now that we have convinced ourselves of the divergence and slowly varying property it still remains to check that products have $c_nc_k\ge c_{n+k}$ ($\log c_{n+k}\le \log c_n+\log c_k$).
We can write
$$
\log c_{n+k} = \left( \sum_{q=1}^{n} \alpha_r(n) d(r) + (1-\alpha_r(n))d(r-1)\right) + \left(\sum_{q=1}^{k} \alpha_r(n) d(r) + (1-\alpha_r(n))d(r-1) \right).
$$
Since $d(r)>0$ are decreasing in $r$, any linear combination has
$$
t d(r-2) + (1-t)d(r-1) > sd(r-1) + (1-s)d(r) ,
$$
giving the conclusion when $n+k\in [ N_r,N_{r+1}]$ and $n,k<N_r$. We also have monotonicity, 
If $n,n+k\in [N_r,N_{r+1}]$ then $\alpha_r(n)\le \alpha_r(n+k)$ whence $ \alpha_r(n) d(r) + (1-\alpha_r(n))d(r-1)<  \alpha_r(n+k) d(r) + (1-\alpha_r(n+k))d(r-1)$.
\end{proof}

\end{document}